\documentclass[11pt]{article}
\usepackage[margin=1in]{geometry}
\usepackage{amsmath,amssymb,amsthm,mathtools,booktabs,array,enumitem,hyperref,microtype,amscd}
\usepackage[T1]{fontenc}
\usepackage{lmodern}
\hypersetup{
 colorlinks=true,
 linkcolor=blue,
 citecolor=blue,
 urlcolor=blue,
 pdftitle={Compact Complex Threefolds Fibred by Two-Dimensional Complex Tori},
 pdfauthor={Xiufan Yang},
 pdfsubject={Compact complex threefolds, complex-torus fibrations, algebraic reduction, toroidal degenerations, and deformations},
 pdfkeywords={compact complex threefold, complex torus fibration, algebraic reduction, non-K\"ahler geometry, toroidal degeneration, logarithmic transform, deformation theory}
}

\newtheorem{theorem}{Theorem}[section]
\newtheorem{proposition}[theorem]{Proposition}
\newtheorem{lemma}[theorem]{Lemma}
\newtheorem{corollary}[theorem]{Corollary}
\newtheorem{remark}[theorem]{Remark}

\newcommand{\Z}{\mathbb Z}
\newcommand{\Q}{\mathbb Q}
\newcommand{\R}{\mathbb R}
\newcommand{\C}{\mathbb C}
\newcommand{\Hh}{\mathbb H}
\newcommand{\Pone}{\mathbb P^1}
\newcommand{\rk}{\operatorname{rk}}
\newcommand{\coker}{\operatorname{coker}}
\newcommand{\im}{\operatorname{im}}
\newcommand{\Aut}{\operatorname{Aut}}
\newcommand{\diag}{\operatorname{diag}}
\newcommand{\NS}{\operatorname{NS}}

\title{Compact Complex Threefolds Fibred by\\
Two-Dimensional Complex Tori}
\author{Xiufan Yang\\
\small School of Science, Nanjing University of Posts and Telecommunications\\
\small \texttt{b25100020@njupt.edu.cn}}
\date{}

\begin{document}
\maketitle

\begin{abstract}
We construct two families of compact complex threefolds fibred over
$\mathbb P^1$ by two-dimensional complex tori.  For each positive integer
$r$ and each parameter $c$ in a fixed half-plane, the families
$Y_{r,c}$ and $Z_{r,c}$ have two multiple fibres of multiplicities
$(4,6)$ and $(3,6)$, respectively, and one reduced normal-crossings
fibre whose normalization consists of $r$ del Pezzo surfaces of degree
six.  Their fundamental groups are $\mathbb Z/2$ and $\mathbb Z/3$,
respectively, and both have Euler characteristic $2r$.  We compute their
integral homology; the index-one members are rational homology
six-spheres.

Every member has algebraic dimension one, lies outside Fujiki's class
$\mathcal C$, and has connected automorphism group $\mathbb C^*$.
For fixed type and index, two members are biholomorphic exactly when
their parameters differ by an element of $(6/r)\mathbb Z$.  Each
underlying smooth six-manifold therefore supports uncountably many
pairwise non-biholomorphic complex structures.

The arithmetic input classifies finite monodromy pairs and their
integral lifts to a parabolic Jacobi group acting on a rank-four lattice.
A quadratic form controls integral lifting and, on the rank-two
square-zero locus, computes the monodromy saturation index.
We prove that the relevant invariant tensors persist under finite-index
restriction, which extends the construction to arbitrary compact base
curves.  We also construct translation twists after base change,
establish a local periodic toroidal quotient theorem in arbitrary
dimension, and determine the additive Bockstein spectral sequences of
the original threefolds.
\end{abstract}

\tableofcontents

\section{Introduction}

\subsection*{Background and main results}

Fibrations over curves provide a natural approach to compact complex
threefolds of algebraic dimension one.  In the non-K\"ahler setting,
the general fibre can be a non-algebraic complex torus, while singular
fibres can be nonnormal.  Controlling the local completions is therefore
essential to both the complex geometry and the topology of the total
space.

Campana--Demailly--Peternell study algebraic reduction under restrictions
on low-degree cohomology \cite{CDP98}.  Their corrected
Theorem~2.2 in \cite{CDP20} assumes
$H^1(X;\mathbb Z)=H^2(X;\mathbb Z)=0$, algebraic dimension one,
and a holomorphic algebraic reduction onto a curve; it concludes that
$e(X)\leq0$.  The index-one examples below instead have
$H^2(Y_1;\mathbb Z)\simeq\mathbb Z/2$ and
$H^2(Z_1;\mathbb Z)\simeq\mathbb Z/3$.
Honda--Viaclovsky's Theorem~1.1 \cite{HondaViaclovsky} excludes
surjective holomorphic maps to two-dimensional complex spaces when
$b_1=b_2=0$ and the Euler characteristic is nonzero; its target has
dimension two, rather than one.

The closest construction is Alp\"oge's Type A family over the orbifold
curve of type $(3,4,\infty)$ \cite[Sections~2--6]{AlpogeS6}.  It combines
two logarithmic transforms with a toroidal completion at a unipotent
point.  In the same rank-four parabolic setting, we classify the finite
monodromy pairs and construct the $(4,6)$ and $(3,6)$ families for
every positive cusp index.  Their topology is computed below directly
from the local models and their attaching maps.

\paragraph{The index-one consequence.}
Albanese--Milivojevi\'c record Sullivan's conjecture that a compact
complex $n$-fold, $n\geq3$, has total Betti number at least four
\cite[Introduction]{AlbaneseMilivojevic}.  The index-one members below
have total rational Betti number two, giving counterexamples in
complex dimension three.  Their fundamental groups, $\mathbb Z/2$
and $\mathbb Z/3$, distinguish them from $S^6$.

\subsection*{The arithmetic statement}

Let
\[
 U_h=\begin{pmatrix}1&h\\0&1\end{pmatrix},\qquad h\geq1.
\]
Reversal of the parabolic orientation means replacing the oriented cusp
generator by its inverse, so that $U_h$ is replaced by $U_{-h}$ before
renormalizing the width to be positive.  The first theorem identifies the possible finite monodromies and the
integral obstruction to their affine lifts.

\begin{theorem}[Finite monodromy and Jacobi lifting]\label{thm:intro-arithmetic}
Let $B_1,B_2\in SL_2(\mathbb Z)$ be noncentral finite-order matrices and
suppose that $B_1B_2$ is conjugate to $U_h$.  Up to simultaneous
conjugacy, interchange of the two finite points, and reversal of the
parabolic orientation, the only possibilities are
\[
 (3,4;h=1),\qquad (4,6;h=1),\qquad (3,6;h=2).
\]
In particular, $h\leq2$.

Fix $s\in\mathbb Z\setminus\{0\}$.  For the associated rank-four
parabolic Jacobi representations of scale $s$, the classes for which
the logarithm of the unipotent monodromy has square
zero form a rank-one lattice.  If $k$ is a primitive coordinate on this
lattice and
\[
 \nu_\rho=\left|\det(I-B_1)\det(I-B_2)\right|,
\]
then an integral lift of scale $s$ exists if and only if
$s k^2/\nu_\rho\in\mathbb Z$.  For a liftable class with $k\neq0$, the
nilpotent monodromy has rank two and its image has finite index
\[
 r=\left|\frac{s k^2}{\nu_\rho}\right|
\]
in its saturated image lattice.  For $k=0$ the nilpotent rank is one;
this case is described in Proposition~\ref{prop:zero-affine-index}.
The parameter $r$ above refers to the rank-two case.
\end{theorem}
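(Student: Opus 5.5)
The argument splits into a finite classification in $SL_2(\mathbb Z)$ followed by a linear-algebra computation in the rank-four Jacobi group.

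\emph{Classification of the pairs.} Noncentral finite-order elements of $SL_2(\mathbb Z)$ have order $3,4,6$ and trace $-1,0,1$. They are conjugate to $\pm S$, $\pm ST$, $\pm (ST)^2$ with $S=\begin{pmatrix}0&-1\\1&0\end{pmatrix}$ and $T=U_1$.

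Conjugate so that $B_1$ is one of these standard representatives. Write $B_2=B_1^{-1}M$ with $M=gU_hg^{-1}$. The condition is that $B_1^{-1}M$ has finite order, i.e. $\operatorname{tr}(B_1^{-1}M)\in\{-1,0,1\}$.

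Parametrize $M=I+h\,vv^{\perp}$, where $v=(a,c)$ is a primitive vector. Then $\operatorname{tr}(B_1^{-1}M)=\operatorname{tr}B_1+h\,q_{B_1}(v)$, where $q_{B_1}$ is the binary quadratic form $v\mapsto \det(v,B_1^{-1}v)$ up to sign. This form is positive definite for elliptic $B_1$: it is $a^2+c^2$ for order four and $a^2\pm ac+c^2$ for orders three and six.

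The trace condition therefore forces $h\,q(v)\in\{1,2\}$ in absolute value, up to the sign fixed by orientation. This leaves finitely many cases:
\begin{itemize}
\item $q=1$ with $h=1$, or $q=1$ with $h=2$;
\item $q=2$ with $h=1$, which is possible only for the form $a^2+c^2$.
\end{itemize}
The stabilizer of $B_1$ acts transitively on the vectors with $q=1$, and likewise on those with $q=2$. Checking each case gives exactly $(3,4;1)$, $(4,6;1)$ and $(3,6;2)$; in particular $h\le 2$. Interchanging the two points and reversing orientation identifies the remaining solutions. This is a short enumeration.

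\emph{Lifting.} Use the paper's model of the parabolic Jacobi group of scale $s$ acting on $\mathbb Z^4=\mathbb Z^2\oplus\mathbb Z^2$. An element is $(A,\lambda,\mu,\kappa)$, whose translation part satisfies a Heisenberg cocycle with central term $s$.

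A lift of $(B_1,B_2)$ is determined by translation vectors $\lambda_i$, subject to two conditions:
\begin{itemize}
\item each lift has finite order, which pins $\lambda_i$ modulo $(I-B_i)\mathbb Z^2$;
\item the product is unipotent of the prescribed shape.
\end{itemize}
The affine freedom is $\mathbb Z^2/(I-B_1)\mathbb Z^2\times\mathbb Z^2/(I-B_2)\mathbb Z^2$, of order $\nu_\rho$. Rationally, the obstruction is the linear condition that $\log$ of the product has square zero. I expect this cuts out a line, the rank-one lattice with coordinate $k$.

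The central coordinate of the product is then a quadratic expression in the translation parameters. I would compute it via the Heisenberg commutator $s\,\omega(\lambda,\mu)$, solving $\lambda_i=(I-B_i)^{-1}(\cdot)$. It comes out as $s k^2/\nu_\rho$, since inverting $I-B_i$ introduces the denominators $\det(I-B_i)$. Integrality of this central term is exactly the lifting criterion.

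\emph{Index.} For $k\ne0$, write $N=\log(\text{product})$, which is block upper triangular. The off-diagonal part $N$ has rank two. Its image is spanned by $hU$-directions together with the translation column, with total scale $sk^2/\nu_\rho$ relative to a primitive basis. Computing the elementary divisors of $N$ against the saturation of $\operatorname{im}N$ gives the index $r=|sk^2/\nu_\rho|$.

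For $k=0$ the translation column vanishes rationally, so the rank drops to one; this case is deferred to Proposition~\ref{prop:zero-affine-index}.

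\emph{Main obstacle.} The hard part is the normalization: pinning down the integral model of the Jacobi group and showing that the primitive generator $k$ of the square-zero lattice produces precisely $\nu_\rho$, rather than some divisor of it, in the denominator. I would handle this by computing all three cases explicitly with Smith normal forms, rather than by a uniform argument.
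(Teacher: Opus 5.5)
Your classification of the finite pairs is sound, and it follows a different route from the paper. The paper conjugates the product to $U_h$, reads $t_1=t_2-hc$ off $B_1=U_hB_2^{-1}$ with $c\neq0$, and rules out $h=1$ with trace gap two by a parity check on the determinant. Your definite form $q_{B_1}(v)=\pm\det(v,B_1^{-1}v)$ gives $h\,q(v)=|t_2-t_1|\le2$ directly. The extra case then disappears because the Eisenstein forms do not represent $2$, while for $a^2+c^2$ a gap of two from trace $0$ is not elliptic.

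The lifting half has a genuine error in its setup. Lifts up to equivalence are not parametrized by $\Z^2/(I-B_1)\Z^2\times\Z^2/(I-B_2)\Z^2$. Equivalence is \emph{simultaneous} conjugation, so the affine classes form $H^1(\Gamma_{m_1,m_2},W_\rho)$: pairs $(d_1,d_2)$ modulo $(d_1+(I-B_1)u,\,d_2+(I-B_2)u)$ with a single $u$. This group is $\Z^2$. Your finite group of order $\nu_\rho$ is only its image under restriction to the two cyclic factors. The kernel, isomorphic to $W$, consists of classes $d_i=(I-B_i)u_i$ with $u_1\neq u_2$; these are trivial at both elliptic points but change the cusp monodromy. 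For instance, in Type B the class $(d_1,d_2)=((2,0),(0,0))$ has $d_1=(I-B_1)(1,-1)^{\mathsf T}$. Yet it lies on the square-zero locus with $k_B=2$, so $N$ has rank two and index $2|s|$. Your parametrization identifies it with $d=0$, where $N$ has rank one. A finite parameter set admits no rank-one lattice and no unbounded index, so your later steps contradict your setup. Also, finite order puts no condition on the $d_i$. Since $N_{B_i}=0$, one has $(B_i,d_i,0)^{m_i}=(I,0,\kappa_{m_i}(B_i,d_i))$, so finite order only forces $z_i=-\kappa_{m_i}(B_i,d_i)/m_i$. Liftability is therefore the pair of local conditions $m_i\mid\kappa_{m_i}(B_i,d_i)$, i.e.\ the vanishing of a class in $H^2(\Gamma_{m_1,m_2},\Z)\cong\Z/m_1\oplus\Z/m_2$.

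Once this is corrected, what remains is exactly what you postpone:
\begin{enumerate}
\item show that the residue $a-y$ is primitive on $H^1\cong\Z^2$;
\item show that the Heisenberg moves generate the whole kernel of $k_\rho$ on $\{a=y\}\cong\Z^3$, which is what makes $k_\rho$ primitive and produces exactly $\nu_\rho$;
\item check the two local conditions against $k_\rho$, e.g.\ $z_1=\tfrac s2(x^2+y^2)$ with $x^2+y^2\equiv k_B^2\pmod 2$.
\end{enumerate}
Your shortcut via ``the central coordinate of the product'' does not do this job. That coordinate is not a conjugacy invariant and equals $\pm sk^2/\nu_\rho$ only modulo $\Z$. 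Moreover, one integrality condition captures both local ones only because the local denominators $|\det(I-B_i)|$ are coprime in the three types, which you never invoke. For the index, the relevant quantity is $\det\overline N$ for $\overline N:V/\ker N\to\ker N$. In Type B, $\overline N=\begin{psmallmatrix}sp&q\\1&p\end{psmallmatrix}$ with $q$ that central entry, and it is $sp^2-q$, not $q$, that equals $\tfrac s2k_B^2$ exactly.
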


Thus the quadratic form controls both integral lifting and the cusp
index.  In the periodic $A_2$ compactifications constructed here, this
index equals the number of components of the normalized toroidal fibre.

\subsection*{The compact threefolds}

Write Type B and Type C for the $(4,6)$- and $(3,6)$-families.
Their parameter domains are fixed lower half-planes
$\mathcal U_\kappa=\{\operatorname{Im}c<C_\kappa\}$ as in
\eqref{eq:admissible-domain}; admissible always means $c\in\mathcal U_\kappa$.
A normal-crossings fibre here need not be simple: locally its equation
is $t=z_0\cdots z_k$, but different branches may belong to the same
global component.  The full integral homology is given in
Theorem~\ref{thm:complete-integral-homology}.

\begin{theorem}[The two compactification families]
For every $r\geq1$ and every admissible affine parameter $c$ there are
compact connected complex threefolds
\[
 f_{r,c}^B:Y_{r,c}\longrightarrow\mathbb P^1,
 \qquad
 f_{r,c}^C:Z_{r,c}\longrightarrow\mathbb P^1
\]
with connected fibres and the following properties.
\begin{enumerate}[label=\textup{(\roman*)}]
\item Away from three points, both maps are proper holomorphic
submersions with two-dimensional complex-torus fibres.
\item The two multiple fibres have multiplicities $(4,6)$ for $Y_{r,c}$ and
$(3,6)$ for $Z_{r,c}$; their reductions are smooth bielliptic surfaces.
The third special fibre is a reduced toroidal normal-crossings fibre whose
normalization is a disjoint union of $r$ del Pezzo surfaces of degree six.
\item
\[
 \pi_1(Y_{r,c})\simeq\mathbb Z/2,
 \qquad
 \pi_1(Z_{r,c})\simeq\mathbb Z/3,
 \qquad
 e(Y_{r,c})=e(Z_{r,c})=2r.
\]
\end{enumerate}
If $r\neq r'$, then $Y_{r,c}$ is not homeomorphic to $Y_{r',c'}$, and
the analogous statement holds for the $Z$-family.  No $Y_{r,c}$ is
homeomorphic to any $Z_{s,c'}$.
\end{theorem}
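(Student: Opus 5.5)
The construction is a gluing of three local models over $\mathbb P^1$, using the arithmetic of Theorem~\ref{thm:intro-arithmetic}. Cover $\mathbb P^1$ by three discs $\Delta_1,\Delta_2,\Delta_\infty$ around the three special points and let $S=\mathbb P^1\setminus\{p_1,p_2,p_\infty\}$.

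\textbf{Smooth part.} Over $S$, take the rank-four parabolic Jacobi lift of scale $s$ for the $(4,6;h=1)$ pair, respectively the $(3,6;h=2)$ pair. I choose $s$ and $k$ so that $|sk^2/\nu_\rho|=r$. With the affine parameter $c$, this lift defines a holomorphic family of two-dimensional complex tori $X^\circ\to S$. It is obtained as a quotient of $\mathbb H'\times\mathbb C^2$ by the Jacobi group, where $\mathbb H'$ is the universal cover of $S$ viewed as an orbifold.

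\textbf{Multiple fibres.} Over $\Delta_1$ and $\Delta_2$, I perform logarithmic transforms. The local model is $(\Delta\times T)/(\mathbb Z/m)$, where $\mathbb Z/m$ acts by $t\mapsto\zeta t$ on the disc and by $B_i$ combined with a translation on the torus $T=E\times E'$. I choose the translation so that the action is free. The reduced fibre is then $T/(\mathbb Z/m)$. Since the group acts by a nontrivial finite-order linear part together with a translation, this quotient is a bielliptic surface.

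\textbf{Toroidal fibre.} Over $\Delta_\infty$, I glue in the periodic toroidal $A_2$ completion determined by the rank-two nilpotent logarithm. The hexagonal fan in the square-zero plane, taken modulo the $\mathbb Z$-action of index $r$, gives $r$ copies of the degree-six toric del Pezzo surface. These are glued cyclically along their toric boundary, and the resulting fibre is reduced because the monodromy is unipotent.

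\textbf{Compactness and fibres.} The admissibility condition $c\in\mathcal U_\kappa$ ensures that the three charts glue Hausdorff-ly and compactly. I expect this gluing to be the main analytic obstacle. To handle it, I would compare the period map near the cusp with the polarization condition given by the quadratic form, and verify that the torus fibres converge to the degenerate fibre in the toroidal chart. Connectedness of fibres follows from the local models. With this, items (i) and (ii) are established.

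\textbf{Topology.}
\begin{itemize}
\item \emph{Euler characteristic.} Torus fibres and bielliptic fibres have $e=0$. For the toroidal fibre, inclusion--exclusion gives $r\cdot6$ from the del Pezzo components, minus $r\cdot6$ from the double curves, plus $2r$ from the triple points. I will verify this count against the hexagon combinatorics. The total is $e=2r$.
\item \emph{Fundamental group.} By van Kampen, $\pi_1$ is the quotient of the orbifold fundamental group extension $\mathbb Z^4\rtimes\pi_1^{orb}$ by the relations coming from each special fibre. The toroidal fibre kills the loop around $p_\infty$ and the image of the nilpotent logarithm. Since the fibre is simply connected after quotienting, it also kills the remaining lattice directions; I expect this to follow from the del Pezzo components being simply connected together with the cyclic gluing. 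The multiple fibres impose $\gamma_1^4=\gamma_2^6=1$, respectively $\gamma_1^3=\gamma_2^6=1$, twisted by the translation classes, and $\gamma_1\gamma_2=1$ holds in $\pi_1(\mathbb P^1\setminus\{p_1,p_2\})$. Quotienting $\mathbb Z^4$ by $\operatorname{im}(I-B_i)$ and the translation relations leaves a cyclic group of order $\gcd(4,6)=2$, respectively $\gcd(3,6)=3$. This has to be checked against the coinvariants.
\end{itemize}

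\textbf{Distinguishing the members.} The integers $e=2r$ separate different $r$ within each family, and the orders of $\pi_1$ separate $Y$ from $Z$. All of these are homeomorphism invariants.
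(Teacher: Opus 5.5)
Your Euler-number count and the final separation argument match the paper. The fundamental-group step, however, rests on a false premise. The toroidal fibre is not simply connected: its branch nerve is a two-torus, $H^1(W_r;\mathbb Z)\cong\mathbb Z^2$, and the cusp neighbourhood has $\pi_1\cong\Lambda/\Lambda_{\mathrm{van}}\cong\mathbb Z^2$ (Proposition~\ref{prop:cusp-pi1}). Attaching it kills only two things: the lifted meridian $t_{\xi_0}\rho_0$, and the \emph{saturated} vanishing lattice $\Lambda_{\mathrm{van}}=\mathbb Z\langle w,\delta\rangle$, in which $\operatorname{im}N$ has index $r$. The monodromy-invariant closure of $\Lambda_{\mathrm{van}}$ is $\ker\chi$ (Lemma~\ref{lem:normal-closure}), so a central class $z=t_\gamma$ survives. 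Van Kampen then gives $x^{m_1}=z^{\ell_1}$, $y^{m_2}=z^{\ell_2}$, $xy=z^{\ell_0}$, with $\ell_j=\chi(v_j)$ and $\ell_0=\chi(\xi_0)$. The group is cyclic of order $|m_1m_2\ell_0-m_2\ell_1-m_1\ell_2|$, so it depends on the translation and collar data. Your $\gcd(m_1,m_2)$ is the orbifold fundamental group of the base, which is only a quotient of $\pi_1$. It equals the true order only for the choice $(\ell_0,\ell_1,\ell_2)=(0,1,-1)$, which the paper makes and you never do. For example, taking $v_2=+\gamma$ instead of $-\gamma$ still gives a free order-six action, but it yields $\pi_1(Y_r)\cong\mathbb Z/10$.

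The construction is also incomplete at its core. A Jacobi lift of the monodromy gives a local system of lattices, not a holomorphic family of tori. You need an equivariant holomorphic period map $\Pi=[Z\mid I_2]$ with $\Pi(gz)=R_g(z)\Pi(z)A_g^{-1}$ and $\mathbb R$-independent columns. The paper obtains $\mu$ as the unique section of a torsor under $\mathcal O_{\mathbb P^1}(-1)$. It obtains $\beta$ as a section of a torsor under $\mathcal O$, unique up to the constant $c$.

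Admissibility of $c$ means that $\operatorname{Im}c$ is negative enough that $\mathcal D_\kappa<0$ everywhere, i.e.\ that the fibres are tori at all. It has nothing to do with Hausdorffness, which is formal for gluing along disjoint annuli.

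The cusp normalizations ($\mu-\tau$ and $\beta-\tau$ holomorphic, with Type C analogues) give $Z_r=\sigma B_{0,r}+C_r(t_c)$ with $C_r$ holomorphic. That is exactly the input the periodic toroidal quotient needs. Its deck group is the rank-two lattice $B_{0,r}\mathbb Z^2$ of index $r$, not a $\mathbb Z$-action. Properness comes from the tropical estimate $y\mapsto y-L\lambda$ with $L\to I$. It cannot come from a polarization: the general fibre is non-algebraic, and the only invariant class has signature $(1,1)$. Reducedness of the central fibre comes from unimodularity of the height-one fan, not from unipotent monodromy.

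Finally, the logarithmic transforms require explicit invariant vectors $v_j$ with $\gcd(m_j,\chi(v_j))=1$, together with the collar map $u\mapsto u-\ell(s)\Pi v_j$. The local model is an equivariant family over the root disc, not a product $\Delta\times T$.
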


For a fixed admissible parameter we suppress $c$ and write
$Y_r=Y_{r,c}$ and $Z_r=Z_{r,c}$.

\begin{corollary}[Cohomology of the compactifications]
The rational cohomology of both families is
\[
 H^q(Y_r;\mathbb Q)\simeq H^q(Z_r;\mathbb Q)\simeq
 \begin{cases}
 \mathbb Q,&q=0,6,\\
 \mathbb Q^{r-1},&q=2,4,\\
 0,&q=1,3,5.
 \end{cases}
\]
Moreover,
\[
 \operatorname{Tor}H_2(Y_r)\simeq\operatorname{Tor}H_3(Y_r)
 \simeq\mathbb Z/r\oplus\mathbb Z/2,
 \qquad
 \operatorname{Tor}H_2(Z_r)\simeq\operatorname{Tor}H_3(Z_r)
 \simeq\mathbb Z/(3r),
\]
and the free rank of $H_2$ and $H_4$ is $r-1$.  In particular,
$Y_1$ and $Z_1$ are non-K\"ahler rational homology six-spheres.
\end{corollary}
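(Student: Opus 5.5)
The plan is to read the corollary off the integral homology of Theorem~\ref{thm:complete-integral-homology}, with the parts of the previous theorem as a cross-check. First I fix the low degrees. Part (iii) gives $\pi_1(Y_r)\simeq\mathbb Z/2$ and $\pi_1(Z_r)\simeq\mathbb Z/3$, so $H_1\simeq\mathbb Z/2$, respectively $\mathbb Z/3$, and $H_0\simeq\mathbb Z$. Hence $b_1=0$, and by Poincar\'e duality (the spaces are compact, connected and complex, so oriented) $b_5=0$, $b_6=1$ and $H_5=0$. Universal coefficients then give $\operatorname{Tor}H^2\simeq\operatorname{Tor}H_1$.

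Next I determine the Betti numbers $b_2=b_4$ and $b_3$. Here $e=2r$ gives $2+2b_2-b_3=2r$. For an independent computation of $b_3$ I use a Mayer--Vietoris decomposition over $\mathbb P^1$. The base is covered by three discs around the special points and the complement, a thrice-punctured sphere carrying a $T^4$-bundle. The two multiple-fibre neighbourhoods retract onto bielliptic surfaces $T^4/G$. Rationally their cohomology is the $G$-invariants, so $b_1=b_3=2$ and $b_2=2$, and only the invariant part of $H^1(T^4)$ survives. The toroidal neighbourhood retracts onto the normal-crossings fibre, whose rational cohomology I compute from the normalization (the $r$ degree-six del Pezzos, each with $b_2=4$) glued along cycles of rational curves. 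The key input is that the monodromies $B_1$, $B_2$ have no invariant vectors, since $\det(I-B_i)\neq0$, and the unipotent part kills the odd cohomology that would otherwise come from $H^1$ and $H^3$ of the fibre. This should give $b_3=0$ and therefore $b_2=b_4=r-1$.

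For the torsion, I take $\operatorname{Tor}H_2$ and $\operatorname{Tor}H_3$ directly from Theorem~\ref{thm:complete-integral-homology}. The consistency check is that Poincar\'e duality together with universal coefficients gives $\operatorname{Tor}H_3\simeq\operatorname{Tor}H^4\simeq\operatorname{Tor}H_2$ in dimension six. Structurally, the $\mathbb Z/r$ factor is the cusp index: by Theorem~\ref{thm:intro-arithmetic}, $r$ is the index of the nilpotent image in its saturation, and this index becomes a torsion contribution in Mayer--Vietoris. The factor $\mathbb Z/2$ or $\mathbb Z/3$ comes from $\pi_1$. For type C, $\mathbb Z/r\oplus\mathbb Z/3\simeq\mathbb Z/3r$ only when $\gcd(r,3)=1$. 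In general I expect a nonsplit extension, and I will read off the cyclic group $\mathbb Z/(3r)$ from the explicit boundary map. This extension question is the main obstacle, because it needs the actual attaching maps rather than rational data. For type B the claimed answer is split, which requires checking that the corresponding boundary matrix has Smith normal form $\operatorname{diag}(1,\dots,2,r)$ rather than $(2r)$.

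Finally, for the case $r=1$: all Betti numbers other than $b_0$ and $b_6$ vanish, so $Y_1$ and $Z_1$ are rational homology six-spheres. For non-K\"ahlerness, a K\"ahler compact threefold has $b_2\geq1$, since the K\"ahler class $[\omega]$ is nonzero because $\int\omega^3>0$. This is impossible when $b_2=0$. The same argument also follows from the earlier statement that these manifolds lie outside class $\mathcal C$.
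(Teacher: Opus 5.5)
Your argument is correct and is the paper's own: everything is read off Theorem~\ref{thm:complete-integral-homology}, since tensoring with $\Q$ gives $b_1=b_3=b_5=0$ and $b_2=b_4=r-1$ directly, the torsion of $H_2$ and $H_3$ is quoted from that same theorem, and non-K\"ahlerness comes from Theorem~\ref{thm:algebraic-dimension-one} or, at $r=1$, from $b_2=0$. You should, however, discard the ``independent'' Mayer--Vietoris check of $b_3$ and the Smith-form criterion, which are wrong as stated: the rank-four monodromies $A_{j,r}$ fix $\delta$ and $v_{j,r}$ (only the Levi blocks $B_i$ lack fixed vectors, as your own value $b_1=2$ for the bielliptic fibres already shows); the cusp fibre has $H^1\simeq H^3\simeq\Z^2$ by Theorem~\ref{thm:cusp-cohomology}, so odd cohomology is not killed locally; and for odd $r$ the Smith form $\operatorname{diag}(1,1,1,2r)$ is consistent with the claim because $\Z/2r\simeq\Z/r\oplus\Z/2$.
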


The integer controlling the fundamental group has the familiar form of
a Seifert Euler number \cite{Orlik}.  If the two logarithmic transforms contribute
integers $\ell_1,\ell_2$ and the toroidal gluing contributes $\ell_0$,
then
\[
 e_{\mathrm{orb}}=\ell_0-\frac{\ell_1}{m_1}-\frac{\ell_2}{m_2},
 \qquad
 p=m_1m_2e_{\mathrm{orb}}.
\]
The integer $p$ agrees up to sign with the determinant of the relation
matrix for the surviving coinvariant circle: in the row convention of
Theorem~\ref{thm:van-kampen-reduction}, $\det R=-p$.  The canonical
choices give $|p|=2$ in the $(4,6)$ case and $|p|=3$ in the $(3,6)$ case.

\subsection*{Algebraic reduction and deformations}

The affine period parameter changes the complex structure of the compact
total space.  Fujiki's class $\mathcal C$ consists of compact complex
manifolds bimeromorphic to compact K\"ahler manifolds.

\begin{theorem}[Algebraic reduction and deformation]
For each $r\geq1$ and for either family, the affine period parameter gives
a proper holomorphic one-parameter deformation on a fixed oriented
smooth six-manifold.  At every admissible parameter the threefold is
outside Fujiki's class $\mathcal C$, has algebraic dimension one, and
its map to $\mathbb P^1$ is the algebraic reduction.  Both the Kodaira--Spencer class of the
family of pairs and the absolute Kodaira--Spencer class are nonzero at
every parameter.  On the entire admissible half-plane, the unmarked
isomorphism relation is translation by $(6/r)\mathbb Z$, and the
connected automorphism group of each member is $\C^*$.
Consequently every underlying smooth six-manifold in either family
supports uncountably many pairwise non-biholomorphic complex structures.
\end{theorem}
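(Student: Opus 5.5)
The proof splits into four parts: the family is a smooth deformation, algebraic dimension one and non-Kählerness, nonvanishing Kodaira--Spencer classes, and the isomorphism and automorphism classification.

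\textbf{Step 1: smooth triviality.} The construction glues three local models over $\Pone$: two logarithmic transforms and one periodic toroidal completion. The parameter $c$ enters only through the affine period of the Jacobi lift, that is, the translation part of the torus fibration over the punctured base. Letting $c$ vary over $\mathcal U_\kappa$, I would assemble everything into one complex manifold $\mathcal Y\to\mathcal U_\kappa$. The local models depend holomorphically on $c$, and the admissibility bound $\operatorname{Im}c<C_\kappa$ keeps the toroidal quotient properly discontinuous uniformly on compact subsets. The map $\mathcal Y\to\mathcal U_\kappa$ is then a proper holomorphic submersion, so Ehresmann's theorem makes it smoothly trivial. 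Since $\mathcal U_\kappa$ is contractible, the fibres are diffeomorphic to one fixed oriented six-manifold.

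\textbf{Step 2: algebraic dimension one, class $\mathcal C$, algebraic reduction.} For algebraic dimension one, the general fibre is a torus with no nonconstant meromorphic functions, because the period lattice coming from the Jacobi representation is non-algebraic for generic monodromy. Since $H^1(X;\Q)=0$ and the monodromy is infinite, there is also no relative meromorphic function along the fibres. Hence every meromorphic function is pulled back from $\Pone$, so $a(X)=1$ and $f$ is the algebraic reduction.

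For being outside class $\mathcal C$, I would argue by contradiction. A manifold in class $\mathcal C$ admits a Kähler modification, so its fibration by tori would carry fibrewise Kähler classes invariant under monodromy. Parabolic monodromy with rank-two nilpotent part is incompatible with such a class, because the torus fibres would have to vary as a polarizable, hence algebraic, family. An alternative route uses the corollary: in class $\mathcal C$ a suitable $(1,1)$ class restricts nontrivially to the general fibre, but $b_2$ is generated by the $r$ toroidal components, and these restrict trivially to the general fibre.

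\textbf{Step 3: Kodaira--Spencer classes.} I would compute the relative Kodaira--Spencer class over the punctured base as the derivative of the affine period. It is the image of $\partial_c$ in $H^1$ of the fibrewise tangent sheaf twisted by the monodromy, and it is nonzero because $c$ appears as a genuine translation in the affine lift. For the absolute class, I would use the exact sequence $0\to T_{X/\Pone}\to T_X\to f^*T_{\Pone}$. Vector fields on $X$ are $H^0(T_X)=\C$ and come from the $\C^*$ action, and they cannot absorb the class.

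\textbf{Step 4: isomorphism relation and automorphisms.} By Step 2, any biholomorphism $X_c\to X_{c'}$ preserves the algebraic reduction and hence covers an automorphism of $\Pone$. That automorphism fixes the three special points, and since their multiplicities $(4,6,1)$ or $(3,6,1)$ are distinct, it fixes each of them and is the identity. So the biholomorphism is fibre-preserving, and it lifts to an automorphism of the Jacobi data normalizing the monodromy. The affine parameter is then determined modulo the periods of the cusp normal form. The cusp lattice has index $r$ in its saturation, so the residual translation group is $(6/r)\Z$, where $6=\operatorname{lcm}(m_1,m_2)$ reflects the multiple fibres. Conversely, translating $c$ by $6/r$ is realized by an explicit fibrewise gauge change. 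The connected automorphism group is fibre-preserving and acts by translations commuting with the monodromy, and the invariant subspace is one-dimensional and compact mod lattice in only one direction, which gives $\C^*$. Uncountability follows because $\mathcal U_\kappa/(6/r)\Z$ is uncountable.

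The main obstacle is Step 4: showing that every fibre-preserving isomorphism extends compatibly across the toroidal fibre and the two logarithmic transforms, and that no extra identifications occur beyond $(6/r)\Z$. I would first handle this with the local periodic toroidal quotient theorem and a direct matching of the local gluing data.
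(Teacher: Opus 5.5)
Your overall architecture is the paper's: glue holomorphically in $c$ and apply Ehresmann, use the algebraic reduction to make isomorphisms fibre-preserving over the identity, then classify. Step 1 is fine, but Steps 2--4 each rest on a missing idea or a false claim.

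\textbf{Algebraic reduction and class $\mathcal C$.} You are missing the key ingredient. The simultaneous invariants of both monodromies in $H^2(F;\Z)$ form the rank-one lattice $\Z\widetilde q_{\kappa,r}$. At \emph{every} admissible $c$ this class is of type $(1,1)$ with Appell--Humbert signature $(1,1)$, because $\det M<0$ is exactly the admissibility condition $\mathcal D_\kappa<0$. Restrictions of global classes to smooth fibres are monodromy invariant. Translation-averaging turns an effective divisor into a nonzero semipositive invariant class, and turns a K\"ahler current with analytic singularities into a positive definite one. Neither kind of class lies in $\R\widetilde q_{\kappa,r}$.

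Your substitutes do not work. ``Non-algebraic for generic monodromy'' does not give $a(F_t)=0$: a non-algebraic two-torus may have $a=1$ and carry divisors. It also does not cover every parameter, and $H^1(X;\Q)=0$ says nothing about horizontal divisors. The claim that parabolic monodromy with rank-two logarithm excludes an invariant K\"ahler class is false. Projective degenerations of abelian surfaces with maximally unipotent monodromy carry invariant polarizations. The obstruction appears only after the elliptic monodromies cut the invariants down to an indefinite line; moreover, a real K\"ahler class is not a polarization. Your alternative route needs $H^2(X;\R)\to H^2(F;\R)$ to vanish, which you assert but do not prove.

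\textbf{Kodaira--Spencer classes.} The parameter $c$ is not a translation part. It enters the period matrix through the entry $r(\beta_{\kappa,0}+c)$, while the affine vectors $v_j$ are independent of $c$. This matters. Translation regluings over $\Pone$ with fixed local models inject into $H^2(\Pone,j_*\mathbb L)\cong\Z$, because $H^1(\Pone,\mathcal V_\kappa)=0$. So a continuous translation parameter would give a trivial family. The pair class is nonzero because, at a smooth fibre, $d\Pi/dc$ is not of the form $A\Pi$.

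For the absolute class, the relevant kernel is not controlled by $H^0(T_X)$. It is controlled by $H^0(X,f^*T_{\Pone})=H^0(\Pone,T_{\Pone})\cong\C^3$, since a pair deformation could be a pure reparametrization of the base. You need that the family is locally trivial as a map near critical points over three fixed critical values. This forces the vector field on $\Pone$ to vanish at three points, hence to be zero.

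\textbf{The period $(6/r)\Z$ and $\Aut^0$.} The explanation $6=\operatorname{lcm}(m_1,m_2)$ is wrong for Type B, where $\operatorname{lcm}(4,6)=12$. The actual derivation runs as follows.
\begin{enumerate}[label=(\roman*)]
\item The integral centralizer of the two monodromies is $\{\varepsilon I+nE_{41}\}$.
\item Comparing period matrices gives $R=\varepsilon I_2$ and $\varepsilon r(c-c')=n$.
\item Extension across the order-six root disc requires the affine-vector difference to lie in $N_2\Lambda=6\Z\gamma\oplus6\Z\delta$. This forces $\varepsilon=1$ and $6\mid n$. The order-four fibre forces only $n$ even, and the order-three fibre imposes nothing.
\end{enumerate}
For the converse, an ``explicit gauge change'' is not enough. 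You need the root-disc conjugacies and the cusp extension, where the twist matrix changes by an integral matrix. You also need the global patching criterion: the collar windings must vanish in $H^2(\Pone,j_*\mathbb L)\cong\Z$, which is detected by $\chi$. Finally, $H^1(\Pone,\mathcal V_\kappa)=0$ makes $H^1(\mathcal T^0)\to H^2(j_*\mathbb L)$ injective, so vanishing windings suffice.

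Likewise, $\Aut^0\simeq\C^*$ requires two facts. First, $H^0(T_X)\simeq H^0(\Pone,\mathcal V_\kappa)=\C$, using $\mathcal V_\kappa\simeq\mathcal O\oplus\mathcal O(-1)$ and extension of vertical fields across the special fibres. Second, $\Lambda^{A_1,A_2}=\Z\delta$ gives the kernel of the resulting flow.
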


The proof of algebraic dimension one uses the rank-one monodromy-invariant
part of the second cohomology of a smooth fibre.  Its primitive generator
has Appell--Humbert signature $(1,1)$ at every admissible parameter.
Every line bundle on the total space restricts to a multiple of this
class; such a class cannot support a nonzero effective divisor on a
torus.  Hence no divisor on the total space is horizontal, and all
meromorphic functions come from the base.  The same invariant-class
argument excludes a K\"ahler class on the total space.
Appendix~\ref{app:fibre-NS} gives an explicit Hodge-locus equation and
determines the N\'eron--Severi group at a suitably chosen fixed smooth
fibre for very general parameters.

\subsection*{Extensions}
Section~\ref{sec:extensions} proves that the invariant alternating line
and the integral centralizer of the Type B/C monodromies are unchanged
under every finite-index restriction.  Finite maps from arbitrary
compact curves, unramified over the three special values, therefore
give compact threefolds of algebraic dimension one outside Fujiki's
class.  Their Euler number is $2rd$, where $d$ is the degree of the
base change, and the affine parameter still gives uncountably many
complex structures on a fixed smooth manifold.
The same base change has a translation-gluing space of dimension
$2g+d-1$, with its integral period quotient kept explicit.
We also give a local periodic quotient theorem for complex tori of
any dimension and determine the mod-prime cohomology and additive
Bockstein spectral sequences of the original threefolds.

\subsection*{Organization}
Sections~\ref{sec:cores}--\ref{sec:global} develop the arithmetic,
period maps, local completions, and global gluing.  Section~\ref{sec:affine-moduli}
proves algebraic reduction, the parameter classification, and the
statements on deformations and automorphisms.  Section~\ref{sec:topology}
computes the fundamental group and integral homology, and
Section~\ref{sec:extensions} treats base change, translation twists,
and higher-dimensional local quotients.  The additive Bockstein
spectral sequences are computed at the end of that section.
Appendices~\ref{app:monodromy-data} and \ref{app:integral-mv-ledger}
record the lattice calculations and geometric attaching data.
Appendices~\ref{app:fibre-NS} and \ref{app:nearby-cycles} contain the
independent fibrewise N\'eron--Severi and nearby-cycle calculations.

\section{Finite monodromy and integral Jacobi lifts}\label{sec:cores}

This section contains the arithmetic input for the two constructions.  We
first classify the finite monodromy pairs, then formulate the lifting
problem as a central-extension obstruction, and finally specialize the
obstruction to the three possible monodromy types.  The explicit reductions are in Appendix~\ref{app:lift-calculations}.

\subsection{Finite-order monodromy pairs}

For $h\geq1$ put
\[
 U_h=\begin{pmatrix}1&h\\0&1\end{pmatrix}.
\]
The exact orders of noncentral finite-order elements of $SL_2(\Z)$ are
$3,4,6$, with traces $-1,0,1$, respectively.

\begin{theorem}[Classification by parabolic width]\label{thm:core-classification}
Let $B_1,B_2\in SL_2(\Z)$ be noncentral finite-order matrices, and suppose
that $B_1B_2$ is conjugate to $U_h$ for some $h\geq1$.  Up to simultaneous
conjugacy, reversal of the parabolic orientation, and interchange of the
two elliptic factors, precisely the following cases occur:
\begin{align*}
\textup{Type A }(3,4;h=1):\quad
&B_1=\begin{pmatrix}-1&1\\-1&0\end{pmatrix},
&&B_2=\begin{pmatrix}0&-1\\1&0\end{pmatrix};\\[1ex]
\textup{Type B }(4,6;h=1):\quad
&B_1=\begin{pmatrix}0&1\\-1&0\end{pmatrix},
&&B_2=\begin{pmatrix}0&-1\\1&1\end{pmatrix};\\[1ex]
\textup{Type C }(3,6;h=2):\quad
&B_1=\begin{pmatrix}-1&1\\-1&0\end{pmatrix},
&&B_2=\begin{pmatrix}0&-1\\1&1\end{pmatrix}.
\end{align*}
The products in the first two cases are $U_1$, and the product in the third
is $U_2$.  In particular, no width $h>2$ occurs.
\end{theorem}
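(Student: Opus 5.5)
The plan is to normalize the product first and then read off all constraints from a single trace identity. Since $B_1B_2$ is conjugate to $U_h$, after a simultaneous conjugation in $SL_2(\Z)$ we may assume $B_1B_2=U_h$ exactly. Then $B_2=B_1^{-1}U_h$. Write
\[
B_1^{-1}=\begin{pmatrix}a&b\\c&d\end{pmatrix},\qquad ad-bc=1,\qquad a+d=t_1,
\]
where $t_1=\operatorname{tr}B_1\in\{-1,0,1\}$. A direct multiplication gives
\[
B_1^{-1}U_h=\begin{pmatrix}a&ah+b\\c&ch+d\end{pmatrix},
\]
so the trace condition $t_2=\operatorname{tr}B_2\in\{-1,0,1\}$ becomes the single linear equation
\[
ch=t_2-t_1 .
\]

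\textbf{Step 1: consequences of the trace equation.} If $c=0$, then $B_1$ is upper triangular with $ad=1$. Its trace is then $\pm2$, so $B_1$ is either central or of infinite order, which is excluded. Hence $c\neq0$, and in particular $t_1\neq t_2$. Since $|t_2-t_1|\le 2$ and $|c|\ge1$, we get $h\le2$. Moreover $h=2$ forces $|c|=1$ and $\{t_1,t_2\}=\{-1,1\}$, which is the order pair $(3,6)$.

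For $h=1$ the possibilities are $|c|=1$ with adjacent traces, giving orders $(3,4)$ or $(4,6)$, or $|c|=2$ with traces $\{-1,1\}$. I would exclude the last case by parity. With $c=\pm2$, the relation $ad-bc=1$ forces $ad$ odd, so $a$ and $d$ are both odd. But then $a+d$ is even, contradicting $a+d=t_1=\pm1$.

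\textbf{Step 2: normal forms.} The residual freedom is conjugation by the centralizer $\pm U_n$ of $U_h$. This replaces $(a,d)$ by $(a+nc,\,d-nc)$ and leaves $c$ fixed. When $|c|=1$ we can therefore shift $a$ to any prescribed value, say the value matching the listed $B_1$. Then $d=t_1-a$ is determined, and $b$ is determined by $ad-bc=1$. So once the traces, $h$ and the sign of $c$ are fixed, $B_1$ is unique up to the allowed conjugacy, and $B_2$ is determined.

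The sign of $c$ is $\operatorname{sgn}(t_2-t_1)$. Interchanging the two elliptic points, or reversing the parabolic orientation (passing to $B_2^{-1}B_1^{-1}=U_{-h}$ and renormalizing), flips this sign and swaps the roles of the two traces. Combining these operations, each unordered order pair $(3,4)$, $(4,6)$, $(3,6)$ gives exactly one class. As a last check, I would multiply the displayed matrices and confirm that the products are $U_1$, $U_1$, $U_2$ and that the traces are as claimed. This also shows that all three cases actually occur.

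\textbf{Main obstacle.} The arithmetic itself is immediate; the hard part is the bookkeeping of the equivalence relation. One must check that interchange and orientation reversal act on the data $(t_1,t_2,\operatorname{sgn}c)$ exactly as claimed, and that the sign ambiguity $\pm U_n$ in the centralizer does not alter traces (it does not, since conjugation by $-I$ is trivial). One must also be explicit about which conjugation realizes the renormalization $U_{-h}\mapsto U_h$ under orientation reversal, since $U_{-h}$ and $U_h$ are not conjugate in $SL_2(\Z)$; I would take this step exactly as the reversal convention fixed in the introduction prescribes. I expect to spend most care on this step, so that the three listed representatives match the normalized matrices literally rather than only up to an unspecified equivalence.
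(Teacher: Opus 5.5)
Your argument is correct and is essentially the paper's proof. The paper parametrizes $B_2$ rather than $B_1^{-1}$ and obtains the same trace identity $t_1=t_2-hc$; it then rules out $c=0$ and the $h=1$, trace-gap-two case by the same parity argument, and normalizes $a$ by conjugating with the centralizer of $U_h$ (your worry about orientation reversal is unnecessary here, since interchanging the factors already swaps the ordered traces and all three listed representatives have $\operatorname{tr}B_1<\operatorname{tr}B_2$).
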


\begin{proof}
Conjugate the product to $U_h$ and write
\[
 t_i=\operatorname{tr}(B_i)\in\{-1,0,1\},
 \qquad
 B_2=\begin{pmatrix}a&b\\c&t_2-a\end{pmatrix}.
\]
Since $B_1=U_hB_2^{-1}$,
\begin{equation}\label{eq:width-trace}
 t_1=t_2-hc.
\end{equation}
The entry $c$ is nonzero, since otherwise $B_2$ would be an integral upper
triangular finite-order matrix and hence central.  Thus
$h\leq |t_2-t_1|\leq2$.

For $h=1$, a trace gap of two would give, after the stated symmetries,
$(t_1,t_2,c)=(-1,1,2)$.  The determinant equation
$a(1-a)-2b=1$ is impossible modulo two.  Hence the trace gap is one.  We may
take $c=1$, and the determinant equation gives
$b=a(t_2-a)-1$.  Conjugation by $U_k$, which centralizes $U_1$, replaces
$a$ by $a+k$; choosing $k=-a$ gives Types A and B.

For $h=2$, equation~\eqref{eq:width-trace} forces
$(t_1,t_2,c)=(-1,1,1)$ up to the stated symmetries.  The same determinant
calculation and conjugation by the centralizer of $U_2$ give Type C.  The
three products are then checked directly.
\end{proof}

In particular, a primitive unipotent product occurs only in Types A and B.

\subsection{The central-extension obstruction}

Let $(W,\omega)$ be a free abelian group of finite rank with an integral
alternating form.  Write
\[
 \mathcal A(W,\omega)=W\rtimes\operatorname{Sp}(W,\omega)
\]
for the affine symplectic group and define the integral Jacobi group by
\[
 \mathcal J(W,\omega)
 =\operatorname{Sp}(W,\omega)\times W\times\Z,
\]
with multiplication
\begin{equation}\label{eq:abstract-jacobi-law}
 (B,u,z)(B',u',z')
 =\bigl(BB',u+Bu',z+z'+\omega(u,Bu')\bigr).
\end{equation}
Thus
\begin{equation}
 1\longrightarrow\Z\longrightarrow
 \mathcal J(W,\omega)\longrightarrow
 \mathcal A(W,\omega)\longrightarrow1
\end{equation}
is a central extension.

Let $\rho:\Gamma\to\operatorname{Sp}(W,\omega)$ be a representation.  A
one-cocycle $d\in Z^1(\Gamma,W_\rho)$ defines the affine representation
$g\mapsto(\rho(g),d(g))$.

\begin{theorem}[Central obstruction for affine symplectic monodromy]
\label{thm:general-jacobi-obstruction}
For $d\in Z^1(\Gamma,W_\rho)$ put
\begin{equation}
 \Omega_d(g,h)=\omega\bigl(d(g),\rho(g)d(h)\bigr).
\end{equation}
Then $\Omega_d$ is a normalized integral two-cocycle and its cohomology
class depends only on $[d]\in H^1(\Gamma,W_\rho)$.  The affine
representation lifts to $\mathcal J(W,\omega)$ if and only if
\[
 \mathfrak o_\omega([d]):=[\Omega_d]=0
 \quad\text{in }H^2(\Gamma,\Z).
\]
When the obstruction vanishes, the set of central lifts is a torsor under
$H^1(\Gamma,\Z)$.  Moreover $\mathfrak o_\omega$ is quadratic, with
polarization represented by
\begin{align}
 \Omega_{d,e}(g,h)
 &=\omega\bigl(d(g),\rho(g)e(h)\bigr)\notag\\
 &\quad+\omega\bigl(e(g),\rho(g)d(h)\bigr).
\end{align}
\end{theorem}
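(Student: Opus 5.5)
The argument is standard group-cohomology bookkeeping for central extensions; every step comes down to expanding the multiplication law \eqref{eq:abstract-jacobi-law}. I first check that the group is well defined. The law is associative because $\omega$ is bilinear and $\operatorname{Sp}$-invariant. The triple-product computation reduces to $\omega(u,Bu')+\omega(u+Bu',BB'u'')=\omega(u',B'u'')+\omega(u,B(u'+B'u''))$, and this holds by invariance of $\omega$ under $B$. So the extension is central with kernel $\{(I,0,z)\}$.

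\textbf{Cocycle property.} For the cocycle property of $\Omega_d$, I will use $d(gh)=d(g)+\rho(g)d(h)$ and compute $\delta\Omega_d(g,h,k)=\Omega_d(h,k)-\Omega_d(gh,k)+\Omega_d(g,hk)-\Omega_d(g,h)$. Substituting gives $\Omega_d(gh,k)=\omega(d(g),\rho(gh)d(k))+\omega(\rho(g)d(h),\rho(g)\rho(h)d(k))$ and $\Omega_d(g,hk)=\omega(d(g),\rho(g)d(h))+\omega(d(g),\rho(g)\rho(h)d(k))$. Invariance of $\omega$ turns the second term of the first expression into $\Omega_d(h,k)$, and then everything cancels. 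Normalization follows from $d(1)=0$, and integrality is clear.

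\textbf{Dependence on the class of $d$.} For a coboundary change $d'(g)=d(g)+\rho(g)w-w$, I expand $\Omega_{d'}-\Omega_d$ bilinearly. I expect the result to be $\delta\beta$ for the one-cochain $\beta(g)=\omega(w,d(g))$, possibly plus a constant multiple of $\omega(w,\rho(g)w)$-type terms, which are themselves coboundaries of $g\mapsto\omega(w,\rho(g)w)$. This is the one place where the bookkeeping needs care, and I expect it to be the main obstacle. I will first try the candidate $\beta(g)=\omega(d(g),w)+\omega(w,\rho(g)w)$ (up to sign) and adjust the coefficients by matching terms in the expansion.

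A cleaner alternative is the following: conjugating the lift by the element $(I,w,0)\in\mathcal J$ realizes the change $d\mapsto d'$, and conjugation by a group element changes the induced cocycle only by a coboundary. I will likely use this conceptual argument, with the explicit $\beta$ read off from it.

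\textbf{Lifting criterion.} A lift has the form $g\mapsto(\rho(g),d(g),\phi(g))$. By \eqref{eq:abstract-jacobi-law}, this map is a homomorphism exactly when $\phi(gh)=\phi(g)+\phi(h)+\Omega_d(g,h)$, that is, when $\Omega_d=-\delta\phi$ (with the sign fixed by convention). So a lift exists if and only if $[\Omega_d]=0$. Two lifts differ by $\phi-\phi'$ satisfying $\delta(\phi-\phi')=0$, which means $\phi-\phi'\in\operatorname{Hom}(\Gamma,\Z)=H^1(\Gamma,\Z)$. Conversely, adding a homomorphism to $\phi$ gives another lift, so the set of lifts is a torsor under $H^1(\Gamma,\Z)$.

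\textbf{Quadraticity.} The map $d\mapsto\Omega_d$ is visibly a quadratic function of $d$ built from the bilinear pairing $(d,e)\mapsto\omega(d(g),\rho(g)e(h))$. Hence $\Omega_{d+e}-\Omega_d-\Omega_e=\Omega_{d,e}$, and $\Omega_{nd}=n^2\Omega_d$. It remains to check that $\Omega_{d,e}$ is itself a cocycle whose class depends only on $[d]$ and $[e]$. This follows by applying the cocycle and class-invariance statements above to $d+e$, $d$, and $e$ and subtracting. Therefore $\mathfrak o_\omega$ descends to a quadratic map $H^1(\Gamma,W_\rho)\to H^2(\Gamma,\Z)$ with polarization $[\Omega_{d,e}]$.
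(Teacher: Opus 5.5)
Your proposal is correct and follows the paper's proof: the cocycle identity comes from $\rho$-invariance of $\omega$, the class is independent of the representative because $d$ can be changed by conjugating with a translation, a lift exists exactly when $\delta z=-\Omega_d$ has a solution, any two lifts differ by an element of $\operatorname{Hom}(\Gamma,\Z)=H^1(\Gamma,\Z)$, and the polarization comes from expanding $\Omega_{d+e}$. In the middle step, use the conjugation route and apply the conjugation by a translation $(I,\pm w,0)$ to the set-theoretic section $g\mapsto(\rho(g),d(g),0)$, since a lift need not exist; also drop your guessed $\beta$, which is missing a term: for $d'(g)=d(g)+\rho(g)w-w$ one has $\Omega_{d'}-\Omega_d=\delta\beta$ with $\beta(g)=\omega(w,d(g))-\omega(d(g),\rho(g)w)+\omega(w,\rho(g)w)$.
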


\begin{proof}
The cocycle identity follows from the cocycle identity for $d$ and the
$\rho$-invariance of $\omega$.  Replacing $d$ by a cohomologous cocycle
conjugates the affine representation by a translation, so the pulled-back
central extension has the same class.  A lift has the form
$(\rho(g),d(g),z(g))$, and~\eqref{eq:abstract-jacobi-law} says precisely
that $\delta z=-\Omega_d$.  The final assertion follows by expanding
$\Omega_{d+e}$.  For the central-extension formalism, see \cite[Chapter~IV]{Brown}.
\end{proof}

\subsection{Rank-four lifts and the square-zero condition}\label{sec:lifts}

Let $W=\Z^2$ and
\[
 J_s=s\begin{pmatrix}0&1\\-1&0\end{pmatrix},
 \qquad s\in\Z\setminus\{0\}.
\]
On $V=\Z\gamma\oplus W\oplus\Z\delta$ consider
\[
 Q_s=
 \begin{pmatrix}
 0&0&1\\
 0&J_s&0\\
 -1&0&0
 \end{pmatrix}.
\]
The $Q_s$-isometries preserving the indicated isotropic flag and acting
trivially on its two rank-one quotients are
\[
 T(B,d,z)=
 \begin{pmatrix}
 1&d^{\mathsf T}J_sB&z\\
 0&B&d\\
 0&0&1
 \end{pmatrix},
 \qquad B\in SL_2(\Z),\ d\in\Z^2,\ z\in\Z,
\]
with multiplication
\begin{equation}\label{eq:jacobi-law}
 T(B,d,z)T(B',d',z')
 =T\bigl(BB',d+Bd',z+z'+d^{\mathsf T}J_sBd'\bigr).
\end{equation}
This is the rank-four matrix realization of the preceding Jacobi group.

For $\Gamma_{m_1,m_2}=C_{m_1}*C_{m_2}$, let $B_i=\rho(g_i)$ be the two
elliptic monodromies.  Since $B_i$ has no fixed vector, a cocycle is
specified by $d_i=d(g_i)\in W$, modulo
\begin{equation}
 (d_1,d_2)\longmapsto
 \bigl(d_1+(I-B_1)u,d_2+(I-B_2)u\bigr).
\end{equation}
The standard identification
\begin{equation}\label{eq:H2-free-product}
 H^2(\Gamma_{m_1,m_2},\Z)
 \cong\Z/m_1\oplus\Z/m_2
\end{equation}
records the restrictions to the two cyclic factors.  Moreover
$H^1(\Gamma_{m_1,m_2},\Z)=0$, so a central lift, when it exists, is
unique after the elliptic central coordinates have been fixed.  For
$B^m=I$ set
\begin{equation}
 \kappa_m(B,u)
 =\sum_{0\leq r<t<m}\omega(B^ru,B^tu).
\end{equation}

\begin{lemma}[Elliptic relation defects]\label{lem:local-obstruction}
Under~\eqref{eq:H2-free-product},
\[
 \mathfrak o_\omega([d_1,d_2])
 =\bigl(\kappa_{m_1}(B_1,d_1)\bmod m_1,
        \kappa_{m_2}(B_2,d_2)\bmod m_2\bigr).
\]
Hence a central lift exists exactly when
$m_i\mid\kappa_{m_i}(B_i,d_i)$ for $i=1,2$, and then
\[
 z_i=-\frac{\kappa_{m_i}(B_i,d_i)}{m_i}.
\]
\end{lemma}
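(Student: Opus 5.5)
The plan is to compute the obstruction class by restricting to the cyclic factors, and then to solve each cyclic relation directly in the Jacobi group. The identification \eqref{eq:H2-free-product} is induced by the restriction maps $H^2(\Gamma_{m_1,m_2},\Z)\to H^2(C_{m_i},\Z)\cong\Z/m_i$. This is a consequence of the Mayer--Vietoris sequence for a free product, using $H^1(C_{m_i},\Z)=0$. It therefore suffices to compute $\mathfrak o_\omega([d])$ restricted to $C_{m_i}=\langle g_i\rangle$, which is the class of $\Omega_d$ restricted to the cyclic group, with cocycle determined by $d(g_i^r)=(I+B_i+\dots+B_i^{r-1})d_i$.

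For a cyclic group $C_m=\langle g\rangle$, the class of a central extension $1\to\Z\to E\to C_m\to1$ in $H^2(C_m,\Z)\cong\Z/m$ is detected as follows. Choose any lift $\tilde g$ of $g$. Then $\tilde g^m$ lies in the central $\Z$, and its value modulo $m$ is the class. Changing the lift by $z$ changes $\tilde g^m$ by $mz$, so the residue is well defined, and the extension splits exactly when some lift has $\tilde g^m=1$. I will apply this to the pullback of $\mathcal J(W,\omega)$ along $C_{m_i}\to\mathcal A(W,\omega)$, $g_i\mapsto(B_i,d_i)$. By the proof of Theorem~\ref{thm:general-jacobi-obstruction}, this pullback has class $[\Omega_d]$ up to a sign convention; I will fix the sign by the normalization $\delta z=-\Omega_d$ used there. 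The sign is harmless, because the lemma only concerns vanishing together with the explicit value of $z_i$, which I will check directly.

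The main computation is $(B,u,0)^m$ in $\mathcal J$. By induction using \eqref{eq:abstract-jacobi-law},
\[
(B,u,0)^n=\Bigl(B^n,\ \textstyle\sum_{r<n}B^ru,\ \sum_{t<n}\omega\bigl(\sum_{r<t}B^ru,\,B^tu\bigr)\Bigr).
\]
The inductive step multiplies $(B^{n},\sum_{r<n}B^ru,z_n)$ by $(B,u,0)$. This gives a central increment $\omega(\sum_{r<n}B^ru,B^nu)$, using $\omega(x,B^n u)$ directly.

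Now take $n=m$. We have $B^m=I$, and $\sum_{r<m}B^r=0$ because $B$ has no fixed vector and $(I-B)\sum_r B^r=I-B^m=0$. So $(B,u,0)^m=(I,0,\kappa_m(B,u))$. Replacing the lift by $(B,u,z)$ multiplies this by $(I,0,mz)$, since $(I,0,z)$ is central. Hence a lift of order $m$ exists iff $m\mid\kappa_m(B,u)$, and the unique such lift has $z=-\kappa_m/m$. Comparing with the residue description gives the stated formula for $\mathfrak o_\omega$.

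For the global statement: a homomorphism out of $C_{m_1}*C_{m_2}$ is the same as a choice of elements of orders dividing $m_1$ and $m_2$. So a central lift exists iff both local conditions hold, and then it is given by $g_i\mapsto(B_i,d_i,z_i)$. This matches the uniqueness noted earlier, since $H^1=0$.

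The only delicate point is bookkeeping. I need to confirm that the sign and ordering of $\omega(B^ru,B^tu)$ with $r<t$ match the convention $\omega(u,Bu')$ in the group law. I also need to check that the resulting residue agrees with $[\Omega_d]$ under \eqref{eq:H2-free-product} rather than with its negative. I expect this to be a direct check against $\delta z=-\Omega_d$, and it does not affect the divisibility criterion.
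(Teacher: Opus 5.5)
Your proposal is correct and follows the paper's proof: compute $(B,u,0)^m=(I,N_Bu,\kappa_m(B,u))$ from the group law, use $N_B=0$ because $B$ has no fixed vector, note that changing the central coordinate by $z$ shifts the result by $mz$, and identify $H^2(\Gamma_{m_1,m_2},\Z)$ with the sum over the two cyclic factors via the free-product (Mayer--Vietoris, equivalently wedge) decomposition. One small correction: that $H^2$ splitting comes from the vanishing of the positive-degree cohomology of the trivial group in the free-product sequence. The vanishing $H^1(C_{m_i},\Z)=0$ that you cite is what gives $H^1(\Gamma_{m_1,m_2},\Z)=0$ instead.
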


\begin{proof}
Repeated use of~\eqref{eq:abstract-jacobi-law} gives
\[
 (B,u,0)^m=(I,N_Bu,\kappa_m(B,u)),\qquad
 N_B=I+B+\cdots+B^{m-1}.
\]
For each finite monodromy used here, $B$ has no eigenvalue $1$, so
$N_B=0$.  Adding central coordinate $z$ changes
the final coordinate by $mz$.  The wedge decomposition of the classifying
space of a free product gives~\eqref{eq:H2-free-product}; see
\cite[Chapter VII]{Brown}.
\end{proof}

Now suppose $B_1B_2=U_h$ and put
$L_\infty=\ker(U_h-I)$.  The cusp residue of a cocycle is
\begin{equation}
 \operatorname{res}_\infty([d])
 =d(g_1g_2)\bmod L_\infty\in W/L_\infty.
\end{equation}
It is well defined on cohomology because a coboundary changes
$d(g_1g_2)$ by an element of $(I-U_h)W\subset L_\infty$.  Let
$H^1_{\square}(\rho,W)$ denote its kernel.

\begin{lemma}[Square-zero monodromy and affine cohomology]\label{lem:square-zero-cohomology}
A Jacobi lift has cusp monodromy $P$ with $(P-I)^2=0$ if and only if
its affine class belongs to $H^1_{\square}(\rho,W)$.  For each of Types A,
B, and C,
\[
 H^1(\Gamma_{m_1,m_2},W_\rho)\cong\Z^2,
 \qquad
 H^1_{\square}(\rho,W)\cong\Z.
\]
If $d_1=(x,y)^{\mathsf T}$ and $d_2=(a,b)^{\mathsf T}$, the square-zero
condition is $a=y$, and primitive coordinates on the three rank-one
lattices are
\[
 k_A=2x-y+3b,
 \qquad
 k_B=x+y+2b,
 \qquad
 k_C=x+y+3b.
\]
\end{lemma}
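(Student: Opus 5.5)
The plan is to reduce everything to explicit linear algebra on the cocycle coordinates $(d_1,d_2)\in W\oplus W$. The first step treats the square-zero criterion. For a Jacobi lift put $P=T(B_1,d_1,z_1)T(B_2,d_2,z_2)$. By~\eqref{eq:jacobi-law},
\[
P=T(U_h,D,z),\qquad D=d(g_1g_2)=d_1+B_1d_2 ,
\]
for some $z\in\Z$. Then $N=P-I$ is strictly block upper triangular, with blocks
\[
N=\begin{pmatrix}0&D^{\mathsf T}J_sU_h&z\\0&U_h-I&D\\0&0&0\end{pmatrix}.
\]
I will compute $N^2$ blockwise. Since $(U_h-I)^2=0$, the only possibly nonzero blocks of $N^2$ are $(U_h-I)D$, $D^{\mathsf T}J_sU_h(U_h-I)$ and $D^{\mathsf T}J_sU_hD$. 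The first vanishes exactly when the second coordinate $D_2$ of $D$ is zero, that is, when $D\in L_\infty=\Z e_1$, because $U_h-I=hE_{12}$ with $h\neq0$. I then check that this condition forces the other two blocks to vanish. Indeed $J_s(U_h-I)=-sh\,E_{22}$, which is killed on the left by $D^{\mathsf T}=(D_1,0)$. Also $U_hD=D$ for $D\in L_\infty$, so $D^{\mathsf T}J_sU_hD=D^{\mathsf T}J_sD=0$ by alternation. Hence $(P-I)^2=0$ if and only if $\operatorname{res}_\infty([d])=0$. This is independent of $z_1,z_2$ and of the coboundary representative, as already noted before the lemma.

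The second step computes $H^1$. Because neither $B_i$ has eigenvalue $1$ and $N_{B_i}=0$, the cocycle condition imposes no constraint on $d_i$. Hence $Z^1\cong\Z^4$ with coordinates $(x,y,a,b)$, and $B^1$ is the image of
\[
u\longmapsto\bigl((I-B_1)u,(I-B_2)u\bigr),
\]
a $4\times2$ integer matrix $M$. This matrix has rank two, since each $I-B_i$ is invertible over $\Q$. For each of Types A, B, C I will write $M$ out from the matrices of Theorem~\ref{thm:core-classification} and check that the $2\times2$ minors have gcd $1$. Equivalently, $M$ has Smith form $\diag(1,1)$, so $\coker M\cong\Z^2$ is torsion-free.

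The third step identifies the residue. In all three types the second row of $B_1$ is $(-1,0)$, so the second coordinate of $B_1d_2$ is $-a$. This gives $D_2=y-a$ uniformly, and the square-zero condition is $a=y$ as claimed. The residue map $H^1\to W/L_\infty\cong\Z$ is $[d]\mapsto y-a$, which is surjective. So its kernel is a saturated rank-one sublattice of $H^1\cong\Z^2$, that is, $H^1_\square\cong\Z$.

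The last step, and the one I expect to require the most care, is to confirm that $k_A,k_B,k_C$ are primitive coordinates on $H^1_\square$. On the hyperplane $\{a=y\}$, with coordinates $(x,y,b)$, I will verify two facts. First, each listed form vanishes on $B^1$; since $B^1\subset\{a=y\}$ automatically, this amounts to evaluating $k$ on the two columns of $M$. Second, $k$ takes the value $1$ on some cocycle, for instance $(x,y,b)=(1,0,0)$ gives $k_B=k_C=1$, and $(x,y,b)=(0,-1,0)$ gives $k_A=1$. Then $\{a=y\}/B^1$ is a rank-one quotient that is torsion-free by the previous step, and $k$ is a well-defined surjection from it to $\Z$, hence an isomorphism. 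The main risk is an arithmetic slip in the coboundary columns; I would double-check these by confirming that $k$ vanishes on $(I-B_1)u$ and $(I-B_2)u$ for $u=e_1$ and $u=e_2$ separately in each type.
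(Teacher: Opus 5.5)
Your proposal is correct and takes essentially the same route as the paper. You compute $(P-I)^2$ blockwise and show it vanishes exactly when the second coordinate $y-a$ of $d(g_1g_2)=d_1+B_1d_2$ is zero, use the Smith form $\operatorname{diag}(1,1)$ of the coboundary map, and test each $k_\rho$ against the two coboundary columns, which are exactly the paper's move vectors. In the $(1,2)$ block you should also note $U_h(U_h-I)=U_h-I$; this is what justifies replacing $J_sU_h(U_h-I)$ by $J_s(U_h-I)=-sh\,E_{22}$.
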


\begin{proof}
In a basis with $U_h=\begin{psmallmatrix}1&h\\0&1\end{psmallmatrix}$,
write $d(g_1g_2)=(p,q)$.  A block multiplication gives
\[
 \bigl(T(U_h,(p,q),z)-I\bigr)^2
 =\begin{pmatrix}
 0&0&-hsq&-hsq^2\\
 0&0&0&hq\\
 0&0&0&0\\
 0&0&0&0
 \end{pmatrix},
\]
so the square vanishes exactly when the residue is zero.  For the three
monodromy pairs, the coboundary map has Smith form
$\operatorname{diag}(1,1)$, while the residue is the primitive coordinate
$a-y$.  On its kernel, the conjugacy moves and primitive quotient
coordinates are
\[
\begin{array}{c|c|c}
A&(2,1,-1),\ (-1,1,1)&2x-y+3b\\
B&(1,1,-1),\ (-1,1,0)&x+y+2b\\
C&(2,1,-1),\ (-1,1,0)&x+y+3b.
\end{array}
\]
In each row the two move vectors generate the saturated kernel of the
listed functional.
\end{proof}

Define
\begin{equation}
 \nu_\rho=\left|\det(I-B_1)\det(I-B_2)\right|
\end{equation}
and, on the rank-one square-zero lattice,
\begin{equation}
 Q_\rho(k)=\frac{k^2}{\nu_\rho}.
\end{equation}
The data needed below are summarized by
\[
\begin{array}{c|c|c|c|c}
\text{type}&(m_1,m_2;h)&(z_1,z_2)&k_\rho&\nu_\rho\\ \hline
A&(3,4;1)&\left(\frac{s}{3}(x^2-xy+y^2),-\frac{s}{2}(a^2+b^2)\right)&2x-y+3b&6\\[1ex]
B&(4,6;1)&\left(\frac{s}{2}(x^2+y^2),-s(a^2+ab+b^2)\right)&x+y+2b&2\\[1ex]
C&(3,6;2)&\left(\frac{s}{3}(x^2-xy+y^2),-s(a^2+ab+b^2)\right)&x+y+3b&3.
\end{array}
\]

\begin{theorem}[Integral Jacobi lifts and the cusp index]
\label{thm:obstruction-discriminant}

For each row of the table, the two elliptic relations hold exactly when the
displayed central coordinates are integral.  On the square-zero locus
$a=y$, the coordinate $k_\rho$, up to sign, is the complete conjugacy
invariant under the integral Heisenberg subgroup and the central Levi
involution.  The central obstruction is
\[
 \begin{array}{c|c}
 A&(-sk^2\bmod3,\;2sk^2\bmod4)\\
 B&(-2sk^2\bmod4,\;0\bmod6)\\
 C&(-sk^2\bmod3,\;0\bmod6),
 \end{array}
\]
so a lift exists if and only if
\[
 sQ_\rho(k)\in\Z,
 \qquad\text{equivalently}\qquad
 \nu_\rho\mid sk^2.
\]
For $k\neq0$, write $P=T_1T_2$, $N=P-I=\log P$, and
$\Lambda_N=V\cap\operatorname{im}(N\otimes\Q)=\ker(N:V\to V)$.
Then $\Lambda_N$ has rank two and the monodromy image has index
\begin{equation}\label{eq:index-as-Q}
 d_{\mathrm{cusp}}=[\Lambda_N:N(V)]
 =\left|sQ_\rho(k)\right|
 =\frac{|s|k^2}{\nu_\rho}.
\end{equation}
For the periodic $A_2$ compactifications of Section~\ref{sec:cusp},
this saturation index equals the number of components of the normalized
central fibre; see Remark~\ref{rem:model-dependent-components}.
\end{theorem}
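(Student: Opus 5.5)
The plan is to reduce everything to Lemma~\ref{lem:local-obstruction} and Lemma~\ref{lem:square-zero-cohomology}, and then finish with explicit matrix computations for each of Types A, B, C. I expect the index formula to be the main obstacle.

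\textbf{Central coordinates.} For each elliptic generator I evaluate $\kappa_{m}(B_i,d_i)$ with the form $\omega=J_s$. Since $\omega(B^ru,B^tu)$ depends only on $t-r$ (by $B$-invariance), the sum collapses to $\sum_{j=1}^{m-1}(m-j)\,\omega(u,B^ju)$. This is a binary quadratic form in the entries of $d_i$, proportional to $s$. Dividing by $m_i$ gives the displayed $z_i$, and Lemma~\ref{lem:local-obstruction} shows that the elliptic relation holds exactly when this $z_i$ is integral. This settles the first assertion row by row. For instance, for $B_1=\begin{psmallmatrix}0&1\\-1&0\end{psmallmatrix}$ one gets $\kappa_4=-2s(x^2+y^2)$.

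\textbf{Obstruction and invariance.} I next restrict to $a=y$ and rewrite the pair $(\kappa_{m_1},\kappa_{m_2})$ modulo $(m_1,m_2)$ in terms of $k_\rho$. The approach is to check that each residue, viewed as a function on the square-zero lattice, is invariant under the two move vectors of Lemma~\ref{lem:square-zero-cohomology}. It then factors through $k_\rho$, and evaluating it at a single representative (say $x=1$, all other coordinates $0$) gives the displayed formula. The invariance itself follows from Theorem~\ref{thm:general-jacobi-obstruction}: the class is constant along coboundary orbits, and the two moves generate those orbits on the kernel. The same observation shows that $k_\rho$ up to sign is a complete invariant. Heisenberg conjugation realizes precisely the coboundaries, and the central Levi involution $-I$ sends $d\mapsto -d$. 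Because $H^1_\square\cong\Z$ with primitive coordinate $k_\rho$, nothing finer survives. Finally I compare the obstruction with $\nu_\rho\mid sk^2$, case by case:
\begin{itemize}[label={}]
\item Type B, $\nu=2$: $4\mid 2sk^2\iff 2\mid sk^2$.
\item Type C, $\nu=3$: the condition is $3\mid sk^2$.
\item Type A, $\nu=6$: I combine the conditions modulo $3$ and modulo $4$; the latter says $2\mid sk^2$.
\end{itemize}

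\textbf{Index.} For a lifted $P=T(U_h,(p,0),z)$ with $q=0$, I compute $N=P-I$ explicitly. Its nonzero entries are $h$ in the $W$-block, together with $p$, $z$, and $sh\cdot(\ast)$ and $sp$ terms in the first row. The kernel $\Lambda_N$ is rank two, namely $\{(\ast,\ast,0,0)\}$ after the $W$-block coordinate condition. The image $N(V)$ is spanned by the columns $(sph\text{-type},h,0,0)$-like vectors and $(z,p,0,0)$, so the index is the absolute value of a $2\times2$ determinant. The determinant has the form $hz-sp^2\cdot(\text{const})$.

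I expect this to be the main obstacle: I must express $p$ and $z$ (the product's central coordinate, obtained from \eqref{eq:jacobi-law} as $z_1+z_2+d_1^{\mathsf T}J_sB_1d_2$) in terms of $k$. The plan is to use the invariance from the previous step to evaluate at a convenient representative of each class. There I would check that the determinant equals $sk^2/\nu_\rho$ up to sign, with the factor $h=2$ in Type C absorbed correctly, and then appeal to quadratic homogeneity in $k$ to extend to all $k$. I would also verify that $k\neq0$ forces the rank to be exactly two. If the determinant does not come out invariant automatically, I would instead conjugate $P$ into a normal form by the integral Heisenberg group before computing.
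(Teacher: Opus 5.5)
Your proposal is correct and follows essentially the paper's proof. Lemma~\ref{lem:local-obstruction} gives the central coordinates and the obstruction table, and Lemma~\ref{lem:square-zero-cohomology} gives completeness of $\pm k_\rho$. The index is $\bigl|\det\begin{psmallmatrix}sp&z\\h&p\end{psmallmatrix}\bigr|=|sp^2-hz|$. The paper expands this determinant as a polynomial in $(x,y,b)$ in Appendix~\ref{app:lift-calculations}, whereas you evaluate it at one class and use conjugation invariance plus quadratic homogeneity.

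The one slip is in Type A: your sample point $x=1$ has $k_A=2$, which loses the residue modulo $4$. Use a $k_A=\pm1$ representative such as $d_1=(1,1)$, $d_2=(1,0)$, where $p=0$ and $z=-s/6$. You also need to invoke $\kappa_m(B,\lambda u)=\lambda^2\kappa_m(B,u)$ explicitly in the obstruction step, not only in the index step.
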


\begin{proof}
The elliptic relations and the obstruction table follow from
Lemma~\ref{lem:local-obstruction}.  On the square-zero locus, the quotient
to the saturated image lattice is represented by a $2\times2$ integral matrix.
Appendix~\ref{app:lift-calculations} lists these three matrices and computes
their determinants as
\[
 \frac{s}{6}k_A^2,
 \qquad
 \frac{s}{2}k_B^2,
 \qquad
 \frac{s}{3}k_C^2.
\]
Lemma~\ref{lem:square-zero-cohomology} computes the Heisenberg move
lattices and shows that their quotients are generated by $k_\rho$.  This proves completeness of the invariant and
formula~\eqref{eq:index-as-Q}.
\end{proof}

\begin{proposition}[The zero affine class]\label{prop:zero-affine-index}
For any of the three rank-four Jacobi types, every lift on the square-zero
locus with $k_\rho=0$ is integrally Heisenberg-conjugate to the lift with
$d_1=d_2=0$ and $z_1=z_2=0$.  Its cusp logarithm has rank one, and
\begin{equation}\label{eq:rank-one-saturation-index}
 [\operatorname{sat}N(V):N(V)]=h_\rho.
\end{equation}
Here $\operatorname{sat}N(V)$ has rank one whereas $\ker N$ has rank
three.  Thus \eqref{eq:rank-one-saturation-index} is not the rank-two
cusp parameter $r$ used for the constructed compactifications.
\end{proposition}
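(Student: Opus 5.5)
The plan is to reduce to the trivial cocycle using the cohomological description in Lemma~\ref{lem:square-zero-cohomology}, and then compute the cusp monodromy directly.

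\textbf{Step 1: normalize the affine part.} Start from a Jacobi lift on the square-zero locus with $k_\rho=0$, given by $T_i=T(B_i,d_i,z_i)$. By Lemma~\ref{lem:square-zero-cohomology}, $H^1_{\square}(\rho,W)\cong\Z$, and $k_\rho$ is a primitive coordinate on it. So $k_\rho$ is injective on $H^1_{\square}$, and $k_\rho=0$ forces $[d]=0$ in $H^1(\Gamma_{m_1,m_2},W_\rho)$. Hence there is $u\in W$ with $d_i=-(I-B_i)u$ for $i=1,2$.

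Conjugate both generators by the Heisenberg element $T(I,u,0)$. By \eqref{eq:jacobi-law}, this replaces each $d_i$ by $d_i+(I-B_i)u=0$. It changes the central coordinates by some integers, which we need not track.

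\textbf{Step 2: the central coordinates vanish automatically.} After Step 1 the generators are $T(B_i,0,z_i')$. Iterating \eqref{eq:jacobi-law} with $d=0$ gives
\[
T(B_i,0,z_i')^{m_i}=T(I,0,m_iz_i').
\]
Here $B_i^{m_i}=I$ holds because $B_i$ has exact order $m_i$ in $SL_2(\Z)$. The elliptic relation $T_i^{m_i}=I$ then forces $m_iz_i'=0$, so $z_i'=0$. This agrees with Lemma~\ref{lem:local-obstruction}, since $\kappa_{m_i}(B_i,0)=0$.

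Thus every such lift is integrally Heisenberg-conjugate to the lift with $d_1=d_2=0$ and $z_1=z_2=0$. Uniqueness also follows from $H^1(\Gamma_{m_1,m_2},\Z)=0$.

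\textbf{Step 3: compute the cusp logarithm.} For the normalized lift,
\[
P=T(B_1,0,0)\,T(B_2,0,0)=T(U_h,0,0)
\]
by \eqref{eq:jacobi-law} and Theorem~\ref{thm:core-classification}. Choose the basis of $W$ in which $U_h=\begin{psmallmatrix}1&h\\0&1\end{psmallmatrix}$; such a change of basis only conjugates by an element of $SL_2(\Z)$. Then $N=P-I$ has a single nonzero entry $h$, which sends the second basis vector $e_2$ of $W$ to $he_1$.

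This gives:
\begin{itemize}[label=\textbullet]
\item $N(V)=\Z\,he_1$, which has rank one;
\item $\operatorname{sat}N(V)=\Z e_1$, so the index $[\operatorname{sat}N(V):N(V)]$ equals $h=h_\rho$;
\item $\ker N=\Z\gamma\oplus\Z e_1\oplus\Z\delta$, which has rank three.
\end{itemize}
The final remark of the proposition follows by contrast with Theorem~\ref{thm:obstruction-discriminant}. There $N$ has rank two and $\Lambda_N$ has rank two, whereas here the saturated image has rank one. So the two indices measure different lattices.

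\textbf{Main obstacle.} The only delicate point is Step 1: the vanishing of $k_\rho$ must detect the vanishing of the whole cohomology class, not just of a quotient. This is precisely the statement $H^1_{\square}\cong\Z$ with $k_\rho$ primitive, as established in the proof of Lemma~\ref{lem:square-zero-cohomology}, where the move vectors generate the saturated kernel of $k_\rho$. One should also check that conjugation by $T(I,u,0)$ acts on $d$ by the standard coboundary shift, which follows from \eqref{eq:jacobi-law}.
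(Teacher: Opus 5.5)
Your proposal is correct and follows the paper's proof essentially step for step: primitivity of $k_\rho$ on $H^1_{\square}(\rho,W)\cong\Z$ makes the class integrally trivial, so one Heisenberg conjugation kills both $d_i$; the relations $m_iz_i=0$ then kill the central coordinates; and the product $\diag(1,U_{h_\rho},1)$ gives the rank-one image $h_\rho\Z u$, with saturation $\Z u$ and rank-three kernel. The one point the paper makes explicit and you leave implicit is that integral conjugacy preserves the rank, the kernel rank, and the saturation index, so the computation for the normalized lift transfers to the original lift.
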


\begin{proof}
The primitive quotient calculation in
Lemma~\ref{lem:square-zero-cohomology} identifies $k_\rho=0$ with the
zero cohomology class, not merely with a rationally trivial class.
Consequently a single integral Heisenberg conjugacy makes both $d_i$
zero.  In the resulting lifts the finite relations say $m_i z_i=0$;
since $z_i\in\Z$, both central coordinates are zero.  The product is
$\diag(1,U_{h_\rho},1)$.  In the basis $(\gamma,u,w,\delta)$ its
logarithm sends $w$ to $h_\rho u$ and kills the other three vectors.
Its image is $h_\rho\Z u$, its saturation is $\Z u$, and its kernel
is $\Z\gamma\oplus\Z u\oplus\Z\delta$.  Integral conjugacy preserves
all these assertions.
\end{proof}

\begin{corollary}[Lifts of cusp index one]

A cusp-index-one lift exists precisely when $|s|=\nu_\rho$ and $|k_\rho|=1$.
After orientation and conjugacy, representatives are
\[
\begin{array}{c|c|c|c}
\text{type}&s& (d_1,d_2)&(z_1,z_2)\\ \hline
A&6&((1,1),(1,0))&(2,-3)\\
B&2&((1,0),(0,0))&(1,0)\\
C&3&((1,0),(0,0))&(1,0).
\end{array}
\]
The standard Type B matrices used in the period construction are displayed
in~\eqref{eq:typeB-standard-matrices}.
\end{corollary}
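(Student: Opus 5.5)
The corollary follows from Theorem~\ref{thm:obstruction-discriminant} together with a direct check of the table. For $k\neq0$ the cusp index is
\[
d_{\mathrm{cusp}}=\frac{|s|k^2}{\nu_\rho}.
\]
Setting $d_{\mathrm{cusp}}=1$ gives $|s|k^2=\nu_\rho$.

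For Types B and C we have $\nu_\rho=2$ and $\nu_\rho=3$, both squarefree. Hence $k^2$ divides $\nu_\rho$ only for $k^2=1$, which forces $|s|=\nu_\rho$ and $|k|=1$. For Type A we have $\nu_\rho=6$, also squarefree, so the same conclusion holds. The case $k=0$ is excluded. By Proposition~\ref{prop:zero-affine-index} it has rank-one nilpotent part, and the cusp index, as defined in the rank-two setting, does not apply to it. Conversely, if $|s|=\nu_\rho$ and $|k|=1$, the divisibility $\nu_\rho\mid sk^2$ holds. So a lift exists by the theorem, and its index is $1$.

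For the representatives, I will use the last part of Theorem~\ref{thm:obstruction-discriminant}. There $k_\rho$ up to sign is stated to be a complete conjugacy invariant under the Heisenberg group and the Levi involution. Reversing the parabolic orientation changes the sign of $s$, so after orientation it suffices to take $s=\nu_\rho>0$ and exhibit one square-zero class with $|k_\rho|=1$. Each displayed representative must then satisfy three conditions:
\begin{enumerate}[label=\textup{(\alph*)}]
\item the square-zero condition $a=y$;
\item $k_\rho=\pm1$, using the formulas of Lemma~\ref{lem:square-zero-cohomology};
\item the central coordinates given by the table preceding the theorem.
\end{enumerate}

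Type A, with $(x,y)=(1,1)$, $(a,b)=(1,0)$ and $s=6$:
\begin{itemize}
\item $a=y=1$;
\item $k_A=2-1+0=1$;
\item $z_1=\tfrac{6}{3}(1-1+1)=2$ and $z_2=-\tfrac{6}{2}(1+0)=-3$.
\end{itemize}

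Type B, with $(x,y)=(1,0)$, $(a,b)=(0,0)$ and $s=2$:
\begin{itemize}
\item $a=y=0$;
\item $k_B=1$;
\item $z_1=\tfrac22\cdot1=1$ and $z_2=0$.
\end{itemize}

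Type C, with $(x,y)=(1,0)$, $(a,b)=(0,0)$ and $s=3$:
\begin{itemize}
\item $a=y=0$;
\item $k_C=1$;
\item $z_1=\tfrac33\cdot1=1$ and $z_2=0$.
\end{itemize}

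The only delicate point is bookkeeping. One must check that the sign conventions for $z_i=-\kappa_{m_i}/m_i$ match the closed forms in the table, and that the orientation reversal acts on $s$ by a sign. I will take both from the stated table and theorem rather than recompute $\kappa_{m}$. The final reference to~\eqref{eq:typeB-standard-matrices} is only a pointer and needs no proof.
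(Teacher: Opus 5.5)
Your proposal is correct and is essentially the argument the paper intends; the paper gives no separate proof and reads the corollary off Theorem~\ref{thm:obstruction-discriminant}. The index formula forces $|s|k^2=\nu_\rho$ with $\nu_\rho\in\{6,2,3\}$ squarefree, $k=0$ is excluded because the cusp index is only defined in the rank-two case, and the table is a direct substitution, which you carry out correctly; the closed forms for $z_i$ do agree with $-\kappa_{m_i}(B_i,d_i)/m_i$. The one step you only assert, normalizing to $s>0$, can be made explicit: conjugating by $\operatorname{diag}(-1,I_2,1)$ sends $T_s(B,d,z)$ to $T_{-s}(B,d,-z)$, which is consistent with the central coordinates being linear in $s$.
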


\begin{corollary}[Monodromy cokernels]
Let $k\neq0$ be liftable, put $d=|s|k^2/\nu_\rho$, and let
$\overline N:V/\Lambda_N\to\Lambda_N$ be the map induced by $N$.
For Types A and B,
\[
 \coker\overline N\cong\Z/d.
\]
For Type C, with $g=\gcd(2,k)$,
\[
 \coker\overline N\cong\Z/g\oplus\Z/(d/g).
\]
In particular, the Type C group is cyclic exactly when $k$ is odd.
\end{corollary}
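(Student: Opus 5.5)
The cokernel is a Smith-normal-form computation for one explicit $2\times2$ integral matrix per type. Because $\Lambda_N=\ker N$ is saturated, $V/\Lambda_N$ is free of rank two and $\overline N:V/\Lambda_N\to\Lambda_N$ is injective. Its cokernel $\Lambda_N/N(V)$ is therefore finite, of order $[\Lambda_N:N(V)]=d$ by Theorem~\ref{thm:obstruction-discriminant}. If $M$ is any integral matrix of $\overline N$ and $e_1$ is the gcd of its entries, the Smith form is $\diag(e_1,d/e_1)$. So it suffices to show $e_1=1$ for Types A and B, and $e_1=\gcd(2,k)$ for Type C.

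\textbf{Normalization.} The cokernel is invariant under integral conjugacy. By Theorem~\ref{thm:obstruction-discriminant}, $k$ up to sign is a complete invariant under Heisenberg conjugacy. Using the Heisenberg moves of Lemma~\ref{lem:square-zero-cohomology}, I would first put the class in the form $d_2=0$. I would then change basis of $W$ so that $B_1B_2=U_h$. The product is then
\[
 P=T_1T_2=T\bigl(U_h,\,d(g_1g_2),\,z\bigr).
\]
The square-zero condition says $d(g_1g_2)\in L_\infty$, so $d(g_1g_2)=(p,0)^{\mathsf T}$. Tracking the primitive coordinate through the change of basis should give $p=\pm k$.

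\textbf{The matrix of $\overline N$.} With $d(g_1g_2)=(p,0)^{\mathsf T}$ one has $(p,0)J_sU_h=(0,sp)$. Hence, in the basis $(\gamma,e_1,e_2,\delta)$,
\[
 Ne_1=0,\qquad Ne_2=sp\,\gamma+h\,e_1,\qquad N\delta=z\,\gamma+p\,e_1 .
\]
So $\Lambda_N=\Z\gamma\oplus\Z e_1$, and $\overline N$ has matrix
\[
 M=\begin{pmatrix} sp& z\\ h& p\end{pmatrix},
 \qquad \det M=sp^2-hz .
\]
Comparing with $d=|s|k^2/\nu_\rho$ fixes $hz$ in terms of $sp^2$ up to a sign choice. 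As a cross-check, $z$ can also be read off the central-coordinate table: here $z=z_1$ because $d_2=0$.

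\textbf{Reading off $e_1$.} In Types A and B, $h=1$ is an entry of $M$, so $e_1=1$ and $\coker\overline N\cong\Z/d$. In Type C, $h=2$, so $e_1=\gcd(2,p,sp,z)=\gcd(2,p,z)$.
\begin{itemize}
\item If $k$ is odd, then $p$ is odd and $e_1=1$.
\item If $k$ is even, the determinant identity gives $2z=sp^2\mp sp^2/3$, so $z\in\{sp^2/3,\ 2sp^2/3\}$. Write $p=2p'$. Then $sp^2/3=4s{p'}^2/3$, and liftability ($3\mid sk^2$, equivalently $3\mid s{p'}^2$) makes this an even integer. Hence $z$ is even and $e_1=2$.
\end{itemize}
This gives $\Z/g\oplus\Z/(d/g)$ with $g=\gcd(2,k)$, and the group is cyclic exactly when $k$ is odd.

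\textbf{Main obstacle.} The delicate step is the normalization: reaching $d_2=0$ with $q=0$ and $p=\pm k$ exactly, not up to a unit of $\Z/h$, while tracking the sign and orientation conventions. I would verify it directly against the Heisenberg move vectors in the table of Lemma~\ref{lem:square-zero-cohomology}, and against the explicit $2\times2$ matrices in Appendix~\ref{app:lift-calculations}, whose determinants are $\tfrac{s}{6}k_A^2$, $\tfrac{s}{2}k_B^2$, $\tfrac{s}{3}k_C^2$.
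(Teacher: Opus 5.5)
Your argument is essentially the paper's. Your matrix $\begin{pmatrix} sp&z\\ h&p\end{pmatrix}$ is exactly the appendix matrix $\overline N_\rho$, with $z=q_\rho$. The entry $h=1$ settles Types A and B. In Type C the first Smith invariant $\gcd(2,p,z)$ equals $\gcd(2,k)$, and the second is $d$ divided by it.

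\textbf{Correction.} Your normalization $d_2=0$ is not available in Type A.

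\begin{itemize}
\item There $B_2$ has order four and $\det(I-B_2)=2$, so $d_2$ is determined only modulo an index-two sublattice.
\item With $d_2=0$, the square-zero condition forces $y=0$, hence $k_A=2x$ is always even. Odd $k_A$ cannot be reached; an example is the cusp-index-one representative with $d_2=(1,0)$.
\item Likewise $p=\pm k$ fails in Type A, since $p_A=b+x-y$ while $k_A=2x-y+3b$.
\end{itemize}

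This does no harm, because your Type A/B argument uses only the entry $h=1$ and the order $d$, not the normalization.

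In Types B and C the order-six matrix has $\det(I-B_2)=1$, so your normalization is legitimate there. The move $u=(b,-a-b)$ kills $d_2$. On the locus $a=y$ it leaves $y=0$ and $p=x=k$ exactly, which is what your Type C parity argument needs.

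You could also skip the normalization altogether, as the paper implicitly does. On the locus $a=y$ one has $p_C\equiv k_C \pmod 2$ and $2q_C=s p_C^2-\tfrac{s}{3}k_C^2$ exactly. Liftability then makes $q_C$ even whenever $k_C$ is even.
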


\begin{proof}
For Types A and B, the quotient-to-kernel matrices in
Appendix~\ref{app:lift-calculations} contain an entry $1$.  For Type C the
first Smith invariant is $\gcd(2,k)$; the second is the determinant divided
by the first.
\end{proof}

\section{Period families for the \texorpdfstring{$(4,6)$}{(4,6)} and
\texorpdfstring{$(3,6)$}{(3,6)} cases}
\label{sec:period}

The two remaining monodromy types admit parallel period constructions.  We
write $\kappa=B,C$ for the $(4,6)$- and $(3,6)$-cases, respectively.  The
basic numerical data are
\[
\begin{array}{c|c|c|c|c}
 \kappa&(m_{\kappa,1},m_{\kappa,2})&h_\kappa&s_\kappa&J_\kappa(t)\\ \hline
 B&(4,6)&1&2&1728(1-t)\\
 C&(3,6)&2&3&6912t(1-t).
\end{array}
\]
Here $h_\kappa$ is the parabolic width and $s_\kappa$ is the coefficient
appearing in the first period column.  In both cases the base is
$B_\kappa=\Pone_t$, with elliptic points $p_{\kappa,1}=0$ and
$p_{\kappa,2}=1$ and cusp $p_{\kappa,0}=\infty$.  Let
\[
 \pi_\kappa:\Hh_z\longrightarrow B_\kappa\setminus\{p_{\kappa,0}\}
\]
be the orbifold universal cover, with deck generators
$g_{\kappa,1},g_{\kappa,2}$ of orders $m_{\kappa,1},m_{\kappa,2}$ and
$g_{\kappa,0}=(g_{\kappa,1}g_{\kappa,2})^{-1}$.

We write $\zeta_m=e^{2\pi i/m}$.  Deck transformations act on the left
and are composed as maps.  For based paths, in contrast, a product
$a_1a_2$ means that $a_1$ is traversed first and $a_2$ second.  Choose
clockwise meridians and basing arcs so that the lift of $a_j$ from a
fixed reference point $z_*$ ends at $g_{\kappa,j}z_*$ and
$a_0=(a_1a_2)^{-1}$.  The lift of $a_1a_2$ ends at
$g_{\kappa,1}g_{\kappa,2}z_*$: its second segment is the
$g_{\kappa,1}$-translate of the reference lift of $a_2$.
The inverse appearing in forward fibre transport is made explicit below.

\subsection{The two modular coordinates}

\begin{proposition}[Modular coordinates]
There are holomorphic maps $\tau_B=\tau$ and $\tau_C$ from the respective
orbifold universal covers to $\Hh$ with
\begin{equation}
 j(\tau)=1728(1-t\circ\pi_B)
\end{equation}
and
\begin{equation}
 j(\tau_C)=6912(t\circ\pi_C)(1-t\circ\pi_C).
\end{equation}
After choosing the standard lifts of the elliptic generators,
\begin{equation}\label{eq:tau-laws}
 \tau\circ g_{B,1}=-\frac1\tau,
 \qquad
 \tau\circ g_{B,2}=-\frac1{\tau+1},
\end{equation}
whereas
\begin{equation}\label{eq:typeC-tau-laws}
 \tau_C\circ g_{C,1}=\frac{\tau_C-1}{\tau_C},
 \qquad
 \tau_C\circ g_{C,2}=-\frac1{\tau_C+1}.
\end{equation}
Consequently
\[
 \tau_B\circ(g_{B,1}g_{B,2})=\tau_B+1,
 \qquad
 \tau_C\circ(g_{C,1}g_{C,2})=\tau_C+2.
\]
\end{proposition}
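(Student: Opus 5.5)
The plan is to build each coordinate as the developing map of a hyperbolic triangle orbifold, using the classical description of $j$ on the standard fundamental domain of $SL_2(\Z)$. We treat the two types separately.

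\emph{Type B.} The base orbifold $\Pone_t$ has cone angles $2\pi/4$ at $t=0$, $2\pi/6$ at $t=1$, and a cusp at $\infty$. This is the $(2,3,\infty)$ modular orbifold $SL_2(\Z)\backslash\Hh$, with the orbifold orders recorded as $4,6$ because $-I$ acts trivially. The map $j:SL_2(\Z)\backslash\Hh\to\C$ is a biholomorphism. It sends the orbit of $i$ to $1728$, the orbit of $\rho=e^{2\pi i/3}$ to $0$, and the cusp to $\infty$. Hence $t\mapsto 1728(1-t)$ identifies $\Pone_t\setminus\{\infty\}$, as an orbifold, with $SL_2(\Z)\backslash\Hh$, matching $0\mapsto i$ and $1\mapsto\rho$. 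The orbifold universal cover of $B_B\setminus\{\infty\}$ is then identified with $\Hh$, and we let $\tau$ be this identification, so that $j(\tau)=1728(1-t\circ\pi_B)$ holds by construction. Deck transformations correspond to elements of $PSL_2(\Z)$. We fix $z_*$ and the clockwise meridians, and lift the generator around $t=0$ to the stabilizer of $i$, namely $\tau\mapsto-1/\tau$. We lift the generator around $t=1$ to the stabilizer of $\rho$, namely $\tau\mapsto-1/(\tau+1)$, which fixes $\rho$ since $\rho^2+\rho+1=0$. The composition is computed as maps: $\tau\circ g_1g_2=(\tau\circ g_1)\circ g_2$ in the paper's pullback convention. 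Substituting $-1/(\tau+1)$ into $-1/\tau$ gives $\tau+1$, consistent with $B_1B_2=U_1$.

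\emph{Type C.} Here the cone orders are $3$ and $6$ (effective $3$ and $3$) plus a cusp. The relevant group is the index-two subgroup of $PSL_2(\Z)$ generated by the two order-three elements $\tau\mapsto(\tau-1)/\tau$ and $\tau\mapsto-1/(\tau+1)$. The quotient $\Gamma'\backslash\Hh$ is a double cover of $SL_2(\Z)\backslash\Hh$, branched only at the orbit of $i$. The function $6912t(1-t)=1728\cdot 4t(1-t)$ is a degree-two map $\Pone\to\Pone$. It is branched at $t=1/2$, with value $1728$, and at $t=\infty$, which goes to $\infty$. Its fibre over $0$ is $\{0,1\}$, each point unramified. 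This matches the double cover: the cone points of order $3$ lie over $\rho$, and the cusp of width $2$ lies over $\infty$, with ramification index $2$ there. So we define $\tau_C$ by lifting the composite $\Hh\to\Gamma'\backslash\Hh\cong\Pone_t\setminus\{\infty\}$. The isomorphism exists because $4t(1-t)=j/1728$ realizes the unique double cover with this branching, and this gives $j(\tau_C)=6912t(1-t)$. The standard lifts are the two order-three elements above, which fix $\rho+1$ and $\rho$ respectively. Their composition $(\tau-1)/\tau$ evaluated at $-1/(\tau+1)$ gives $\bigl(-1/(\tau+1)-1\bigr)\cdot\bigl(-(\tau+1)\bigr)=1+(\tau+1)=\tau+2$.

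\emph{Main obstacle.} The delicate point is the bookkeeping of conventions. The chosen clockwise meridians must correspond to exactly these generators, not their inverses or conjugates, and the product must come out as $\tau+h$ rather than $\tau-h$. I would first pin down $z_*$ near the cusp and check that the local rotation at each fixed point has the correct sign. If a sign disagrees, I would conjugate by a suitable element of the normalizer, or use the allowed reversal of parabolic orientation, to reach the stated formulas. The remaining step is to check that the matrices representing these Möbius maps are the $B_i$ of Theorem~\ref{thm:core-classification}, up to the sign $\pm I$.
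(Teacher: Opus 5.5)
Your M\"obius computations are correct: the fixed points $i$, $\rho$, $\rho+1$ and the products $S\circ ST=T$, $TS\circ ST=T^2$. Pulling back $j$ is also the paper's strategy. The gap is your identification of the domain. In the statement, $\pi_\kappa:\Hh_z\to B_\kappa\setminus\{p_{\kappa,0}\}$ is the universal cover of the effective $(4,6,\infty)$, resp.\ $(3,6,\infty)$, orbifold. Its deck group is $C_{m_1}*C_{m_2}$, and each $g_{\kappa,j}$ acts on $\Hh_z$ with exact order $m_{\kappa,j}$. So $t\circ\pi_\kappa$ vanishes to order $m_{\kappa,1}$ and $1-t\circ\pi_\kappa$ to order $6$ at the respective fixed points; the next lemma extracts exactly these roots.

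This orbifold is neither $PSL_2(\Z)\backslash\Hh$ nor your $\Gamma'\backslash\Hh$, whose cone orders are $(2,3)$ and $(3,3)$. Your phrase ``recorded as $4,6$ because $-I$ acts trivially'' describes the stack $[\Hh/SL_2(\Z)]$. That stack has generic $\Z/2$ isotropy and fundamental group $C_4*_{C_2}C_6$, not $C_4*C_6$. The paper genuinely needs the free product: the standard Type B matrices satisfy $T_1^2\neq T_2^3$, so the monodromy does not factor through $SL_2(\Z)$.

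Consequently $\tau$ cannot be a biholomorphism. Over $t=0$ in Type B, $j\circ\tau-1728=-1728\,t\circ\pi_B$ vanishes to order $4$, while $j-1728$ vanishes only to order $2$ at $i$, so $\tau$ has local degree $2$ there. The same happens over $t=1$ in both types: order $6$ against the triple zero of $j$ at $\rho$. As written, your $\tau$ is defined on the wrong space, and your generators have orders $2,3$ (resp.\ $3,3$) instead of $4,6$ (resp.\ $3,6$).

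The missing step is the one the paper's proof states. The coarse map is an orbifold morphism inducing $\Z/4\to\Z/2$ and $\Z/6\to\Z/3$ on isotropy in Type B. In Type C it induces $\Z/3\cong\Z/3$ at $t=0$ and $\Z/6\to\Z/3$ at $t=1$, with $t=1/2$ an ordinary point over $1728$. The map $\tau_\kappa$ is the lift of this morphism to universal covers, a branched map rather than an isomorphism.

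Concretely, the function $J_\kappa(t\circ\pi_\kappa)$ on $\Hh_z$ has even order at every zero of $J_\kappa(t\circ\pi_\kappa)-1728$ and order divisible by $3$ at every zero of $J_\kappa(t\circ\pi_\kappa)$. Since $\Hh_z$ is simply connected and $j:\Hh\to\C$ is the orbifold universal cover with cone orders $2,3$ over $1728,0$, this function lifts to $\tau_\kappa$ with $j\circ\tau_\kappa=J_\kappa(t\circ\pi_\kappa)$. Two lifts differ by an element of $PSL_2(\Z)$. This gives $\tau_\kappa\circ g=\phi(g)\circ\tau_\kappa$ for a non-injective homomorphism $\phi:C_{m_1}*C_{m_2}\to PSL_2(\Z)$.

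This also settles your sign worry, with no reversal of orientation needed. A clockwise generator acts on a root coordinate by $u\mapsto\zeta_m^{-1}u$, and at its fixed point $\tau_\kappa$ is locally $cu^2+\cdots$ or $cu+\cdots$. Hence $\phi(g_{\kappa,j})$ has derivative $\zeta_3^{-1}$ at the order-three points. That is the derivative of $ST$ at $\rho$ and of $TS$ at $\rho+1$, not of their inverses. After a simultaneous conjugation in $PSL_2(\Z)$, your product computations finish the proof.
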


\begin{proof}
For Type B, the coarse map $t\mapsto1728(1-t)$ sends the orbifold points
of orders $4$ and $6$ to the modular points of orders $2$ and $3$ through
the quotient maps $\Z/4\to\Z/2$ and $\Z/6\to\Z/3$.  Its lift to universal
covers gives \eqref{eq:tau-laws}.  For Type C, the rational function
$6912t(1-t)$ has simple zeros at $0$ and $1$, simple ramification over
$1728$ at $t=1/2$, and a pole of order two at infinity.  It therefore
induces the orbifold morphism of signature $(3,6,\infty)$ and cusp width
two.  The normalized elliptic lifts are
\[
 \begin{pmatrix}1&-1\\1&0\end{pmatrix},
 \qquad
 \begin{pmatrix}0&-1\\1&1\end{pmatrix},
\]
which give \eqref{eq:typeC-tau-laws}.  Their products act by translations
by $1$ and $2$, respectively.
\end{proof}

Put $t_c=1/t$.  On distinguished components over a punctured cusp disc
choose logarithmic coordinates $\sigma_B=\sigma$ and $\sigma_C$ with
$\sigma_\kappa\circ g_{\kappa,0}=\sigma_\kappa-1$.  The Fourier expansion
of $j$ gives holomorphic functions $h_B=h$ and $h_C$ at $t_c=0$ such that
\begin{equation}
 \tau=\sigma+h(t_c)
\end{equation}
and
\begin{equation}
 \tau_C=2\sigma_C+h_C(t_c).
\end{equation}

\subsection{The automorphic line and the first affine coordinate}

\begin{lemma}[Automorphic roots and a homogeneous section]
\label{lem:aux-roots}
For $\kappa=B,C$, put $m_1=m_{\kappa,1}$ and $m_2=6$.
There are holomorphic roots on the orbifold universal cover
\[
 u_{\kappa,1}^{m_1}=t\circ\pi_\kappa,\qquad
 u_{\kappa,2}^{6}=1-t\circ\pi_\kappa
\]
with characters
\[
\begin{array}{c|cc}
 &g_{\kappa,1}&g_{\kappa,2}\\ \hline
 u_{\kappa,1}&\zeta_{m_1}^{-1}u_{\kappa,1}&u_{\kappa,1}\\
 u_{\kappa,2}&u_{\kappa,2}&\zeta_6^{-1}u_{\kappa,2}.
\end{array}
\]
The function
\begin{equation}\label{eq:uniform-eta-section}
 F_\kappa=\eta(\tau_\kappa)^{-2}
 u_{\kappa,1}^{m_1-1}u_{\kappa,2}
\end{equation}
satisfies
\begin{equation}\label{eq:homogeneous-eta-laws}
 F_\kappa\circ g_{\kappa,1}=-\frac{F_\kappa}{\tau_\kappa},\qquad
 F_\kappa\circ g_{\kappa,2}=\frac{F_\kappa}{\tau_\kappa+1},\qquad
 F_\kappa\circ g_{\kappa,0}=F_\kappa.
\end{equation}
At the cusp, $F_\kappa=t_c^{-1}$ times a holomorphic unit.
\end{lemma}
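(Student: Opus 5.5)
The plan is to handle the two roots first, using only the orbifold structure of $\pi_\kappa$. Then I check the three transformation laws of $F_\kappa$ with the $\eta$-multiplier system, and finally read off the cusp behaviour from Fourier expansions.

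\emph{Roots.} Since $\pi_\kappa$ is the orbifold universal cover, near a preimage $z_1$ of $p_{\kappa,1}=0$ there is a local uniformizer $w$ with $t\circ\pi_\kappa=w^{m_1}\cdot(\text{unit})$. Away from the preimages of $0$ the function $t\circ\pi_\kappa$ has no zeros. So every zero of $t\circ\pi_\kappa$ has multiplicity divisible by $m_1$. Because $\Hh_z$ is simply connected, an $m_1$-th root $u_{\kappa,1}$ exists. The same argument gives $u_{\kappa,2}$ with $u_{\kappa,2}^6=1-t\circ\pi_\kappa$, using that the second orbifold point has order $m_{\kappa,2}=6$ in both types.

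\emph{Characters.} For each deck transformation $g$, the function $u\circ g$ is again a root of the same function. Hence $u\circ g=\zeta\,u$ for a constant root of unity $\zeta$.
\begin{itemize}[label=\textbullet]
\item To see that $u_{\kappa,1}\circ g_{\kappa,2}=u_{\kappa,1}$, evaluate the ratio at the fixed point $z_2$ of $g_{\kappa,2}$. There $t=1$, so $u_{\kappa,1}(z_2)\neq0$ and the ratio is $1$. The analogous argument at $z_1$ handles $u_{\kappa,2}\circ g_{\kappa,1}$.
\item For $u_{\kappa,1}\circ g_{\kappa,1}$, $g_{\kappa,1}$ acts near $z_1$ as $w\mapsto\zeta_{m_1}^{\pm1}w$, and $u_{\kappa,1}=w\cdot(\text{unit})$. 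So the character is $\zeta_{m_1}^{\pm1}$.
\item The clockwise-meridian convention fixed before the statement is what selects the exponent $-1$. This is consistent with~\eqref{eq:tau-laws} and~\eqref{eq:typeC-tau-laws}, since the derivative of $\tau_\kappa\circ g_{\kappa,1}$ at the fixed point is the corresponding rotation.
\end{itemize}

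\emph{Laws for $F_\kappa$.} I use $\eta(-1/\tau)=\sqrt{-i\tau}\,\eta(\tau)$ and $\eta(\tau+1)=e^{\pi i/12}\eta(\tau)$.
\begin{itemize}[label=\textbullet]
\item Type B, $g_{B,1}$: we get $\eta(-1/\tau)^{-2}=i\,\eta(\tau)^{-2}/\tau$, and $u_{B,1}^{3}$ picks up $\zeta_4^{-3}=i$. The product is $-F/\tau$.
\item Type B, $g_{B,2}$: we get $\eta(-1/(\tau+1))^{-2}=i e^{-\pi i/6}\eta^{-2}/(\tau+1)$, and $u_{B,2}$ contributes $\zeta_6^{-1}=e^{-\pi i/3}$. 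The total constant is $ie^{-\pi i/2}=1$.
\item Type C, $g_{C,1}$: write $(\tau-1)/\tau=1-1/\tau$. Then $\eta^{-2}$ picks up $e^{-\pi i/6}\,i/\tau$, and $u_{C,1}^2$ contributes $\zeta_3^{-2}=e^{2\pi i/3}$. The product is $e^{\pi i}/\tau=-1/\tau$.
\item Type C, $g_{C,2}$: this is the same computation as Type B, $g_{B,2}$.
\end{itemize}
The law for $g_{\kappa,0}=(g_{\kappa,1}g_{\kappa,2})^{-1}$ follows by composing the two laws as automorphy factors. The composite factor is that of the translation $\tau\mapsto\tau+h_\kappa$, which is $1$, and the root-of-unity constants cancel. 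I will double-check this against the explicit composition rather than only the cocycle formalism.

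\emph{Cusp.} We have $\eta(\tau)^{-2}=q^{-1/12}\cdot(\text{unit})$ with $q=e^{2\pi i\tau}$. Also $\tau_\kappa=h_\kappa\sigma_\kappa+h_\kappa(t_c)$, where the second $h_\kappa$ denotes the holomorphic function $h$ or $h_C$ of the previous subsection. Combined with the $j$-expansion $j\sim q^{-1}\sim J_\kappa(t)\sim t_c^{-\deg J_\kappa}$, this gives $q\sim t_c^{\deg J_\kappa}\cdot(\text{unit})$. So $\eta^{-2}\sim t_c^{-\deg J_\kappa/12}$, multiplied by a function of $\sigma$ that is $g_{\kappa,0}$-multivalued. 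Meanwhile $u_{\kappa,1}^{m_1-1}u_{\kappa,2}\sim t_c^{-(m_1-1)/m_1-1/6}$.
\begin{itemize}[label=\textbullet]
\item Type B: the exponents are $-1/12-3/4-1/6=-1$.
\item Type C: the exponents are $-2/12-2/3-1/6=-1$.
\end{itemize}
Since $F_\kappa$ is $g_{\kappa,0}$-invariant, it descends to the punctured cusp disc. There it is $t_c^{-1}$ times a function with no zeros or poles, hence a holomorphic unit.

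The main obstacle is bookkeeping the branches: fixing the $\sqrt{-i\tau}$ branch, the choice of roots, and the orientation convention so that every constant is exactly $\pm1$ as displayed. This is what I will verify most carefully.
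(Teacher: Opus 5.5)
Your proposal is correct and follows essentially the paper's route: roots from the multiplicities on the simply connected cover, characters from the clockwise generators, the $\eta^{-2}$ multipliers for $S$, $TS$, $ST$ balanced against the root-of-unity factors, composition for $g_{\kappa,0}$, and the cusp order $-h_\kappa/12-(m_1-1)/m_1-1/6=-1$ from $q=t_c^{h_\kappa}\cdot(\text{unit})$. Two small remarks: the branch of $\sqrt{-i\tau}$ never matters, since only $\eta^{-2}$ occurs; and in Type B the $\tau$-law cannot confirm the sign of the $g_{B,1}$-character, because $\tau_B$ is branched of order two there and $S'(i)=-1=\zeta_4^{\pm2}$, so that sign rests on the clockwise convention alone, which you correctly identify as the deciding input.
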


\begin{proof}
The zeros of $t\circ\pi_\kappa$ and $1-t\circ\pi_\kappa$ have
multiplicities $m_1$ and $6$, so the required roots exist locally.
Their transition ratios form a \v{C}ech cocycle with values in the
corresponding group of roots of unity.  It vanishes on the simply
connected universal cover, giving global roots.  Continuation around
the chosen clockwise generators gives the stated characters.
Write $S\tau=-1/\tau$ and $T\tau=\tau+1$.
The eta identities \cite[Proposition~5.1 and Theorem~5.2]{KongTeo} give
\[
 \eta(S\tau)^{-2}=\frac{i}{\tau}\eta(\tau)^{-2},\qquad
 \eta(TS\tau)^{-2}=\frac{\zeta_6}{\tau}\eta(\tau)^{-2},\qquad
 \eta(ST\tau)^{-2}=\frac{\zeta_6}{\tau+1}\eta(\tau)^{-2}.
\]
The first elliptic transformations are $S$ in Type B and $TS$ in
Type C; the second is $ST$ in both.  For the first generator the
root factors give $i(-i)^3=-1$ in Type B and
$\zeta_6\zeta_3^{-2}=-1$ in Type C.  For the second, the factor
$\zeta_6^{-1}$ from $u_{\kappa,2}$ cancels the eta multiplier.
Composition gives the cusp identity.

Since $q=e^{2\pi i\tau_\kappa}=t_c^{h_\kappa}$ times a holomorphic
unit, the product formula for eta gives the cusp order
\[
 -\frac{h_\kappa}{12}-\frac{m_1-1}{m_1}-\frac16=-1
 \quad\text{for }(m_1,h_\kappa)=(4,1),(3,2).
\]
Cusp invariance makes this an integral order on the punctured coarse
disc; the remaining factor extends as a holomorphic unit.
\end{proof}

The first affine coordinates are required to satisfy
\begin{equation}\label{eq:mu-laws}
 \mu\circ g_{B,1}=1-\frac{\mu}{\tau},
 \qquad
 \mu\circ g_{B,2}=\frac{\mu}{\tau+1},
\end{equation}
and
\begin{equation}\label{eq:typeC-mu-laws}
 \mu_C\circ g_{C,1}=1-\frac{\mu_C}{\tau_C},
 \qquad
 \mu_C\circ g_{C,2}=\frac{\mu_C}{\tau_C+1}.
\end{equation}
Differences of two solutions obey the common homogeneous law
\begin{equation}\label{eq:nu-laws}
 \nu\circ g_{\kappa,1}=-\frac{\nu}{\tau_\kappa},
 \qquad
 \nu\circ g_{\kappa,2}=\frac{\nu}{\tau_\kappa+1},
 \qquad
 \nu\circ g_{\kappa,0}=\nu.
\end{equation}

\begin{lemma}[The homogeneous line bundle]\label{lem:homogeneous-bundle}
For both types, the sheaf of holomorphic functions satisfying
\eqref{eq:nu-laws} and regular at the cusp is a line bundle on the
coarse base, isomorphic to $\mathcal O_{\mathbb P^1}(-1)$.
\end{lemma}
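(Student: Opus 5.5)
The plan is to use the section $F_\kappa$ of Lemma~\ref{lem:aux-roots} as a global trivialization of the automorphy factor. Then every solution of \eqref{eq:nu-laws} becomes $F_\kappa$ times a function on the coarse base, and the only remaining work is to identify exactly which such functions are allowed.

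First I would check that any two solutions of \eqref{eq:nu-laws} differ by an invariant factor. If $\nu$ is holomorphic on an open set $\pi_\kappa^{-1}(V)$ and satisfies \eqref{eq:nu-laws}, consider $\phi=\nu/F_\kappa$. By \eqref{eq:homogeneous-eta-laws}, $F_\kappa$ obeys the same laws: the multipliers $-1/\tau_\kappa$ and $1/(\tau_\kappa+1)$ appear, and $F_\kappa$ is cusp-invariant. Hence $\phi$ is invariant under $g_{\kappa,1}$, $g_{\kappa,2}$, and therefore under the whole deck group. So $\phi$ descends to a meromorphic function on $V$.

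Next I would locate the zeros of $F_\kappa$ on $\Hh$ and read off the pole orders allowed for $\phi$. Since $\eta$ never vanishes on $\Hh$, the zeros come only from $u_{\kappa,1}^{m_1-1}u_{\kappa,2}$.
\begin{itemize}
\item \emph{Over $t=0$.} Here $u_{\kappa,1}$ is a local uniformizer upstairs and $t=u_{\kappa,1}^{m_1}$. So $F_\kappa$ vanishes to order $m_1-1$ upstairs, which is order $(m_1-1)/m_1<1$ in the coarse coordinate $t$.
\item \emph{Over $t=1$.} The same computation gives order $5/6<1$.
\end{itemize}
Now $\phi$ is a function of $t$, so upstairs it has pole order a multiple of $m_1$ (respectively $6$). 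Holomorphy of $\nu=F_\kappa\phi$ forces that pole order to be at most $m_1-1$ (respectively $5$), hence zero. Conversely, every holomorphic $\phi$ gives a holomorphic $\nu$. Thus, over the finite part $\Pone\setminus\{\infty\}$, the sheaf is exactly $\mathcal O\cdot F_\kappa$, a free rank-one module.

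At the cusp I would interpret ``regular at the cusp'' as follows. Since $\nu$ is $g_{\kappa,0}$-invariant, it descends to the punctured cusp disc, and regularity means it extends holomorphically over $t_c=0$. Lemma~\ref{lem:aux-roots} gives $F_\kappa=t_c^{-1}\epsilon$ with $\epsilon$ a holomorphic unit. So near $\infty$ the sheaf is generated by $t_cF_\kappa$, again locally free of rank one. Gluing the two local descriptions gives the transition function $t_c=1/t$ between the generators $F_\kappa$ on $\{t\neq\infty\}$ and $t_cF_\kappa$ near $\infty$.

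Finally, to fix the degree, I would compute global sections. A global section is $\phi F_\kappa$ with $\phi$ entire in $t$ and $t_c^{-1}\phi(1/t_c)$ regular at $t_c=0$. This forces $\phi$ to be a polynomial vanishing at $\infty$, that is, $\phi=0$. Equivalently, $F_\kappa$ itself is a meromorphic section with no zeros on the coarse base (the fractional zeros do not count downstairs) and a simple pole at $\infty$, so its divisor is $-[\infty]$. The bundle is therefore $\mathcal O_{\Pone}(-1)$.

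I expect the main obstacle to be the elliptic points. The difficulty is to make rigorous that the invariant quotient $\phi$ cannot have a pole there, which needs the fractional vanishing orders $(m_1-1)/m_1$ and $5/6$ to be strictly less than one. The cusp behaviour, by contrast, is already supplied by Lemma~\ref{lem:aux-roots}.
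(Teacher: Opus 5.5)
Your argument is correct and is essentially the paper's: it also uses $F_\kappa$ as a local generator at the two finite points, since dividing an equivariant function by $F_\kappa$ gives an invariant holomorphic function on the root disc, and then reads off the divisor $-p_{\kappa,0}$ of $F_\kappa$ from its order $-1$ at the cusp. There is one small slip: $F_\kappa$ contains $u_{\kappa,2}$ only to the first power, so over $t=1$ it vanishes to order $1$ upstairs (coarse order $1/6$, matching the paper's ``orders congruent to $1$ modulo $6$''), not $5/6$; since $1<6$, your pole-order argument goes through unchanged.
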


\begin{proof}
At the first root point, the homogeneous multiplier at the fixed point
is $\zeta_{m_1}$; a local root coordinate transforms by
$\zeta_{m_1}^{-1}$.  Equivariant functions therefore have orders
congruent to $m_1-1$ modulo $m_1$.  At the second point their orders
are congruent to $1$ modulo $6$.  Since eta has no zeros in $\mathbb H$,
$F_\kappa$ has exactly these minimal orders, and is a local generator
of the invariant direct image at both finite points.  This direct
image is locally free of rank one: dividing an equivariant function
by $F_\kappa$ gives an invariant holomorphic function on each root
disc.  Away from the finite points and cusp, $F_\kappa$ is nonzero.
At the cusp it has order $-1$ by Lemma~\ref{lem:aux-roots}.
Its divisor as a meromorphic section of the coarse line bundle is
therefore $-p_{\kappa,0}$, proving the assertion.
\end{proof}

\begin{proposition}[First affine period coordinates]

There are unique holomorphic functions $\mu$ and $\mu_C$ satisfying
\eqref{eq:mu-laws} and \eqref{eq:typeC-mu-laws}, respectively, such that
\[
 \mu-\tau
 \quad\text{and}\quad
 \mu_C-\frac{\tau_C}{2}
\]
extend holomorphically across the cusp.
\end{proposition}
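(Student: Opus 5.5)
The plan is to treat both statements at once. I would read \eqref{eq:mu-laws} and \eqref{eq:typeC-mu-laws} as the gluing data of an affine bundle on the coarse base $\Pone_t$. Its underlying vector bundle is the homogeneous line bundle of Lemma~\ref{lem:homogeneous-bundle}, namely $\mathcal O_{\Pone}(-1)$. Uniqueness and existence then follow from $H^0(\Pone,\mathcal O(-1))=0$ and $H^1(\Pone,\mathcal O(-1))=0$, respectively.

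\emph{Consistency at the cusp.} First I would compose the two generator laws to get the cusp law. Using $\tau\circ g_{B,2}=-1/(\tau+1)$,
\[
\mu\circ(g_{B,1}g_{B,2})=1-\frac{\mu\circ g_{B,2}}{\tau\circ g_{B,2}}=1+\mu,
\]
so $\mu\circ g_{B,0}=\mu-1$. The same computation gives $\mu_C\circ g_{C,0}=\mu_C-1$. Since $\tau\circ g_{B,0}=\tau-1$ and $\tau_C\circ g_{C,0}=\tau_C-2$, the functions $\tau$ and $\tau_C/2$ satisfy the required cusp law on the distinguished cusp components. They are therefore legitimate local solutions near $p_{\kappa,0}$.

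\emph{Uniqueness.} Suppose $\mu,\mu'$ are two solutions, both with $\mu-\tau$ and $\mu'-\tau$ holomorphic at the cusp. Their difference satisfies \eqref{eq:nu-laws}. It is $g_{\kappa,0}$-invariant, and it is regular at the cusp because it equals $(\mu-\tau)-(\mu'-\tau)$. So it is a global section of $\mathcal O_{\Pone}(-1)$ and hence vanishes. The same argument applies to Type C with $\tau_C/2$ in place of $\tau$.

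\emph{Existence.} I would cover $\Pone$ by three kinds of open sets and produce a local solution on each.
\begin{itemize}[label=--]
\item \emph{Near the cusp:} take the local solution $\tau$, respectively $\tau_C/2$.
\item \emph{Near each elliptic point:} take a root disc $D$ stable under the cyclic stabilizer $\langle g_{\kappa,j}\rangle$. The inhomogeneous law is a $1$-cocycle of this finite cyclic group with values in the $\C$-vector space of holomorphic functions on $D$, under the twisted action. Because the group is finite and the coefficients are rational vector spaces, its cohomology vanishes. So averaging any holomorphic function over the group gives a local equivariant solution. Concretely, pick any $f$ and form a normalized sum over powers of $g$, exactly as one splits a cocycle of a finite group.
\item \emph{Away from the special points:} the orbifold cover is unramified, and local sections are propagated by the laws.
\end{itemize}
On overlaps, differences of these local solutions satisfy the homogeneous law and are regular. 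They therefore define a \v{C}ech $1$-cocycle with values in $\mathcal O_{\Pone}(-1)$. Since $H^1(\Pone,\mathcal O(-1))=0$, the local solutions can be corrected by local sections of $\mathcal O(-1)$ so that they agree and glue to a global $\mu$. Near the cusp the correction is a section regular at $t_c=0$, so $\mu-\tau$ remains holomorphic across the cusp.

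\emph{Expected obstacle.} The main difficulty is bookkeeping rather than any real obstruction. One must check that the local solutions near the elliptic points are compatible with the chosen lifts and with the composition conventions for deck transformations. One must also check that ``regular at the cusp'' for the differences means exactly the regularity condition defining the sheaf in Lemma~\ref{lem:homogeneous-bundle}, so that the \v{C}ech class genuinely lives in $H^1(\Pone,\mathcal O(-1))$ and not in a twist of it.
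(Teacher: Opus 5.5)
Your proposal is correct and follows essentially the same route as the paper. The solution sheaf is a torsor under the line bundle $\mathcal O_{\Pone}(-1)$ of Lemma~\ref{lem:homogeneous-bundle}, with local solutions $\tau$ (resp.\ $\tau_C/2$) at the cusp and averaged solutions on the root discs, and $H^0(\Pone,\mathcal O(-1))=H^1(\Pone,\mathcal O(-1))=0$ give uniqueness and existence.

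The one step you leave implicit is that averaging needs the generator law to define an honest action of the finite stabilizer, which is a norm-zero check. The paper settles this by exhibiting explicit local solutions: $(\tau+1)/2$ in Type B, $(\tau_C+1)/3$ in Type C, and $0$ at the order-six point.
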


\begin{proof}
In each case the local solutions form a torsor under the line bundle of
Lemma~\ref{lem:homogeneous-bundle}.  On a root chart, local solutions are
first constructed upstairs and then averaged over the finite stabilizer;
finite-stabilizer invariants are exact over $\mathbb C$, so the resulting
torsor descends to the coarse curve.  At the first elliptic point one may
use $(\tau+1)/2$ in Type B and $(\tau_C+1)/3$ in Type C; at the second
elliptic point use $0$, and at the cusp use $\tau$ or $\tau_C/2$.
Existence and uniqueness follow from
\[
 H^1(\Pone,\mathcal O(-1))=H^0(\Pone,\mathcal O(-1))=0.
\]
\end{proof}

Write
\begin{equation}
 m(t_c)=\mu-\tau
\end{equation}
and
\begin{equation}
 m_C(t_c)=\mu_C-\sigma_C.
\end{equation}
Both functions are holomorphic at $t_c=0$.

\subsection{The second affine coordinate}

For Type B put
\[
 \phi_1=1-\frac{2\mu^2}{\tau},
 \qquad
 \phi_2=-\frac{2\mu^2}{\tau+1},
\]
and require
\begin{equation}\label{eq:beta-laws}
 \beta\circ g_{B,1}=\beta+\phi_1,
 \qquad
 \beta\circ g_{B,2}=\beta+\phi_2.
\end{equation}
For Type C put
\[
 \phi^C_1=1-\frac{3\mu_C^2}{\tau_C},
 \qquad
 \phi^C_2=-\frac{3\mu_C^2}{\tau_C+1},
\]
and require
\begin{equation}\label{eq:typeC-beta-laws}
 \beta_C\circ g_{C,1}=\beta_C+\phi^C_1,
 \qquad
 \beta_C\circ g_{C,2}=\beta_C+\phi^C_2.
\end{equation}
The cusp conditions are that $\beta-\tau$ and
$\beta_C-\tau_C/2$ extend holomorphically.

\begin{lemma}[Finite-order consistency]

The additive cocycles have zero norm on the finite cyclic groups:
\[
 \sum_{j=0}^{3}\phi_1\circ g_{B,1}^j=0,
 \qquad
 \sum_{j=0}^{5}\phi_2\circ g_{B,2}^j=0,
\]
\[
 \sum_{j=0}^{2}\phi^C_1\circ g_{C,1}^j=0,
 \qquad
 \sum_{j=0}^{5}\phi^C_2\circ g_{C,2}^j=0.
\]
At the cusps,
\[
 (\tau,\mu,\beta)\circ(g_{B,1}g_{B,2})
 =(\tau+1,\mu+1,\beta+1)
\]
and
\begin{equation}
 (\tau_C,\mu_C,\beta_C)\circ(g_{C,1}g_{C,2})
 =(\tau_C+2,\mu_C+1,\beta_C+1).
\end{equation}
\end{lemma}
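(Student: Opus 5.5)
The plan is to realize the three laws for $(\tau_\kappa,\mu_\kappa,\beta_\kappa)$ as a single linear action of the rank-four Jacobi group of Section~\ref{sec:lifts}. Then the finite-order identities are just the relations $T_i^{m_i}=I$, and the cusp identity is the product $T_1T_2$. Concretely, I would look for an automorphy factor $j_g$ with
\[
 \bigl(\beta\circ g,\ \mu\circ g,\ \tau\circ g,\ 1\bigr)^{\mathsf T}\cdot j_g(\tau)
 =T(B_g,d_g,z_g)\,\bigl(\beta,\ \mu,\ \tau,\ 1\bigr)^{\mathsf T}
\]
up to the harmless normalization of the last coordinates. Here $(B_g,d_g,z_g)$ are the cusp-index-one representatives of the corollary on lifts of cusp index one, with $s=2$, $d_1=(1,0)$, $z_1=1$, $d_2=0$, $z_2=0$ in Type B, and the analogous data with $s=3$ in Type C.

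The first check is that \eqref{eq:tau-laws}, \eqref{eq:mu-laws} and \eqref{eq:beta-laws} are exactly the affine transformations encoded by $T(B_i,d_i,z_i)$. The $\tau$-law is the $SL_2$ part. The $\mu$-law $\mu\mapsto 1-\mu/\tau$ is the $W$-translation by $d_1$ divided by the automorphy factor $\tau$. The term $-s\mu^2/\tau$ in $\phi_1$ is the entry $d^{\mathsf T}J_sB$ paired against the $W$-coordinates, and the constant $1$ is $z_1$. Similarly, $\phi_2=-2\mu^2/(\tau+1)$, with no constant, corresponds to $d_2=0$, $z_2=0$.

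Once this dictionary is fixed, the cocycle identity gives $\beta\circ g^m=\beta+\sum_{j<m}\phi\circ g^j$. The vanishing of the norm is therefore equivalent to $T_i^{m_i}=I$ in the Jacobi group. This is precisely the elliptic relation of Lemma~\ref{lem:local-obstruction} with $z_i=-\kappa_{m_i}(B_i,d_i)/m_i$, already verified in the table of Theorem~\ref{thm:obstruction-discriminant}.

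As an independent sanity check I would also compute directly. For $g_{B,1}$, one gets $\tau\circ g_1^2=\tau$ and $\mu\circ g_1^2=1+\tau-\mu$. Hence $\phi_1+\phi_1\circ g_1=2+2\tau-4\mu$, and this expression changes sign under $g_1^2$, so the four-term sum vanishes. For the order-six and order-three generators I would pair terms in the same way using $\tau\circ g^{k}$ and the induced affine maps on $\mu$. The cusp identities then follow by composing the two laws in the convention fixed at the start of Section~\ref{sec:period}. In Type B the $\tau$-part is $U_1$ and the translation parts combine to $\mu+1$ and $\beta+1$. In Type C the $\tau$-part is $U_2$, while $\mu_C$ and $\beta_C$ still shift by $1$; this is consistent with the normalizations $\mu_C-\tau_C/2$ and $\beta_C-\tau_C/2$ being cusp-regular.

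I expect the main obstacle to be convention bookkeeping for the order-six generator. There $g_2^3$ acts trivially on $\tau$ but through $-I$ on the lattice, so $\mu\circ g_2^3=-\mu$, and the norm does not cancel by naive pairing of $j$ with $j+3$. The linear Jacobi realization handles this automatically, and that is why I would lead with it. The real care is in matching the left-action and composition order of deck transformations with the matrix product order in \eqref{eq:jacobi-law}, and in placing the signs of $J_s$ correctly. I would settle this by checking one generator completely and then relying on the group law.
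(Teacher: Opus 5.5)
\textbf{The gap.} The linear realization you lead with does not exist, and your order-three and order-six cases rest on it. Suppose $j_g\,(\beta\circ g,\mu\circ g,\tau\circ g,1)^{\mathsf T}=M(\beta,\mu,\tau,1)^{\mathsf T}$ for $g=g_{B,1}$, with $M$ a constant invertible matrix and $j_g$ a scalar factor. This must hold as a transformation of the variables $(\tau,\mu,\beta)$, since that is what a group-law argument needs.
\begin{itemize}
\item The last row makes $j_g$ linear in $(\beta,\mu,\tau,1)$.
\item The third row, with $\tau\circ g=-1/\tau$, then forces $j_g=c\tau$ with $c\neq0$.
\item The first row would then require $c\tau\beta+c\tau-2c\mu^2$ to be linear, which it is not.
\end{itemize}
Your dictionary also misplaces the quadratic term. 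For $s=2$ and $d_1=(1,0)^{\mathsf T}$ the entry $d_1^{\mathsf T}J_sB_1$ is $(-2,0)$, and pairing it with $W$-coordinates only ever gives the linear term $-2\mu$.

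What is true is the paper's period equivariance, $\Pi_B(g_{B,1}z)=R^B_1\,\Pi_B(z)\,T_1^{\mathsf T}$. So $T_1$ acts linearly on the two rows $(2\mu,\tau,1,0)$ and $(\beta,\mu,0,1)$, but only up to the matrix-valued factor $R^B_1=\begin{psmallmatrix}-1/\tau&0\\-\mu/\tau&1\end{psmallmatrix}$. The first column of $\Pi_BT_1^{\mathsf T}$ is $(2\mu-2\tau,\ \beta-2\mu+1)^{\mathsf T}$. The off-diagonal entry of $R^B_1$ turns the second entry into $\beta+1-2\mu-\frac{\mu}{\tau}(2\mu-2\tau)=\beta+1-\frac{2\mu^2}{\tau}$. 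So the normalization is not harmless: it is exactly where $\phi_1$ comes from. Also, reasoning through $\beta\circ g^m$ presupposes the function $\beta$, which the paper constructs only afterwards, using this lemma.

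\textbf{How to repair it.} Work with free variables $(\tau,\mu,\beta)$. Send $[Z\mid I_2]$ to $Y^{-1}X$, where $[Z\mid I_2]M=[X\mid Y]$; this is a right action of $GL_4(\mathbb C)$ wherever $Y$ is invertible.
\begin{itemize}
\item One matrix product per generator shows that the stated laws are this action with $M=A_j^{-1}=T_j^{\mathsf T}$.
\item Then $T_j^{m_j}=I$ yields the four norm identities as rational identities in $(\tau,\mu)$.
\item For the cusps, $M=(A_1A_2)^{-1}=\mathsf M_0$. Here $[Z\mid I_2]\mathsf M_0$ is already normalized, which gives the translations directly.
\end{itemize}
This explains the vanishing conceptually, at roughly the cost of the paper's proof, which simply substitutes the explicit orbits.

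\textbf{The direct route.} Your direct check is correct for $g_{B,1}$, and composing the laws for the cusps is also fine. Two cases remain.
\begin{itemize}
\item For the order-six generator, $g_2^3$ acts by $(\tau,\mu)\mapsto(\tau,-\mu)$ and $\phi_2$ is even in $\mu$. So the six-term sum is twice $\phi_2+\phi_2\circ g_2+\phi_2\circ g_2^2=-2\mu^2\bigl(\frac1{\tau+1}+\frac1{\tau(\tau+1)}-\frac1\tau\bigr)=0$. The Type C case is identical with $-3$ in place of $-2$.
\item For $g_{C,1}$, the orbit reduces the three-term sum to $3-3\,\frac{\mu_C^2(\tau_C-1)+(\tau_C-\mu_C)^2-\tau_C(\mu_C-1)^2}{\tau_C(\tau_C-1)}$. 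This vanishes because the numerator equals $\tau_C(\tau_C-1)$.
\end{itemize}
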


\begin{proof}
The order-four orbit of $(\tau,\mu)$ is
\[
 (\tau,\mu),\quad
 \left(-\frac1\tau,\frac{\tau-\mu}{\tau}\right),\quad
 (\tau,\tau+1-\mu),\quad
 \left(-\frac1\tau,\frac{\mu-1}{\tau}\right),
\]
while the order-three orbit of $(\tau_C,\mu_C)$ is
\[
 (\tau_C,\mu_C),\quad
 \left(\frac{\tau_C-1}{\tau_C},
       \frac{\tau_C-\mu_C}{\tau_C}\right),\quad
 \left(-\frac1{\tau_C-1},
       \frac{\mu_C-1}{\tau_C-1}\right).
\]
The order-six orbit is common to the two constructions after replacing
$(\tau,\mu)$ by $(\tau_C,\mu_C)$.  Substitution gives the four norm-zero
identities.  Composition of the two affine transformations gives the
cusp laws.
\end{proof}

\begin{proposition}[Second affine period coordinates]

The solutions of \eqref{eq:beta-laws} with $\beta-\tau$ regular at the
cusp form a nonempty affine line
\[
 \{\beta_{\mathrm{part}}+c_B:c_B\in\C\}.
\]
The solutions of \eqref{eq:typeC-beta-laws} with
$\beta_C-\tau_C/2$ regular at the cusp form a nonempty affine line
\[
 \{\beta_{C,\mathrm{part}}+c_C:c_C\in\C\}.
\]
\end{proposition}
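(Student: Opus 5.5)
The plan follows the proof of the first affine period coordinates, now for an inhomogeneous additive law. Consider the sheaf $\mathcal T$ on the coarse base whose local sections are holomorphic solutions of \eqref{eq:beta-laws} (resp.\ \eqref{eq:typeC-beta-laws}), equivariant under the local stabilizer, with $\beta-\tau$ (resp.\ $\beta_C-\tau_C/2$) regular at the cusp. Differences of two local solutions satisfy the homogeneous laws $\beta'\circ g_{\kappa,j}=\beta'$ and are regular at the cusp. Since $g_{\kappa,0}$ then acts trivially, such differences are exactly holomorphic functions on the coarse curve. Hence $\mathcal T$ is a torsor under $\mathcal O_{\Pone}$, provided local sections exist everywhere. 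Once they do, the obstruction to a global section lies in $H^1(\Pone,\mathcal O)=0$. The global solutions then form a torsor under $H^0(\Pone,\mathcal O)=\C$, which is the asserted affine line $\{\beta_{\mathrm{part}}+c\}$.

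So the work is local existence, which I would check point by point.

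\textbf{Generic points.} Away from the three special points, a local solution is any holomorphic function on a disc upstairs, transported by the cocycle. The cocycle condition for the additive law holds because $(\phi_1,\phi_2)$ come from a genuine one-cocycle of the deck group.

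\textbf{Elliptic points.} At a fixed point $z_0$ of $g_{\kappa,j}$ of order $m$, I will average over the stabilizer. Set
\[
\beta_0=-\frac1m\sum_{i=0}^{m-1}\sum_{l=0}^{i-1}\phi_j\circ g^l ,
\]
or any equivalent explicit averaging formula. This produces a local solution of $\beta\circ g=\beta+\phi_j$, and it works exactly because the norm $\sum_l\phi_j\circ g^l$ vanishes. That vanishing is the content of the finite-order consistency lemma, so existence here reduces to $H^1(C_m,\mathcal O)=0$ over $\C$ with a norm-zero cocycle.

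\textbf{Cusp.} At the cusp, the composite transformation is $\beta\mapsto\beta+1$ by the consistency lemma, and $\tau\mapsto\tau+1$ (resp.\ $\tau_C\mapsto\tau_C+2$). The candidate $\beta=\tau$ (resp.\ $\tau_C/2$) solves the cusp law exactly, so it is a local section, and the regularity condition is by definition the one measured against it.

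The main obstacle is the elliptic step. There I must verify that $\phi_j$ is holomorphic near $z_0$: this needs $\tau\neq0$ and $\tau+1\neq0$, which holds since $\tau\in\Hh$. I must also confirm that the two generator laws are compatible on the universal cover, i.e.\ that $g\mapsto\phi_g$ extends to a cocycle of the orbifold fundamental group. That group is free on the two generators modulo only the finite-order relations, so those relations are the only constraints, and they are exactly the norm-zero identities. Because $\phi$ depends on $\mu$, I would first record that $\mu$ itself transforms by the affine law \eqref{eq:mu-laws}. The compatibility of $\phi$ with composition is then a direct substitution that reuses the order-four, order-three and order-six orbits listed in the consistency lemma. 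Descent from the root charts to the coarse curve uses the same exactness of finite-stabilizer invariants as before. The Type C case is identical with $(\tau_C,\mu_C,\phi^C_j)$ in place of $(\tau,\mu,\phi_j)$.
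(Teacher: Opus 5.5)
Your proposal is correct and follows essentially the paper's argument. The solution sheaf is an $\mathcal O_{\Pone}$-torsor, local sections come from stabilizer averaging via the zero-norm identities at the elliptic points and from $\tau$ (resp.\ $\tau_C/2$) at the cusp, and $H^1(\Pone,\mathcal O)=0$, $H^0(\Pone,\mathcal O)=\C$ give the affine line; your averaging formula $-\frac1m\sum_{i}\sum_{l<i}\phi\circ g^l$ coincides with the paper's $\frac1m\sum_j j\,\phi\circ g^j$ by the zero-norm identity.
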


\begin{proof}
For a cyclic generator $g$ of order $m$, the zero-norm identity for an
additive cocycle $\phi$ gives the local solution
\[
 b=\frac1m\sum_{j=0}^{m-1}j\,\phi\circ g^j.
\]
At the cusps use $\tau$ and $\tau_C/2$.  Differences of solutions are
invariant functions regular at the cusp; hence each solution sheaf is a
torsor under $\mathcal O_{\Pone}$.  The claim follows from
$H^1(\Pone,\mathcal O)=0$ and $H^0(\Pone,\mathcal O)=\C$.
\end{proof}

Put
\begin{equation}
 b(t_c)=\beta-\tau
\end{equation}
and
\begin{equation}
 b_C(t_c)=\beta_C-\sigma_C.
\end{equation}
These are holomorphic at the cusp.

\subsection{Nondegeneracy and equivariance}

Set
\[
 Z_B=\begin{pmatrix}2\mu&\tau\\\beta&\mu\end{pmatrix},
 \qquad
 Z_C=\begin{pmatrix}3\mu_C&\tau_C\\\beta_C&\mu_C\end{pmatrix},
 \qquad
 \Pi_\kappa=[Z_\kappa\mid I_2],
\]
and
\[
 \mathcal D_B=\operatorname{Im}\beta
 -2\frac{(\operatorname{Im}\mu)^2}{\operatorname{Im}\tau},
 \qquad
 \mathcal D_C=\operatorname{Im}\beta_C
 -3\frac{(\operatorname{Im}\mu_C)^2}{\operatorname{Im}\tau_C}.
\]

\begin{proposition}[Nondegenerate period families]\label{prop:nondegenerate-periods}

The functions $\mathcal D_B$ and $\mathcal D_C$ are invariant under their
respective deck groups.  For sufficiently negative imaginary part of
$c_B$ or $c_C$, they are negative everywhere.  Hence every matrix
$\Pi_\kappa(z)$ defines a lattice in $\C^2$.
\end{proposition}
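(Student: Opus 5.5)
The plan is to verify three things in order: that each $\mathcal D_\kappa$ is invariant under the deck group, that it is bounded above on the base and tends to $-\infty$ at the cusp, and that $\mathcal D_\kappa<0$ forces $\Pi_\kappa$ to have $\R$-independent columns. The last point is pure linear algebra and comes first. The four columns of $\Pi_\kappa=[Z_\kappa\mid I_2]$ span a lattice in $\C^2$ exactly when they are $\R$-linearly independent. Since the last two columns are the standard basis, this holds exactly when $\det\operatorname{Im}Z_\kappa\neq0$. Expanding the determinant gives
\[
\det\operatorname{Im}Z_B=2(\operatorname{Im}\mu)^2-\operatorname{Im}\tau\,\operatorname{Im}\beta=-\operatorname{Im}\tau\cdot\mathcal D_B ,
\]
and similarly $\det\operatorname{Im}Z_C=-\operatorname{Im}\tau_C\cdot\mathcal D_C$. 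Because $\operatorname{Im}\tau_\kappa>0$, negativity of $\mathcal D_\kappa$ gives the lattice property.

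For invariance it suffices to check the two elliptic generators, since the deck group is generated by them. I will treat the second-generator law, which is common to both types, and the first-generator laws separately. Under $g_{\kappa,2}$ we have $\tau\mapsto-1/(\tau+1)$, $\mu\mapsto\mu/(\tau+1)$ and $\beta\mapsto\beta-s\mu^2/(\tau+1)$ with $s=s_\kappa$. One has $\operatorname{Im}(-1/(\tau+1))=\operatorname{Im}\tau/|\tau+1|^2$. Writing $\mu/(\tau+1)=\mu\overline{(\tau+1)}/|\tau+1|^2$, the identity to verify is
\[
\operatorname{Im}\frac{\mu^2}{\tau+1}+\frac{\bigl(\operatorname{Im}(\mu\overline{(\tau+1)})\bigr)^2}{|\tau+1|^2\operatorname{Im}\tau}=\frac{(\operatorname{Im}\mu)^2}{\operatorname{Im}\tau}.
\]
This is the classical invariance of $\operatorname{Im}(\text{index term})-(\operatorname{Im} z)^2/\operatorname{Im}\tau$ under the Jacobi action, and it follows by writing $\mu=x+iy$, $\tau+1=a+ib$ and expanding. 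The first generator has the affine shift $\mu\mapsto1-\mu/\tau$ and $\beta\mapsto\beta+1-s\mu^2/\tau$. The constants $1$ have real parts only, so they do not affect imaginary parts, and the same identity, with $\tau$ in place of $\tau+1$, applies. For Type C one writes $(\tau_C-1)/\tau_C=1-1/\tau_C$, and the real shift is again harmless. The cusp law $(\tau,\mu,\beta)\mapsto(\tau+h,\mu+1,\beta+1)$ leaves all imaginary parts fixed and is consistent with this. Consequently $\mathcal D_\kappa$ descends to a continuous function on $B_\kappa\setminus\{p_{\kappa,0}\}$. Continuity at the elliptic points holds because the function is defined and continuous upstairs.

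The main step is the behaviour at the cusp, where I will use the expansions $\mu=\tau+m$, $\beta=\tau+b$ (Type B), and $\mu_C=\sigma_C+m_C=\tau_C/2+O(1)$, $\beta_C=\tau_C/2+O(1)$ (Type C). Here $m,b,m_C,b_C$ are holomorphic at $t_c=0$, and $T=\operatorname{Im}\tau_\kappa\to\infty$ as $t_c\to0$. In Type B this gives
\[
\mathcal D_B=T+\operatorname{Im}b-2\frac{(T+\operatorname{Im}m)^2}{T}=-T+O(1).
\]
In Type C the same expansion gives $\mathcal D_C=\tfrac T2-3\cdot\frac{(T/2)^2}{T}+O(1)=-\tfrac T4+O(1)$. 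In both cases $\mathcal D_\kappa\to-\infty$ at the cusp. The delicate point is that the $O(1)$ terms are uniform in the argument of $t_c$. This holds because they are bounded holomorphic functions of $t_c$ combined with quotients such as $(\operatorname{Im}m)^2/T$, which are bounded once $T$ is large.

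To conclude, fix particular solutions $\beta_{\mathrm{part}}$ and $\beta_{C,\mathrm{part}}$. The particular $\mathcal D_\kappa$ is then continuous on the complement of a small cusp disc, which is compact, and tends to $-\infty$ on that disc. It is therefore bounded above by some constant $C_\kappa$. Replacing $\beta$ by $\beta+c$ changes $\mathcal D_\kappa$ by $\operatorname{Im}c$, so $\mathcal D_\kappa<0$ everywhere as soon as $\operatorname{Im}c<-C_\kappa$, which is the required admissible half-plane.
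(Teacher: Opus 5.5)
Your proof is correct and follows the paper's argument: the identity $-\det\operatorname{Im}Z_\kappa=(\operatorname{Im}\tau_\kappa)\mathcal D_\kappa$ gives the lattice property, the cusp asymptotics are $-\operatorname{Im}\tau+O(1)$ and $-\tfrac14\operatorname{Im}\tau_C+O(1)$, and changing the affine constant shifts $\mathcal D_\kappa$ by $\operatorname{Im}c$. The only difference is the invariance step: you verify the Jacobi-type identity by direct expansion (which checks out), whereas the paper reads invariance off period equivariance, since both $\det\operatorname{Im}Z_\kappa$ and $\operatorname{Im}\tau_\kappa$ scale by $|c\tau_\kappa+d|^{-2}$.
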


\begin{proof}
The transformation laws give
\[
 -\det\operatorname{Im}Z_\kappa
 =(\operatorname{Im}\tau_\kappa)\mathcal D_\kappa.
\]
For an elliptic generator with lower row $(c,d)$, period equivariance
implies
\[
 \det\operatorname{Im}Z_\kappa(gz)
 =|c\tau_\kappa+d|^{-2}\det\operatorname{Im}Z_\kappa(z),
 \qquad
 \operatorname{Im}\tau_\kappa(gz)
 =|c\tau_\kappa+d|^{-2}\operatorname{Im}\tau_\kappa(z).
\]
Hence $\mathcal D_\kappa(gz)=\mathcal D_\kappa(z)$ exactly.  Moreover the
real $4\times4$ matrix of the four period columns has determinant, up to
sign, $\det\operatorname{Im}Z_\kappa$; therefore
$\mathcal D_\kappa<0$ implies the required real-linear
independence.  At the cusps,
\[
 \mathcal D_B=-\operatorname{Im}\tau+O(1),
 \qquad
 \mathcal D_C=-\frac14\operatorname{Im}\tau_C+O(1).
\]
Thus each function tends to $-\infty$ at the cusp and is bounded above on
the compact complement.  Changing the affine constant adds its imaginary
part to $\mathcal D_\kappa$.
\end{proof}

Fix the particular solutions $\beta_{\kappa,0}$ and let
$\mathcal D_{\kappa,0}$ be the corresponding scalar.  It descends to
the coarse base minus the cusp, is continuous at the finite points,
and tends to $-\infty$ at the cusp.  Hence
$M_\kappa=\max\mathcal D_{\kappa,0}$ is finite.  Choose once and for all
$C_\kappa\leq-M_\kappa$ and set
\begin{equation}\label{eq:admissible-domain}
 \mathcal U_\kappa=\{c\in\C:\operatorname{Im}c<C_\kappa\}.
\end{equation}
Throughout the paper, an \emph{admissible parameter} means a point of
this fixed half-plane.  Then
$\beta_\kappa=\beta_{\kappa,0}+c$ satisfies $\mathcal D_\kappa<0$
everywhere.  Real translations preserve $\mathcal U_\kappa$.

For Type B let $T_1,T_2$ be the standard lifts in
\eqref{eq:typeB-standard-matrices}, and set $A^B_j=(T_j^{-1})^{\mathsf T}$.
For Type C put
\[
 T^C_1=
 \begin{pmatrix}
 1&-3&0&1\\
 0&-1&1&1\\
 0&-1&0&0\\
 0&0&0&1
 \end{pmatrix},
 \qquad
 T^C_2=
 \begin{pmatrix}
 1&0&0&0\\
 0&0&-1&0\\
 0&1&1&0\\
 0&0&0&1
 \end{pmatrix},
 \qquad
 A^C_j=((T^C_j)^{-1})^{\mathsf T}.
\]
Define
\[
 R^B_1=\begin{pmatrix}-1/\tau&0\\-\mu/\tau&1\end{pmatrix},
 \quad
 R^B_2=\begin{pmatrix}1/(\tau+1)&0\\-\mu/(\tau+1)&1\end{pmatrix},
\]
\[
 R^C_1=\begin{pmatrix}-1/\tau_C&0\\-\mu_C/\tau_C&1\end{pmatrix},
 \quad
 R^C_2=\begin{pmatrix}1/(\tau_C+1)&0\\-\mu_C/(\tau_C+1)&1\end{pmatrix}.
\]

\begin{proposition}[Period equivariance]

For $\kappa=B,C$ and $j=1,2$,
\[
 \Pi_\kappa(g_{\kappa,j}z)
 =R^\kappa_j(z)\Pi_\kappa(z)(A^\kappa_j)^{-1}.
\]
Consequently the quotients are holomorphic families of compact complex
two-tori over the complements of the three special points.
\end{proposition}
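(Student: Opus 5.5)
The proof is a direct verification: compute both sides column by column, then derive the torus-family statement from it.

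Write $\Pi_\kappa(z)=[Z_\kappa\mid I_2]$ and compute both sides. Since $A^\kappa_j=((T^\kappa_j)^{-1})^{\mathsf T}$, we have $(A^\kappa_j)^{-1}=(T^\kappa_j)^{\mathsf T}$. The right side is therefore $R^\kappa_j\,[Z_\kappa\mid I_2]\,(T^\kappa_j)^{\mathsf T}$, an explicit $2\times4$ matrix whose entries are rational in $(\tau_\kappa,\mu_\kappa,\beta_\kappa)$. The left side is obtained by substituting the transformation laws \eqref{eq:tau-laws}, \eqref{eq:typeC-tau-laws}, \eqref{eq:mu-laws}, \eqref{eq:typeC-mu-laws}, \eqref{eq:beta-laws} and \eqref{eq:typeC-beta-laws} into $Z_\kappa$. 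The identity then reduces to four column identities for each of the four cases $(\kappa,j)$.

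For example, with Type C and $j=2$, $(T^C_2)^{\mathsf T}$ fixes the first and fourth basis vectors. The right side's first column is $R^C_2(3\mu_C,\beta_C)^{\mathsf T}$, which equals $(3\mu_C/(\tau_C+1),\,\beta_C-3\mu_C^2/(\tau_C+1))$. This matches $(3\mu_C\circ g,\beta_C\circ g)$ by $\mu_C\circ g_{C,2}=\mu_C/(\tau_C+1)$ and $\phi^C_2=-3\mu_C^2/(\tau_C+1)$. The middle columns reproduce $\tau_C\circ g_{C,2}=-1/(\tau_C+1)$ together with the identity block in the permuted positions. Each case is checked the same way. The first-column check is exactly where the quadratic term of $\phi_1,\phi_2$ (coefficient $s_\kappa=2$ or $3$) is needed. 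The constant $1$ in $\phi_1$ and in $\mu\circ g_1$ matches the nonzero affine entries $d_1$ and $z_1$ in the lift.

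I expect the main obstacle to be bookkeeping rather than conceptual. The two places errors can creep in are the inverse–transpose convention, since the identity involves $(A^\kappa_j)^{-1}$ rather than $A^\kappa_j$, and the precise form of the Type B matrices \eqref{eq:typeB-standard-matrices}, which are stated later. I would fix conventions by checking one column (the $\tau$ column) first, and only then do the remaining columns. As a consistency check, composing the $j=1,2$ identities should give the cusp law. There $R$ becomes the identity, because the cusp translations $(\tau+1,\mu+1,\beta+1)$ are integral shifts, so the period change is by the unipotent $T_1T_2$.

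For the consequence, the identity says that the lattice $\Lambda(z)\subset\C^2$ spanned by the columns of $\Pi_\kappa(z)$ satisfies $\Lambda(g z)=R^\kappa_j(z)\Lambda(z)$. Two facts are needed here.
\begin{enumerate}[label=\textup{(\roman*)}]
\item $A^\kappa_j$ is integral and unimodular, since $T^\kappa_j\in GL_4(\Z)$.
\item $R^\kappa_j(z)$ is invertible, with determinant $-1/\tau_\kappa$ or $1/(\tau_\kappa+1)$.
\end{enumerate}
By Proposition~\ref{prop:nondegenerate-periods}, $\Lambda(z)$ is a lattice for admissible parameters. The quotient $(\mathbb H\times\C^2)/(\text{lattice})$ is then a holomorphic torus family over $\mathbb H$. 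The deck group acts on it by $(z,w)\mapsto(gz,R^\kappa_j(z)w)$; this action is fibrewise linear and compatible with the group law, because the $R$'s form a cocycle, which follows from the identities themselves.

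This action is free over the complement of the elliptic fixed points. Descending along $\mathbb H\to B_\kappa\setminus\{0,1,\infty\}$ therefore gives a proper holomorphic submersion with fibres $\C^2/\Lambda(z)$.
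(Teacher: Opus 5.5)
Your proposal is correct and follows essentially the paper's argument: the paper notes that the last two columns force the given $R^\kappa_j$, and that the first two columns are then equivalent to the transformation laws for $\tau_\kappa,\mu_\kappa,\beta_\kappa$; this is exactly your column-by-column check, and your Type C, $j=2$ computation is right, as are the other three cases. Your treatment of the consequence only makes explicit what the paper leaves implicit: unimodularity of $A^\kappa_j$, invertibility of $R^\kappa_j$, the cocycle relation obtained from $(A^\kappa_j)^{m_j}=I$, and the nondegeneracy proposition.
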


\begin{proof}
The last two columns determine the change of complex basis $R^\kappa_j$.
For the first two columns the identity is equivalent to the transformation
laws for $\tau_\kappa,\mu_\kappa,\beta_\kappa$ established above.
\end{proof}

\paragraph{Deck matrices and forward transport.}
\label{par:deck-forward-convention}
The matrices $A_j$ in the period identity describe the deck action on
the homology marking.  Write
\begin{equation}\label{eq:deck-forward-distinction}
 \mathsf D_j:=A_j,\qquad \mathsf P_j:=\mathsf D_j^{-1}.
\end{equation}
Here $\mathsf P_j$ is the forward return map on homology along the
chosen clockwise meridian.  Indeed, on the root arc
$s(q)=s_0e^{-2\pi iq/m_j}$ a flat period $\nu\in\Lambda$ arrives as
$\Pi(gs_0)\nu$.  The quotient identifies $(s,u)$ with $(gs,R(s)u)$,
so returning the endpoint to the initial fibre gives
\begin{equation}\label{eq:clockwise-forward-period}
 R(s_0)^{-1}\Pi(gs_0)\nu
 =\Pi(s_0)A_j^{-1}\nu=\Pi(s_0)\mathsf P_j\nu.
\end{equation}
The same calculation applies to the affine root action on homology,
since translations induce the identity there.  With the path convention
above,
\begin{equation}\label{eq:path-deck-transport-product}
 \mathsf D(a_1a_2)=\mathsf D_1\mathsf D_2,\qquad
 \mathsf P(a_1a_2)=\mathsf P_2\mathsf P_1
 =(\mathsf D_1\mathsf D_2)^{-1}.
\end{equation}
The contragredient deck matrices are $T_j=(A_j^{-1})^{\mathsf T}$;
forward cohomology transport is $T_j^{-1}=A_j^{\mathsf T}$.
A matrix and its inverse have the same fixed lattice, so the invariant
calculations below may use $T_j$.  These conventions also apply to the
index-$r$ matrices constructed next.

\subsection{Isogenies and cusp expansions}

For $r\ge1$ define
\begin{equation}
 T^B_{1,r}=
 \begin{pmatrix}
 1&-2r&0&r\\
 0&0&1&1\\
 0&-1&0&0\\
 0&0&0&1
 \end{pmatrix},
 \qquad T^B_{2,r}=T_2,
\end{equation}
\begin{equation}
 T^C_{1,r}=
 \begin{pmatrix}
 1&-3r&0&r\\
 0&-1&1&1\\
 0&-1&0&0\\
 0&0&0&1
 \end{pmatrix},
 \qquad T^C_{2,r}=T^C_2.
\end{equation}
Let $A^\kappa_{j,r}=((T^\kappa_{j,r})^{-1})^{\mathsf T}$ and put
\begin{equation}
 \Pi^B_r=
 \begin{pmatrix}
 2r\mu&\tau&1&0\\
 r\beta&\mu&0&1
 \end{pmatrix},
\end{equation}
\begin{equation}
 \Pi^C_r=
 \begin{pmatrix}
 3r\mu_C&\tau_C&1&0\\
 r\beta_C&\mu_C&0&1
 \end{pmatrix}.
\end{equation}
The same matrices $R^\kappa_j$ give equivariance.  With
$D_r=\operatorname{diag}(r,1,1,1)$,
\[
 \Pi^\kappa_r=\Pi^\kappa_1D_r,
 \qquad
 A^\kappa_{j,1}D_r=D_rA^\kappa_{j,r}.
\]
Thus the two open families form isogeny towers of degree $r$.

For Type B, write $\tau=\sigma+h$, $\mu=\tau+m$, and $\beta=\tau+b$.
Then
\begin{equation}\label{eq:typeB-cusp-decomposition}
 Z^B_r=\sigma B^B_{0,r}+C^B_r,
 \qquad
 B^B_{0,r}=\begin{pmatrix}2r&1\\r&1\end{pmatrix},
\end{equation}
where
\[
 C^B_r=
 \begin{pmatrix}
 2r(h+m)&h\\r(h+b)&h+m
 \end{pmatrix},
\]
and
\begin{equation}
 C^B_r(B^B_{0,r})^{-1}
 =\begin{pmatrix}h+2m&-2m\\b-m&-b+h+2m\end{pmatrix}.
\end{equation}
For Type C, write $\tau_C=2\sigma_C+h_C$, $\mu_C=\sigma_C+m_C$,
and $\beta_C=\sigma_C+b_C$.  Then
\begin{equation}\label{eq:typeC-cusp-decomposition}
 Z^C_r=\sigma_CB^C_{0,r}+C^C_r,
 \qquad
 B^C_{0,r}=\begin{pmatrix}3r&2\\r&1\end{pmatrix},
\end{equation}
where
\[
 C^C_r=\begin{pmatrix}3rm_C&h_C\\rb_C&m_C\end{pmatrix},
\]
and
\begin{equation}
 C^C_r(B^C_{0,r})^{-1}
 =\begin{pmatrix}
 -h_C+3m_C&3h_C-6m_C\\
 b_C-m_C&-2b_C+3m_C
 \end{pmatrix}.
\end{equation}
Both normalized twists are independent of $r$, and
\[
 \operatorname{SNF}(B^B_{0,r})
 =\operatorname{SNF}(B^C_{0,r})
 =\operatorname{diag}(1,r).
\]

For the topological marking let
$\Lambda=\Z\langle\gamma,u,w,\delta\rangle$ be the first-homology lattice.
We use the invariant functional
\[
 \chi=\gamma^*:\Lambda\longrightarrow\Z,
 \qquad\chi(\gamma)=1,\quad\chi(u)=\chi(w)=\chi(\delta)=0,
\]
and put $\Lambda_{\mathrm{van}}=\Z\langle w,\delta\rangle$.
Proposition~\ref{prop:cusp-pi1} identifies this primitive angular
sublattice with the kernel of $H_1(F;\Z)\to H_1(N_{0,r}^\kappa;\Z)$.
It must be distinguished from the sublattice
$\operatorname{im}(\mathsf M^\kappa_{0,r}-I)$, whose index in it is $r$.  The two cusp deck matrices are
\begin{equation}
 \mathsf M^B_{0,r}=
 \begin{pmatrix}
 1&0&0&0\\
 0&1&0&0\\
 2r&1&1&0\\
 r&1&0&1
 \end{pmatrix}
\end{equation}
and
\begin{equation}\label{eq:typeC-cusp-monodromy}
 \mathsf M^C_{0,r}=
 \begin{pmatrix}
 1&0&0&0\\
 0&1&0&0\\
 3r&2&1&0\\
 r&1&0&1
 \end{pmatrix}.
\end{equation}
In both cases
\[
 \ker(\mathsf M^\kappa_{0,r}-I)=\Lambda_{\mathrm{van}},
\]
and the induced map
$\Lambda/\Lambda_{\mathrm{van}}\to\Lambda_{\mathrm{van}}$ has matrix
$B^\kappa_{0,r}$ in the bases $(\bar\gamma,\bar u)$ and $(w,\delta)$.
Its image therefore has index $r$.
For forward transport along the clockwise cusp meridian, put
\begin{equation}\label{eq:clockwise-cusp-transport}
 \mathsf D^\kappa_{0,r}:=\mathsf M^\kappa_{0,r}
 =(A^\kappa_{1,r}A_{2,r})^{-1},\qquad
 \mathsf P^\kappa_{0,r}:=(\mathsf D^\kappa_{0,r})^{-1}
 =\begin{pmatrix}I_2&0\\-B^\kappa_{0,r}&I_2\end{pmatrix}.
\end{equation}
To see the sign directly, continuation along this meridian changes
$\sigma_\kappa$ to $\sigma_\kappa-1$.  A flat lattice vector
$(\alpha,\beta)\in\mathbb Z^2\oplus\mathbb Z^2$ therefore returns as
$(\alpha,\beta-B^\kappa_{0,r}\alpha)$ in the initial period marking.
Thus $\mathsf D_{0,r}-I$ induces $B^\kappa_{0,r}$, whereas
$\mathsf P_{0,r}-I$ induces $-B^\kappa_{0,r}$.  Their integral kernels
and images agree.  More generally, for every integral automorphism $D$,
\begin{equation}\label{eq:inverse-integral-image}
 D^{-1}-I=(D-I)(-D^{-1}),\qquad
 \ker(D^{-1}-I)=\ker(D-I),\quad
 \operatorname{im}(D^{-1}-I)=\operatorname{im}(D-I).
\end{equation}
The first identity is an integral change of basis on the source; it
applies also to every exterior power of $D$.

\section{Local completions}\label{sec:local-completions}

The period families are completed over the three missing points of the
base.  At the two finite-monodromy points we use affine logarithmic
transforms.  At the unipotent point we use a periodic toroidal model.
The finite construction is elementary once freeness is checked; the
periodic quotient requires a separate properness theorem because the
underlying toric space is not of finite type.

\subsection{Logarithmic transforms at the finite-monodromy points}
\label{sec:finite}

Fix integers $n\geq1$ and $m\geq2$.  Let $\Lambda$ be a lattice of
rank $2n$ and let $\mathcal T\to\Delta_s$ be the holomorphic torus family
\[
 \mathcal T_s=\mathbb C^n/\Pi(s)\Lambda,
\]
where $\Pi(s)$ is holomorphic and maps $\Lambda_{\mathbb R}$
real-linearly isomorphically onto $\mathbb C^n$ for every $s$.
Put $g(s)=\zeta_m^{-1}s$.  Suppose that $A\in GL(\Lambda)$,
$A^m=I$, and a holomorphic map $R:\Delta_s\to GL_n(\mathbb C)$ satisfy
\begin{equation}\label{eq:root-holomorphic-equivariance}
 \Pi(gs)A=R(s)\Pi(s),\qquad
 R(g^{m-1}s)\cdots R(gs)R(s)=I_n.
\end{equation}
The second identity also follows from the first and $A^m=I$, since
the period columns span $\mathbb C^n$.  Let $v\in\Lambda^A$.

\begin{lemma}[Affine logarithmic transform]\label{lem:log-transform}
Under \eqref{eq:root-holomorphic-equivariance}, the formula
\begin{equation}\label{eq:holomorphic-affine-root-action}
 \Phi(s,[u])=\left(gs,\left[R(s)u+\frac1m\Pi(gs)v\right]\right)
\end{equation}
defines a holomorphic action of order $m$ on $\mathcal T$.
Suppose that $\chi\in\Lambda^\vee$ is primitive and $A$-invariant.  If
\[
 \gcd\bigl(m,\chi(v)\bigr)=1,
\]
then every nonidentity element acts freely.  The quotient is smooth,
and its central divisor is $mS$, where $S$ is the smooth quotient of
$\mathcal T_0$ by the induced affine action.  If $\mathcal T\to\Delta_s$
is proper, the quotient is proper over the coarse disc $t=s^m$.
\end{lemma}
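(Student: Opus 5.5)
I will verify the claims of Lemma~\ref{lem:log-transform} in order: holomorphy and well-definedness, order $m$, freeness, then the structure and properness of the quotient.

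\emph{Well-definedness and holomorphy.} The map $\Phi$ is holomorphic on $\Delta_s\times\mathbb C^n$. It descends to $\mathcal T$ because $R(s)\Pi(s)\Lambda=\Pi(gs)A\Lambda=\Pi(gs)\Lambda$ by \eqref{eq:root-holomorphic-equivariance}; thus $u\mapsto R(s)u$ carries the lattice of $\mathcal T_s$ onto the lattice of $\mathcal T_{gs}$.

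\emph{Order $m$.} Iterating, the $k$-th power sends $(s,[u])$ to
\[
\Bigl(g^ks,\Bigl[R_k(s)u+\tfrac1m\sum_{j=1}^{k}R(g^{k-1}s)\cdots R(g^{j}s)\,\Pi(g^js)v\Bigr]\Bigr),
\]
where $R_k(s)=R(g^{k-1}s)\cdots R(s)$. Using $R(s')\Pi(s')=\Pi(gs')A$ repeatedly, each summand equals $\Pi(g^ks)A^{k-j}v=\Pi(g^ks)v$, since $v\in\Lambda^A$. Hence the translation part of $\Phi^k$ is $\frac km\Pi(g^ks)v$. For $k=m$ we have $R_m=I$ and $g^m=1$, so the translation is $\Pi(s)v\in\Pi(s)\Lambda$, which is zero in the torus. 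Thus $\Phi^m=\mathrm{id}$.

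\emph{Freeness.} For $0<k<m$, $\Phi^k$ moves $s$ unless $s=0$, so only the central fibre needs checking. At $s=0$ the map $u\mapsto R_k(0)u+\frac km\Pi(0)v$ is an affine automorphism of $\mathcal T_0$ whose linear part corresponds to $A^k$ on $\Lambda$ (since $\Pi(0)A=R(0)\Pi(0)$). Suppose it had a fixed point $[u]$. Lifting to $\Lambda_{\mathbb R}$ through the real isomorphism $\Pi(0)$, write $u=\Pi(0)x$. The fixed-point equation becomes $A^kx+\frac km v-x\in\Lambda$, taking $A^k$ in the appropriate direction. Applying the $A$-invariant functional $\chi$ annihilates $A^kx-x$, which gives $\frac{k\chi(v)}m\in\mathbb Z$. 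This contradicts $\gcd(m,\chi(v))=1$ and $0<k<m$. The one point requiring care, and in my view the main obstacle, is whether $\Pi(0)A=R(0)\Pi(0)$ means $A$ or $A^{-1}$ acts on real coordinates. Either way, $\chi$ is invariant under both, so the argument is insensitive to this choice.

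\emph{Quotient.} The action is free and properly discontinuous (the group is finite), so the quotient $X$ is a smooth complex manifold. The function $t=s^m$ is invariant and descends, giving $X\to\Delta_t$. The central fibre of $\mathcal T\to\Delta_s$ is reduced with $s$ vanishing to order one, so $t$ vanishes to order $m$ along it upstairs. Since the quotient map is étale, $t$ also vanishes to order $m$ along the image $S=\mathcal T_0/\langle\Phi\rangle$, which is smooth because the induced action on $\mathcal T_0$ is free. Hence the central divisor is $mS$. Finally, if $\mathcal T\to\Delta_s$ is proper, then $\mathcal T\to\Delta_s\to\Delta_t$ is proper because $s\mapsto s^m$ is finite. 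The quotient map is surjective and continuous, so preimages under $X\to\Delta_t$ of compact sets are images of compact sets, hence compact. Therefore $X\to\Delta_t$ is proper.
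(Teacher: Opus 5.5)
Your proof is correct and follows the same route as the paper. You get well-definedness from the period identity, compute the iterates (the translation part of $\Phi^k$ is $\frac km\Pi(g^ks)v$; the paper does the same computation in the flat coordinates $(s,x)\mapsto(gs,Ax+v/m)$), and obtain the central fixed-point equation $(A^k-I)x+\frac kmv\in\Lambda$, which is ruled out by applying $\chi$. You then finish with the order-$m$ vanishing of $t=s^m$ and the finiteness of the root map for properness.

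Your worry about $A$ versus $A^{-1}$ does not arise. Setting $s=0$ in $\Pi(gs)A=R(s)\Pi(s)$ gives $R(0)^k\Pi(0)=\Pi(0)A^k$ directly.
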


\begin{proof}
The period identity makes \eqref{eq:holomorphic-affine-root-action}
well defined on lattice quotients, and both its linear part and its
translation section are holomorphic.  In the flat real coordinates
$x\in\Lambda_{\mathbb R}/\Lambda$, where $u=\Pi(s)x$, it becomes
\[
 (s,x)\longmapsto(gs,Ax+v/m).
\]
Since $Av=v$, its $k$th iterate has fibre coordinate $A^kx+kv/m$.
The $m$th iterate is translation by the integral period $v$, hence is
the identity.  The rotation of the base has exact order $m$.
A nontrivial power has no fixed point off $s=0$.  A fixed point on the
central torus would give, for some $1\leq k<m$ and
$\widetilde x\in\Lambda_{\mathbb R}$,
\[
 (A^k-I)\widetilde x+\frac{k}{m}v\in\Lambda.
\]
Applying the real-linear extension of $\chi$ gives
$m\mid k\chi(v)$, contrary to coprimality.  The free finite holomorphic
quotient is therefore smooth.  The local coordinate $s$ vanishes to
order one on the root central fibre, whereas $t=s^m$, so the quotient
central divisor has multiplicity $m$.  The root map is finite, and a
finite quotient of a proper family over it is proper over the coarse disc.
\end{proof}

For the families below, the period equivariance of
Section~\ref{sec:period}, restricted to each root disc, is precisely
\eqref{eq:root-holomorphic-equivariance}.  The periods and the matrices
$R_j^\kappa$ are holomorphic there.  Iterating the period identity and
using $(A_{j,r}^\kappa)^{m_j}=I$ gives its finite-order cocycle identity.

For the index-$r$ Type B matrices, the dual deck matrices
$A_{j,r}=(T_{j,r}^{-1})^{\mathsf T}$ fix
\[
 v_{1,r}=(1,-r,-r,0)^{\mathsf T},
 \qquad
 v_{2,r}=(-1,0,0,0)^{\mathsf T}.
\]
For Type C put
\begin{equation}\label{eq:typeC-filling-vectors}
 v^C_{1,r}=(1,-r,-r,0)^{\mathsf T},
 \qquad
 v^C_{2,r}=(-1,0,0,0)^{\mathsf T}.
\end{equation}
The corresponding dual monodromies fix these vectors.  In both families
$\chi(v_1)=1$ and $\chi(v_2)=-1$, so the lemma applies to the
multiplicities $(4,6)$ and $(3,6)$.

\begin{proposition}[The reduced finite fibres and their normal characters]
\label{prop:bielliptic-fibres}
For the Type B and Type C actions, every reduced finite fibre is a
bielliptic surface.  If its multiplicity is $m$, then its normal bundle
has exact order $m$.
\end{proposition}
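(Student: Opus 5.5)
The reduced fibre over a finite point is $S=\mathcal T_0/\langle\Phi\rangle$. Here $\mathcal T_0=\C^2/\Pi(0)\Lambda$, and $\Phi|_{\mathcal T_0}$ is the affine map $x\mapsto Ax+v/m$ in flat coordinates. By Lemma~\ref{lem:log-transform} this action is free. Throughout, $A=A^\kappa_{j,r}$ and $v=v^\kappa_{j,r}$.

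\emph{Step 1: the central torus is a product of elliptic curves.} At the fixed point, $R(0)$ is a finite-order automorphism of $\C^2$. From the displayed $R^\kappa_j$, evaluated at the elliptic fixed point where $\tau_\kappa=i$ (for $m=4$) or $\tau_\kappa$ is a primitive cube/sixth root point (for $m=3,6$), its eigenvalues are $\{1,\lambda\}$, with $\lambda=-1/\tau$ or $1/(\tau+1)$ a primitive $m$-th root of unity. The $1$-eigenline and the $\lambda$-eigenline are $R(0)$-stable. Their intersections with the lattice $\Pi(0)\Lambda$ have rank two, since they come from the rational decomposition of $\Lambda_\Q$ by $\ker(A-I)$ and $\operatorname{im}(A-I)$. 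They therefore define elliptic curves $E=\ker(A-I)_\R/\ker(A-I)$ and $F$, with an isogeny $E\times F\to\mathcal T_0$.

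\emph{Step 2: identify $S$ as bielliptic.} The group $\Z/m$ acts on $\mathcal T_0$ freely. Its linear part is trivial on $E$ and a rotation of order $m$ on $F$, while its translation part $v/m$ has nonzero image in $E$: since $\chi(v)=\pm1$ and $\chi$ factors through the $A$-coinvariants, $v/m$ is not absorbed by $\operatorname{im}(A-I)$. The action therefore commutes with the induced map $\mathcal T_0\to E'=\mathcal T_0/(\text{image of }\lambda\text{-line})$. This gives fibrations $S\to E'/\langle v/m\rangle$, an elliptic curve, and $S\to F/\langle\lambda\rangle\cong\Pone$. Hence $S$ is a smooth compact surface with $q(S)=1$, $\kappa(S)=0$, and $K_S$ torsion, which is the bielliptic classification. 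Freeness is already given, so only the explicit check of the eigenvalues of $R(0)$ is needed.

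\emph{Step 3: the normal bundle.} Let $X\to\Delta_t$ be the quotient, with central divisor $mS$. The function $t$ gives $\mathcal O_X(mS)|_S\cong\mathcal O_S$, so $N_S^{\otimes m}$ is trivial. For exactness, note that $N_S$ is the descent of the trivial normal bundle $\mathcal O_{\mathcal T_0}\cdot\partial_s$ twisted by the character $s\mapsto\zeta_m^{-1}s$. Thus $N_S$ is the flat line bundle attached to the character
\[
\pi_1(S)\to\Z/m\to\C^*,\qquad 1\mapsto\zeta_m^{-1},
\]
where $\pi_1(S)\to\Z/m$ is the deck quotient of the étale cover $\mathcal T_0\to S$. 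This homomorphism is surjective, so $N_S^{\otimes k}$ is holomorphically trivial only if $m\mid k$, once flat-trivial is shown to imply character-trivial. That implication is the main obstacle. Flat unitary line bundles on a compact Kähler surface are holomorphically trivial iff their character is trivial, because $H^1(S,\C^*)\to\operatorname{Pic}^0$ is injective on unitary characters (Hodge theory). Alternatively, pull back to $\mathcal T_0$ and compute the invariant sections of $\mathcal O$ with the twisted action: they are the constants transforming by $\zeta_m^{-k}$, which vanish unless $m\mid k$. Either route gives exact order $m$.
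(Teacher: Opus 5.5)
Your proposal is correct. The normal-bundle step is the paper's argument: your second route, pulling a nowhere-zero section of $N_S^{\otimes k}$ back to a constant on the compact torus transforming by $\zeta_m^{-k}$, is exactly what the paper does. Your Hodge-theoretic alternative also works, and in fact it needs no Kähler hypothesis, since $\log|\sigma|^2$ is pluriharmonic.

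The bielliptic identification is where you differ. The paper never decomposes the torus. It observes that $S$ is Kähler as a finite étale quotient of a torus, and that the invariant holomorphic $1$-forms form a line, so $q(S)=1$. Then $\chi(\mathcal O_S)=0$ forces $p_g=0$, the determinant character of the linear holonomy makes $K_S$ torsion, and the paper quotes Enriques--Kodaira.

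You instead build the eigenline isogeny $E\times F\to\mathcal T_0$ and the two fibrations. This gives more geometry; in particular, your use of $\chi$ shows that the induced translation on $E'$ has exact order $m$. However, your closing sentence returns to the classification without supplying its inputs. The fibrations only give $q\geq1$. You still need the eigenvalue count on $1$-forms, and $\det R(0)=\lambda\neq1$ for the statement about $K_S$. You also need to say why $S$ is Kähler (or has $b_1$ even), because secondary Kodaira surfaces also have $\kappa=0$ and $q=1$.

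Alternatively, your decomposition can finish the job without any classification. The lifts of $\langle\Phi\rangle$ to $E\times F$ form a finite group $G'$ acting freely, with elements of the form $(x,y)\mapsto(x+a,\lambda^ky+b)$. The action on $E$ is faithful and by translations: an element trivial on $E$ with nontrivial linear part on $F$ would have a fixed point. Moreover $F/G'\cong\Pone$. Hence $S\cong(E\times F)/G'$ is bielliptic by definition.
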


\begin{proof}
Let $T\to S=T/C_m$ be the quotient of the central complex two-torus.  It
is finite and \'etale, hence $S$ is compact K\"ahler.  The invariant
subspace of $H^0(T,\Omega_T^1)$ is one-dimensional in each case, so
$q(S)=1$.  Since $\chi(\mathcal O_T)=0$ and the cover is \'etale,
$\chi(\mathcal O_S)=0$, whence $p_g(S)=0$.  The determinant character
of the nontrivial linear holonomy makes $K_S$ torsion, so
$\kappa(S)=0$.  The Enriques--Kodaira classification therefore identifies
$S$ as a bielliptic surface; see \cite[Chapter~VI]{BHPV}.

The normal line of the central torus is generated by
$\partial/\partial s$, on which the cyclic generator acts through the
character
\[
 \vartheta_m(\zeta_m)=\zeta_m^{-1}.
\]
Thus the quotient normal bundle $N_{S/N}$ satisfies
$N_{S/N}^{\otimes m}\simeq\mathcal O_S$.  If a smaller positive tensor
power were trivial, its nowhere-zero section would pull back to a
holomorphic function on the compact torus transforming by
$\vartheta_m^d$.  Such a function is constant, forcing $\vartheta_m^d=1$.
Since $\vartheta_m$ has exact order $m$, the normal bundle also has exact
order $m$.
\end{proof}

\begin{lemma}[The two collar identifications]\label{lem:finite-collar}
For either family, choose a branch
$\ell(s)=\log(s)/(2\pi i)$.  The maps
\[
 \Theta_{\mathrm{lin\to aff}}(s,u)
 =\bigl(s,u-\ell(s)\Pi_r^\kappa(z)v_{j,r}^\kappa\bigr),
\]
\[
 \phi_{\mathrm{aff\to lin}}(s,u)
 =\bigl(s,u+\ell(s)\Pi_r^\kappa(z)v_{j,r}^\kappa\bigr)
\]
are inverse biholomorphisms on the punctured collars.  The first
conjugates the linear model to the affine root-disc model and is the map
used in the global gluing; the second is its inverse.  They cover $t=s^{m_j^\kappa}$,
intertwine the monodromies, and are independent of the logarithm branch
on the torus quotient.
\end{lemma}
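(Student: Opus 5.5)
We verify the four assertions of Lemma~\ref{lem:finite-collar} in order: well-definedness, mutual inverseness, intertwining of the monodromies, and independence of the logarithm branch. All of them reduce to the period equivariance \eqref{eq:root-holomorphic-equivariance} together with the invariance $A_{j,r}^\kappa v_{j,r}^\kappa=v_{j,r}^\kappa$.

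\textbf{Well-definedness and inverseness.} On the punctured root collar $0<|s|<\epsilon$, fix a branch $\ell(s)$ on a slit sector, or more invariantly on the universal cover of the collar. The section $s\mapsto \ell(s)\Pi_r^\kappa(z(s))v$ is then holomorphic. Both maps are fibrewise translations by opposite holomorphic sections, so they preserve each torus $\C^2/\Pi(s)\Lambda$. They are biholomorphic and mutually inverse, and they cover the identity in $s$, hence also in $t=s^{m_j}$.

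\textbf{Intertwining.} Use the flat coordinates $u=\Pi(s)x$, in which $\Theta_{\mathrm{lin\to aff}}$ reads $x\mapsto x-\ell(s)v$. In these coordinates the linear model's generator is $(s,x)\mapsto(gs,Ax)$ and the affine one is $(s,x)\mapsto(gs,Ax+v/m)$. Along the clockwise generator $g(s)=\zeta_m^{-1}s$ the branch changes by $\ell(gs)=\ell(s)-1/m$. Therefore
\[
 \Theta(gs,Ax)=\bigl(gs,\;Ax-\ell(s)v+v/m\bigr)=\bigl(gs,\;A(x-\ell(s)v)+v/m\bigr),
\]
where the second equality uses $Av=v$. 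This is exactly the affine generator applied to $\Theta(s,x)$. So $\Theta$ conjugates the linear $C_m$-action to the affine one. It therefore descends to a biholomorphism between the punctured quotients over $0<|t|<\epsilon^m$, and in particular it intertwines the monodromies of the two punctured families. Translating back through $\Pi(gs)A=R(s)\Pi(s)$ gives the holomorphic form of this identity: both sides are expressed through $R(s)$ and the translation $\frac1m\Pi(gs)v$ of \eqref{eq:holomorphic-affine-root-action}.

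\textbf{Branch independence.} Two branches of $\ell$ differ by an integer $n$. The translation therefore changes by $n\Pi(s)v\in\Pi(s)\Lambda$, because $v\in\Lambda$, and this change is trivial on the torus quotient. Consequently $\Theta$ is single-valued on the whole punctured collar, not only on a sector.

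\textbf{Main obstacle.} The delicate point is the sign bookkeeping. One must check that the clockwise orientation convention of Section~\ref{sec:period} really gives $\ell(gs)=\ell(s)-1/m$ rather than $+1/m$, consistently with the affine action using $+v/m$ and with the sign in $\Theta_{\mathrm{lin\to aff}}$. I would verify this directly from $g(s)=\zeta_m^{-1}s$ and $\zeta_m=e^{2\pi i/m}$. If a sign mismatch appeared, it would be absorbed by replacing $v$ with $-v$, which is harmless since $\chi(-v)$ is still coprime to $m$; I expect, however, that the stated signs are already consistent.
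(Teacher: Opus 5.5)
Your proposal is correct and matches the paper's proof: substitute $\ell(gs)=\ell(s)-1/m_j$ and $A_{j,r}^\kappa v_{j,r}^\kappa=v_{j,r}^\kappa$ to get the conjugacy identity, then note that a change of branch shifts the translation by the integral period $n\Pi_r^\kappa v_{j,r}^\kappa$. Because $\ell(s)$ is complex, the ``flat coordinate'' step only makes sense in $\Lambda_{\mathbb C}$, or directly in $u$ via $R(s)\Pi(s)v=\Pi(gs)v$, which your closing remark in that step already supplies; your own computation also confirms the stated signs, so the $v\mapsto -v$ fallback in your last paragraph is unnecessary.
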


\begin{proof}
The affine generator sends $s$ to $\zeta_{m_j}^{-1}s$ and acts on the
fibre by the linear monodromy followed by translation by
$v_{j,r}^\kappa/m_j^\kappa$.  Since
$A_{j,r}^\kappa v_{j,r}^\kappa=v_{j,r}^\kappa$, substitution gives the two
conjugacy identities.  Replacing $\log s$ by $\log s+2\pi i n$ changes
the fibre coordinate by the integral period
$\pm n\Pi_r^\kappa(z)v_{j,r}^\kappa$, so the maps descend to the torus
quotient.
\end{proof}

\begin{proposition}[Boundary map of a finite logarithmic filling]
\label{prop:finite-filling-pi1}
Let $N_j$ be the quotient in Lemma~\ref{lem:log-transform}, let $M_j$ be
its boundary over a circle in the coarse base, and write $t_\lambda$ for
translation by $\lambda\in\Lambda$.  Let $a_j$ be the lift of the
clockwise meridian through the linear zero section, transferred by the
collar map.  Put $\mathsf D_j=A_j$ and $\mathsf P_j=A_j^{-1}$.
The boundary is the mapping torus with forward return $\mathsf P_j$,
namely $(F\times[0,1])/((x,1)\sim(\mathsf P_jx,0))$.  With
left-to-right traversal of path products its fundamental group is
\[
 \pi_1(M_j)\cong\Lambda\rtimes_{\mathsf D_j}\mathbb Z\langle a_j\rangle,
 \qquad a_jt_\lambda a_j^{-1}=t_{\mathsf D_j\lambda}.
\]
Equivalently, $a_j^{-1}t_\lambda a_j=t_{\mathsf P_j\lambda}$.
Moreover
\[
 \pi_1(N_j)=
 \left\langle t_\lambda,g_j\ \middle|\
 t_\lambda t_\mu=t_{\lambda+\mu},\
 g_jt_\lambda g_j^{-1}=t_{A_j\lambda},\
 g_j^{m_j}=t_{v_j}\right\rangle.
\]
The boundary inclusion sends $t_\lambda$ to $t_\lambda$ and $a_j$ to
$g_j$; its kernel is the normal closure of
$a_j^{m_j}t_{-v_j}$.
\end{proposition}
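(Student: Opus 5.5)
The plan is to model everything on the real-flat picture of Lemma~\ref{lem:log-transform}. In coordinates $(s,x)$ with $x\in\Lambda_{\mathbb R}/\Lambda$, the affine generator acts by
\[
 (s,x)\longmapsto(gs,Ax+v/m).
\]
We pass to the universal cover $\Delta_s\times\Lambda_{\mathbb R}$, which is contractible. There the group generated by integral translations $t_\lambda$ and the lift $\tilde g:(s,\tilde x)\mapsto(gs,A\tilde x+v/m)$ acts freely and properly, because the action on $\mathcal T$ is free by Lemma~\ref{lem:log-transform}. Hence $\pi_1(N_j)$ is this deck group.

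\textbf{Step 1: the presentation of $\pi_1(N_j)$.} We compute $\tilde g\,t_\lambda\,\tilde g^{-1}=t_{A\lambda}$ directly. Since $Av=v$, the $m$-th iterate satisfies $\tilde g^{m}=t_v$. The group is therefore a quotient of the presented group, with normal subgroup $\Lambda$ and quotient $\mathbb Z/m$. The presented group also contains $\Lambda$ as a normal subgroup with quotient generated by $g_j$ of order dividing $m$. A comparison of extensions then gives the isomorphism, with no further relations.

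\textbf{Step 2: the boundary and its fundamental group.} The boundary $M_j$ over $|t|=\varepsilon$ is the quotient of $\{|s|=\varepsilon^{1/m}\}\times$torus by the cyclic action. A fundamental domain is an arc of angle $2\pi/m$ traversed clockwise, with endpoints glued by the affine map. Using the collar map of Lemma~\ref{lem:finite-collar} we pass to the linear model, which removes the translation $\ell(s)\Pi v$. The gluing is then linear. Its return on homology is $\mathsf P_j=A_j^{-1}$, exactly as in \eqref{eq:clockwise-forward-period}. This exhibits $M_j$ as the stated mapping torus.

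For a mapping torus with return $\mathsf P$, and with left-to-right path products, the loop $a_j$ satisfies $a_j^{-1}t_\lambda a_j=t_{\mathsf P\lambda}$. Equivalently, $a_jt_\lambda a_j^{-1}=t_{\mathsf D\lambda}$. The main obstacle is keeping these conventions consistent: the deck-versus-forward convention of \eqref{eq:deck-forward-distinction} and the path-product order \eqref{eq:path-deck-transport-product}. I would check the sign by lifting $a_j$ to the cover and comparing with the action of $\tilde g$ on the real coordinate.

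\textbf{Step 3: the inclusion map and its kernel.} On the universal cover, the lift of $a_j$ through the linear zero section, transferred by the collar map, ends at $\tilde g$ applied to the start point. Hence the inclusion sends $a_j\mapsto g_j$ and $t_\lambda\mapsto t_\lambda$. The endpoint term $\ell(s)\Pi v$ accumulates to $v/m$ over one arc, and this matches the translation part of $\tilde g$. The map is surjective.

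For the kernel, we quotient $\Lambda\rtimes\mathbb Z$ by the normal closure of $a_j^{m}t_{-v}$. This imposes exactly the extra relation $g_j^{m}=t_v$ of Step 1, so the result is $\pi_1(N_j)$. Alternatively, the same conclusion follows from van Kampen, attaching $N_j$ as $M_j$ plus a solid-torus-type filling along the orbit of the central fibre.
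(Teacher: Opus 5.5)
Your proposal is correct and follows essentially the same route as the paper. It uses the mapping-torus relation $a_j^{-1}t_\lambda a_j=t_{\mathsf P_j\lambda}$ for the boundary, identifies the collar-transferred zero-section loop with the affine deck generator (so that $a_j\mapsto g_j$ and $g_j^{m_j}=t_{v_j}$), and obtains the kernel as the normal closure of the single new relator $a_j^{m_j}t_{-v_j}$. The differences are minor: you justify the presentation of $\pi_1(N_j)$ explicitly through the universal cover $\Delta_s\times\mathbb R^{2n}$ and a comparison of extensions. Also, your endpoint identity (the lift ends at $\tilde g$ applied to its start) holds exactly, so you do not need the paper's translation isotopy removing the term $-\ell(s_0)\Pi(s(q))v_j$.
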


\begin{proof}
In the mapping-torus square, a fibre loop $t_\lambda$ at the initial
end becomes $t_{\mathsf P_j\lambda}$ at the terminal end.  Its boundary
relation is $t_\lambda a_jt_{\mathsf P_j\lambda}^{-1}a_j^{-1}=1$
when paths are traversed from left to right.  This is exactly the first
presentation; the conjugation matrix is the inverse of the forward return.

For the finite filling, the collar image of the linear zero section
along $s(q)=s_0e^{-2\pi iq/m_j}$ has complex coordinate
$-\ell(s_0)\Pi(s(q))v_j+(q/m_j)\Pi(s(q))v_j$.
The first term is an invariant holomorphic section and is removed by
a translation isotopy.  The remaining path has flat real coordinate
$qv_j/m_j$.  At the central torus the remaining path
$q\mapsto[qv_j/m_j]$ ends at the affine deck translate of its starting
point.  Denote its class by $g_j$.  The affine deck transformation
conjugates lattice translations by $A_j$, and its $m_j$th power is
translation by $v_j$.  Thus $g_j^{m_j}=t_{v_j}$, with the positive sign
in the stated presentation.  Filling the root disc introduces exactly
$a_j^{m_j}t_{-v_j}=1$, and the boundary inclusion sends $a_j$ to $g_j$.
This proves the kernel assertion.
\end{proof}

\subsection{The periodic toroidal model}\label{sec:cusp}

Put $e_1=(1,0)$ and $e_2=(0,1)$.  For $v\in\mathbb Z^2$ let
\[
 \Delta_v^+=\operatorname{conv}(v,v+e_1,v+e_2),
 \qquad
 \Delta_v^-=\operatorname{conv}(v+e_1+e_2,v+e_1,v+e_2).
\]
These triangles form the standard $\mathbb Z^2$-periodic
$A_2$ triangulation of $\mathbb R^2$.

\begin{lemma}[The periodic $A_2$ fan]\label{lem:periodic-A2-fan}
Let $N'=\mathbb Z^2\oplus\mathbb Z$.  For every simplex $\tau$ of the
periodic triangulation, let
\[
 \sigma_\tau=\mathbb R_{\geq0}(\tau\times\{1\})\subset N'_\mathbb R,
\]
and include all faces and the zero cone.  These cones form a smooth fan
$\Sigma_{A_2}$.  The triangulation is locally finite, every nonzero cone
has finite star, and every maximal cone is unimodular.  The affine toric
charts therefore glue to a separated smooth analytic toric threefold
$Y$.  The character of the last lattice coordinate defines
\[
 t_c:Y\longrightarrow\mathbb C.
\]
The irreducible components of $t_c^{-1}(0)$ are indexed by
$\mathbb Z^2$, and each is the toric del Pezzo surface $dP_6$ of degree six.
\end{lemma}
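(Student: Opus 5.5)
The plan is to verify the fan axioms directly from the triangulation, then read off the geometry from standard toric theory applied chart by chart. First I will check that the cones $\sigma_\tau$ form a fan. Every cone is a cone over a simplex placed at height one, so two cones $\sigma_\tau,\sigma_{\tau'}$ meet in the cone over $\tau\cap\tau'$. Because the periodic $A_2$ triangulation is an honest triangulation (any two simplices meet in a common face), this intersection is a common face. Faces of $\sigma_\tau$ are the cones over faces of $\tau$ together with $\{0\}$, so the collection is closed under faces.

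Unimodularity comes next. For a maximal cone over $\Delta_v^\pm$, the primitive generators are $(v,1),(v+e_1,1),(v+e_2,1)$, or the analogous triple for $\Delta_v^-$. Subtracting the first generator from the others gives $(e_1,0),(e_2,0)$ and a third vector of height one, so the determinant is $\pm1$. Each translate $v\mapsto v+w$ is realized by the lattice automorphism $(x,h)\mapsto(x+hw,h)$, so it suffices to check one $\Delta^+$ and one $\Delta^-$.

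For local finiteness, every vertex of the triangulation lies in exactly six triangles, so every nonzero cone has finite star. The affine charts $U_\sigma=\operatorname{Specan}\C[\sigma^\vee\cap M']$ are then copies of $\C^3$. They glue along the open subsets $U_{\sigma\cap\sigma'}$, which is the standard construction, but now with infinitely many charts.

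Separatedness is the one place where the infinite fan needs care, and it is the step I expect to be the main obstacle. I will prove it chartwise. The diagonal of $U_\sigma\cap U_{\sigma'}$ in $U_\sigma\times U_{\sigma'}$ is closed because $\sigma\cap\sigma'$ is a face of both cones and the semigroup algebra of $\sigma\cap\sigma'$ is generated by those of $\sigma^\vee$ and $\sigma'^\vee$ (the usual separating-hyperplane argument). Hausdorffness is a pairwise statement about charts, so the infinitude of the fan is irrelevant. Local finiteness of the stars also gives local compactness and second countability.

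The character $\chi^{(0,0,1)}$ is nonnegative on every cone, since all cones lie in the half-space of nonnegative height. It is therefore regular on each chart, and these local functions glue to $t_c$. Its zero locus is the union of the orbit closures $V(\rho)$ over rays $\rho$ with positive height, namely the rays through $(v,1)$ for $v\in\Z^2$. This gives the indexing by $\Z^2$, and each such component appears with multiplicity one since the last coordinate equals $1$ on each generator. Finally, $V(\rho_v)$ is the toric surface of the quotient fan $\operatorname{Star}(\rho_v)$ in $N'/\Z(v,1)\cong\Z^2$. The six triangles around $v$ project to the hexagonal fan with rays $\pm e_1,\pm e_2,\pm(e_1-e_2)$. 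This is the fan of $\Pone\times\Pone\times\Pone$ blown down, i.e. of $dP_6$, the blow-up of $\Pone\times\Pone$ at two torus-fixed points.
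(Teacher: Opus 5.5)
Your proposal is correct and follows the same route as the paper's proof. It gets the fan axiom from the fact that cones over simplices meet in the cone over their common face, unimodularity by subtracting one height-one generator, separatedness because two charts meet in the chart of their common face, and each vertex divisor as the toric surface of the hexagonal star fan. The only slip is the aside calling the hexagonal fan ``the fan of $\Pone\times\Pone\times\Pone$ blown down'', which does not make sense, since that is a threefold; the identification you also give is the right one, namely $\Pone\times\Pone$ blown up at two opposite torus-fixed points, or equivalently $\mathbb P^2$ blown up at its three torus-fixed points.
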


\begin{proof}
A compact subset of $\mathbb R^2$ meets only finitely many translates of
the two fundamental triangles, and every simplex belongs to finitely
many triangles.  Intersections of the cones are cones over common faces,
so the fan axiom holds.  For $\Delta_v^+$, subtracting the ray
$(v,1)$ from the other two rays gives $(e_1,0)$ and $(e_2,0)$; hence the
three primitive ray vectors form a basis of $N'$.  The same calculation
applies to $\Delta_v^-$.  Thus every maximal cone is unimodular.  The
usual affine toric gluing is separated because any two charts meet in
the chart of their common face.

The divisor corresponding to the ray through $(v,1)$ has fan equal to
the star of $v$ in the triangulation, modulo that ray.  This is the
hexagonal fan, so the divisor is $dP_6$.
\end{proof}

For a holomorphic matrix $D(t_c)$ define on the dense torus
\begin{equation}\label{eq:master-action}
 \widehat\Psi_\lambda(x,t_c)
 =\left(e^{2\pi iD(t_c)\lambda}t_c^\lambda x,t_c\right),
 \qquad \lambda\in\mathbb Z^2.
\end{equation}
Here $t_c^\lambda x$ means componentwise multiplication on
$(\mathbb C^*)^2$.  Translation of the triangulation by $\lambda$ sends
$U_\sigma$ to $U_{\sigma+\lambda}$, while the exponential factor is a
torus translation; together they extend \eqref{eq:master-action} to $Y$.

\begin{lemma}[Properness of the periodic action]\label{lem:tropical-properness}
Assume that $D(t_c,s)$ is holomorphic on
$\Delta_{\epsilon_0}\times S$.  For every compact $K\Subset S$, after
shrinking to a closed subdisc $\overline\Delta_{\epsilon_K}$, put
\[
 L(t_c,s)=I-\frac{2\pi\operatorname{Im}D(t_c,s)}{\log|t_c|}
 \qquad(0<|t_c|\leq\epsilon_K).
\]
Then $L$ and $L^{-1}$ are uniformly bounded on
$\Delta_{\epsilon_K}^*\times K$.  If
$C_1,C_2\subset Y_{\leq\epsilon_K}\times K$ are compact, only finitely
many $\lambda\in\mathbb Z^2$ satisfy
$\widehat\Psi_\lambda(C_1)\cap C_2\neq\varnothing$.  Thus the action is
properly discontinuous, uniformly on compact parameter sets.
\end{lemma}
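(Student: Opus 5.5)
Two things need proof: that $L$ and $L^{-1}$ are uniformly bounded, and that the action is proper. The plan is to pass to tropical (logarithmic) coordinates on the dense torus. There the action becomes an affine translation on $\mathbb R^2$, and the relevant estimates are linear.

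\textbf{Boundedness of $L$.} Since $D$ is holomorphic on $\Delta_{\epsilon_0}\times S$, the entries of $\operatorname{Im}D$ are bounded on $\overline\Delta_{\epsilon}\times K$ for some fixed $\epsilon<\epsilon_0$, say by $M_K$. Meanwhile $|\log|t_c||\geq|\log\epsilon_K|\to\infty$ as $\epsilon_K\to0$. So
\[
\bigl\|2\pi\operatorname{Im}D/\log|t_c|\bigr\|\leq 2\pi M_K/|\log\epsilon_K|.
\]
Choose $\epsilon_K$ small enough that this is at most $1/2$. Then $L$ is within $1/2$ of $I$, and a Neumann series gives $\|L^{-1}\|\leq2$.

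\textbf{Properness.} For $(x,t_c)$ in the dense torus with $0<|t_c|\leq\epsilon_K$, put
\[
\operatorname{Trop}(x,t_c)=\frac{\log|x|}{\log|t_c|}\in\mathbb R^2,
\]
taken componentwise. The map $\widehat\Psi_\lambda$ multiplies $x$ by $e^{2\pi iD\lambda}t_c^\lambda$, and $\log\bigl|e^{2\pi iD\lambda}\bigr|=-2\pi\operatorname{Im}D\,\lambda$. Hence
\[
\operatorname{Trop}\circ\widehat\Psi_\lambda=\operatorname{Trop}+L\lambda .
\]
By construction of the fan $\Sigma_{A_2}$, a point in a chart $U_{\sigma_\tau}$ near the central fibre has tropical image near the simplex $\tau\subset\mathbb R^2$. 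Concretely, I would verify that on the closure of $Y_{\leq\epsilon_K}$ intersected with a compact set, $\operatorname{Trop}$ extends continuously to the central fibre with values in a compact subset of $\mathbb R^2$. On the component indexed by $v$ it should take values in the star of $v$. A compact set $C_1\subset Y_{\leq\epsilon_K}\times K$ meets only finitely many charts $U_{\sigma_\tau}$. Away from the central fibre, with $|t_c|$ bounded below, $\operatorname{Trop}$ is plainly bounded on compacts. So there is $R$ with $|\operatorname{Trop}|\leq R$ on $C_1\cup C_2$, wherever it is defined.

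If $\widehat\Psi_\lambda(C_1)\cap C_2\neq\varnothing$ at a point with $t_c\neq0$, then $|L\lambda|\leq2R$, and the bound on $L^{-1}$ gives $|\lambda|\leq4R$. On the central fibre itself, $\widehat\Psi_\lambda$ sends the component indexed by $v$ to the one indexed by $v+\lambda$. Since $C_1$ and $C_2$ meet only finitely many components, only finitely many $\lambda$ occur there. This uses continuity of $\widehat\Psi_\lambda$ and a density argument, or directly the chart translation $U_\sigma\mapsto U_{\sigma+\lambda}$. Both bounds are uniform over $s\in K$, which gives the uniform proper discontinuity.

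\textbf{Main obstacle.} The delicate step is making the tropicalization bound precise near the central fibre, in particular on the lower-dimensional toric strata that lie inside $t_c\neq0$ charts. There, a coordinate of $x$ can tend to $0$ or $\infty$ while $t_c$ stays fixed. To handle this I would work chart by chart in the unimodular coordinates $z_0,z_1,z_2$ of $U_{\sigma_\tau}$, with $t_c=z_0z_1z_2$ and each $|z_i|\leq1$ on a relatively compact neighbourhood. In these coordinates the point $\operatorname{Trop}$ is the barycentric combination
\[
\sum_i\frac{\log|z_i|}{\log|t_c|}\,(\text{vertex}_i)
\]
of the vertices of $\tau$, up to a bounded error of size $O\bigl(1/|\log|t_c||\bigr)$. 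This confines it to a bounded neighbourhood of $\tau$. The finite cover of a compact set by such charts then yields the constant $R$.
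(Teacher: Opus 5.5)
Your proposal is correct and follows the paper's proof essentially step for step. The common ingredients are the tropical coordinate $\log|x|/\log|t_c|$ (the paper's $-y$), shifted by $L\lambda$ under $\widehat\Psi_\lambda$; the chartwise bound on the barycentric weights $\log|z_i|/\log|t_c|$, obtained from $t_c=z_0z_1z_2$ and $|z_i|\leq R$ on finitely many compact chart portions; the uniform bound on $L^{-1}$; and the orbit shift $O_\sigma\mapsto O_{\sigma+\lambda}$ on the central fibre. Three small corrections do not affect the argument: the barycentric identity is exact, with no $O(1/|\log|t_c||)$ error; the ``obstacle'' you describe does not occur, because every nonzero cone has positive height, so all non-dense orbits lie in $t_c=0$; and you should drop the claim that the tropical coordinate extends continuously to the central fibre, which fails along the double curves, since only its boundedness is used.
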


\begin{proof}
On the dense torus define
\[
 y=-\frac{\log|x|}{\log|t_c|}.
\]
Then
\begin{equation*}\label{eq:tropical-action-shift}
 y(\widehat\Psi_\lambda p)=y(p)-L(t_c,s)\lambda.
 \tag{\ref*{lem:tropical-properness}.1}
\end{equation*}
We first record the chartwise bound that remains valid as $t_c\to0$.
Let $\sigma$ be a maximal cone over a triangle with vertices
$v_0,v_1,v_2$, let $m_0,m_1,m_2$ be the dual basis, and put
\[
 \ell_{\sigma,i}(q)=\langle m_i,(q,1)\rangle.
\]
The toric coordinates $z_i=\chi^{m_i}$ satisfy the exact identity
\begin{equation*}\label{eq:tropical-chart-logarithm}
 \log|z_i|=\log|t_c|\,\ell_{\sigma,i}(-y),
 \qquad \sum_i\ell_{\sigma,i}=1.
 \tag{\ref*{lem:tropical-properness}.2}
\end{equation*}
If a compact set in $U_\sigma$ is contained in $|z_i|\leq R$, then
$t_c=z_0z_1z_2$ gives, for $t_c\neq0$,
\[
 -\frac{\log R}{|\log|t_c||}\leq
 \ell_{\sigma,i}(-y)\leq
 1+\frac{2\log R}{|\log|t_c||}.
\]
Thus the tropical coordinates of dense-torus points in any fixed compact
chart portion are uniformly bounded, including along sequences with
$t_c\to0$.

Cover $C_1$ and $C_2$ by finitely many such compact chart portions.
If $q=\widehat\Psi_\lambda(p)$ with $t_c\neq0$, the two tropical
coordinates are uniformly bounded.  Equation
\eqref{eq:tropical-action-shift} and the uniform bound on $L^{-1}$ then
bound $\lambda$.  If $p$ and $q$ lie on the central fibre, they belong to
toric orbits $O_\sigma,O_\tau$ from finite sets of cones, and
$q=\widehat\Psi_\lambda(p)$ forces $\tau=\sigma+\lambda$; again only
finitely many $\lambda$ occur.  Hence the transporter of two compact sets
is finite.
\end{proof}

\begin{proposition}[Compact sets meeting every orbit]
\label{prop:uniform-fundamental-blocks}
Let $D(t_c,s)$ be as above.  For every compact $K\Subset S$ there are
$0<\epsilon_K<\epsilon_0$ and a compact set
\[
 \mathcal F_K\subset Y_{\leq\epsilon_K}\times K
\]
meeting every $\mathbb Z^2$-orbit.  It is the union of finitely many closed polydiscs in affine toric
charts, intersected with $Y_{\leq\epsilon_K}\times K$, and one
compact representative of the central component over $K$.  The same conclusion holds for every
finite-index subgroup of the deck lattice.
\end{proposition}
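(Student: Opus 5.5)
We use the tropical coordinate $y=-\log|x|/\log|t_c|$ from the proof of Lemma~\ref{lem:tropical-properness} and the shift formula $y(\widehat\Psi_\lambda p)=y(p)-L(t_c,s)\lambda$.

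First fix $K$. We shrink $\epsilon_K$ so that $L$ and $L^{-1}$ are uniformly bounded on $\Delta^*_{\epsilon_K}\times K$. We also want $L$ uniformly close to $I$, with $\|L-I\|\le 1/10$ say. This is possible because $\operatorname{Im}D$ is bounded on $\overline\Delta_{\epsilon_0/2}\times K$ while $1/|\log|t_c||\to0$.

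Next, take dense-torus points over $0<|t_c|\le\epsilon_K$. Given such a $p$, we choose $\lambda\in\mathbb Z^2$ approximately equal to $L^{-1}y(p)$, more precisely so that $y(p)-L\lambda$ lies in a fixed compact region $E$ of $\mathbb R^2$. The region $E$ will be the unit square $[0,1]^2$ enlarged by a small margin. This works because $L$ is close to the identity: the image $L(\mathbb Z^2)$ is a lattice whose fundamental domain is contained in $E$.

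The translated point $q=\widehat\Psi_\lambda(p)$ then has $-y(q)$ in the compact region $-E$. This region is covered by finitely many triangles $\Delta^\pm_v$. For $-y(q)$ lying in the triangle of a maximal cone $\sigma$, formula (\ref{lem:tropical-properness}.2) gives $\log|z_i|=\log|t_c|\,\ell_{\sigma,i}(-y)$. After enlarging the triangle by the margin, $\ell_{\sigma,i}(-y)\ge -\eta$ for a small $\eta$, so $|z_i|\le|t_c|^{-\eta}$. This bound is not uniform as $t_c\to0$, and this is the main obstacle. To handle it, we take the margin to be zero where possible. Precisely, we choose $\lambda$ exactly, not approximately: the condition $-y(q)\in\bigcup\Delta^\pm_v$ over the finitely many triangles meeting $[0,1]^2$ is achievable exactly, since the translates $L(\mathbb Z^2)$ of the fundamental parallelogram tile $\mathbb R^2$. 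With $\ell_{\sigma,i}\ge0$ on the triangle we get $|z_i|\le1$. Because $L$ is a perturbation of $I$, the fundamental parallelogram of $L\mathbb Z^2$ lies inside the union of the triangles of the $3\times3$ block of unit squares around the origin. Every dense-torus orbit therefore meets the union of the closed unit polydiscs $\{|z_i|\le1\}$ in these finitely many charts, intersected with $Y_{\le\epsilon_K}\times K$. These are compact, since $|t_c|\le\epsilon_K$ and each polydisc is closed and bounded in its chart $\mathbb C^3$, with $s\in K$.

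Points off the dense torus with $t_c\neq0$ lie in toric boundary strata of the fibre. Such strata do not occur: $t_c=z_0z_1z_2\neq0$ forces all $z_i\ne0$. So every point with $t_c\ne0$ lies on the dense torus.

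On the central fibre $t_c=0$, the action is given by translation of cones, $U_\sigma\mapsto U_{\sigma+\lambda}$, composed with a torus translation. Every toric orbit $O_\tau$ is therefore carried into an orbit $O_{\tau'}$ whose cone $\tau'$ is a cone over a face of a triangle in the fundamental block. The central component indexed by $v=0$, namely the compact $dP_6$ over $K$, contains all such closures up to the finitely many faces incident to the block. Adding this component, together with the polydiscs already chosen, makes $\mathcal F_K$ meet every orbit.

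For a finite-index subgroup $\Lambda'\subset\mathbb Z^2$, we take coset representatives $\lambda_1,\dots,\lambda_k$ and set $\mathcal F'_K=\bigcup_j\widehat\Psi_{\lambda_j}(\mathcal F_K)$. This is a finite union of compact sets, each again a polydisc in a translated chart, and it meets every $\Lambda'$-orbit.
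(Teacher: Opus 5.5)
Your argument is correct and is essentially the paper's proof. Choosing $\lambda$ with $L^{-1}y-\lambda\in[0,1]^2$ puts the translated $-y$ in the bounded set $-L[0,1]^2$, hence in one of finitely many closed triangles, where $0\le\ell_{\sigma,i}(-y)\le1$ gives $|z_i|\le1$; one then adds $D_0\times K$ for the central fibre and finitely many translates for a finite-index subgroup, exactly as you do, though your central-fibre justification is cleaner stated as: every central point lies on some $D_v$, and $\widehat\Psi_{-v}$ carries $D_v$ onto $D_0$.
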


\begin{proof}
After shrinking $\epsilon_K$, $L$ and $L^{-1}$ are uniformly bounded.
For a point over $t_c\neq0$, choose $\lambda\in\mathbb Z^2$ so that
\[
 L(t_c,s)^{-1}y-\lambda\in[0,1]^2.
\]
The transformed tropical coordinate belongs to the bounded set
$L(t_c,s)[0,1]^2$.  Hence $-y$ lies in one of finitely many triangles of
the periodic triangulation.  In the chart of that triangle the exact
formula \eqref{eq:tropical-chart-logarithm} has
$0\leq\ell_{\sigma,i}(-y)\leq1$, so $|z_i|\leq1$.  Intersecting the corresponding finite union of closed unit
polydiscs with the inverse image of the closed disc gives a compact
set meeting every orbit over the punctured closed disc.

Over $t_c=0$, the deck lattice acts transitively on the normalization
components.  Add one entire representative component
$D_0\simeq dP_6$, which is compact and meets every central orbit after a
deck translation.  The resulting finite union is compact and meets every
orbit over the closed disc.  For a finite-index subgroup, add finitely
many component representatives and finitely many translates of the same
polydiscs.
\end{proof}

\begin{theorem}[Relative periodic toroidal quotient]\label{thm:master-cusp}
Let $S$ be a complex manifold, let $D(t_c,s)$ be holomorphic on
$\Delta_{\epsilon_0}\times S$, and let $B\in M_2(\Z)$ have nonzero
determinant.  Put $\Gamma=B\Z^2$ and $d=|\det B|$.
For every relatively compact open set $S'\Subset S$, there is
$\epsilon>0$ such that the action
\[
 \widehat\Psi_\nu(x,t_c,s)=
 \bigl(e^{2\pi iD(t_c,s)\nu}t_c^\nu x,t_c,s\bigr),
 \qquad \nu\in\Gamma,
\]
is free and properly discontinuous on $Y_{<\epsilon}\times S'$.
The quotient
\[
 \mathcal N_{B,D}=(Y_{<\epsilon}\times S')/\Gamma
 \longrightarrow\Delta_\epsilon\times S'
\]
is proper and has smooth Hausdorff total space; its projection to
$S'$ is a holomorphic submersion.  In local coordinates the map to the
disc is $t_c=z_0$, $z_0z_1$, or $z_0z_1z_2$.
For each parameter its central fibre is reduced with normal crossings;
its normalization is a disjoint union of $d$ copies of $dP_6$.  The
normalization of its double locus consists of $3d$ copies of $\mathbb P^1$;
it has $2d$ triple points and Euler number $2d$.
The quotient map
\[
 \mathcal N_{B,D}\longrightarrow\mathcal N_{I,D}
\]
is a finite \'etale cover of degree $d$, with group $\Z^2/\Gamma$.
The case of a fixed parameter gives a proper map from a smooth
complex threefold to the disc.
\end{theorem}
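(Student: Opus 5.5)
The plan is to obtain the quotient $\mathcal N_{B,D}$ from the action of the full lattice, using the two preceding results, and then read off its local and central-fibre structure from the toric charts of Lemma~\ref{lem:periodic-A2-fan}. Since $\overline{S'}$ is compact, we apply Lemma~\ref{lem:tropical-properness} with $K=\overline{S'}$ to get $\epsilon=\epsilon_K$. On $Y_{<\epsilon}\times S'$ the $\mathbb Z^2$-action, and hence its restriction to $\Gamma=B\mathbb Z^2$, is then properly discontinuous, uniformly in the parameter.

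Freeness is checked separately on the dense torus and on the central fibre. Over $t_c\neq0$, a fixed point of $\widehat\Psi_\nu$ has $y(p)=y(p)-L\nu$ by the tropical shift formula (\ref{lem:tropical-properness}.1). Since $L$ is invertible, this forces $\nu=0$. The same argument covers the non-dense orbits over $t_c\neq0$: the charts there are $\mathbb C^*$-bundles over toric strata in which $t_c$ stays nonzero, so every point has a well-defined tropical coordinate. On the central fibre, $\widehat\Psi_\nu$ maps the orbit $O_\sigma$ to $O_{\sigma+\nu}$, and the periodic triangulation has no cone fixed by a nonzero translation, so the action is free there too. A free, properly discontinuous holomorphic action has a Hausdorff complex-manifold quotient. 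The map to $\Delta_\epsilon\times S'$ and the submersion to $S'$ descend because the action preserves $t_c$ and $s$.

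For properness, we use Proposition~\ref{prop:uniform-fundamental-blocks}, applied to the finite-index subgroup $\Gamma$. It gives a compact set $\mathcal F_K$ meeting every $\Gamma$-orbit, so the preimage of a compact set $C\subset\Delta_{\epsilon'}\times K'$ is the image of the compact set $\mathcal F_K\cap(\text{preimage of }C)$. This preimage is closed in a compact set, so the quotient map is proper.

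The local equations come from the three kinds of cones. A unimodular maximal cone with rays $(v_i,1)$ and dual coordinates $z_i$ satisfies $t_c=\chi^{(0,1)}=z_0z_1z_2$, since $\sum_i m_i$ pairs to $1$ on each ray. Cones of dimension two and one give $z_0z_1$ and $z_0$ after splitting off torus factors. Each of these is reduced normal crossings, and the action preserves all of this.

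The remaining work is counting. The components of $t_c^{-1}(0)$ in $Y$ are indexed by $\mathbb Z^2$ and permuted freely. The normalization of the quotient's central fibre is therefore $\mathbb Z^2/\Gamma$ copies of $dP_6$, that is, $d$ copies. Note that several of these branches may lie on one global component, for example when the vertex orbit is glued to itself, and this is exactly why we count normalizations. Edges of the periodic triangulation fall into three $\mathbb Z^2$-orbits, of horizontal, vertical and diagonal edges, and each edge gives a $\mathbb P^1$. Triangles fall into two orbits, $\Delta^\pm$, and each triangle gives a triple point. This yields $3d$ copies of $\mathbb P^1$ and $2d$ triple points. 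The Euler number of the central fibre is then computed stratum by stratum: the open two-dimensional tori contribute $0$, the open $\mathbb C^*$ strata contribute $0$, and the $2d$ points contribute $2d$. The check is $6d-3d\cdot2+2d=2d$. Finally, $\mathcal N_{B,D}\to\mathcal N_{I,D}$ is the quotient by the free action of $\mathbb Z^2/\Gamma$, which has order $d$, so it is a finite étale cover of degree $d$.

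The main obstacle is freeness and Hausdorffness near the central fibre for points in lower-dimensional orbits over $t_c\neq0$. The tropical coordinate $y$ is only defined on the dense torus, so for those points we plan to extend it through the chart-logarithm identity (\ref{lem:tropical-properness}.2), allowing $\ell_{\sigma,i}$ to take the value $-\infty$ on coordinates that vanish. If that extension proves awkward, we fall back on the transporter finiteness already proved, together with the cone-shift argument stratum by stratum.
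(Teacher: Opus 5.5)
Your proposal is correct and follows essentially the same route as the paper's proof: tropical properness and the orbit-meeting compact sets over $\overline{S'}$, freeness via $L\nu=0$ off the centre and via $O_\sigma\mapsto O_{\sigma+\nu}$ on it, and then the vertex/edge/triangle orbit count (your orbit-stratum Euler computation is equivalent to the paper's inclusion--exclusion $6d-2(3d)+2d$).

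The one correction is that your ``main obstacle'' does not exist. Every ray $(v,1)$ has height one, so $t_c=\chi^{(0,0,1)}$ vanishes on every orbit $O_\sigma$ with $\sigma\neq0$. Hence $\{t_c\neq0\}$ is exactly the dense torus, and your dense-torus argument already covers all of it. You should therefore drop the description of $\mathbb C^*$-bundles over strata with $t_c\neq0$ and the fallback extension of $y$.
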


\begin{proof}
Apply Lemma~\ref{lem:tropical-properness} and
Proposition~\ref{prop:uniform-fundamental-blocks} over
$\overline{S'}$, shrinking $\epsilon$ uniformly.  They give proper
discontinuity and compact sets meeting every $\Gamma$-orbit over
compact subsets of the base; the finite-index case uses finitely
many translates of the same sets.

On the central fibre, $\widehat\Psi_\nu$ sends the toric orbit
$O_\sigma$ to $O_{\sigma+\nu}$.  A nonzero translation cannot fix a
finite simplex, so a central fixed point forces $\nu=0$.
For $t_c\neq0$, a fixed point would imply
\[
 (\log|t_c|I-2\pi\operatorname{Im}D(t_c,s))\nu=0.
\]
This matrix is invertible for small $|t_c|$, uniformly on
$\overline{S'}$.  Thus the action is free.  The quotient is Hausdorff
by proper discontinuity and smooth because the quotient map is a
local biholomorphism.  This also proves that the projection to $S'$
is a submersion.  Images of the compact orbit-meeting sets prove
properness over $\Delta_\epsilon\times S'$.

The local equations follow from the unimodular fan, with all
multiplicities equal to one.  The quotient triangulation has $d$
vertices, $3d$ edges, and $2d$ triangles.  Its vertex surfaces are
$dP_6$; inclusion--exclusion on the normalization gives
\[
 e=6d-2(3d)+2d=2d.
\]
Finally, the inclusion $\Gamma\subset\Z^2$ of index $d$ between
free properly discontinuous actions gives the stated finite
\'etale cover.
\end{proof}

\begin{remark}[Component counts are model-dependent]\label{rem:model-dependent-components}
The equality between a saturation index and a component count in
Theorem~\ref{thm:obstruction-discriminant} uses the specified periodic
$A_2$ model.  Indeed, blow up a smooth point of a central component away
from the double locus.  Before blowing up the map is locally $t=z_0$;
afterwards its charts have the forms $t=x$ or $t=xy$.  The total space
remains smooth and the central fibre remains reduced with normal
crossings, but an additional normalization component $\mathbb P^2$
appears.  Nothing changes over the punctured disc, so neither monodromy
nor its saturation index changes.
\end{remark}

\begin{corollary}[Finite-index period matrices]\label{thm:finite-index-reduction}
Let $B\in M_2(\Z)$ be nonsingular and let $C(t_c,s)$ be holomorphic.
For $D=CB^{-1}$ the action
\[
 \Psi^{B,C}_\lambda(x,t_c,s)=
 \bigl(e^{2\pi iC(t_c,s)\lambda}t_c^{B\lambda}x,t_c,s\bigr)
\]
equals $\widehat\Psi_{B\lambda}$.  Its quotient has all the
properties of Theorem~\ref{thm:master-cusp}, locally uniformly in $s$.
\end{corollary}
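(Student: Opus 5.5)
The plan is to reduce everything to Theorem~\ref{thm:master-cusp} by a direct substitution. The one identity to check is
\[
 \Psi^{B,C}_\lambda=\widehat\Psi_{B\lambda}
 \qquad\text{for }D=CB^{-1}.
\]
For $\lambda\in\Z^2$ put $\nu=B\lambda\in\Gamma=B\Z^2$. Then
\[
 D(t_c,s)\nu=C(t_c,s)B^{-1}B\lambda=C(t_c,s)\lambda,
 \qquad
 t_c^{\nu}=t_c^{B\lambda}.
\]
Both factors in \eqref{eq:master-action} therefore agree, which gives the identity on the dense torus. Both sides are holomorphic and extend to $Y$ by the same chart translation $U_\sigma\mapsto U_{\sigma+B\lambda}$, so they agree on all of $Y$.

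Since $B$ is nonsingular, $\lambda\mapsto B\lambda$ is an isomorphism $\Z^2\to\Gamma$. Hence the $\Z^2$-action $\Psi^{B,C}$ is the $\Gamma$-action of Theorem~\ref{thm:master-cusp}, transported along this group isomorphism. The two actions have the same orbits and the same quotient space, and freeness and proper discontinuity transfer verbatim.

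The one hypothesis to check is holomorphy of $D$. It holds because $D=CB^{-1}$ with $B^{-1}\in M_2(\Q)$ a constant matrix and $C$ holomorphic on $\Delta_{\epsilon_0}\times S$. Theorem~\ref{thm:master-cusp} then applies on every $S'\Subset S$. It gives an $\epsilon>0$ such that the quotient is proper, smooth, Hausdorff and submersive over $S'$. Its central fibre is reduced normal crossings with $d=|\det B|$ copies of $dP_6$ in the normalization, and it carries the finite \'etale cover of degree $d$ onto $\mathcal N_{I,D}$. This is exactly the claimed uniformity in $s$: any $s$ lies in some such $S'$.

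I expect essentially no obstacle. The only point needing care is that $D$ has rational rather than integral entries, so $e^{2\pi iD\nu}$ must be read only for $\nu\in\Gamma$, where it equals $e^{2\pi iC\lambda}$. The properness estimates of Lemma~\ref{lem:tropical-properness} use only $\operatorname{Im}D$ and the boundedness of $L$ and $L^{-1}$. Both persist after multiplying by the fixed matrix $B^{-1}$, so nothing in those proofs needed $D$ integral.
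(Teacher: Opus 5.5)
Your proposal is correct and is the paper's argument: the paper gives no separate proof, treating the corollary as immediate from Theorem~\ref{thm:master-cusp} via $\nu=B\lambda$, $D\nu=CB^{-1}B\lambda=C\lambda$, exactly as you do. Your caveat about the rational entries of $D$ is unnecessary, though harmless. Theorem~\ref{thm:master-cusp} only asks that $D$ be a holomorphic complex matrix, and $e^{2\pi iD\nu}$ is defined for every $\nu\in\Z^2$.
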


\begin{lemma}[The toric covering tube is simply connected]
\label{lem:toric-simply-connected}
For sufficiently small $\epsilon$, both
$Y_{<\epsilon}$ and $Y_{\leq\epsilon}$ are homotopy equivalent to $Y$
and are simply connected.
\end{lemma}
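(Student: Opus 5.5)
We need to show that $Y_{<\epsilon}$ and $Y_{\leq\epsilon}$ are homotopy equivalent to $Y$ and simply connected. The plan is to deformation retract the tube onto the central fibre $Y_0=t_c^{-1}(0)$. The same argument then applies to all of $Y$, since the real torus action shrinking $|t_c|$ makes $Y$ itself retract onto $Y_0$. After that we compute $\pi_1(Y_0)$ combinatorially.

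\emph{Step 1 (retraction).} The compact real torus $(S^1)^3\subset(\mathbb C^*)^3$ acts on $Y$, and so does the one-parameter real subgroup $\rho_\theta=e^{-\theta}\cdot(0,0,1)\in N'_\mathbb R$, i.e.\ the cocharacter $(0,0,1)$ evaluated at $e^{-\theta}$. This cocharacter lies in the support of $\Sigma_{A_2}$ (it is the ray over $0\in\mathbb R^2$). In each affine chart $U_\sigma$ with coordinates $z_i=\chi^{m_i}$, it acts by $z_i\mapsto e^{-\theta\langle m_i,(0,1)\rangle}z_i$. It multiplies $t_c$ by $e^{-\theta}$. Because $(0,1)$ lies in the support, the weights $\langle m_i,(0,1)\rangle$ are not uniformly nonnegative on all charts. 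So instead of this single flow I will use the tropical description of Lemma~\ref{lem:tropical-properness}. Write a point of the dense torus as $(y,\arg)$ with $y=-\log|x|/\log|t_c|\in\mathbb R^2$, and set $\lambda=-\log|t_c|\in(-\log\epsilon,\infty]$. The map $(\lambda,y)\mapsto$ (the point with the same $y$ and arguments, and $|t_c|=e^{-\lambda}$) extends continuously to $\lambda=\infty$, landing in the stratum $O_\sigma$ of the cone containing $(-y,1)$. This follows from the exact identity (\ref{lem:tropical-properness}.2): $|z_i|=|t_c|^{\ell_{\sigma,i}(-y)}$ with $\ell_{\sigma,i}\ge0$ on $\sigma$. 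Pushing $\lambda\to\infty$ at fixed $y$ and fixed phases therefore gives a strong deformation retraction $H$ of $Y_{\leq\epsilon}$ (resp.\ $Y_{<\epsilon}$, resp.\ $Y$) onto $Y_0$. The main obstacle is checking that $H$ is continuous at points already on or near boundary strata of the central fibre, where $y$ is undefined. I would handle this chartwise. On $U_\sigma$ the flow reads $z_i\mapsto z_i\,|t_c|^{(s-1)\ell_{\sigma,i}(-y)}$ for $s\in[1,\infty]$. It is continuous, including at $t_c=0$, because the exponents stay in $[0,1]$ on the closure of the relevant chart region and the chart bounds from the proof of Lemma~\ref{lem:tropical-properness} are uniform. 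Local finiteness of the fan (Lemma~\ref{lem:periodic-A2-fan}) lets the chartwise homotopies patch. It follows that $Y_{<\epsilon}\simeq Y_0\simeq Y_{\leq\epsilon}$ and $Y\simeq Y_0$.

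\emph{Step 2 ($\pi_1(Y_0)=1$).} The central fibre $Y_0$ is a union of toric $dP_6$'s indexed by $\mathbb Z^2$, glued along $\mathbb P^1$'s (edges) and points (triangles) according to the periodic triangulation. Each component is simply connected, and so is each double curve. Van Kampen for this normal-crossings union (equivalently, a Mayer--Vietoris/complex-of-spaces argument) reduces $\pi_1(Y_0)$ to $\pi_1$ of the dual complex. The dual complex is the $A_2$ triangulation of $\mathbb R^2$, which is contractible. Hence $\pi_1(Y_0)=1$. To make van Kampen legitimate I would thicken each component to a regular neighbourhood deformation retracting onto it, using the toric charts. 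Pairwise and triple intersections of these neighbourhoods retract onto the connected curves and points respectively, so the nerve argument applies.

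Combining the two steps gives the lemma. Notably, the homotopy equivalence with $Y$ comes for free from the same retraction applied with $\epsilon=\infty$.
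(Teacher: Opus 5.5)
\textbf{Step 1 does not produce a deformation retraction, and both conclusions of the lemma rest on it.}

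By the exact identity $|z_i|=|t_c|^{\ell_{\sigma,i}(-y)}$, your chartwise formula $z_i\mapsto z_i|t_c|^{(s-1)\ell_{\sigma,i}(-y)}$ is just $z_i\mapsto z_i|z_i|^{s-1}$. This is the continuous endomorphism of $(\mathbb C^*)^3$ that multiplies $\log|\cdot|$ by $s$. Two things go wrong.
\begin{enumerate}
\item The map moves the central fibre. A point $(0,z_1,z_2)$ of the open orbit of $D_{v_0}$ goes to $(0,z_1|z_1|^{s-1},z_2|z_2|^{s-1})$. So if you hold $Y_0$ fixed (where $y$ is undefined), the homotopy is already discontinuous at generic points of $Y_0$ for every finite $s>1$.
\item At $s=\infty$ the limit is $0$ where $|z_i|<1$ and $z_i$ where $|z_i|=1$. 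Points with $-y$ inside a triangle go to the torus-fixed point, while points with $-y$ on an edge go to the unit circle of the edge orbit. The end map therefore jumps across $\{\ell_{\sigma,i}(-y)=0\}$. Keeping the exponents in $[0,1]$ does not help, because $|t_c|^{\infty\cdot\ell}$ is discontinuous at $\ell=0$.
\end{enumerate}
The passage to ``$\epsilon=\infty$'' also fails. The coordinate $y$ is undefined on $|t_c|=1$, and $|t_c|\mapsto|t_c|^s$ sends points with $|t_c|>1$ to infinity. So nothing in Step 1 gives $Y\simeq Y_0$, or even $Y\simeq Y_{<\epsilon}$. A genuine collapse of a level tube onto the nonnormal central fibre is a nontrivial statement. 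The paper needs a Whitney/Thom stratification and a stratified neighbourhood-retraction theorem for it later, in Lemma~\ref{lem:cusp-neighbourhood-collapse}.

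The paper avoids $Y_0$ entirely.
\begin{itemize}
\item \emph{Homotopy equivalences.} The cocharacter $(0,0,1)$ you discarded is exactly the right tool, because no limit is taken. The elements $\rho_a$, $a\in(0,1]$, are automorphisms of $Y$ with $t_c(\rho_a y)=a\,t_c(y)$. With $a(y)=\epsilon/(\epsilon+|t_c(y)|)$, the homotopy $H_s(y)=\rho_{(1-s)+sa(y)}y$ preserves both tubes and pushes $Y$ into $Y_{<\epsilon}$. Hence $H_1$ is a homotopy inverse of each inclusion.
\item \emph{Simple connectivity.} The paper works on $Y$ itself. Every loop and every null-homotopy lies in the toric variety $Y_R$ of a finite connected subfan, and $\pi_1(Y_R)=N'/\langle n_\rho\rangle$ by Fulton. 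The rays through $(0,0,1)$, $(1,0,1)$, $(0,1,1)$ already generate $N'$.
\end{itemize}
Your nerve computation of $\pi_1(Y_0)$ in Step 2 is fine on its own. It only becomes useful once $Y_{<\epsilon}\simeq Y_0$ has actually been proved.
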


\begin{proof}
The cocharacter $(0,0,1)$ gives a positive-real torus action $\rho_a$ on
$Y$ with $t_c(\rho_a y)=a\,t_c(y)$.  The function
$a(y)=\epsilon/(\epsilon+|t_c(y)|)$ and the homotopy
\[
 H_s(y)=\rho_{(1-s)+sa(y)}y
\]
give homotopy equivalences from the open and closed tubes to $Y$.

To compute $\pi_1(Y)$, exhaust $\mathbb R^2$ by nested finite
connected patches of the periodic triangulation.  The cones over each patch and
their faces form a finite subfan $\Sigma_R$, with open toric subvariety
$Y_R\subset Y$.  Every compact subset of $Y$ lies in some $Y_R$.
Consequently every loop in $Y$, and every null-homotopy between loops,
is contained in some $Y_R$: the images of the relevant compact source
spaces are compact.  Hence the natural map
\[
 \varinjlim_R\pi_1(Y_R)\longrightarrow\pi_1(Y)
\]
is an isomorphism.  For a finite fan, van Kampen gives
\cite[Section~3.2]{Fulton}
\[
 \pi_1(Y_R)=N'/\langle n_\rho:\rho\in\Sigma_R(1)\rangle.
\]
Taking the direct limit gives the same formula with all rays of
$\Sigma_{A_2}$.  The three rays through $(0,0,1)$, $(1,0,1)$, and
$(0,1,1)$ already generate $N'=\mathbb Z^2\oplus\mathbb Z$.  Hence
$\pi_1(Y)=0$.
\end{proof}

Write $W_r^\kappa$ for the reduced central fibre of the toroidal
model.  For the topological statements fix $0<\rho<\epsilon$.  For
$\kappa\in\{B,C\}$ write
\begin{equation}\label{eq:closed-cusp-tube}
 N^\kappa_{0,r}(\rho)=
 (f_r^\kappa)^{-1}(|t_c|\leq\rho),
 \qquad
 M^\kappa_{0,r}(\rho)=
 (f_r^\kappa)^{-1}(|t_c|=\rho),
\end{equation}
Here $N^\kappa_{0,r}(\rho)$ is a compact oriented six-manifold
with boundary $M^\kappa_{0,r}(\rho)$, a closed oriented five-manifold
carrying the outward-normal-first boundary orientation.

\begin{proposition}[Cusp fundamental group and boundary map]
\label{prop:cusp-pi1}
For $\kappa\in\{B,C\}$ and every $r\geq1$,
\[
 \pi_1\bigl(N^\kappa_{0,r}(\rho)\bigr)
 \cong\overline\Lambda\cong\mathbb Z^2.
\]
For a nearby smooth fibre marked by $H_1(F;\mathbb Z)=\Lambda$, inclusion
induces
\[
 \Lambda\longrightarrow\overline\Lambda
 =\Lambda/\Lambda_{\mathrm{van}}.
\]
Thus the primitive vanishing lattice is
$\Lambda_{\mathrm{van}}=\mathbb Z\langle w,\delta\rangle$.

If $a_0$ is the clockwise meridian represented by the extended zero
section, its forward return is $\mathsf P^\kappa_{0,r}$ from
\eqref{eq:clockwise-cusp-transport}.  The boundary is its mapping torus,
and, with left-to-right path products,
\[
 \pi_1\bigl(M^\kappa_{0,r}(\rho)\bigr)
 \cong\Lambda\rtimes_{\mathsf D^\kappa_{0,r}}
 \mathbb Z\langle a_0\rangle,\qquad
 a_0^{-1}t_\lambda a_0=t_{\mathsf P^\kappa_{0,r}\lambda}.
\]
and the boundary inclusion is
\[
 t_\lambda\longmapsto t_{\bar\lambda},
 \qquad a_0\longmapsto1.
\]
Its kernel is the normal closure of
$\Lambda_{\mathrm{van}}\cup\{a_0\}$.
\end{proposition}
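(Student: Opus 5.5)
The plan is to realize the cusp tube as a quotient of the simply connected toric tube and read off both fundamental groups from the covering. By Theorem~\ref{thm:master-cusp} and Corollary~\ref{thm:finite-index-reduction}, $N^\kappa_{0,r}(\rho)$ is the quotient of the closed tube $Y_{\leq\rho}$ by the free, properly discontinuous action of $\Gamma=B^\kappa_{0,r}\Z^2\cong\Z^2$, where the twist is $C^\kappa_r(B^\kappa_{0,r})^{-1}$ from \eqref{eq:typeB-cusp-decomposition} and \eqref{eq:typeC-cusp-decomposition}. Since $Y_{\leq\rho}$ is simply connected (Lemma~\ref{lem:toric-simply-connected}), covering theory gives $\pi_1(N^\kappa_{0,r}(\rho))\cong\Gamma\cong\Z^2$. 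I then identify this group with $\overline\Lambda$ by matching the lattice marking. The torus fibre over $t_c\neq0$ is $(\C^*)^2/\Gamma$ in the multiplicative coordinate $x$, and its $H_1$ is an extension of $\Gamma$ by $\pi_1((\C^*)^2)=\Z^2$. The latter are the angular loops, which die in $Y$. So I will use the cusp decomposition $\Pi=\sigma B_0+C$ to check that, under the exponential $x=e^{2\pi i u}$ in the normalized period basis, the columns $w,\delta$ of $\Pi^\kappa_r$ (the identity block) correspond to the angular loops. The columns $\gamma,u$ correspond to $\Gamma$ via $B^\kappa_{0,r}$. This gives the short exact sequence $0\to\Lambda_{\mathrm{van}}\to\Lambda\to\Gamma\to0$. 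The fibre inclusion into the tube is then the quotient $\Lambda\to\Lambda/\Lambda_{\mathrm{van}}=\overline\Lambda$, and $\Lambda_{\mathrm{van}}$ is primitive.

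For the boundary, $M^\kappa_{0,r}(\rho)$ is the smooth torus bundle over the circle $|t_c|=\rho$, so it is a mapping torus. Its return map is the cusp monodromy. Following $\sigma_\kappa\mapsto\sigma_\kappa-1$ as in \eqref{eq:clockwise-cusp-transport}, the forward return is $\mathsf P^\kappa_{0,r}$. The semidirect product presentation then follows exactly as in the first paragraph of the proof of Proposition~\ref{prop:finite-filling-pi1}, with the same left-to-right convention. The section $a_0$ is the extended zero section: the point $x=1$ (that is, $u=0$) is fixed by $t_c$-variation and extends holomorphically across $t_c=0$ into a toric chart. Hence $a_0$ bounds the disc $\{x=1,\ |t_c|\leq\rho\}$ in the tube and maps to $1$. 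Lattice loops $t_\lambda$ map to $t_{\bar\lambda}$ by the fibre computation above.

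For the kernel: the map $\pi_1(M)\to\overline\Lambda$ is surjective. Its kernel contains the normal closure $K$ of $\Lambda_{\mathrm{van}}\cup\{a_0\}$. The quotient $\pi_1(M)/K$ is generated by the images of $\Lambda$, with $a_0$ killed, so the relation $a_0^{-1}t_\lambda a_0=t_{\mathsf P\lambda}$ becomes trivial there. Hence $\pi_1(M)/K$ is a quotient of $\Lambda/\Lambda_{\mathrm{van}}$, which maps isomorphically onto $\overline\Lambda$. This forces the kernel to equal $K$.

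The main obstacle will be the identification step: checking carefully that the angular classes of $(\C^*)^2$ are precisely $\Z\langle w,\delta\rangle$ in the marking $\Pi^\kappa_r$, rather than the index-$r$ sublattice $\operatorname{im}(\mathsf M-I)$, with signs and orientation consistent with $\mathsf P^\kappa_{0,r}$. I would do this by writing $u=\Pi(\alpha,\beta)^{\mathsf T}=\sigma B_0\alpha+C\alpha+\beta$ and observing that $\beta\in\Z^2$ exactly gives $x$ unchanged.
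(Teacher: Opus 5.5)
Your proposal is correct and follows essentially the same route as the paper. The ingredients match: the simply connected toric tube of Lemma~\ref{lem:toric-simply-connected} as universal cover with deck group $B^\kappa_{0,r}\Z^2\cong\overline\Lambda$; the angular loops of $(\C^*)^2$ as $\Lambda_{\mathrm{van}}$, via $0\to\Lambda_{\mathrm{van}}\to\Lambda\to\overline\Lambda\to0$; the extended section $x=(1,1)$ bounding a disc, which kills $a_0$; and the mapping-torus square of Proposition~\ref{prop:finite-filling-pi1} for the boundary presentation. Your short argument that $\pi_1(M^\kappa_{0,r}(\rho))/K$ is a quotient of $\Lambda/\Lambda_{\mathrm{van}}\cong\overline\Lambda$, so that the kernel is exactly the normal closure $K$, makes explicit a step the paper leaves implicit.
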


\begin{proof}
The preimage of the closed tube in $Y$ is simply connected by
Lemma~\ref{lem:toric-simply-connected}.  Its free deck group is
$\overline\Lambda$, so it is the universal cover of the cusp
neighbourhood.  Over $t_c\neq0$ the preimage of the fibre is
$(\mathbb C^*)^2$, with fundamental group $\Lambda_{\mathrm{van}}$.
The endpoint deck transformation gives the exact sequence
\[
 0\longrightarrow\Lambda_{\mathrm{van}}
 \longrightarrow\Lambda
 \longrightarrow\overline\Lambda\longrightarrow0.
\]

The point $(x_1,x_2)=(1,1)$ defines a section; in the chart over
$\Delta_0^+$ it is $(z_0,z_1,z_2)=(t_c,1,1)$, so it extends over the
central fibre.  Its boundary meridian bounds the section disc and is
null-homotopic in the closed tube.  The mapping-torus presentation uses
the same square relation as Proposition~\ref{prop:finite-filling-pi1}:
forward transport is $\mathsf P_{0,r}$ and conjugation by $a_0$ is
$\mathsf D_{0,r}$.  It gives the displayed boundary map and its normal kernel.
\end{proof}

\begin{corollary}[The two cusp-cover families]
\label{cor:typeB-cusp-ladder}
For the Type B matrices, the neighbourhoods
\[
 N^B_{0,r}(\rho)\longrightarrow N^B_{0,1}(\rho)
\]
form cyclic covers of degree $r$.  The same holds for Type C:
\[
 N^C_{0,r}(\rho)\longrightarrow N^C_{0,1}(\rho).
\]
On interiors the covers are finite \'etale.  Their central fibres have $r$
normalization components and Euler number $2r$.
\end{corollary}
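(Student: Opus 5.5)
The plan is to realize both cusp neighbourhoods as quotients of one toric tube by nested lattices. Then the covering statement is a special case of Theorem~\ref{thm:master-cusp} together with Corollary~\ref{thm:finite-index-reduction}. By the cusp decompositions \eqref{eq:typeB-cusp-decomposition} and \eqref{eq:typeC-cusp-decomposition}, the local period matrix near $t_c=0$ has the form $Z^\kappa_r=\sigma_\kappa B^\kappa_{0,r}+C^\kappa_r$. The normalized twist $D^\kappa=C^\kappa_r(B^\kappa_{0,r})^{-1}$ is independent of $r$. The toroidal completion $N^\kappa_{0,r}(\rho)$ is therefore the quotient of the same closed tube $Y_{\le\rho}$ by the action $\widehat\Psi_\nu$, with the same holomorphic matrix $D^\kappa(t_c)$ and with $\nu$ ranging over $\Gamma_r=B^\kappa_{0,r}\Z^2$. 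I would first record this identification explicitly, using the exponentiated coordinates on the vanishing torus $\C^2/\Lambda_{\mathrm{van}}$. This is the step where care is needed: one must check that the coordinate change $x=e^{2\pi i u}$ produces exactly the action of Corollary~\ref{thm:finite-index-reduction}, with $B=B^\kappa_{0,r}$ and $C=C^\kappa_r$, for every $r$ simultaneously.

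Next I would compare the lattices. Since $B^B_{0,r}=B^B_{0,1}E_r$ with $E_r=\operatorname{diag}(r,1)$, we get $\Gamma_r=B^B_{0,1}E_r\Z^2\subset\Gamma_1$. This holds because the first column of $B^B_{0,r}$ is $r$ times that of $B^B_{0,1}$, and the second columns agree. The same factorization holds for Type C. Hence $\Gamma_r$ has index $|\det E_r|=r$ in $\Gamma_1$. The quotient $\Gamma_1/\Gamma_r\cong\Z/r$ is cyclic, generated by the image of the first column of $B^\kappa_{0,1}$.

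The free, properly discontinuous action of $\Gamma_1$ on $Y_{\le\rho}$ (Lemma~\ref{lem:tropical-properness}) then gives a covering
\[
Y_{\le\rho}/\Gamma_r\to Y_{\le\rho}/\Gamma_1
\]
with deck group $\Gamma_1/\Gamma_r\cong\Z/r$. This is the final assertion of Theorem~\ref{thm:master-cusp}, applied to the pair $\Gamma_r\subset\Gamma_1$ rather than to $\Gamma\subset\Z^2$; the proof there works verbatim. The covering is a local biholomorphism on interiors, hence finite étale.

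Finally, the central fibre of $N^\kappa_{0,r}$ is described by Theorem~\ref{thm:master-cusp} with $d=|\det B^\kappa_{0,r}|$. The Smith normal form $\operatorname{diag}(1,r)$ gives $d=r$, so there are $r$ normalization components $dP_6$ and Euler number $2r$. The only real obstacle is the bookkeeping in the first step. One must make sure that the choice of $\rho$ can be made uniformly. For this, the $\epsilon$ of Theorem~\ref{thm:master-cusp} depends only on $D^\kappa$, and $D^\kappa$ does not depend on $r$.
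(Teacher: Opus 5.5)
Your proposal is correct and follows the paper's proof. The paper also notes that the normalized twist $C^\kappa_r(B^\kappa_{0,r})^{-1}$ is independent of $r$ and that $B^\kappa_{0,r}$ has Smith form $\operatorname{diag}(1,r)$, and then applies Corollary~\ref{thm:finite-index-reduction}, and hence Theorem~\ref{thm:master-cusp}, to the two cusp decompositions. Your adaptation to the pair $\Gamma_r\subset\Gamma_1$ is unnecessary: $\det B^\kappa_{0,1}=1$ for both types, so $\Gamma_1=\Z^2$ and the final assertion of Theorem~\ref{thm:master-cusp} applies as stated.
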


\begin{proof}
For both families the normalized twist is independent of $r$, while the
cusp matrix has Smith form $\operatorname{diag}(1,r)$.  Apply
Corollary~\ref{thm:finite-index-reduction} to
\eqref{eq:typeB-cusp-decomposition} and
\eqref{eq:typeC-cusp-decomposition}.
\end{proof}

\section{Global compactification}\label{sec:global}

We glue the smooth torus family to the three local models constructed in
Section~\ref{sec:local-completions}: two affine logarithmic transforms
and one periodic toroidal completion.  After writing the three boundary
identifications in a common notation, we prove a relative gluing lemma and
apply it uniformly to the Type B and Type C families.

\subsection{The open family and the three collar maps}

We use \(\kappa\in\{B,C\}\) to denote the two realized monodromy types.
For Type B the base is \(B_B=B\), the marked points are
\(p_{B,i}=p_i\), and the finite multiplicities are \((4,6)\).  For Type C
the base is \(B_C\), the marked points are \(p_{C,i}=p_i^C\), and the
finite multiplicities are \((3,6)\).  Put
\[
 B_\kappa^\circ
 =B_\kappa\setminus\{p_{\kappa,0},p_{\kappa,1},p_{\kappa,2}\}.
\]
Fix an admissible value of the affine period parameter and suppress it
from the notation.  Let
\[
 J_r^\kappa\longrightarrow B_\kappa^\circ
\]
be the corresponding smooth family of complex two-tori.

\begin{lemma}[Properness of the open torus family]
\label{lem:open-family-proper}
The map \(J_r^\kappa\to B_\kappa^\circ\) is a holomorphic submersion with
connected fibres and is proper over every compact subset of
\(B_\kappa^\circ\).
\end{lemma}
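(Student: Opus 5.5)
The plan is to realize $J_r^\kappa$ explicitly as a quotient of a trivial torus bundle over the orbifold universal cover, and to reduce every assertion to a local statement over small discs in $B_\kappa^\circ$. Over the upper half-plane $\mathbb H_z$ (the orbifold universal cover $\pi_\kappa$ restricted to the preimage of $B_\kappa^\circ$, which is $\mathbb H_z$ minus the discrete set of elliptic fixed points), form
\[
 \widetilde J=\bigl(\mathbb H_z\times\mathbb C^2\bigr)\big/\Lambda,\qquad
 \lambda\cdot(z,u)=\bigl(z,u+\Pi^\kappa_r(z)\lambda\bigr).
\]
This is a holomorphic family of tori by Proposition~\ref{prop:nondegenerate-periods}. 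The first step is to note that, for an admissible parameter, $\mathcal D_\kappa<0$ holds everywhere. Hence $\Pi_r^\kappa(z)=\Pi_1^\kappa(z)D_r$ maps $\Lambda_\mathbb R$ real-linearly isomorphically onto $\mathbb C^2$ for every $z$. Consequently $\Lambda$ acts freely and properly discontinuously fibrewise, and $\widetilde J$ is locally biholomorphic to $U\times(\mathbb C^2/\Lambda)$: over a small disc $U$, the map $(z,x)\mapsto(z,\Pi(z)x)$ with $x\in\Lambda_\mathbb R/\Lambda$ is a real-analytic trivialization that is holomorphic in $z$ for fixed flat coordinates.

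The second step is to pass to the deck quotient. The deck group acts on $\widetilde J$ by
\[
 (z,[u])\mapsto\bigl(gz,[R^\kappa_j(z)u]\bigr),
\]
which is well defined by the period equivariance proposition. It acts freely over $B_\kappa^\circ$, because the deck group acts freely on the complement of the elliptic fixed points, and properly discontinuously there. So $J_r^\kappa$ is a complex manifold, and the projection is holomorphic.

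For the main claims, pick any point $b\in B_\kappa^\circ$ and a small disc $V\ni b$ evenly covered by $\pi_\kappa$. The preimage of $J_r^\kappa|_V$ is identified with $\widetilde J|_U$ for one sheet $U$. This gives a local product
\[
 J_r^\kappa|_V\cong U\times T^4
\]
as smooth families, with holomorphic projection. From this product:
\begin{itemize}
 \item the map is a submersion, since its differential is surjective in the product chart;
 \item the fibres are connected, since each is a compact complex two-torus;
 \item properness over a compact $K\subset B_\kappa^\circ$ follows by covering $K$ with finitely many closed discs $\overline V_i$, over each of which the preimage is homeomorphic to $\overline V_i\times T^4$ and hence compact. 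The preimage of $K$ is then a closed subset of a finite union of compact sets.
\end{itemize}

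The only genuine point to check is that the local trivialization is a homeomorphism onto a product including the fibre topology. This uses continuity of $\Pi(z)$ and of $\Pi(z)^{-1}$ on $\Lambda_\mathbb R$, which follows from the nonvanishing of $\det\operatorname{Im}Z_\kappa$. I expect that to be the main (mild) obstacle, together with checking that the deck action is properly discontinuous near points of $\mathbb H_z$ lying over $B_\kappa^\circ$. Both are standard once the uniform lattice condition from the admissible half-plane is invoked.
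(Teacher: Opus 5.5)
Your proposal is correct and follows essentially the paper's argument. Over a simply connected (evenly covered) disc the family is the marked quotient $(U\times\mathbb C^2)/\Pi_r^\kappa(z)\mathbb Z^4$, compactness over a compact set comes from the flat fundamental domain, and a finite cover of $K$ finishes. The paper uses the image of $K\times[0,1]^4$ where you use the equivalent homeomorphism with $\overline V\times T^4$. The only slip is the phrase ``locally biholomorphic to $U\times(\mathbb C^2/\Lambda)$'': this is only a real-analytic trivialization, since the fibre complex structure varies with $z$, but your later steps use nothing beyond the smooth product, so the argument stands.
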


\begin{proof}
On a simply connected coordinate disc \(U\Subset B_\kappa^\circ\), the
local system is marked and the family is
\[
 (U\times\mathbb C^2)/\Pi_r^\kappa(z)\mathbb Z^4.
\]
For a compact set \(K\subset U\), the image of
\[
 K\times[0,1]^4
 \longrightarrow U\times\mathbb C^2,
 \qquad (z,t)\longmapsto\bigl(z,\Pi_r^\kappa(z)t\bigr),
\]
is compact and meets every lattice orbit over \(K\).  It therefore maps
onto the inverse image of \(K\) in the quotient.  Hence the family is
proper over \(K\).  A finite cover of an arbitrary compact subset by such
coordinate discs proves the assertion.  Smoothness and connectedness of
the fibres follow from the period construction.
\end{proof}

Choose pairwise disjoint coordinate discs
\[
 D_{\kappa,i}^-\Subset D_{\kappa,i}^+\subset B_\kappa,
 \qquad i=0,1,2,
\]
centred at the three marked points, and write
\[
 A_{\kappa,i}
 =D_{\kappa,i}^+\setminus\overline{D_{\kappa,i}^-}.
\]
Let \(N_{\kappa,0,r}\to D_{\kappa,0}^+\) be the toroidal completion of
Section~\ref{sec:cusp}, and for \(j=1,2\) let
\(N_{\kappa,j,r}\to D_{\kappa,j}^+\) be the logarithmic transform of
Section~\ref{sec:finite}.  The gluing maps are oriented from the open
family to the local models.

At a finite point, pass to the root coordinate \(t=s^{m_{\kappa,j}}\)
and choose
\(\ell(s)=\log(s)/(2\pi i)\).  Lemma~\ref{lem:finite-collar} gives
\begin{equation}\label{eq:global-finite-collar}
 \psi_{\kappa,j}:
 J_r^\kappa|_{A_{\kappa,j}}
 \xrightarrow{\ \sim\ }
 N_{\kappa,j,r}|_{A_{\kappa,j}},
 \qquad
 [s,u]\longmapsto
 \bigl[s,u-\ell(s)\Pi_r^\kappa(z)v_{\kappa,j,r}\bigr].
\end{equation}
The expression is written on the root cover; the equivariance proved in
Lemma~\ref{lem:finite-collar} makes it a biholomorphism over the coarse
annulus and removes the dependence on the logarithm branch.

At the toroidal point write
\[
 Z_r^\kappa=\sigma_\kappa B_{0,r}^\kappa+C_r^\kappa,
 \qquad
 t_c=e^{2\pi i\sigma_\kappa}.
\]
On the punctured collar define
\begin{equation}\label{eq:global-cusp-collar}
 \psi_{\kappa,0}:
 J_r^\kappa|_{A_{\kappa,0}}
 \xrightarrow{\ \sim\ }
 N_{\kappa,0,r}|_{A_{\kappa,0}},
 \qquad
 [t_c,u]\longmapsto
 [t_c,\exp(2\pi i u)].
\end{equation}
Indeed, for \(\lambda,\mu\in\mathbb Z^2\),
\[
 \exp\bigl(2\pi i(u+Z_r^\kappa\lambda+\mu)\bigr)
 =e^{2\pi iC_r^\kappa\lambda}
  t_c^{B_{0,r}^\kappa\lambda}\exp(2\pi i u),
\]
which is precisely the toric deck action of
Theorem~\ref{thm:master-cusp}.  Thus exponentiation identifies the
additive period quotient with the multiplicative toroidal quotient.  A
local logarithm gives the inverse map.

The three maps are summarized by the fibrewise diagram
\begin{equation}\label{eq:global-gluing-diagram}
\begin{CD}
 J_r^\kappa|_{A_{\kappa,0}\sqcup A_{\kappa,1}\sqcup A_{\kappa,2}}
 @>{\psi_{\kappa,0}\sqcup\psi_{\kappa,1}\sqcup\psi_{\kappa,2}}>>
 \displaystyle\bigsqcup_{i=0}^2
 N_{\kappa,i,r}|_{A_{\kappa,i}}\\
 @VVV @VVV\\
 A_{\kappa,0}\sqcup A_{\kappa,1}\sqcup A_{\kappa,2}
 @= A_{\kappa,0}\sqcup A_{\kappa,1}\sqcup A_{\kappa,2}.
\end{CD}
\end{equation}
The annuli are disjoint, so there are no triple-overlap conditions.

\subsection{Analytic gluing}

\begin{lemma}[Relative analytic gluing along annuli]
\label{lem:analytic-gluing}
Let \(B\) be a compact Riemann surface and \(S\) a complex space.  For
pairwise disjoint coordinate discs choose
\[
 D_i^-\Subset D_i^0\Subset D_i^+,
 \qquad
 A_i=D_i^+\setminus\overline{D_i^-}.
\]
Suppose
\[
 X^\circ\longrightarrow
 \left(B\setminus\{p_0,\ldots,p_n\}\right)\times S
\]
is holomorphic and proper over compact subsets, and suppose
\(X_i\to D_i^+\times S\) is proper.  If there are biholomorphisms over
\(A_i\times S\),
\[
 \psi_i:X^\circ|_{A_i\times S}
 \xrightarrow{\ \sim\ }X_i|_{A_i\times S},
\]
then gluing
\[
 X^\circ|_{(B\setminus\cup_i\overline{D_i^-})\times S}
 \quad\text{to}\quad
 \bigsqcup_i X_i
\]
along the \(\psi_i\) produces a Hausdorff complex space \(X\), and the
induced map \(X\to B\times S\) is proper.  If the pieces are complex
manifolds, then \(X\) is a complex manifold.  If the fibre of every
piece is connected, then the fibres of \(X\to B\times S\) are connected.
\end{lemma}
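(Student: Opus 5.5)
The proof is the standard gluing of a family of complex spaces along open overlaps, with the annuli chosen to ensure Hausdorffness and properness. Put $X_\infty = X^\circ|_{(B\setminus\cup_i\overline{D_i^-})\times S}$ and form
\[
 X=\Bigl(X_\infty\sqcup\bigsqcup_i X_i\Bigr)\Big/\sim,
 \qquad p\sim\psi_i(p)\quad\text{for } p\in X^\circ|_{A_i\times S}.
\]
The overlaps $A_i$ are pairwise disjoint, so $\sim$ is already an equivalence relation and no cocycle condition is needed. The projections to $B\times S$ are compatible because each $\psi_i$ lies over $A_i\times S$. Hence they descend to a continuous map $f:X\to B\times S$. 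Each piece maps homeomorphically onto an open subset of $X$, so $X$ inherits the structure of a complex space, or of a complex manifold if the pieces are manifolds, and $f$ is holomorphic.

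\textbf{Hausdorffness.} This is the only delicate point. Take two distinct points $x,y\in X$.
\begin{itemize}[label=\textbullet]
\item If $f(x)\neq f(y)$, pull back disjoint neighbourhoods in $B\times S$.
\item If $f(x)=f(y)$ and both points lie in a single piece, that piece is Hausdorff and is open in $X$.
\item The remaining case is $x\in X_\infty\setminus X^\circ|_{A_i\times S}$ and $y\in X_i\setminus X_i|_{A_i\times S}$ with the same image. This cannot occur, because the images of these two sets in $B$ lie in $B\setminus D_i^+$ and in $\overline{D_i^-}$ respectively, which are disjoint.
\end{itemize}
Without this disjointness one would get the standard non-Hausdorff failure along the boundary circles. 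For a rigorous argument, I would use the intermediate discs $D_i^0$ and cover $X$ by the two closed pieces
\[
 f^{-1}\bigl(\overline{D_i^0}\times S\bigr)\subset X_i,
 \qquad
 f^{-1}\Bigl(\bigl(B\setminus\textstyle\bigcup_i D_i^0\bigr)\times S\Bigr)\subset X_\infty.
\]
Each is closed in $X$ and is Hausdorff as a subspace of a Hausdorff piece. Two distinct points in the same closed piece are separated inside the corresponding open piece. Two points in different closed pieces with the same image both lie over $\partial D_i^0\times S\subset A_i\times S$, where the pieces are identified, so they are in fact points of one piece.

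\textbf{Properness.} Let $K\subset B\times S$ be compact. Split it into
\[
 K_i=K\cap\bigl(\overline{D_i^0}\times S\bigr),
 \qquad
 K_\infty=K\cap\Bigl(\bigl(B\setminus\textstyle\bigcup_i D_i^0\bigr)\times S\Bigr).
\]
Each of these sets is compact. Then $f^{-1}(K_i)$ is compact by properness of $X_i\to D_i^+\times S$. Since $K_\infty$ is a compact subset of $(B\setminus\{p_0,\dots,p_n\})\times S$, the preimage $f^{-1}(K_\infty)$ is compact by properness of $X^\circ$ over compact sets. A finite union of compact sets in a Hausdorff space is compact, so $f^{-1}(K)$ is compact.

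\textbf{Connected fibres.} Over a point of $B\times S$ the fibre of $f$ is identified with the fibre of exactly one piece, either through $X_\infty$ or through $X_i$; over points of $A_i\times S$ the two descriptions agree via $\psi_i$. Each such fibre is connected by hypothesis.
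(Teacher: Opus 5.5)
Your proof is correct and follows the paper's argument. It uses the same quotient of $X_\infty\sqcup\bigsqcup_i X_i$, the same decomposition of $K$ via the intermediate discs $D_i^0$ for properness, and the same fibrewise check for connectedness. The only variation is the Hausdorff step: the paper shows the graph of the equivalence relation is closed and the quotient map is open, whereas you separate points with different images using the map to the Hausdorff base and note that points with equal image lie in a common open piece. The paper's openness of the quotient map is also what justifies your unproved claim that each piece embeds as an open subset, and both versions rest on the identifications taking place exactly over $A_i\times S$.
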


\begin{proof}
Let \(X_0\) denote the restriction of \(X^\circ\) to
\((B\setminus\cup_i\overline{D_i^-})\times S\), and consider the
disjoint union \(X_0\sqcup\bigsqcup_iX_i\).  The equivalence relation is
the union of the diagonal and the graphs of \(\psi_i\) and
\(\psi_i^{-1}\).  Each graph is closed.  Indeed, if a sequence in the
graph converges in both pieces, then its base point converges to a point
belonging to both base domains, hence to \(A_i\times S\), where
closedness follows from continuity.  Since the annuli are disjoint,
there are no chains involving two different local pieces.

The quotient map is open: the saturation of an open set is obtained by
adding its images and inverse images under the finitely many
biholomorphisms \(\psi_i\).  An open quotient by a closed equivalence
relation is Hausdorff.  The complex atlases on the pieces agree on the
overlaps, so they define a complex-space atlas on the quotient; if the
pieces are manifolds, all charts are manifold charts.

To prove properness, let \(K\subset B\times S\) be compact.  The sets
\[
 K_0=K\cap\left((B\setminus\cup_iD_i^0)\times S\right),
 \qquad
 K_i=K\cap\left(\overline{D_i^0}\times S\right)
\]
are compact, and \(K_0\) lies in the punctured base.  The inverse images
of \(K_0\) in \(X^\circ\) and of \(K_i\) in \(X_i\) are compact.  Their
images in the quotient cover the inverse image of \(K\), which is
therefore compact.  The assertion about connected fibres follows by
examining separately a special point, an annulus, and a point in the
complement of the larger discs.
\end{proof}

\subsection{The compact threefolds}

\begin{theorem}[Global compactification of the two period families]

For every \(r\ge1\) and every admissible affine period parameter $c$, the
maps in \eqref{eq:global-gluing-diagram} define compact connected complex
threefolds
\[
 f_{r,c}^B:Y_{r,c}\longrightarrow B=\mathbb P^1,
 \qquad
 f_{r,c}^C:Z_{r,c}\longrightarrow B_C=\mathbb P^1.
\]
When $c$ is fixed, we abbreviate these spaces to $Y_r$ and $Z_r$.  They
have connected fibres.  Over the complement of the three marked points
they are proper holomorphic submersions by complex two-tori.  The finite
fibres have multiplicities \((4,6)\) for \(Y_r\) and \((3,6)\) for
\(Z_r\), and their reductions are smooth bielliptic surfaces.  The
remaining fibre is reduced and has normal crossings; its normalization is a
disjoint union of \(r\) copies of \(dP_6\), and its Euler number is
\(2r\).
\end{theorem}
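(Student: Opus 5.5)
The proof will assemble the pieces already constructed and apply Lemma~\ref{lem:analytic-gluing} with $S$ a point, i.e.\ with the parameter fixed. The inputs are as follows. The open piece $X^\circ=J_r^\kappa\to B_\kappa^\circ$ is a holomorphic submersion with connected torus fibres, proper over compact subsets, by Lemma~\ref{lem:open-family-proper}. At the finite points the local pieces $N_{\kappa,j,r}$ are the quotients of Lemma~\ref{lem:log-transform}. The torus family over the root disc is proper, since the periods are holomorphic and nondegenerate there by Proposition~\ref{prop:nondegenerate-periods}. The action is free because $\chi(v_{j,r}^\kappa)=\pm1$ is coprime to $m_{\kappa,j}$. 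The quotient is therefore smooth and proper over the coarse disc, with central divisor $m_{\kappa,j}S_j$. At the cusp, $N_{\kappa,0,r}$ is the quotient $\mathcal N_{B_{0,r}^\kappa,D}$ given by Corollary~\ref{thm:finite-index-reduction}, with $D=C_r^\kappa(B^\kappa_{0,r})^{-1}$. This quotient is smooth and proper over a small disc in $t_c$.

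The collar maps are biholomorphisms over the annuli. For the finite points this is \eqref{eq:global-finite-collar} together with Lemma~\ref{lem:finite-collar}, which also gives independence of the branch and descent from the root cover. For the cusp it is \eqref{eq:global-cusp-collar}: the exponential identity displayed there shows that $\exp(2\pi i\,\cdot)$ intertwines the additive lattice $Z_r^\kappa\Z^2\oplus\Z^2$ with the multiplicative deck action $\widehat\Psi_{B\lambda}$. Because the kernel $\Z^2$ of $\exp(2\pi i\,\cdot)$ is exactly the second block of the lattice, this map is a bijection on quotients and is locally biholomorphic.

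One point must be checked at this stage. The cusp radius $\epsilon$ in Theorem~\ref{thm:master-cusp} may be small, so I would choose the disc $D_{\kappa,0}^+$ to lie inside $|t_c|<\epsilon$ and take all discs pairwise disjoint. The gluing lemma then yields a Hausdorff complex manifold $X$ with a proper map to $\Pone$. Compactness follows from properness over the compact base. Connectedness of fibres is part of the gluing lemma: the torus fibres are connected, the reduced finite fibres are quotients of connected tori, and the cusp central fibre is the quotient of the connected toric boundary. Connectedness of $X$ itself then follows from connected fibres over a connected base.

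The fibre descriptions are all read off from the local models. The fibre over $p_{\kappa,j}$ is $m_{\kappa,j}S_j$, and by Proposition~\ref{prop:bielliptic-fibres} each $S_j$ is a smooth bielliptic surface; this gives multiplicities $(4,6)$ in Type~B and $(3,6)$ in Type~C. For the cusp fibre, Theorem~\ref{thm:master-cusp} says it is reduced with normal crossings, that its normalization consists of $d=|\det B^\kappa_{0,r}|$ copies of $dP_6$, and that its Euler number is $2d$. Here $\operatorname{SNF}(B^\kappa_{0,r})=\diag(1,r)$, so $d=r$.

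The main obstacle is the cusp collar. One has to verify that the dense-torus part of the toroidal quotient over the punctured disc coincides exactly with $J_r^\kappa$ restricted to the collar, as an analytic space over the annulus, and not merely fibrewise. This requires the decomposition \eqref{eq:typeB-cusp-decomposition} and \eqref{eq:typeC-cusp-decomposition}, in which $C_r^\kappa$ is holomorphic in $t_c$ (since $h,m,b$ and $h_C,m_C,b_C$ are holomorphic at the cusp), and the normalization $t_c=e^{2\pi i\sigma_\kappa}$ must be consistent with the clockwise deck generator. I would verify this directly on a simply connected sector and then check compatibility with the cusp deck matrix $\mathsf M^\kappa_{0,r}$.
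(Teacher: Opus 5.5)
Your proposal is correct and follows the paper's proof. You apply Lemma~\ref{lem:analytic-gluing} with $S$ a point to $J_r^\kappa$, the two logarithmic transforms and the toroidal quotient, then read off the fibres from Proposition~\ref{prop:bielliptic-fibres} and the cusp theorem; the paper cites Corollary~\ref{cor:typeB-cusp-ladder} for the cusp fibre, which amounts to your observation $|\det B^\kappa_{0,r}|=r$. The cusp-collar identification you flag as the main obstacle is treated in the paper as immediate from the exponential identity displayed after \eqref{eq:global-cusp-collar}, so your extra check on a simply connected sector is added care rather than a missing step.
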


\begin{proof}
Fix \(\kappa\in\{B,C\}\).  Lemma~\ref{lem:open-family-proper} gives the
required properness of \(J_r^\kappa\) over compact subsets of the
punctured base.  The finite local models are smooth and proper by
Lemma~\ref{lem:log-transform}, and the toroidal model is smooth and
proper by Theorem~\ref{thm:master-cusp}.  Equations
\eqref{eq:global-finite-collar} and \eqref{eq:global-cusp-collar} give
the three fibre-preserving biholomorphisms on the annuli.  Applying
Lemma~\ref{lem:analytic-gluing} with \(S\) a point produces a Hausdorff
complex manifold proper over \(B_\kappa\).  Since the base is compact,
the total space is compact.

The description of the finite fibres is
Proposition~\ref{prop:bielliptic-fibres}.  The toroidal fibre and its
finite-index covers are described by
Corollary~\ref{cor:typeB-cusp-ladder}.  All smooth and local fibres are
connected, so the glued fibres and the total space are connected.
\end{proof}

Shrinking the coordinate discs or changing the intermediate radii does
not change the compactification: two choices admit a common refinement,
and the identity on the common open and local pieces gives a canonical
biholomorphism of the resulting quotients.  The affine parameters in the
second period coordinates therefore survive as parameters of the compact
threefolds; their deformation-theoretic interpretation is given in
Section~\ref{sec:affine-moduli}.

\section{Algebraic reduction and deformations}\label{sec:affine-moduli}

The invariant alternating class determines the algebraic reduction at
every parameter.  The remaining task is to decide which integral
re-markings extend across the local completions.  We use the notation
\[
 \begin{array}{c|c|c|c|c}
 \kappa&s_\kappa&(m_{\kappa,1},m_{\kappa,2})
 &X^\kappa_{r,c}&B_\kappa\\ \hline
 B&2&(4,6)&Y_{r,c}&B\\
 C&3&(3,6)&Z_{r,c}&B_C.
 \end{array}
\]
Here $\beta_{B,0}=\beta_{\mathrm{part}}$ and
$\beta_{C,0}=\beta_{C,\mathrm{part}}$, and the marked period matrix over a
fixed smooth point of the corresponding orbifold base is
\begin{equation}\label{eq:parameter-period-matrix}
 \Pi^\kappa_{r,c}
 =\begin{pmatrix}
 s_\kappa r\mu_\kappa&\tau_\kappa&1&0\\
 r(\beta_{\kappa,0}+c)&\mu_\kappa&0&1
 \end{pmatrix}.
\end{equation}
We use the particular solutions and parameter half-planes fixed in
\eqref{eq:admissible-domain}.

\begin{proposition}[Simultaneous compact deformation]\label{prop:simultaneous-parameter-family}
For every $r\geq1$ and $\kappa\in\{B,C\}$, the constructions of the
preceding sections glue holomorphically in $c$ to a proper holomorphic
submersion
\[
 \varpi_{\kappa,r}:\mathcal X_{\kappa,r}\longrightarrow\mathcal U_\kappa
\]
whose fibre at $c$ is $X^\kappa_{r,c}$.  There is simultaneously a
holomorphic map
\[
 \mathcal F_{\kappa,r}:\mathcal X_{\kappa,r}
 \longrightarrow B_\kappa\times\mathcal U_\kappa
\]
restricting to the torus fibration $f^\kappa_{r,c}$ on every parameter
fibre.  In particular all $X^\kappa_{r,c}$ with $c\in\mathcal U_\kappa$
are oriented-diffeomorphic.
\end{proposition}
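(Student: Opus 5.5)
The plan is to rerun the global construction of Section~\ref{sec:global} with the parameter $c$ treated as a holomorphic variable. Throughout, $S=\mathcal U_\kappa$, and Lemma~\ref{lem:analytic-gluing} is applied with this nontrivial $S$ instead of a point.

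\textbf{Open family.} The period matrix \eqref{eq:parameter-period-matrix} depends holomorphically on $(z,c)$, and only the entry $r(\beta_{\kappa,0}+c)$ involves $c$. The matrices $R^\kappa_j$ and $A^\kappa_{j,r}$ do not depend on $c$: the additive cocycles $\phi_j$ involve only $\tau_\kappa,\mu_\kappa$, and the period equivariance identities hold identically in $c$. Hence the quotient of $\mathbb H\times\mathcal U_\kappa\times\C^2$ by the lattice and the deck group is a holomorphic family $\mathcal J^\kappa_r\to B^\circ_\kappa\times\mathcal U_\kappa$. Nondegeneracy holds on all of $\mathcal U_\kappa$ because $\mathcal D_\kappa=\mathcal D_{\kappa,0}+\operatorname{Im}c<0$ there, by the choice of $C_\kappa$. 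The argument of Lemma~\ref{lem:open-family-proper}, run over compacta $K\times K'$ with the image of $K\times K'\times[0,1]^4$, gives properness over compact subsets of $B^\circ_\kappa\times\mathcal U_\kappa$.

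\textbf{Finite points.} Lemma~\ref{lem:log-transform} is written for a single disc, but its proof is pointwise in $c$. The action \eqref{eq:holomorphic-affine-root-action} is holomorphic in $(s,c,u)$, and freeness uses only $\chi(v)$, which does not depend on $c$. Properness over $\Delta\times K'$ follows because the quotient is finite, applied to a family that is proper over $\Delta\times K'$; that family is proper by the same fundamental-domain argument as above. The collar maps of Lemma~\ref{lem:finite-collar} are holomorphic in $c$.

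\textbf{Cusp.} This is the step needing care. The cusp twist $C^\kappa_r$ depends on $c$ only through $b$ (resp.\ $b_C$), which is shifted by $c$. So $D=C^\kappa_r(B^\kappa_{0,r})^{-1}$ is holomorphic on $\Delta_{\epsilon_0}\times\mathcal U_\kappa$, with $\epsilon_0$ independent of $c$ since $b$, $h$, $m$ are defined on a fixed disc. Theorem~\ref{thm:master-cusp} and Corollary~\ref{thm:finite-index-reduction} then give a smooth quotient, proper and submersive over $S'$, but only for each $S'\Subset\mathcal U_\kappa$ and with $\epsilon$ depending on $S'$. This is the main obstacle: there is no uniform $\epsilon$ on the whole half-plane, because $\operatorname{Im}D$ is unbounded as $\operatorname{Im}c\to-\infty$. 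I will handle it by gluing over an exhaustion. For each $S'$, choose cusp discs $D^\pm_{\kappa,0}$ inside $\{|t_c|<\epsilon(S')\}$ and construct $\mathcal X|_{S'}$ by Lemma~\ref{lem:analytic-gluing}. By the remark following the global theorem, shrinking the discs yields canonically biholomorphic spaces. These identifications are compatible on overlaps $S'\cap S''$, since they are the identity on common pieces, so the $\mathcal X|_{S'}$ glue to $\mathcal X_{\kappa,r}$. Hausdorffness and properness are local over $\mathcal U_\kappa$, so they follow from the statements over each $S'$. The map $\mathcal F_{\kappa,r}$ is induced by the projections of the pieces, which agree on the annuli.

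\textbf{Submersion and diffeomorphism.} The map $\varpi_{\kappa,r}$ is a submersion because each piece is: the open and logarithmic pieces are products of a family of tori or its smooth finite quotient with $S$, and the cusp piece is covered by Theorem~\ref{thm:master-cusp}. A proper holomorphic submersion over the connected half-plane $\mathcal U_\kappa$ is a $C^\infty$ fibre bundle by Ehresmann's theorem. Since $\mathcal U_\kappa$ is contractible, the bundle is trivial, so all fibres are diffeomorphic. The complex orientations are preserved by the trivialization, because they vary continuously.
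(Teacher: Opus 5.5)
Your proposal is correct and follows essentially the same route as the paper's proof. In both, the periods are holomorphic in $c$ and the finite affine data do not depend on $c$. Theorem~\ref{thm:master-cusp} gives a relative cusp model over each relatively compact parameter set, with a radius that is not uniform on all of $\mathcal U_\kappa$. Lemma~\ref{lem:analytic-gluing} builds the family over each such set, these local families are glued by the canonical disc-shrinking identifications, and Ehresmann's theorem then gives the fixed oriented diffeomorphism type.
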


\begin{proof}
The period matrices \eqref{eq:parameter-period-matrix} are holomorphic in
$(z,c)$, and the defining nondegeneracy scalar is shifted by
$\operatorname{Im}c$.  Hence the open torus quotient is a holomorphic
family over $B_\kappa^\circ\times\mathcal U_\kappa$.  The finite affine
actions and their translation vectors are independent of $c$; only the
complex structures of the ambient tori vary holomorphically.  At the
cusp, adding $c$ changes the holomorphic twist matrix $C_r(t_c)$ by a
constant matrix, so the standard toric action remains holomorphic in $(t_c,c)$.
The freeness and tropical properness estimates are uniform on compact
subsets of $\mathcal U_\kappa$.

Cover $\mathcal U_\kappa$ by relatively compact parameter discs
$S_a\Subset\mathcal U_\kappa$.  Theorem~\ref{thm:master-cusp}
provides a relative cusp model on each $S_a$, and Lemma~\ref{lem:analytic-gluing}
constructs a proper holomorphic family over $S_a$.  On an overlap
$S_a\cap S_b$ both families are obtained from the same period matrices,
finite affine actions, toric deck action, and collar formulae.  Their
restrictions are therefore canonically identical after shrinking the
local cusp discs to their common radius.  These canonical identifications
satisfy the cocycle condition, so the local families descend to a
holomorphic family over all of $\mathcal U_\kappa$.  Properness and the
submersion property are local on the target.  Hence
$\varpi_{\kappa,r}$ is a proper holomorphic submersion, and Ehresmann's
theorem gives the final assertion.
\end{proof}

\subsection{The invariant class and algebraic reduction}

Put $\lambda_{\kappa,r}=s_\kappa r$, and let
$\gamma^*,u^*,w^*,\delta^*$ be the basis dual to the marked homology
basis.  The primitive integral alternating class relevant to the total
space is
\begin{equation}\label{eq:distinguished-NS-class}
 \widetilde q_{\kappa,r}
 =\lambda_{\kappa,r}\,\gamma^*\wedge\delta^*
 +u^*\wedge w^*
 \in\bigwedge^2\Lambda^*.
\end{equation}

\begin{lemma}[The monodromy-invariant alternating class]
\label{lem:invariant-class-all-parameters}
For every $r\geq1$, $\kappa\in\{B,C\}$, and smooth fibre $F$, the
simultaneous invariant groups are
\begin{equation}\label{eq:joint-H2-invariants}
 H^2(F;\Z)^{\langle T_1,T_2\rangle}
 =\Z\widetilde q_{\kappa,r},\qquad
 H^2(F;\R)^{\langle T_1,T_2\rangle}
 =\R\widetilde q_{\kappa,r}.
\end{equation}
At every admissible parameter $c$ and every smooth base point, this
class is of type $(1,1)$ and its Appell--Humbert form has signature
$(1,1)$.
\end{lemma}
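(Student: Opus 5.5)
The proof has two parts: an integral computation of invariants of the wedge square of the monodromy, and a Hodge-theoretic check at each parameter.

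\emph{Invariants.} Since $T_j$ and $T_j^{-1}$ have the same fixed lattice (see \eqref{eq:inverse-integral-image}), we may compute invariants of $\bigwedge^2 T_j$ acting on $\bigwedge^2\Lambda^*\cong\Z^6$, for $j=1,2$.
\begin{enumerate}[label=\textup{(\arabic*)}]
\item Express $T^\kappa_{1,r}$ and $T^\kappa_{2,r}$ in the dual basis $\gamma^*,u^*,w^*,\delta^*$.
\item Note that each $T_{j,r}$ preserves the alternating form $Q_{s\cdot r}$ of Section~\ref{sec:lifts}, up to identifying which coordinates pair. So the class $\lambda\gamma^*\wedge\delta^*+u^*\wedge w^*$, which is the form preserved by the Jacobi matrices $T(B,d,z)$, is invariant; it is primitive because its $u^*\wedge w^*$ coefficient is $1$.
\item Prove the rational invariant space is one-dimensional. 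I would use $T_2=\mathrm{diag}(1,B_2,1)$ with $B_2$ of order six. The $\bigwedge^2T_2$-invariants are spanned by $\gamma^*\wedge\delta^*$ and $\bigwedge^2$ of the middle block, i.e.\ $u^*\wedge w^*$. The mixed terms $\gamma^*\wedge(\cdot)$ and $(\cdot)\wedge\delta^*$ contribute nothing, since $B_2$ has no eigenvalue $1$.
\item Impose invariance under $T_{1,r}$ on the span $x\,\gamma^*\wedge\delta^*+y\,u^*\wedge w^*$. The nonzero off-diagonal entries $d$ of $T_{1,r}$ should force $x=\lambda y$.
\end{enumerate}
Primitivity then gives the integral statement $H^2(F;\Z)^{\langle T_1,T_2\rangle}=\Z\widetilde q_{\kappa,r}$, and the real statement follows by tensoring. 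Step 4 is a small linear computation with the explicit matrices. It is the place where the sign and normalization of $\lambda_{\kappa,r}=s_\kappa r$ must be checked against the isogeny scaling $D_r$.

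\emph{Type and signature.} An integral class $E$ on $\C^2/\Pi\Lambda$ is of type $(1,1)$ exactly when $E$ vanishes on pairs of $(0,1)$-vectors. Equivalently, the Riemann relation $\Pi\,E^{-1}\Pi^{\mathsf T}=0$ holds, with $E$ viewed as an antisymmetric matrix in the basis dual to the columns. So I will:
\begin{enumerate}[label=\textup{(\arabic*)}]
\item Write $E$ for $\widetilde q$, with $\gamma$ pairing to $\delta$ with weight $\lambda$ and $u$ pairing to $w$.
\item Compute $\Pi E^{-1}\Pi^{\mathsf T}$ using \eqref{eq:parameter-period-matrix}. The result is a $2\times2$ antisymmetric matrix, so only the off-diagonal entry must be checked. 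It is a bilinear expression in the rows $(s r\mu,\tau,1,0)$ and $(r\beta,\mu,0,1)$. It involves $\lambda^{-1}(\pm s r\mu)$ and $\mp\mu$, which cancel precisely because $\lambda=s_\kappa r$.
\end{enumerate}
This cancellation is independent of $\beta$, hence of $c$, so the $(1,1)$ property holds at every admissible parameter. Alternatively, invariance under monodromy plus a monodromy-invariant Hodge class argument could be used, but the direct check is cleaner.

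For the signature, compute the Hermitian form $H(x,y)=E(ix,y)+iE(x,y)$. Its determinant equals, up to a positive factor, $-\det\operatorname{Im}Z$ with $Z$ the normalized $2\times2$ block, after rescaling by $D_r$. By the identity $-\det\operatorname{Im}Z_\kappa=(\operatorname{Im}\tau_\kappa)\mathcal D_\kappa$ from Proposition~\ref{prop:nondegenerate-periods}, and since $\mathcal D_\kappa<0$ on the admissible half-plane, $\det H<0$. A Hermitian $2\times2$ form with negative determinant has signature $(1,1)$. The main obstacle is getting the constant relating $\det H$ to $\det\operatorname{Im}Z$ correct, including the $r$ and $s_\kappa$ scalings, so that the sign argument is airtight. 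I would verify it first at $r=1$ and then transport the result through $\Pi_r=\Pi_1D_r$.
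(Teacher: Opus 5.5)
Your proposal is correct and follows the paper's proof. The order-six generator cuts the invariants down to $\Z\gamma^*\wedge\delta^*\oplus\Z u^*\wedge w^*$, the first generator forces the ratio $s_\kappa r$, and your Riemann relation $\Pi\widetilde Q^{-1}\Pi^{\mathsf T}=0$ is equivalent to the paper's $K^{\mathsf T}\widetilde QK=0$. Your Jacobi-isometry step is not needed, and it is slightly misstated: the matrix identity is $\widetilde Q=-s_\kappa r\,Q_{s_\kappa r}^{-1}$, so $\widetilde q$ is not $Q_{s_\kappa r}$ itself.

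The constant you flag is settled by the paper's Gram matrix $i\Pi\widetilde Q^{-1}\overline\Pi^{\mathsf T}=2M$, with $M=\begin{pmatrix}\operatorname{Im}\tau&\operatorname{Im}\mu\\ \operatorname{Im}\mu&s_\kappa^{-1}\operatorname{Im}\beta\end{pmatrix}$. Its determinant is $(\operatorname{Im}\tau/s_\kappa)\mathcal D_\kappa$, which is negative and independent of $r$. Hence $H^{\mathrm{AH}}=M^{-1}$ has signature $(1,1)$ directly, with no detour through $r=1$ and $D_r$.
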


\begin{proof}
Use the contragredient deck matrices $T_j=(A_j^{-1})^{\mathsf T}$.
By \eqref{eq:clockwise-forward-period}, the forward cohomology transports
are $T_j^{-1}$ and have the same fixed groups.
The common order-six matrix fixes in $H^2(F;\Z)$ precisely
\[
 \Z(\gamma^*\wedge\delta^*)\oplus\Z(u^*\wedge w^*).
\]
Substitute $a\gamma^*\wedge\delta^*+b u^*\wedge w^*$ into the first
monodromy equation.  The explicit matrices give $a=s_\kappa r b$
(also see the coefficient equations in
Appendix~\ref{app:finite-quotient-h2}).  This proves the integral
assertion; the coefficient of $u^*\wedge w^*$ is one, so the displayed
generator is primitive.  Solving the same equations over $\R$ gives
the real assertion.

At any smooth base point abbreviate the period coordinates by
$\tau,\mu,\beta=\beta_{\kappa,0}+c$, and write
\[
 \Pi=[Z\mid I_2],\qquad
 Z=\begin{pmatrix}s_\kappa r\mu&\tau\\r\beta&\mu\end{pmatrix},
 \qquad K=\begin{pmatrix}I_2\\-Z\end{pmatrix}.
\]
If $\widetilde Q$ is the skew matrix of
$\widetilde q_{\kappa,r}$, direct multiplication gives
$K^{\mathsf T}\widetilde QK=0$, the type-$(1,1)$ condition.  Moreover,
\begin{equation}\label{eq:all-parameter-AH-matrix}
 G:=i\Pi\widetilde Q^{-1}\overline\Pi^{\mathsf T}=2M,
 \qquad
 M=\begin{pmatrix}
 \operatorname{Im}\tau&\operatorname{Im}\mu\\
 \operatorname{Im}\mu&\dfrac1{s_\kappa}\operatorname{Im}\beta
 \end{pmatrix}.
\end{equation}
With the Appell--Humbert convention
$E(\lambda,\mu)=\operatorname{Im}H(\lambda,\mu)$ of
\cite[Chapter~2]{BL}, the Hermitian matrix of this class is
$H^{\mathrm{AH}}=2G^{-1}=M^{-1}$.  Admissibility gives
\[
 \det M=
 \frac{\operatorname{Im}\tau}{s_\kappa}
 \left(\operatorname{Im}\beta
 -s_\kappa\frac{(\operatorname{Im}\mu)^2}{\operatorname{Im}\tau}\right)<0.
\]
Thus $H^{\mathrm{AH}}$ has signature $(1,1)$.  Neither calculation
requires a condition on the full N\'eron--Severi group of the fibre.
\end{proof}

\begin{lemma}[Effective classes on a complex two-torus]
\label{lem:effective-torus-class}
On a compact complex two-torus, the class of a nonzero effective divisor
has a nonzero translation-invariant semipositive $(1,1)$ representative.
Consequently neither the zero class nor a class with a nondegenerate
indefinite Appell--Humbert form can be the class of such a divisor.
\end{lemma}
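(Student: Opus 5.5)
The plan is to pass from the divisor to a translation-invariant closed form by averaging over the torus.

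\emph{Step 1: a semipositive current.} Let $T=V/\Gamma$ be the two-torus and $E\neq0$ an effective divisor. Its integration current $[E]$ is a closed positive $(1,1)$-current whose de Rham class is $c_1(\mathcal O_T(E))$.

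\emph{Step 2: averaging.} Average $[E]$ over translations $\theta_a$, $a\in T$, with respect to the normalized Haar measure:
\[
\omega_E=\int_T\theta_a^*[E]\,da .
\]
Translations act trivially on $H^2(T;\R)$, so $\omega_E$ lies in the same class. Averaging preserves positivity, and the result is translation invariant. Hence $\omega_E$ is a constant-coefficient semipositive $(1,1)$-form, that is, a positive semidefinite Hermitian form $H_E$ on $V$ whose imaginary part represents the class.

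\emph{Step 3: nonvanishing.} This is the main point to check. I would fix a Kähler form $\omega_0=\tfrac i2\sum dz_j\wedge d\bar z_j$ on $T$. Then
\[
\int_T\omega_E\wedge\omega_0=\int_E\omega_0>0,
\]
because $E$ is a nonzero effective curve and the area of a nonzero effective curve is positive. The integral is cohomological, so both the class and the invariant form $\omega_E$ are nonzero. This already excludes the zero class.

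\emph{Step 4: excluding indefinite forms.} Suppose the Appell--Humbert form of the class were nondegenerate and indefinite. The class determines its invariant representative uniquely, since invariant closed forms are harmonic for a flat metric. That representative would then be $\omega_E$, whose Hermitian form $H_E$ is semidefinite. This contradicts indefiniteness.

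An equivalent cohomological check replaces Step 4 if preferred. In dimension two, a signature $(1,1)$ Hermitian form $H$ gives $\int_T c^2<0$, because the top power is proportional to $\det H$. A semidefinite form gives $\int_T c^2\ge0$. Under either formulation the integral of the positive invariant form is the only nontrivial step; the rest is bookkeeping through the Appell--Humbert dictionary of \cite[Chapter~2]{BL}.
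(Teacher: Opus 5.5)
Your proposal is correct and follows essentially the same route as the paper: you average the integration current over translations, detect nonvanishing by pairing with an invariant K\"ahler form, and use uniqueness of the invariant representative to exclude both the zero class and indefinite classes. Your alternative self-intersection check ($\int_T c^2<0$ for a signature $(1,1)$ form versus $\int_T c^2\geq0$ for a semidefinite representative) is also valid, and it avoids the uniqueness step.
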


\begin{proof}
For a nonzero effective divisor $D$, average its positive closed
integration current over the compact group of translations, using
normalized Haar measure.  The result is translation invariant, hence
is a smooth constant-coefficient semipositive $(1,1)$ form.  Translations
are homotopic to the identity, so averaging preserves its de Rham class.
For any translation-invariant K\"ahler form $\omega$ on the torus, its
pairing with $\omega$ equals $\int_D\omega>0$, so the averaged form is
nonzero.  Translation-invariant forms give the usual exterior-algebra
model for the real cohomology of a torus; in particular, an invariant
exact form is zero and the invariant representative of a class is unique.
The zero class and a class with indefinite Hermitian representative
therefore cannot occur.
\end{proof}

\begin{theorem}[Algebraic reduction and non-K\"ahlerness for every parameter]
\label{thm:algebraic-dimension-one}
For every $r\geq1$, $\kappa\in\{B,C\}$, and $c\in\mathcal U_\kappa$,
all divisors on $X^\kappa_{r,c}$ are vertical,
\[
 a(X^\kappa_{r,c})=1,
\]
and $f^\kappa_{r,c}:X^\kappa_{r,c}\to B_\kappa$ is its algebraic
reduction in the sense of \cite[Chapter~3]{Ueno}.  Every member is
non-K\"ahler.  Every biholomorphism between two members of the same
family preserves the torus fibration and induces the identity on the
base.
\end{theorem}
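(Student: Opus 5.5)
The argument proceeds in four steps: divisors are vertical, algebraic dimension is computed, non-K\"ahlerness follows from the same class, and biholomorphisms preserve the fibration.

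\textbf{Divisors are vertical.} Let $D$ be an irreducible divisor on $X=X^\kappa_{r,c}$. We show that $D$ cannot dominate $B_\kappa$. Suppose it did. Then its restriction $D|_F$ to a general smooth fibre $F$ is a nonzero effective divisor. The line bundle $\mathcal O_X(D)$ restricted to $F$ has a Chern class $c_1(\mathcal O_X(D))|_F\in H^2(F;\Z)$. This class is the restriction of a global class, so it is invariant under the monodromy of the torus family over $B_\kappa^\circ$. By Lemma~\ref{lem:invariant-class-all-parameters} it therefore equals $n\,\widetilde q_{\kappa,r}$ for some $n\in\Z$. If $n=0$ the class is zero; if $n\neq0$ its Appell--Humbert form is nondegenerate of signature $(1,1)$. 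Lemma~\ref{lem:effective-torus-class} excludes both cases. Hence every prime divisor lies in a fibre, i.e.\ all divisors are vertical.

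\textbf{Algebraic dimension.} The pullbacks of rational functions on $\mathbb P^1$ give $a(X)\geq1$. Conversely, let $\varphi$ be a meromorphic function on $X$ and $\xi\in\C$ a general value. The divisor of $\varphi-\xi$ is vertical by the first step, so its zero and pole divisors are finite unions of fibres. Hence $\varphi$ is constant on the general fibre, and therefore factors through $B_\kappa$ meromorphically, using that the fibres are connected. It follows that $\C(X)=f^*\C(B_\kappa)$, so $a(X)=1$ and $f$ is the algebraic reduction in the sense of \cite[Chapter~3]{Ueno}.

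\textbf{Non-K\"ahlerness.} Suppose $\omega$ were a K\"ahler form on $X$. Its class restricted to a smooth fibre is monodromy invariant and real, so it lies in $\R\widetilde q_{\kappa,r}$ by the real part of Lemma~\ref{lem:invariant-class-all-parameters}. On the other hand, it is represented by the K\"ahler form $\omega|_F$, whose invariant representative is positive definite. A positive definite form cannot be a real multiple of a class of signature $(1,1)$ or zero, which is a contradiction. The same argument excludes Fujiki's class $\mathcal C$: a manifold in class $\mathcal C$ with $a=1$ would carry a K\"ahler modification, and the average over fibres of the pulled-back K\"ahler form gives a semipositive nonzero invariant class on $F$. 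This argument needs care, and I treat it only if the statement requires it.

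\textbf{Biholomorphisms.} Let $\Phi:X\to X'$ be a biholomorphism between two members of the same family. It induces an isomorphism of meromorphic function fields, and the algebraic reduction is functorial. Hence $\Phi$ descends to an automorphism $\bar\Phi$ of $\mathbb P^1$ commuting with the fibrations. The map $\bar\Phi$ must preserve the multiplicities of the fibres: it sends the multiple fibres of multiplicities $m_1\neq m_2$ to fibres of the same multiplicities, and it sends the reduced singular fibre (the one with Euler number $2r\neq0$) to the corresponding fibre. So $\bar\Phi$ fixes the three points $0,1,\infty$. An automorphism of $\mathbb P^1$ fixing three points is the identity.

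\textbf{Main obstacle.} The step requiring care is the first one: one must justify that $c_1(\mathcal O_X(D))|_F$ is monodromy invariant, and that it is the class of the effective divisor $D|_F$. The second point follows because $D$ is flat over a general point. For the first, restriction from $H^2(X)$ factors through $H^0(B^\circ,R^2f_*\Z)$, which equals the invariants.
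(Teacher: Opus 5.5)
Your proposal is correct and follows the paper's proof step for step: the flat restriction of $c_1(\mathcal O_X(D))$ to a smooth fibre lies in $\Z\widetilde q_{\kappa,r}$, and Lemma~\ref{lem:effective-torus-class} excludes both $k=0$ and $k\neq0$; meromorphic functions then descend from the base; a K\"ahler class would give a positive definite class on the indefinite invariant line; and the induced base automorphism fixes the three special values. The differences are cosmetic. You single out the toroidal fibre by its nonzero Euler number rather than as the unique normal-crossings fibre. You assert the descent of a fibrewise-constant meromorphic function, where the paper spells out the graph-closure and Remmert argument; your claim that vertical divisors are unions of whole fibres should read fibre components, which is harmless. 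Finally, your hedged Fujiki-class aside is not needed for this statement; the paper treats that question separately in Corollary~\ref{cor:non-Fujiki}, using K\"ahler currents with analytic singularities.
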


\begin{proof}
The restriction of a global cohomology class to the smooth fibres is a
flat section of the corresponding cohomology local system: along a
path, the fibre inclusions are homotopic through the total space.
In particular, for a line bundle $L$ on $X^\kappa_{r,c}$,
Lemma~\ref{lem:invariant-class-all-parameters} gives
\[
 c_1(L|_{F_t})=k\widetilde q_{\kappa,r}\quad\text{for some }k\in\Z.
\]
Suppose an irreducible effective divisor dominates $B_\kappa$.
Its restriction to a general smooth fibre is a nonzero effective divisor.
If $k\neq0$, its invariant class is nondegenerate and indefinite;
if $k=0$, its class is zero.  Both possibilities contradict
Lemma~\ref{lem:effective-torus-class}.  Thus every irreducible divisor
is vertical.

The divisor of a nonzero meromorphic function is therefore vertical.
Its restriction to a general smooth fibre has neither zeros nor poles;
normality removes possible codimension-two indeterminacy, so the
restriction is holomorphic and hence constant.  Let
$\Gamma_g\subset X^\kappa_{r,c}\times\mathbb P^1$ be the closure of its
graph.  The proper map $f^\kappa_{r,c}\times\mathrm{id}$ sends
$\Gamma_g$ to an irreducible analytic curve in
$B_\kappa\times\mathbb P^1$ whose projection to $B_\kappa$ has generic
degree one.  By Remmert's theorem and normalization of this curve, it
is the graph of a meromorphic function $h$ on $B_\kappa$, and
$g=(f^\kappa_{r,c})^*h$.  Conversely, pullback from the projective base
supplies nonconstant meromorphic functions.  Hence the meromorphic
function field is exactly $\C(B_\kappa)$.

If the total space had a K\"ahler form, its restriction to a smooth
fibre would have a positive definite translation-invariant representative,
obtained by averaging.  Its class must belong to
$\R\widetilde q_{\kappa,r}$ by the same flatness argument.
This real line contains no positive definite class, by
Lemma~\ref{lem:invariant-class-all-parameters}.  This proves
non-K\"ahlerness without any use of the homology calculation below.

Finally, a biholomorphism induces an automorphism of the meromorphic
function field and hence of $B_\kappa$.  The cusp is the unique
normal-crossings fibre, and the other special fibres have distinct
multiplicities, $(4,6)$ or $(3,6)$.  The base automorphism fixes all
three special values and is therefore the identity.
\end{proof}

\begin{corollary}[Exclusion from Fujiki's class]\label{cor:non-Fujiki}
Every $X^\kappa_{r,c}$ lies outside Fujiki's class $\mathcal C$.
\end{corollary}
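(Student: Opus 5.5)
Suppose, for contradiction, that some $X=X^\kappa_{r,c}$ lies in Fujiki's class $\mathcal C$. The plan is to exploit the standard fact that a compact complex manifold in class $\mathcal C$ has algebraic dimension controlled by its Kähler geometry. Concretely, in class $\mathcal C$ the algebraic reduction can be realized by a holomorphic fibration after a modification, and its very general fibre is again in class $\mathcal C$. We compare this with Theorem~\ref{thm:algebraic-dimension-one}, which says that $f=f^\kappa_{r,c}$ is the algebraic reduction and every divisor is vertical.

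\textbf{Step 1: a Kähler modification.} Let $\mu:\widetilde X\to X$ be a proper modification with $\widetilde X$ compact Kähler. Such a $\mu$ exists by Varouchas' characterization, or by Fujiki's definition together with Hironaka's resolution. Since $\mu$ is an isomorphism over a dense Zariski-open set, it is biholomorphic over the preimage of a very general point $t$ of $B^\circ_\kappa$. The exceptional locus is a proper analytic subset, so its image in $B_\kappa$ either is finite or misses the general fibre in codimension at least one. For very general $t$, the fibre $F_t$ is therefore met by the exceptional locus in at most a proper analytic subset.

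\textbf{Step 2: restrict the Kähler class.} Take a Kähler form $\widetilde\omega$ on $\widetilde X$ and push it forward as a closed positive $(1,1)$-current $T=\mu_*\widetilde\omega$ on $X$. Its cohomology class restricts to $H^2(F_t;\mathbb R)$ as a flat, hence monodromy-invariant, class. By Lemma~\ref{lem:invariant-class-all-parameters}, this class lies in $\mathbb R\widetilde q_{\kappa,r}$. On the other hand, $\widetilde\omega$ restricted to the strict transform $\widetilde F_t$ is Kähler. Pushing it forward to $F_t$ gives a closed positive current whose class pairs positively with any invariant Kähler form on the torus, so the class is nonzero. Averaging over translations, as in Lemma~\ref{lem:effective-torus-class}, gives a nonzero semipositive invariant representative. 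But the only nonzero classes in $\mathbb R\widetilde q_{\kappa,r}$ have indefinite nondegenerate Appell--Humbert form. This is a contradiction.

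\textbf{Main obstacle.} The delicate point is justifying that the class of $\mu_*\widetilde\omega$ restricted to $F_t$ equals the pushforward of the class of $\widetilde\omega|_{\widetilde F_t}$. This requires that $\mu$ be an isomorphism near $F_t$, or at least that the currents restrict well. I would first try to arrange this by choosing $t$ outside the image of the exceptional locus. That image is a proper analytic subset of $B_\kappa$ whenever the exceptional locus contains no fibre-dominating component. If the exceptional locus does dominate the base, I would fall back on a different route. Fujiki's theorem says that for $X$ in class $\mathcal C$ the very general fibre of the algebraic reduction is in class $\mathcal C$ and the map is "Kähler on fibres" after modification. Alternatively, use that class $\mathcal C$ satisfies $\partial\bar\partial$-lemma-type Hodge symmetry. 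The rational cohomology corollary gives $b_2=r-1$, while $H^2$ must contain a class restricting positively to fibres. Either way, the contradiction again comes from the signature $(1,1)$ of $\widetilde q_{\kappa,r}$.
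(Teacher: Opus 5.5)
Your strategy is sound and, once completed, is a somewhat more elementary version of the paper's argument. As written, however, it has a gap exactly at the step you call the main obstacle, and neither proposed remedy closes it.

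\textbf{The gap.} The claim in Step~1 that $\mu$ is biholomorphic over a very general fibre is unjustified. The center $E\subset X$ of $\mu$ (the indeterminacy locus of $\mu^{-1}$) has codimension at least two because $X$ is smooth, hence normal. So $E$ is a finite union of curves and points. Nothing in a proof by contradiction prevents $E$ from containing a curve that dominates $B_\kappa$. Then every fibre meets $E$, and you cannot choose $t\notin f(E)$.

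Your fallbacks do not repair this:
\begin{itemize}
\item That the very general fibre of the algebraic reduction is in class $\mathcal C$ is vacuous here, since the fibres are complex tori.
\item The appeal to Hodge symmetry and $b_2=r-1$ yields no contradiction for $r\geq2$.
\item The assertion that $H^2$ ``must contain a class restricting positively to fibres'' is precisely the compatibility you have not established.
\end{itemize}

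\textbf{The missing step.} It is short.
\begin{enumerate}
\item For general $t$, the set $E\cap F_t$ is finite and $t$ is a regular value of $f\circ\mu$. Then $\mu$ restricts to a map $\mu_t:\widetilde F_t=(f\circ\mu)^{-1}(t)\to F_t$ from a smooth compact surface, and $\mu_t$ is biholomorphic over $F_t\setminus E$.
\item On $X\setminus E$ the current $T=\mu_*\widetilde\omega$ is the smooth form $(\mu^{-1})^*\widetilde\omega$. Its restriction to $F_t\setminus E$ agrees with $(\mu_t)_*(\widetilde\omega|_{\widetilde F_t})$.
\item Deleting finitely many points from a real four-manifold does not change $H^2$, so the map $H^2(F_t;\R)\to H^2(F_t\setminus E;\R)$ is injective.
\item Hence $[T]|_{F_t}$ is the class of the closed positive current $(\mu_t)_*(\widetilde\omega|_{\widetilde F_t})$. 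Equivalently, $\mu$ is transverse to $F_t$, so Gysin pushforward commutes with restriction.
\end{enumerate}
With this inserted, your nonvanishing, averaging and signature argument goes through. It needs only a nonzero semipositive invariant representative, which Lemma~\ref{lem:effective-torus-class} already excludes.

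\textbf{Comparison with the paper.} The paper avoids the issue differently. It takes a K\"ahler current with analytic singularities from Demailly--P\u{a}un regularization and restricts its local potentials to a smooth fibre not contained in the singular locus. Averaging then gives a positive definite invariant form in a class lying in $\R\widetilde q_{\kappa,r}$. Your route needs only the definition of $\mathcal C$ plus resolution of the graph, at the cost of the extra topological comparison above.
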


\begin{proof}
A compact complex manifold in Fujiki's class admits a K\"ahler current;
by regularization it admits one with analytic singularities
\cite[Theorems~3.2 and~3.4]{DemaillyPaun}.  Suppose such a current $T$ existed on
$X=X^\kappa_{r,c}$.  Choose a smooth fibre $F_t$ not contained in its
singular locus.  The local potentials of $T$ restrict to $F_t$, so
$T|_{F_t}$ is a closed strictly positive $(1,1)$ current representing
$[T]|_{F_t}$.  Averaging over the compact translation group of $F_t$
gives a positive definite invariant form in this class.  For example,
if $T\geq\epsilon\omega_X$ for a Hermitian form $\omega_X$, its
restriction and average dominate a positive multiple of an invariant
K\"ahler form on $F_t$.

The global class $[T]$ restricts to a monodromy-invariant class,
so $[T]|_{F_t}\in\R\widetilde q_{\kappa,r}$ by
Lemma~\ref{lem:invariant-class-all-parameters}.  That line contains no
positive definite class, a contradiction.
\end{proof}

\subsection{Translations and the arithmetic period}

Let $E_{41}$ denote the matrix with a single $1$ in row four and column
one.  A \emph{homology marking} fixes the base, the identification of
the homology local system with the one constructed above, and the
specified local completion data; it does not specify fibrewise origins.
A \emph{strict marking} also includes the zero section on the smooth
locus.  A homology-marked isomorphism induces the identity on this local
system; a strictly marked isomorphism must in addition preserve the
specified zero section.  Thus homology-marked maps may be translations,
whereas strictly marked maps preserve zero.
An \emph{isomorphism over the base} may have any compatible flat integral
linear part and a fibrewise translation.  An \emph{unmarked
biholomorphism} is an arbitrary biholomorphism of total spaces.
Theorem~\ref{thm:algebraic-dimension-one} makes every unmarked
biholomorphism between members of one family an isomorphism over the
identity of the base.

\subsubsection{Affine automorphisms on the smooth locus}

\begin{lemma}[Monodromy centralizer]\label{lem:monodromy-centralizer}
For either realized family and every $r\geq1$,
\[
 \operatorname{Cent}_{GL_4(\Z)}(A^\kappa_{1,r},A^\kappa_{2,r})
 =\{\varepsilon I+nE_{41}:\varepsilon\in\{\pm1\},\ n\in\Z\}.
\]
\end{lemma}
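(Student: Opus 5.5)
The plan is to cut the centralizer down in three steps, each using one piece of the monodromy: the order-six generator $A^\kappa_{2,r}$, then the unipotent cusp product, then a direct check against $A^\kappa_{1,r}$. Throughout, write $X\in GL_4(\Z)$ for an element commuting with both $A^\kappa_{1,r}$ and $A^\kappa_{2,r}$, and use the ordered basis $(\gamma,u,w,\delta)$. Then $E_{41}$ is the map $\gamma\mapsto\delta$ killing $u,w,\delta$.

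\emph{Step 1: the order-six generator.} Since $T^\kappa_{2,r}=T_2$ is block diagonal $\diag(1,B,1)$ with $B$ of order six on the middle coordinates, so is $A^\kappa_{2,r}=(T_2^{-1})^{\mathsf T}$. Its middle block $B'$ is also of order six. Hence $\Lambda$ splits integrally as
\[
\ker(A_2-I)=\Z\langle\gamma,\delta\rangle
\quad\oplus\quad
\ker(A_2^2-A_2+I)=\Z\langle u,w\rangle .
\]
Any $X$ commuting with $A_2$ preserves both summands, so $X=P\oplus U$ with $P\in GL_2(\Z)$ on $\langle\gamma,\delta\rangle$. The block $U$ commutes with $B'$, so it lies in $\Z[B']\cong\Z[\zeta_6]$. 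Being invertible, $U$ is one of the six units $\pm B'^k$.

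\emph{Step 2: the cusp product.} $X$ commutes with $\mathsf M^\kappa_{0,r}=(A_{1,r}A_{2,r})^{-1}$, hence with $N=\mathsf M^\kappa_{0,r}-I$. So $X$ preserves $\ker N=\Lambda_{\mathrm{van}}=\Z\langle w,\delta\rangle$.
\begin{itemize}[label=--]
\item Intersecting with $\langle u,w\rangle$ shows that $U$ preserves the line $\Z w$. A nontrivial power of the rotation $B'$ fixes no rational line, so $U=\varepsilon' I$ with $\varepsilon'=\pm1$.
\item Intersecting with $\langle\gamma,\delta\rangle$ shows that $P$ preserves $\Z\delta$. Thus $P\gamma=a\gamma+n\delta$ and $P\delta=e\delta$, with $a,e\in\{\pm1\}$.
\end{itemize}
Now compare $XN$ and $NX$ on $u$ and on $\gamma$. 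From the first two columns of $\mathsf M^\kappa_{0,r}$,
\[
Nu=w+\delta\ (\text{Type B}),\qquad Nu=2w+\delta\ (\text{Type C}),\qquad N\gamma=s_\kappa r\,w+r\,\delta .
\]
Evaluating on $u$ gives $\varepsilon' w+e\delta=\varepsilon'(w+\delta)$ in Type B, and the analogous identity in Type C; either way $e=\varepsilon'$. Evaluating on $\gamma$, and using $N\delta=0$, gives $\varepsilon'(s_\kappa r\,w+r\,\delta)=a(s_\kappa r\,w+r\,\delta)$, so $a=\varepsilon'$. Therefore $X=\varepsilon I+nE_{41}$ with $\varepsilon=\varepsilon'$.

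\emph{Step 3: the converse.} It remains to show every $\varepsilon I+nE_{41}$ commutes with both $A_{j,r}$, i.e.\ that $E_{41}$ does. Passing to contragredients, this is equivalent to $E_{14}$ commuting with $T_{j,r}$. Since $T_{j,r}E_{14}=(T_{j,r}e_1)e_4^{\mathsf T}$ and $E_{14}T_{j,r}=e_1(e_4^{\mathsf T}T_{j,r})$, it suffices that $T_{j,r}e_1=e_1$ and $e_4^{\mathsf T}T_{j,r}=e_4^{\mathsf T}$. This is read off directly from the displayed $T^\kappa_{1,r}$ and $T_2$, $T^C_2$: their first columns are $e_1$ and their last rows are $e_4^{\mathsf T}$. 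Conceptually, this says that $\chi=\gamma^*$ is invariant and that $\delta$ is fixed.

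\emph{Main obstacle.} The main risk is bookkeeping rather than ideas. One must check that the order-six block really is diagonal in the chosen basis for both $A_2$ and its contragredient. One must also keep the deck-versus-forward and transpose conventions straight. By \eqref{eq:inverse-integral-image} and the fact that a matrix and its inverse have the same centralizer, these conventions do not affect the kernel and image data used in Step 2.
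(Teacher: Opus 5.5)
Your argument is correct. Its first step agrees with the paper's: you use $A_{2,r}$ to split $\Lambda=\Z\langle\gamma,\delta\rangle\oplus\Z\langle u,w\rangle$ and to force the middle block into $\Z[B']$. The two proofs differ in how the remaining freedom is removed.

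The paper substitutes this rational block form directly into the commutation relation with $A^\kappa_{1,r}$ and reads off the constraints entry by entry. This gives the rational centralizer $\{aI+eE_{41}\}$ first, and integrality is imposed only at the end.

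You instead use integrality at once, through the units of $\Z[\zeta_6]$. You then replace $A^\kappa_{1,r}$ by the cusp logarithm $N=\mathsf M^\kappa_{0,r}-I$. Three facts already recorded in the paper then do all the work: $\ker N=\Lambda_{\mathrm{van}}$, $Nu$, and $N\gamma$. This replacement is legitimate because $A_1=(A_1A_2)A_2^{-1}$, so commuting with $A_2$ and $\mathsf M_0$ is the same as commuting with $A_1$ and $A_2$.

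Your route is more structural, and it foreshadows the kernel-intersection method of Theorem~\ref{thm:finite-index-invariants}. The paper's route is a shorter computation and also yields the rational centralizer. You also check the converse explicitly, whereas the paper leaves it implicit.

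Two cosmetic remarks. The phrase ``a nontrivial power of $B'$ fixes no rational line'' must exclude $B'^3=-I$. Also, the unit enumeration can be skipped: $\Q[B']\cong\Q(\zeta_6)$ is a field, so any element of it with a rational eigenvector is already scalar.
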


\begin{proof}
The common order-six matrix forces a commuting matrix to have the form
\[
 \begin{pmatrix}
 a&0&0&b\\
 0&p+q&-p&0\\
 0&p&q&0\\
 e&0&0&f
 \end{pmatrix}.
\]
Commutation with the first Type B or Type C matrix gives
$b=p=0$ and $q=f=a$, while $e$ remains arbitrary.  Thus the rational
centralizer is $aI+eE_{41}$.  Integrality and invertibility force
$a=\pm1$ and $e\in\Z$.
\end{proof}

\begin{lemma}[Fibrewise biholomorphisms are affine]
\label{lem:fibrewise-affine-form}
Let $\Phi:X^\kappa_{r,c}\to X^\kappa_{r,c'}$ be a biholomorphism over the
identity of the base.  On a marked simply connected part of the smooth
locus it has the unique affine form
\[
 \Phi_z([u])=[R(z)u]+t(z),\qquad
 \Pi^\kappa_{r,c'}(z)L=R(z)\Pi^\kappa_{r,c}(z),
\]
where $L\in GL_4(\mathbb Z)$ is the flat integral linear part and $t$
is a holomorphic section of the target torus bundle.  On the universal
cover $L$ is constant and commutes with the two monodromies.
At a finite special point a necessary compatibility condition is
\[
 Lv_j-v_j\in N_{A_j}\Lambda.
\]
This condition concerns the central affine actions; extension of a
punctured map also requires holomorphic extension in the root-disc model.
\end{lemma}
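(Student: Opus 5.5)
The plan is to work fibrewise over a marked simply connected disc $U$ in the smooth locus, then globalize using monodromy, and finally test compatibility at the finite points.

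\textbf{Fibrewise affine form.} For $z\in U$, $\Phi_z:F_z\to F'_z$ is a biholomorphism of complex two-tori. Any holomorphic map between complex tori is a group homomorphism followed by a translation. The standard argument lifts to universal covers; the lift has bounded (hence constant) derivative, because its partial derivatives are lattice-periodic and holomorphic. So $\Phi_z([u])=[R(z)u]+t(z)$ with $R(z)\in GL_2(\mathbb C)$ and $R(z)\Pi_{r,c}(z)\Lambda=\Pi_{r,c'}(z)\Lambda$. This yields an integral matrix $L(z)$ with $\Pi_{r,c'}(z)L(z)=R(z)\Pi_{r,c}(z)$. Since $\Pi$ has real-independent columns, $L(z)$ and $R(z)$ are uniquely determined by $\Phi_z$.

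\textbf{Regularity.} $\Phi$ is holomorphic in $z$ and the periods vary holomorphically, so $L(z)$ is continuous. Being integral on a connected set, $L$ is constant. Then $R(z)$ is read off from the last two columns, $R(z)=\Pi_{r,c'}(z)L\,\binom{0}{I_2}$, so it is holomorphic. Next, $t(z)=\Phi_z(0)$ is the composition of the zero section with $\Phi$, hence a holomorphic section of the target bundle. The affine form is unique because $R$ is determined by the induced map on $H_1$ together with the periods, and $t$ by the image of the origin.

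\textbf{Commutation with monodromy.} Pass to the universal cover of $B^\circ_\kappa$, where $L$ is globally constant. Continue the identity $\Pi_{r,c'}L=R\Pi_{r,c}$ along $g_j$ and use the period equivariance $\Pi(g_jz)=R_j\Pi(z)A_j^{-1}$. The $R_j^\kappa$ are the same for both parameters, since they involve only $\tau,\mu$. This gives
\[
\Pi_{r,c'}(z)A_j^{-1}LA_j=R_j^{-1}R(g_jz)R_j\Pi_{r,c}(z).
\]
Because $\Phi$ is globally well defined, the induced map on the local system commutes with transport, so $LA_j=A_jL$. This is equivalent to the displayed relation with a single $L$ and $R(g_jz)=R_jR(z)R_j^{-1}$. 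By Lemma~\ref{lem:monodromy-centralizer} we then already know $L=\pm I+nE_{41}$.

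\textbf{Finite-point condition; the expected obstacle.} On the root disc the central fibre is the quotient of $\mathcal T_0$ by $x\mapsto Ax+v/m$. A fibre-preserving extension near $s=0$ must lift, in flat coordinates, to a map $x\mapsto Lx+\tilde t$ on the root model. This map must normalize the cyclic affine action, since $\Phi$ covers the identity of the coarse disc. It must therefore conjugate the generator $g_j$ either to itself or to $g_j$ composed with a lattice translation. Conjugating $x\mapsto A x+v/m$ by $x\mapsto Lx+\tilde t$ and using $LA=AL$ gives translation part $Lv/m+(I-A)\tilde t$. Equating this with $v/m$ modulo $\Lambda$ gives $(L v-v)/m\in(I-A)\Lambda_\mathbb R+\Lambda$. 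Applying the norm $N_A=\sum A^k$, which kills $(I-A)$, turns this into $N_A(Lv-v)/m\in N_A\Lambda$, and since $N_A v=mv$ it reduces to $Lv-v\in N_A\Lambda$. (Equivalently, one can compare the relations $g^m=t_{v}$ from Proposition~\ref{prop:finite-filling-pi1}.)

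The delicate point is justifying that the extension is described by a constant-linear-part affine map in the flat real coordinates of the root model. Analytic continuation of the fibrewise affine form through the collar of Lemma~\ref{lem:finite-collar} shows this. The collar map's $\ell(s)\Pi v$ shift then accounts for the translation part, and the branch change of $\ell(s)$ is exactly the $g_j$-twist used above. As the statement says, this condition is only necessary; holomorphic extension in $s$ is a separate requirement and is not addressed here.
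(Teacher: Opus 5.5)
Your proposal is correct and follows the paper's own argument: torus biholomorphisms are affine, the integral linear part is locally constant and commutes with monodromy, and at a finite point an equivariant map of root-disc models has a translation part conjugating the affine actions with vectors $Lv_j$ and $v_j$, so applying $N_{A_j}$ (the necessary half of Lemma~\ref{lem:affine-cyclic-conjugacy}) gives $Lv_j-v_j\in N_{A_j}\Lambda$. The step you flagged as delicate is settled in the paper by observing that base change by $t=s^{m_j}$ followed by normalization recovers the root-disc family functorially, so $\Phi$ induces the equivariant isomorphism directly; your parenthetical comparison of $g_j^{m_j}=t_{v_j}$ under $\Phi_*$ in $\pi_1$ is an equally valid route.
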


\begin{proof}
A biholomorphism between compact complex tori is affine.  Its action on
integral first homology is locally constant; analytic continuation
therefore gives a flat integral map commuting with monodromy.  Its
complex-linear realization is $R(z)$, determined by the period identity,
and the image of zero is the translation section $t(z)$.

At a finite special point, base change by $t=s^{m_j}$ followed by
normalization recovers the root-disc torus family used in the
construction.  A global $\Phi$ induces an equivariant isomorphism of
these normalized base changes.  On their central tori, its translation
part conjugates the affine actions with vectors $Lv_j$ and $v_j$.
Lemma~\ref{lem:affine-cyclic-conjugacy} gives the displayed necessary
condition.  The root-disc extensions used below are constructed in
Lemma~\ref{lem:family-root-conjugacy}.
\end{proof}

\begin{remark}[A nonextendible punctured translation]
In the common order-six root model, $\Pi(s)\delta=e_2$ and
$A_2\delta=\delta$.  The punctured fibre translation
$[u]\mapsto[u+s^{-6}e_2]$ is equivariant under $s\mapsto\zeta_6^{-1}s$;
its linear part is the identity and its affine-vector defect is zero.
Nevertheless, at $s_n=n^{-1/6}$ and $s'_n=(n+1/2)^{-1/6}$ its values on
the root zero section approach respectively the classes of $0$ and
$\delta/2$.  These remain distinct in the central affine quotient:
the orbit of $0$ consists of the classes of $-k\gamma/6$,
$0\leq k<6$.  Hence the descended punctured map has no continuous extension.
\end{remark}

\begin{lemma}[Conjugacy of finite affine cyclic actions]
\label{lem:affine-cyclic-conjugacy}
Let $A^m=I$ on a lattice $\Lambda$, let
$N_A=I+A+\cdots+A^{m-1}$, and let $v,v'\in\Lambda^A$.  The two affine
actions
\[
 x\longmapsto Ax+\frac vm,
 \qquad
 x\longmapsto Ax+\frac{v'}m
\]
on the torus are conjugate by a translation if and only if
\[
 v'-v\in N_A\Lambda.
\]
\end{lemma}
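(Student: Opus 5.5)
The plan is to compute directly what conjugating the first action by a translation does, and then read the condition off as a cohomological statement about the cyclic group. Work on the real torus $\Lambda_\R/\Lambda$. For $x_0\in\Lambda_\R$, let $t_{x_0}(x)=x+x_0$ and $g_v(x)=Ax+v/m$. Then
\[
 t_{x_0}\,g_v\,t_{x_0}^{-1}(x)=Ax+\frac vm+(I-A)x_0 .
\]
So the two actions are conjugate by a translation exactly when some $x_0\in\Lambda_\R$ satisfies
\[
 \frac{v'-v}{m}\equiv(I-A)x_0\pmod\Lambda,
\]
that is, when $v'-v\in m(I-A)x_0+m\Lambda$. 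Conjugating the generator suffices, since it determines the whole cyclic action.

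For sufficiency, suppose $v'-v=N_A\lambda$ with $\lambda\in\Lambda$. I would take
\[
 x_0=\frac1m\sum_{k=0}^{m-1}k\,A^k\lambda .
\]
A telescoping computation using $A^m=I$ gives $(I-A)x_0=\frac1m N_A\lambda-\lambda$. Since $-\lambda\in\Lambda$, this is $\frac{v'-v}{m}$ modulo $\Lambda$, as required.

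For necessity, suppose $v'-v=m(I-A)x_0+m\mu$ with $x_0\in\Lambda_\R$ and $\mu\in\Lambda$. Because $v,v'\in\Lambda^A$, applying $N_A$ to both sides gives
\[
 m(v'-v)=N_A(v'-v)=m N_A\mu ,
\]
since $N_A(I-A)=0$. Hence $v'-v=N_A\mu\in N_A\Lambda$.

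The point to check is that this cancellation is legitimate even though $x_0$ is only real. It is, because $N_A(I-A)=I-A^m=0$ holds over $\R$, and because $N_A$ acts as multiplication by $m$ on $A$-fixed vectors. Equivalently, the condition says that the classes of $v$ and $v'$ agree in $\hat H^0(C_m,\Lambda)=\Lambda^A/N_A\Lambda$. I expect no serious obstacle; the only care needed is the bookkeeping that reduces the real translation $x_0$ to an integral statement via $N_A$.
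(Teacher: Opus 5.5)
Your proof is correct and follows the paper's argument. Necessity is the same application of $N_A$ to $(v'-v)/m=(I-A)x_0+\mu$, using $N_A(I-A)=0$ and the fact that $N_A$ acts as $m$ on $\Lambda^A$. For sufficiency you write down the explicit translation $x_0=\frac1m\sum_{k=0}^{m-1}kA^k\lambda$ where the paper instead cites $\ker N_A=\operatorname{im}(I-A)$ over $\mathbb R$; this is a harmless and slightly more constructive variant.
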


\begin{proof}
If translation by $t$ conjugates the first action to the second, then
\[
 \frac{v'-v}{m}=(I-A)t+\lambda
\]
for some $\lambda\in\Lambda$.  Applying $N_A$ gives
$v'-v=N_A\lambda$.  Conversely, if $v'-v=N_A\lambda$, then
$N_A(N_A\lambda/m-\lambda)=0$.  Over $\Lambda\otimes\mathbb R$ one has
$\ker N_A=\operatorname{im}(I-A)$, so
$N_A\lambda/m-\lambda=(I-A)t$ for some $t$, which gives the conjugacy.
\end{proof}

For the two families the norm matrices are
\[
 N^B_1=\begin{pmatrix}
 4&0&0&0\\-4r&0&0&0\\-4r&0&0&0\\0&2&-2&4
 \end{pmatrix},
 \qquad
 N^C_1=\begin{pmatrix}
 3&0&0&0\\-3r&0&0&0\\-3r&0&0&0\\0&1&-1&3
 \end{pmatrix},
\]
while the common order-six norm is
\[
 N_2=\operatorname{diag}(6,0,0,6).
\]

\subsubsection{Translations extending across the special fibres}

Let $\mathbb L_{\kappa,r}$ be the homology local system on
$B_\kappa^\circ$ with fibre $\Lambda$ and clockwise forward transports
$\mathsf P_j=A_j^{-1}$, and let
$j:B_\kappa^\circ\hookrightarrow B_\kappa$.  Introduce the root stack \cite{Cadman}
\begin{equation}
 \mathfrak B_\kappa=
 \sqrt[m_{\kappa,1}]{(B_\kappa,p_{\kappa,1})}
 \times_{B_\kappa}
 \sqrt[m_{\kappa,2}]{(B_\kappa,p_{\kappa,2})},
\end{equation}
with coarse map $\mathfrak c_\kappa:\mathfrak B_\kappa\to B_\kappa$.
On the root stack, let $\mathcal V^{\mathrm{orb}}_\kappa$ be the rank-two
bundle of vertical translation fields.  Across the toroidal point we use
its logarithmic extension, generated in a toric chart by
$x_1\partial_{x_1}$ and $x_2\partial_{x_2}$.  Put
\[
 \mathcal V_\kappa=(\mathfrak c_\kappa)_*
 \mathcal V^{\mathrm{orb}}_\kappa.
\]
Let $\mathcal T^{0,\mathrm{comp}}_{\kappa,r}$ be the identity component
of the sheaf of fibre translations extending across all three local
completions, and let $\mathcal T^{\mathrm{comp}}_{\kappa,r}$ be the full
sheaf.

\begin{lemma}[Valuation of a toroidal translation]
\label{lem:toroidal-translation-valuation}
Let $h$ be a fibre translation on the punctured toroidal model, and
assume that both $h$ and $h^{-1}$ extend holomorphically across the
toroidal completion.  After choosing a lift to the periodic toric cover,
it has a unique form in dense-torus coordinates
\[
 h(t_c)=t_c^\nu u(t_c),
 \qquad \nu\in\mathbb Z^2,
\]
where $u:\Delta\to(\mathbb C^*)^2$ is holomorphic.  Changing the lift of
$h$ by the deck transformation indexed by $\eta\in\mathbb Z^2$ changes
$\nu$ by $B^\kappa_{0,r}\eta$.
\end{lemma}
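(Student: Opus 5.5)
The plan is to work on the periodic toric cover $Y_{<\epsilon}$, where the punctured toroidal model is the quotient of $(\mathbb C^*)^2\times\Delta^*$ by the action $\Psi^{B,C}_\lambda$ of Corollary~\ref{thm:finite-index-reduction}, with $B=B^\kappa_{0,r}$ and $C=C^\kappa_r$. A fibre translation $h$ on the punctured model is a holomorphic section of the punctured torus family. Over the simply connected sector it lifts to a multivalued map $\Delta^*\to(\mathbb C^*)^2$. Continuation around the meridian changes the lift by a deck transformation $\Psi_\eta$. Replacing $h$ by its composite with the zero-section translation structure, this gives $\tilde h(e^{2\pi i}t_c)=e^{2\pi iC\eta}t_c^{B\eta}\tilde h(t_c)$ for some $\eta$ in the multiplicative picture.

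\textbf{Step 1: extension forces single-valuedness.} Because $h$ and $h^{-1}$ extend, the extended translation acts on the central fibre as a biholomorphism and preserves the zero section up to a lift. Lifting $h$ through the simply connected tube (Lemma~\ref{lem:toric-simply-connected}) shows that the section $t_c\mapsto h(t_c)\cdot(1,1)$, which extends across $t_c=0$, lifts to a genuine holomorphic map of the disc into $Y_{<\epsilon}$. Since the extended zero section through $(z_0,z_1,z_2)=(t_c,1,1)$ bounds a disc, the lift of $h$ along the zero section is a closed path in the simply connected cover. The monodromy $\eta$ is therefore trivial for this lift, and the dense-torus coordinates $x(t_c)$ are single-valued holomorphic functions on $\Delta^*$ with values in $(\mathbb C^*)^2$.

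\textbf{Step 2: bounded tropical coordinate.} The lifted disc lies in finitely many affine toric charts near $t_c=0$, because its image is compact. By the chart identity (\ref{lem:tropical-properness}.2), $y=-\log|x|/\log|t_c|$ stays bounded, and each chart coordinate $z_i=\chi^{m_i}$ is holomorphic on the whole disc. Each coordinate $x_k$ is a Laurent monomial in the $z_i$, so it is meromorphic at $0$ with some order $\nu_k\in\mathbb Z$. Write $x=t_c^\nu u$. Then $u$ is holomorphic and nonvanishing on $\Delta^*$ and meromorphic at $0$ with order zero, hence a holomorphic unit on $\Delta$. Uniqueness is immediate, since $\nu$ is the vector of vanishing orders of the coordinates.

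\textbf{Step 3: effect of changing the lift.} Composing with $\widehat\Psi_{B\eta}$ multiplies $x$ by $e^{2\pi iC\eta}t_c^{B\eta}$. This replaces $\nu$ by $\nu+B\eta$ and $u$ by $e^{2\pi iC(t_c)\eta}u$, which is still a unit.

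The main obstacle is Step 1: making precise that extendibility of both $h$ and $h^{-1}$ rules out a nontrivial continuation shift, equivalently that the extended section has a single-valued lift. The shift would have to lie in $\Lambda_{\mathrm{van}}$ translated through $\Lambda\to\overline\Lambda$ of Proposition~\ref{prop:cusp-pi1}. My first approach is to use the closed section disc and the trivial image of $a_0$ in $\pi_1(N_{0,r})$. The role of $h^{-1}$ is to guarantee that the extended map lands in the smooth quotient fibrewise bijectively, so the lift of its section stays in the dense-torus region off $t_c=0$.
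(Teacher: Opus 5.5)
Your proposal is correct and follows essentially the paper's own proof: lift the extended section $h\circ e$ over the simply connected disc into $Y_{<\epsilon}$, take $\nu$ to be the orders at $0$ of the pulled-back characters in a toric chart containing the lifted central point, and observe that a deck change multiplies the lift by $e^{2\pi i C^\kappa_r\eta}t_c^{B^\kappa_{0,r}\eta}$. The bounded tropical coordinate in Step 2 and your appeal to $h^{-1}$ are not needed: every point of $Y$ with $t_c\neq0$ already lies in the dense torus, because every ray has height one, and the argument uses only that $h\circ e$ extends holomorphically.
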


\begin{proof}
Write $p:Y_{<\epsilon}\to N^\kappa_{0,r}$ for the periodic covering.
By Lemma~\ref{lem:toric-simply-connected}, its source is simply
connected, so the extended automorphism composed with $p$ has a lift
$\widehat h:Y_{<\epsilon}\to Y_{<\epsilon}$.  Covering lifts are
holomorphic because $p$ is a local biholomorphism.  Equivalently, its
value on the extended zero section is obtained by lifting the image
section over the simply connected disc, after choosing one point of
the lift.  Toric characters can then be pulled back along this lifted section.

Let $\widetilde e:\Delta\to Y_{<\epsilon}$ be the resulting lifted
section.  It is in the dense torus over $t_c\neq0$.  Each rational
toric character $\chi^m$ on the cover consequently pulls back to a
nonzero meromorphic function germ at $0$.  Its order is additive in
$m\in\operatorname{Hom}((\C^*)^2,\C^*)$, and thus equals
$\langle m,\nu\rangle$ for a unique $\nu\in\Z^2$.  Dividing by
$t_c^{\langle m,\nu\rangle}$ leaves a holomorphic unit.  For a basis of
the character lattice these units define
$u:\Delta\to(\C^*)^2$.

Over a punctured fibre, a lift of a torus translation to its
$(\C^*)^2$ cover is multiplication by the lifted value at zero.
Hence $\widehat h$ has the asserted multiplier $t_c^\nu u(t_c)$.
Conversely, this multiplier and its inverse extend on the periodic
model: $t_c^\nu$ shifts the fan by $\nu$ and the unit factor is a
holomorphic torus action.  They commute with the deck action and
therefore descend.  Finally, changing the lift by the deck element
indexed by $\eta$ multiplies the multiplier by
$t_c^{B^\kappa_{0,r}\eta}$ times a holomorphic unit.  Its valuation
changes by $B^\kappa_{0,r}\eta$, as claimed.
\end{proof}

\begin{proposition}[Translations across the local completions]

There are exact sequences
\begin{equation}\label{eq:translation-vector-bundle}
 0\longrightarrow\mathcal O_{B_\kappa}
 \longrightarrow\mathcal V_\kappa
 \longrightarrow\mathcal O_{B_\kappa}(-1)
 \longrightarrow0,
\end{equation}
\begin{equation}\label{eq:completion-translation-exponential}
 0\longrightarrow j_*\mathbb L_{\kappa,r}
 \longrightarrow\mathcal V_\kappa
 \xrightarrow{\exp}
 \mathcal T^{0,\mathrm{comp}}_{\kappa,r}
 \longrightarrow0,
\end{equation}
and
\begin{equation}\label{eq:completion-translation-components}
 0\longrightarrow\mathcal T^{0,\mathrm{comp}}_{\kappa,r}
 \longrightarrow\mathcal T^{\mathrm{comp}}_{\kappa,r}
 \longrightarrow(i_{p_{\kappa,0}})_*(\mathbb Z/r)
 \longrightarrow0.
\end{equation}
The first sequence splits, and therefore
\begin{equation}\label{eq:H1-translation-vector-bundle}
 H^1(B_\kappa,\mathcal V_\kappa)=0.
\end{equation}
At the three types of points the kernels and component groups are
\[
\begin{array}{c|c|c}
 q&(j_*\mathbb L_{\kappa,r})_q&
 \mathcal T^{\mathrm{comp}}_{\kappa,r,q}/
 \mathcal T^{0,\mathrm{comp}}_{\kappa,r,q}\\ \hline
 q\in B_\kappa^\circ&\Lambda&0\\
 q=p_{\kappa,j},\ j=1,2&\Lambda^{A_j}&0\\
 q=p_{\kappa,0}&\Lambda_{\mathrm{van}}&
 \Lambda_{\mathrm{van}}/\operatorname{im}(\mathsf M^\kappa_{0,r}-I)
 \simeq\mathbb Z/r.
\end{array}
\]
\end{proposition}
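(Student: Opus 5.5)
The plan is to build $\mathcal V_\kappa$ explicitly from the period data and then check the three exact sequences stalk by stalk, since every assertion is local on $B_\kappa$ except the splitting.

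\textbf{The extension \eqref{eq:translation-vector-bundle}.} On the universal cover the vertical translation fields are the constant vectors of $\mathbb C^2$. Under the deck group they transform by the matrices $R^\kappa_j$. These matrices are lower triangular with lower-right entry $1$, so the second coordinate $e_2$ spans an invariant sub-line on which the deck action is trivial. The quotient line transforms by $-1/\tau_\kappa$ and $1/(\tau_\kappa+1)$, which is exactly the homogeneous law \eqref{eq:nu-laws}.

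\begin{itemize}
\item Pushing forward from the root stack gives invariant direct images. The sub-line gives $\mathcal O_{B_\kappa}$, with $e_2$ invariant at the finite points.
\item At the cusp, in toric coordinates $u=\log x/(2\pi i)$, one has $\partial_u\leftrightarrow 2\pi i\,x\partial_x$. So the constant fields are the logarithmic generators, and the extension is regular there.
\item The quotient is $\mathcal O(-1)$ by Lemma~\ref{lem:homogeneous-bundle}. At the finite points one must check that invariant sections of the rank-two orbibundle surject onto invariant sections of the quotient. This holds because taking finite-group invariants is exact over $\C$.
\end{itemize}

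The sequence splits because $\operatorname{Ext}^1(\mathcal O(-1),\mathcal O)=H^1(\Pone,\mathcal O(1))=0$. Then $H^1(\mathcal V_\kappa)=H^1(\mathcal O)\oplus H^1(\mathcal O(-1))=0$.

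\textbf{The exponential sequence \eqref{eq:completion-translation-exponential}.} Away from the special points, $\exp$ sends a local section $v$ of $\mathcal V_\kappa$ to translation by $[v]$, and its kernel is the period lattice $\Pi\Lambda\cong\Lambda$.

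\begin{itemize}
\item \emph{Finite point $p_{\kappa,j}$.} On the root disc, an invariant vector field $v(s)$ exponentiates to an equivariant translation. Such a translation extends across the central fibre because the affine action is translation-compatible. A section of the kernel is a flat lattice vector, which must be $A_j$-invariant to be equivariant, giving $\Lambda^{A_j}$. Surjectivity onto the identity component: near the central fibre, an extending translation is a holomorphic section of the root family commuting with the action. It lifts via the local logarithm on a simply connected root disc, and the lift can be averaged to an invariant one.
\item \emph{Cusp $p_{\kappa,0}$.} I use Lemma~\ref{lem:toroidal-translation-valuation}. A translation extending with its inverse has the form $t_c^\nu u(t_c)$. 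The identity component corresponds to $\nu\in\operatorname{im}B^\kappa_{0,r}$, and after re-lifting to $\nu=0$ it is $u=\exp(2\pi i w)$ with $w$ holomorphic, that is, a section of the log bundle. The kernel consists of the flat lattice periods that are holomorphic at the cusp. These are the single-valued, monodromy-invariant periods, namely $\ker(\mathsf M-I)=\Lambda_{\mathrm{van}}$, which in dense-torus coordinates are the integral vectors $\mathbb Z^2$.
\end{itemize}

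\textbf{The component group \eqref{eq:completion-translation-components}.} The valuation $\nu\in\mathbb Z^2=\Lambda_{\mathrm{van}}$ is well defined modulo $B^\kappa_{0,r}\mathbb Z^2=\operatorname{im}(\mathsf M^\kappa_{0,r}-I)$. Every $\nu$ is realized by $t_c^\nu$, which descends as shown in the lemma. So the cokernel at the cusp is $\mathbb Z^2/B^\kappa_{0,r}\mathbb Z^2$. The Smith form $\operatorname{diag}(1,r)$ identifies this with $\mathbb Z/r$. At the other points the component group is trivial because every extending translation is locally an exponential, as shown above.

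\textbf{Expected obstacle.} The hardest step is exactness on the right at the finite points: showing that every translation extending across the multiple fibre is the exponential of an invariant field. A punctured translation may extend only as a multivalued object, as in the nonextendible example above. To handle this, I would first use Lemma~\ref{lem:family-root-conjugacy} and Lemma~\ref{lem:affine-cyclic-conjugacy} to show that the constant part lies in the identity component. I would then use holomorphy of the section on the closed root disc to take a global logarithm there.
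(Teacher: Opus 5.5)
Your treatment of the extension \eqref{eq:translation-vector-bundle} and its splitting, of the kernels $\Lambda$, $\Lambda^{A_j}$, $\Lambda_{\mathrm{van}}$, and of the cusp component group via Lemma~\ref{lem:toroidal-translation-valuation} follows the paper. The gap is at the finite points, in the step ``the lift can be averaged to an invariant one.''

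\textbf{Why averaging fails.} Let $a(s)$ be a holomorphic lift of an equivariant translation $\tau(s)$ on the root disc. Its defect
\[
 a(\zeta_m^{-1}s)-R^\kappa_j(s)a(s)=\Pi(\zeta_m^{-1}s)\lambda
\]
is a constant one-cocycle of $C_m$ with values in the \emph{lattice} $\Lambda$, with $A_j$-action. Changing the lift by $\Pi\mu$, $\mu\in\Lambda$, changes $\lambda$ by $(I-A_j)\mu$. Averaging over $C_m$ solves the coboundary equation only with $\mu\in\frac1m\Lambda$. So the averaged field $a'$ is equivariant, but $\exp(a')$ differs from $\tau$ by an $m$-torsion translation, which need not be trivial.

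Precisely, taking $C_m$-invariants of $0\to\Lambda\to H^0(\mathcal V)\to H^0(\mathcal T)\to0$ over the root disc gives
\[
 H^0(\mathcal T)^{C_m}\big/\exp\bigl(H^0(\mathcal V)^{C_m}\bigr)\;\cong\;H^1(C_m,\Lambda)=\ker N_{A_j}/(A_j-I)\Lambda .
\]
Averaging only proves $H^1(C_m,H^0(\mathcal V))=0$. The group $H^1(C_m,\Lambda)$ can be nonzero torsion; already for $A=-1$ on $\mathbb Z$ it is $\mathbb Z/2$.

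\textbf{The missing input.} You need the integral lattice computation $\ker N_A=(A-I)\Lambda$ for the three matrices. The paper records the common bases:
\begin{itemize}
\item $\mathbb Z\langle u+w,-2u+\delta\rangle$ for $A^B_{1,r}$,
\item $\mathbb Z\langle u+w,-3u+\delta\rangle$ for $A^C_{1,r}$,
\item $\mathbb Z\langle u,w\rangle$ for $A_{2,r}$.
\end{itemize}
Only with this can the defect be removed by a lattice-valued zero-cochain, which leaves $\exp$ unchanged. Without it you have not shown that $\exp$ hits the whole stalk of $\mathcal T^{\mathrm{comp}}_{\kappa,r}$ at $p_{\kappa,j}$. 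So the entries $0$ in the component-group column at the finite points, and hence the exactness of \eqref{eq:completion-translation-components} with a skyscraper only at $p_{\kappa,0}$, remain unproved.

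\textbf{Your ``expected obstacle.''} The tools you propose there do not address this step. A global logarithm on the closed root disc exists automatically, since the disc is simply connected and $\tau$ is already holomorphic there. Lemmas~\ref{lem:family-root-conjugacy} and~\ref{lem:affine-cyclic-conjugacy} concern conjugacy of affine actions, controlled by $v'-v\in N_A\Lambda$. They do not control the group $\ker N_A/(A-I)\Lambda$ that governs this step.
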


\begin{proof}
The automorphy matrices $R_j^\kappa$ are lower triangular and fix the
second coordinate vector.  Hence $\mathcal V^{\mathrm{orb}}_\kappa$ has
a trivial line subbundle, and its quotient is the automorphic line from
Lemma~\ref{lem:homogeneous-bundle}.  Taking invariant coarse pushforward
is exact for the finite root stabilizers, so the quotient on $B_\kappa$
is $\mathcal O_{B_\kappa}(-1)$ and
\eqref{eq:translation-vector-bundle} follows.  Since
\[
 \operatorname{Ext}^1(\mathcal O(-1),\mathcal O)
 =H^1(B_\kappa,\mathcal O(1))=0,
\]
the sequence splits.  Together with
$H^1(\mathcal O)=H^1(\mathcal O(-1))=0$, this gives
\eqref{eq:H1-translation-vector-bundle}.

We verify the exponential sequence on stalks.  On the smooth locus it is
the usual sequence for a complex torus.  Near a finite point, work on the
root disc with coordinate $s$.  A vertical vector field is a holomorphic
map $a(s)\in\mathbb C^2$ satisfying the equivariance relation determined
by $R_j^\kappa$.  If its exponential is the identity on the torus, then
on the punctured disc
\[
 a(s)=\Pi_r^\kappa(s)\lambda
\]
for a locally constant $\lambda\in\Lambda$; connectedness makes
$\lambda$ constant and equivariance gives $A_j\lambda=\lambda$.  Thus the
kernel is $\Lambda^{A_j}=(j_*\mathbb L_{\kappa,r})_{p_{\kappa,j}}$.
Conversely, let $\tau(s)$ be an equivariant translation of the root-disc
model.  Since the root disc is simply connected, $\tau$ has a holomorphic
lift $a(s)$ to the vertical vector bundle.  The equivariance defect
\[
 a(\zeta_m^{-1}s)-R_j^\kappa(s)a(s)
\]
is a constant lattice-valued one-cocycle for $C_m$.  Its class lies in
$H^1(C_m,\Lambda)=\ker N_A/(A-I)\Lambda$.  For the order-four,
order-three, and order-six matrices one has, respectively,
\[
\begin{array}{c|c}
 A&\ker N_A=(A-I)\Lambda\\ \hline
 A^B_{1,r}&\mathbb Z\langle u+w,-2u+\delta\rangle\\
 A^C_{1,r}&\mathbb Z\langle u+w,-3u+\delta\rangle\\
 A_{2,r}&\mathbb Z\langle u,w\rangle.
\end{array}
\]
Thus this cyclic cohomology group vanishes in all three finite local
models.  After changing the lift by a lattice-valued zero-cochain, $a(s)$
is equivariant.  This proves surjectivity of the exponential map and also
shows that the local translation group has no extra components at either
root point.

At the cusp, logarithmic vertical fields exponentiate to the dense torus
$(\mathbb C^*)^2$ acting on every toric chart.  Their kernel is the
cocharacter lattice
\[
 \ker(\mathsf M^\kappa_{0,r}-I)=\Lambda_{\mathrm{van}}
 =(j_*\mathbb L_{\kappa,r})_{p_{\kappa,0}}.
\]
Lemma~\ref{lem:toroidal-translation-valuation} associates to every
completion-preserving translation a valuation class
$[\nu]\in\mathbb Z^2/B^\kappa_{0,r}\mathbb Z^2$.  The unit factor
$u(t_c)$ exponentiates a logarithmic vertical field and therefore belongs
to the identity component.  Conversely, periodicity of the fan makes the
monomial factor $t_c^\nu$ extend after shifting the normalization
component by $\nu$.  Thus the valuation gives
\[
 \pi_0\bigl(\mathcal T^{\mathrm{comp}}_{\kappa,r,p_{\kappa,0}}\bigr)
 \cong\mathbb Z^2/B^\kappa_{0,r}\mathbb Z^2
 \cong\Lambda_{\mathrm{van}}/
 \operatorname{im}(\mathsf M^\kappa_{0,r}-I)
 \cong\mathbb Z/r.
\]
This proves the last two exact sequences and the stalk descriptions.
\end{proof}

\subsubsection{The obstruction to patching local conjugacies}

\begin{lemma}[The integral collar-obstruction group]

There is a canonical identification
\begin{equation}\label{eq:translation-obstruction-group}
 H^2(B_\kappa,j_*\mathbb L_{\kappa,r})
 \cong
 \frac{\Lambda}{(A_1-I)\Lambda+(A_2-I)\Lambda}.
\end{equation}
For both realized families the quotient is infinite cyclic, detected by
the primitive invariant functional $\chi$.
\end{lemma}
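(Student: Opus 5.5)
We compute $H^2(B_\kappa,j_*\mathbb L)$ through the orbifold fundamental group and a Mayer--Vietoris decomposition of the base. First, we use that $H^2(B_\kappa, j_*\mathbb L)$ agrees with compactly supported cohomology of a local system on the punctured surface corrected by the local contributions at the three points. Concretely, cover $B_\kappa=\mathbb P^1$ by the three closed discs $\overline{D_{\kappa,i}^+}$ and the complement $B_0$ of the smaller discs. The stalks of $j_*\mathbb L$ at the special points are the invariants $\Lambda^{A_j}$ and $\Lambda_{\mathrm{van}}=\ker(\mathsf M_0-I)$. On each disc the sheaf is $j_*$ of a local system on a punctured disc, so the disc is acyclic in positive degrees: its cohomology is the stalk in degree zero. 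The annuli have group cohomology of $\mathbb Z$ with coefficients $\Lambda$: $H^0=\Lambda^{A}$ and $H^1=\Lambda/(A-I)\Lambda$. The set $B_0$ is a thrice-punctured sphere, homotopic to a wedge of two circles with free fundamental group on $a_1,a_2$, so $H^2(B_0)=0$ and $H^1(B_0)=Z^1/B^1$.

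The Mayer--Vietoris sequence then reads
\[
 \bigoplus_i H^1(A_i)\to H^2(B_\kappa,j_*\mathbb L)\to 0,
\]
with the preceding map $H^1(B_0)\oplus\bigoplus_i H^1(\text{disc}_i)=H^1(B_0)\to\bigoplus_{i}\Lambda/(A_i-I)\Lambda$ given by restriction of a cocycle to the three boundary loops. Its cokernel is the target group. We identify it with the $H^2$ of the free-product presentation: a cocycle on the free group $\langle a_1,a_2\rangle$ is an arbitrary pair $(d_1,d_2)\in\Lambda^2$, and its restriction to $a_0=(a_1a_2)^{-1}$ is $-(d_1+A_1d_2)$ up to the chosen conventions. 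The cokernel of
\[
 \Lambda^2\to\Lambda/(A_1-I)\oplus\Lambda/(A_2-I)\oplus\Lambda/(\mathsf M_0-I)
\]
is computed by sending a triple $(x_1,x_2,x_0)$ to $x_1+x_2+x_0$ modulo $(A_1-I)\Lambda+(A_2-I)\Lambda+(\mathsf M_0-I)\Lambda$. Since $\mathsf M_0=(A_1A_2)^{-1}$, its image $(\mathsf M_0-I)\Lambda$ lies in the sum of the other two, via $A_1A_2-I=A_1(A_2-I)+(A_1-I)$. This gives \eqref{eq:translation-obstruction-group}. Equivalently, this is the coinvariant $H_0$ of the orbifold group twisted by Poincaré duality, which serves as a sanity check. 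The delicate step is getting the signs and the identification of the boundary restrictions right. I would verify it by the exact-sequence diagram chase, rather than trusting conventions, since only the resulting subgroup matters and it is insensitive to inverses by \eqref{eq:inverse-integral-image}.

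For the final claim, we use the explicit matrices. $(A_2-I)\Lambda$ is $\mathbb Z\langle u,w\rangle$, from the table $\ker N_A=(A-I)\Lambda$ for the order-six matrix. $(A_1-I)\Lambda$ contains $\mathbb Z\langle u+w,-2u+\delta\rangle$, respectively $-3u+\delta$ in Type C. The sum is therefore $\mathbb Z\langle u,w,\delta\rangle$, which is exactly $\ker\chi$. The quotient $\Lambda/\ker\chi$ is then $\mathbb Z$, generated by $\gamma$ and detected by $\chi$. Since $\chi$ is invariant under both monodromies, it kills both image sums, so the sum cannot be larger than $\ker\chi$.
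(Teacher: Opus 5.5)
Your proof is correct, but it takes a different route from the paper.

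\textbf{The paper's route.} It compares $j_!\mathbb L\subset j_*\mathbb L$. The quotient is a skyscraper sheaf at the three special points, so $H^2(B_\kappa,j_*\mathbb L)\cong H^2_c(B_\kappa^\circ,\mathbb L)$ exactly. Contrary to your opening sentence, there is no local correction in degree two, though you never use that sentence. The paper then reads off $H^2_c$ from a cellular model of the pair of pants. Its top differential is $(a,b)\mapsto(\mathsf P_1-I)a+(\mathsf P_2-I)b$, and the source change $(a,b)=(-\mathsf D_1x,-\mathsf D_2y)$ turns its cokernel into the coinvariant quotient. The cusp never appears as a separate summand.

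\textbf{Your route.} You use the closed-cover Mayer--Vietoris sequence with acyclic discs. This makes $H^2$ the cokernel of restriction $H^1(F(a_1,a_2),\Lambda)\to\bigoplus_{i=0}^{2}\Lambda/(A_i-I)\Lambda$, with $A_0$ the cusp monodromy, and you finish by algebra. The algebraic step is only sketched, so you should write out why the summation map $(x_0,x_1,x_2)\mapsto x_0+x_1+x_2$ has kernel exactly the image:
\begin{itemize}
\item The restriction to $a_0$ is really $-\mathsf M_0(d_1+A_1d_2)$. This agrees with your formula modulo $(\mathsf M_0-I)\Lambda$.
\item Middle exactness uses $(A_1-I)\Lambda+A_1(A_2-I)\Lambda=(A_1-I)\Lambda+(A_2-I)\Lambda$, which follows from $A_1(A_2-I)=(A_2-I)+(A_1-I)(A_2-I)$.
\item Absorbing the cusp summand uses $(\mathsf M_0-I)\Lambda=(A_1A_2-I)\Lambda$ from \eqref{eq:inverse-integral-image}.
\end{itemize}

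\textbf{What each buys.} Your version keeps the three collar groups $\Lambda/(A_i-I)\Lambda$ visible. It identifies the obstruction group directly as the target of the sum map, which is the form $[\omega_0+\omega_1+\omega_2]$ used in Proposition~\ref{lem:torus-translation-globalization}. The paper has to get that formula separately, by evaluating the connecting cocycle on the two-cell. The paper's version is shorter and needs no decomposition of the base.

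Your final lattice computation is correct and matches the paper's Smith-form statement: $(A_2-I)\Lambda=\mathbb Z\langle u,w\rangle$ and $(A_1-I)\Lambda=\mathbb Z\langle u+w,-s_\kappa u+\delta\rangle$, with sum $\ker\chi$.
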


\begin{proof}
Put $\mathcal Q=j_*\mathbb L/j_!\mathbb L$.  This is a sum of
skyscraper sheaves at the three special points, so
$H^a(B_\kappa,\mathcal Q)=0$ for every $a>0$.  The exact segment
\[
 H^1(B_\kappa,\mathcal Q)\longrightarrow
 H^2(B_\kappa,j_!\mathbb L)\longrightarrow
 H^2(B_\kappa,j_*\mathbb L)\longrightarrow
 H^2(B_\kappa,\mathcal Q)
\]
therefore has zero outer terms.  Since
$H^2(B_\kappa,j_!\mathbb L)=H_c^2(B_\kappa^\circ,\mathbb L)$, it gives
\[
 H^2(B_\kappa,j_*\mathbb L_{\kappa,r})
 \cong H_c^2(B_\kappa^\circ,\mathbb L_{\kappa,r}).
\]
Use the oriented pair-of-pants cellular model for compactly supported
cohomology \cite[Chapter~8]{KashiwaraSchapira}, with the clockwise paths
and forward transports $\mathsf P_1,\mathsf P_2$ fixed in
\eqref{eq:deck-forward-distinction}.  Its top differential, in oriented
cell trivializations, is
\begin{equation}\label{eq:translation-cellular-complex}
 \Lambda\oplus\Lambda\xrightarrow{\ d_c^1\ }\Lambda,
 \qquad d_c^1(a,b)=(\mathsf P_1-I)a+(\mathsf P_2-I)b.
\end{equation}
The integral source change $(a,b)=(-\mathsf D_1x,-\mathsf D_2y)$
turns this differential into $(A_1-I)x+(A_2-I)y$.  Equivalently,
the top cohomology is the common coinvariant lattice by
\eqref{eq:inverse-integral-image}.  This proves
\eqref{eq:translation-obstruction-group}.  The Smith normal
form of $[(A_1-I)\ (A_2-I)]$ is
$\operatorname{diag}(1,1,1,0)$ for both families.  The functional
$\chi$ annihilates the image and is primitive, so it identifies the
cokernel with $\mathbb Z$.
\end{proof}

\begin{proposition}[Patching completion-preserving translations]
\label{lem:torus-translation-globalization}
Let $X,X'$ be two completed fibrations of fixed type $\kappa$ and index
$r$ over $B_\kappa=\mathbb P^1$.  Write
$\mathcal T^0=\mathcal T^{0,\mathrm{comp}}_{\kappa,r}$ and
$\mathcal T=\mathcal T^{\mathrm{comp}}_{\kappa,r}$ for the translation
sheaves of the target $X'$.  Choose local isomorphisms
$\Phi_a:X|_{U_a}\to X'|_{U_a}$ on the pair of pants, the two root discs,
and the cusp disc, with the same flat integral linear part $L$.

Fix a reference cusp comparison $\Phi_0^{\mathrm{ref}}$ with linear part
$L$, and require the chosen cusp map $\Phi_0$ to induce the same
identification of normalization-component labels as
$\Phi_0^{\mathrm{ref}}$.  More precisely, the translation
$\Phi_0\circ(\Phi_0^{\mathrm{ref}})^{-1}$ of the target completion has
zero valuation class in
$\mathbb Z^2/B_{0,r}^\kappa\mathbb Z^2\simeq\mathbb Z/r$, using its
fixed toric marking and Lemma~\ref{lem:toroidal-translation-valuation}.
The reference comparison thus fixes the component identification that
zero-component corrections must preserve.  Every overlap mismatch
$\Phi_b\circ\Phi_a^{-1}$ is a fibre translation.
On the three clockwise oriented collars let
\[
 [\omega_i]\in\Lambda/(\mathsf P_i-I)\Lambda
\]
be the winding of the translation mismatch: continue a local logarithm
along the collar, return its endpoint by $\mathsf P_i$, and subtract
its initial value.  At the cusp use $\mathsf P_0$ from
\eqref{eq:clockwise-cusp-transport}.  All classes are transported to
the fixed reference fibre.  The local maps glue after correction by
sections of the full sheaf $\mathcal T$ if and only if
\begin{equation}\label{eq:collar-winding-obstruction}
 [\omega_0+\omega_1+\omega_2]=0
 \quad\text{in }H^2(B_\kappa,j_*\mathbb L_{\kappa,r}).
\end{equation}
When this condition holds, the corrections can be chosen in
$\mathcal T^0$, preserving the prescribed cusp component identification.
\end{proposition}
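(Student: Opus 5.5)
The plan is to read the patching problem as a nonabelian-looking but actually abelian Čech problem for the sheaf of completion-preserving translations, and then reduce it to the exponential sequence \eqref{eq:completion-translation-exponential}. Since all local maps share the linear part $L$, each overlap mismatch $\Phi_b\circ\Phi_a^{-1}$ is a fibre translation of the target over an annulus. We are therefore asking whether the Čech $1$-cocycle $(\Phi_b\Phi_a^{-1})$ of the covering by the pair of pants $U_\circ$ and the three discs is a coboundary. The relevant sheaf is $\mathcal T$, with the cusp correction further restricted to have zero valuation class. Translations commute, so the cocycle condition is vacuous: the three annuli are disjoint and there are no triple overlaps. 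Consequently the class lives in $H^1$ of the covering, with coefficients in $\mathcal T$.

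First I will reduce to $\mathcal T^0$. Only the cusp stalk has a component group, $\mathbb Z/r$, by \eqref{eq:completion-translation-components}. The normalization fixed by $\Phi_0^{\mathrm{ref}}$ says that $\Phi_0$ has the prescribed valuation class, and over the punctured collar every translation lies in $\mathcal T^0$. So the cocycle takes values in $\mathcal T^0$ on all three annuli. Moreover, corrections at the cusp lying in $\mathcal T$ but not in $\mathcal T^0$ would change the component label. Since they are not needed, any gluing we produce can be taken with corrections in $\mathcal T^0$, which gives the last sentence of the statement.

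Next I will apply the exponential sequence. The long exact sequence of \eqref{eq:completion-translation-exponential}, together with $H^1(B_\kappa,\mathcal V_\kappa)=H^2(B_\kappa,\mathcal V_\kappa)=0$ from \eqref{eq:H1-translation-vector-bundle} and $\dim B_\kappa=1$, gives the connecting isomorphism
\[
\partial\colon H^1(B_\kappa,\mathcal T^0)\xrightarrow{\ \sim\ }H^2(B_\kappa,j_*\mathbb L_{\kappa,r}).
\]
The covering is Leray for $\mathcal V_\kappa$: the discs are Stein, and so is the pair of pants, as an open Riemann surface. Hence the covering computes $H^1$ of $\mathcal T^0$ up to refinement. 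This is automatic from the Čech-to-derived comparison, because $H^1$ of $\mathcal T^0$ on each open set injects into $H^2(j_*\mathbb L)$ of that open set. That group vanishes for the discs, and for the pair of pants it is $H^2$ of a noncompact surface, which is zero. The cocycle is therefore a coboundary, which is exactly the gluing condition, if and only if its image under $\partial$ vanishes.

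The main step is to compute $\partial$ explicitly and match it with the windings $[\omega_i]$. On each annulus choose a local logarithm $a_i$ of the mismatch, a section of $\mathcal V$ on a slit annulus. Going once around the clockwise collar changes $a_i$ by a lattice element, and after transport by $\mathsf P_i$ this difference is $\omega_i$. In the cellular model \eqref{eq:translation-cellular-complex} for $H^2_c(B^\circ_\kappa,\mathbb L)\cong H^2(B_\kappa,j_*\mathbb L)$, the boundary circle of the pair of pants contributes a $2$-cochain whose value, transported to the reference fibre, is the class of $\omega_0+\omega_1+\omega_2$ in the coinvariant quotient. Here I use that $\Lambda/(\mathsf P_i-I)\Lambda$ maps to the common coinvariants.

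I expect the bookkeeping of orientations and transports to be the obstacle: the clockwise versus counterclockwise conventions, $\mathsf P$ versus $\mathsf D$, and the change of basis in \eqref{eq:inverse-integral-image} needed to identify the cellular cokernel with $\Lambda/((A_1-I)\Lambda+(A_2-I)\Lambda)$. I will handle it by checking the formula on the single generator detected by $\chi$. The winding class is well defined: changing the logarithm by a lattice vector changes $\omega_i$ by $(\mathsf P_i-I)\lambda$, and changing the local maps by sections of $\mathcal T^0$ alters the $a_i$ by single-valued sections. Since $\chi$ is $\mathsf P_i$-invariant, it suffices to verify that $\partial$ of the cocycle equals $\sum_i\chi(\omega_i)$ up to a global sign. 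A global sign does not affect the vanishing criterion \eqref{eq:collar-winding-obstruction}.
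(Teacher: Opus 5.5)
Your route is the paper's: the mismatches form a $\mathcal T^0$-valued cocycle because all overlaps lie in the smooth locus, the vanishing $H^1(B_\kappa,\mathcal V_\kappa)=0$ makes the connecting map $\partial$ of \eqref{eq:completion-translation-exponential} injective, and $\partial$ is evaluated on the pair-of-pants two-cell as the sum of the collar jumps. Your Leray/Stein discussion is correct but unnecessary. The map $\check H^1(\mathcal U,F)\to H^1(B_\kappa,F)$ is injective for every cover and every abelian sheaf, and the sufficiency argument uses nothing more.

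The gap is the \emph{necessity} half of the equivalence for the full sheaf $\mathcal T$. The statement allows the cusp correction $\sigma_0\in\mathcal T(U_0)$ to have nonzero valuation class in $\mathbb Z/r$. It claims that even such a gluing forces \eqref{eq:collar-winding-obstruction}. You instead restrict the cusp correction to zero valuation class from the outset. Your sentence ``since they are not needed'' only shows that $\mathcal T^0$-corrections suffice once the obstruction vanishes. It does not exclude a mismatch class $[\tau]\neq0$ in $H^1(B_\kappa,\mathcal T^0)$ that becomes a coboundary once a component-shifting correction is permitted.

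What you must show is that $\iota_*\colon H^1(B_\kappa,\mathcal T^0)\to H^1(B_\kappa,\mathcal T)$ is injective, and you already have the ingredient. Your isomorphism $\partial\colon H^1(B_\kappa,\mathcal T^0)\cong H^2(B_\kappa,j_*\mathbb L_{\kappa,r})\cong\mathbb Z$ makes this group torsion-free. The component sequence \eqref{eq:completion-translation-components} gives an exact segment $\mathbb Z/r\to H^1(B_\kappa,\mathcal T^0)\xrightarrow{\iota_*}H^1(B_\kappa,\mathcal T)$. Its first arrow has $r$-torsion image, hence zero image, so $\iota_*$ is injective. This is how the paper closes the argument. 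Geometrically, a cusp translation $t_c^\nu u(t_c)$ changes the cusp winding only by an element of $\Lambda_{\mathrm{van}}\subset\ker\chi$, so it cannot change the global class.

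One smaller inaccuracy matters if you do the winding bookkeeping directly. A correction in $\mathcal T^0$ over the pair of pants need not have a single-valued logarithm, so it can change the individual $[\omega_i]$. Only their sum is invariant in $H^2(B_\kappa,j_*\mathbb L_{\kappa,r})$. It is not true, as you claim, that such corrections alter the logarithms only by single-valued sections.
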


\begin{proof}
Let $\tau_{ab}$ be the target translation mismatches, in additive
notation.  Equality of the linear parts makes them translations, and
the overlap identities make them a cocycle.  All overlaps lie in the
smooth locus, where $\mathcal T^0=\mathcal T$, so this is a
$\mathcal T^0$-valued cocycle.  The reference cusp comparison fixes the
component identification to be preserved by $\mathcal T^0$ corrections.
On simply connected sectors refining each collar choose local
holomorphic logarithms.  Their integral jumps, after returning to the
initial marking, represent $[\omega_i]$; changing a lift by $\eta$
changes its representative by $(\mathsf P_i-I)\eta$.  The connecting
morphism for \eqref{eq:completion-translation-exponential} is the
integral two-cocycle of these logarithms.  Evaluation on the oriented
pair-of-pants two-cell is the sum of its boundary jumps, namely
\eqref{eq:collar-winding-obstruction}.  This fixes the sign of the
identification with the cellular quotient
\eqref{eq:translation-cellular-complex}.

The long exact sequence contains
\[
 H^1(B_\kappa,\mathcal V_\kappa)\longrightarrow
 H^1(B_\kappa,\mathcal T^0)\xrightarrow{\partial}
 H^2(B_\kappa,j_*\mathbb L_{\kappa,r}).
\]
Its left term is zero by \eqref{eq:H1-translation-vector-bundle}, so
$\partial$ is injective.  The obstruction vanishes exactly when
$[\tau]=0$.  A trivial translation torsor has a global section; written
in the original local trivializations, this gives sections
$\sigma_a\in\mathcal T^0(U_a)$ satisfying
\[
 \tau_{ab}=\sigma_b-\sigma_a.
\]
Composing the local maps with the corresponding translations removes
the mismatches.  These sections extend across the prescribed local
completions by the definition of $\mathcal T^0$, and analytic gluing
gives the global biholomorphism.  Local logarithms were used only on a
refinement to compute the connecting class; no single-valued logarithm
of $\sigma_a$ on an entire original piece is required.

It remains to check necessity when corrections in the full sheaf
$\mathcal T$ are allowed.  The injection $\partial$ above and
\eqref{eq:translation-obstruction-group} give
\[
 H^1(B_\kappa,\mathcal T^0)\hookrightarrow
 H^2(B_\kappa,j_*\mathbb L_{\kappa,r})\simeq\mathbb Z.
\]
Thus $H^1(B_\kappa,\mathcal T^0)$ is torsion-free.  The component
sequence \eqref{eq:completion-translation-components} gives the exact
segment
\[
 \mathbb Z/r\xrightarrow{\partial_{\mathrm{comp}}}
 H^1(B_\kappa,\mathcal T^0)\xrightarrow{\iota_*}
 H^1(B_\kappa,\mathcal T),
\]
where $\mathbb Z/r$ is the group of global sections of the
point-supported component sheaf.  The image of
$\partial_{\mathrm{comp}}$ is annihilated by $r$, hence is zero in the
torsion-free middle group.  Therefore $\iota_*$ is injective.  If
$\mathcal T$-valued corrections glue the local maps, then
$\iota_*[\tau]=0$, so $[\tau]=0$ and
\eqref{eq:collar-winding-obstruction} follows.  This proves necessity
for the full sheaf, while sufficiency was proved using $\mathcal T^0$.
The comparison uses the torsion-free obstruction group on
$B_\kappa=\mathbb P^1$; no such comparison is assumed for the arbitrary
base curves of Section~\ref{sec:extensions}.
\end{proof}

\subsubsection{The exact period}

\begin{lemma}[Root-disc conjugacies for a period shift]
\label{lem:family-root-conjugacy}
Put $n=6k$ and $M_n=I+nE_{41}$.  For the two finite points the changes of
affine vectors and compatible conjugating data are
\[
\begin{array}{c|c|c|c|c}
\kappa,j&M_nv_j-v_j&m_j&\lambda_j&t_j\\ \hline
B,1&6k\delta&4&-3kw&\dfrac{3k}{2r}\gamma\\[1mm]
C,1&6k\delta&3&2k\delta&0\\
B,2&-6k\delta&6&-k\delta&0\\
C,2&-6k\delta&6&-k\delta&0.
\end{array}
\]
They satisfy
\begin{equation}\label{eq:root-conjugacy-equation}
 \frac{M_nv_j-v_j}{m_j}=(I-A_j)t_j+\lambda_j.
\end{equation}
For $c'=c-n/r$, write $\Pi=\Pi^\kappa_{r,c}$ and
$\Pi'=\Pi^\kappa_{r,c'}$.  The column identity $\Pi'M_n=\Pi$ holds.
In the direction from parameter $c$ to parameter $c'$, the root-disc
isomorphism is
\begin{equation}\label{eq:directed-root-conjugacy}
 \Psi_j(s,[u])=\bigl(s,[u-\Pi'(s)t_j]\bigr).
\end{equation}
It is equivariant on torus quotients and extends holomorphically over
$s=0$.  Its inverse uses the opposite translation.
\end{lemma}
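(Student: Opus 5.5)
The plan has three parts: check the table entries against \eqref{eq:root-conjugacy-equation}, check the column identity, and then show that \eqref{eq:directed-root-conjugacy} is equivariant and extends over $s=0$.

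\textbf{The table.} First I compute $M_nv_j-v_j=nE_{41}v_j=n\chi(v_j)\,\delta$. Since $\chi(v_1)=1$ and $\chi(v_2)=-1$, this gives the entries $\pm6k\delta$. Next I verify \eqref{eq:root-conjugacy-equation} row by row, using the explicit dual matrices $A_{j,r}=(T_{j,r}^{-1})^{\mathsf T}$ and the norm matrices $N_1^\kappa,N_2$ displayed above.
\begin{itemize}
\item In the rows with $t_j=0$, the identity reduces to $(M_nv_j-v_j)/m_j=\lambda_j$. This is the arithmetic $6k/3=2k$ and $-6k/6=-k$.
\item In row $B,1$, $6k\delta/4$ is not integral. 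I compute $(I-A^B_{1,r})\gamma$ from the first column of $A^B_{1,r}$ and check that $\tfrac{3k}{2r}(I-A_{1,r})\gamma-3kw=\tfrac{3k}{2}\delta$.
\end{itemize}
As a consistency check, Lemma~\ref{lem:affine-cyclic-conjugacy} requires $M_nv_j-v_j\in N_{A_j}\Lambda$. This holds because the last row of $N^B_1$ produces $4\delta$ from $\gamma$-free columns, for instance $N_1^B(-w)=2\delta$.

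\textbf{The column identity.} Right multiplication by $M_n=I+nE_{41}$ adds $n$ times column four of $\Pi'$ to column one. Column four is $e_2$, so the identity amounts to $r(\beta_0+c')+n=r(\beta_0+c)$. This is exactly the choice $c'=c-n/r$.

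\textbf{Equivariance.} The parameter-$c$ action is $\Phi(s,[u])=(gs,[R(s)u+\tfrac1m\Pi(gs)v])$. The matrices $R_j$ do not depend on $c$, since they involve only $\tau,\mu$. In flat coordinates for the target marking, $u=\Pi'(s)x'$ and $\Pi=\Pi'M_n$, so the source action becomes $x'\mapsto A'x'+M_nv/m$. Here $M_n$ commutes with $A_j$ by Lemma~\ref{lem:monodromy-centralizer}. I then conjugate by the translation $x'\mapsto x'-t_j$. The relation \eqref{eq:root-conjugacy-equation} says this turns translation by $M_nv/m$ into translation by $v/m$ modulo $\Lambda$; the term $\lambda_j$ is absorbed by the lattice. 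Both sides are holomorphic because $\Pi'(s)t_j$ is a holomorphic section on the whole root disc.

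\textbf{Extension and inverse.} The map is a translation by a section that is holomorphic through $s=0$, so it commutes with the finite group action and descends to the quotient across the central fibre. This is the point that distinguishes it from the nonextendible translation in the preceding remark. The inverse is translation by $+\Pi'(s)t_j$.

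I expect the only real obstacle to be the sign and index bookkeeping, namely $A$ versus $\mathsf P$, and the orientation of $E_{41}$ (row four, column one). I will settle these by checking the $B,1$ row explicitly.
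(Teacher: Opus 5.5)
Your proposal is correct and follows the paper's proof. You check the table directly from the matrices, and your $B,1$ computation is exactly the paper's identity $(I-A^B_{1,r})\tfrac{3k}{2r}\gamma-3kw=\tfrac{3k}{2}\delta$; you read off the column identity from the fourth period column; and you obtain equivariance from the defect $\Pi'(gs)\bigl(d_j/m_j-(I-A_j)t_j\bigr)=\Pi'(gs)\lambda_j$, computed in flat coordinates rather than on the complex cover. Extension over $s=0$ is, as in the paper, just holomorphy of $\Pi'(s)t_j$ on the whole root disc (note that $\Psi_j$ intertwines the parameter-$c$ and parameter-$c'$ actions rather than commuting with a single one, and your $A'$ is simply $A_j$).
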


\begin{proof}
The displayed identities follow directly from the matrices:
\[
 (I-A^B_{1,r})\frac{3k}{2r}\gamma-3kw=\frac{3k}{2}\delta,
 \qquad
 N^C_1(2k\delta)=6k\delta,
 \qquad
 N_2(-k\delta)=-6k\delta.
\]
The last period column gives $\Pi'M_n=\Pi$ directly, and the complex
linear actions $R_j^\kappa(s)$ are the same at both parameters.  Put
$d_j=M_nv_j-v_j$ and let $g(s)=\zeta_{m_j}^{-1}s$.  On the complex
cover the difference between $\Psi_j\Phi_c$ and $\Phi_{c'}\Psi_j$ is
\[
 \Pi'(gs)\left(\frac{d_j}{m_j}-t_j+A_jt_j\right)
 =\Pi'(gs)\lambda_j.
\]
This is an integral period by \eqref{eq:root-conjugacy-equation}, so
the two maps agree on the root-disc torus quotient.  The section
$\Pi'(s)t_j$ and its negative are holomorphic on the full root disc;
hence \eqref{eq:directed-root-conjugacy} and its inverse extend there.
\end{proof}

\begin{proposition}[Exact arithmetic parameter quotient]
\label{prop:parameter-translations}
For every $n\in\mathbb Z$ there is an integral re-marking identity
\begin{equation}\label{eq:period-remarking-identity}
 \Pi^\kappa_{r,c-n/r}(I+nE_{41})=\Pi^\kappa_{r,c}.
\end{equation}
This re-marking preserves the homology marking only when $n=0$.
If $6\mid n$, there is an unmarked biholomorphism inducing this
integral re-marking,
\[
 X^\kappa_{r,c}\simeq X^\kappa_{r,c-n/r}.
\]
For all $c,c'\in\mathcal U_\kappa$,
\[
 X^\kappa_{r,c}\simeq X^\kappa_{r,c'}
 \quad\Longleftrightarrow\quad
 c-c'\in(6/r)\mathbb Z,
\]
whereas a homology-marked isomorphism, and in particular a strictly
marked isomorphism, forces $c=c'$.
\end{proposition}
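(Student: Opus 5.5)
The plan has three parts: the re-marking identity, sufficiency for $6\mid n$, and necessity.

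\textbf{The identity.} I would check \eqref{eq:period-remarking-identity} column by column. Right multiplication by $I+nE_{41}$ adds $n$ times the fourth column $(0,1)^{\mathsf T}$ of $\Pi^\kappa_{r,c-n/r}$ to its first column. In \eqref{eq:parameter-period-matrix} this changes the lower-left entry from $r(\beta_{\kappa,0}+c-n/r)$ to $r(\beta_{\kappa,0}+c)$, and leaves every other entry alone. The re-marking matrix differs from $I$ when $n\neq0$, so it is not the identity on the local system; this is why it preserves the homology marking only for $n=0$.

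\textbf{Sufficiency for $6\mid n$.} Write $n=6k$ and $M_n=I+nE_{41}$. By Lemma~\ref{lem:monodromy-centralizer}, $M_n$ commutes with both monodromies. On the smooth locus the identity of $\mathbb C^2$ together with $M_n$ therefore gives a fibrewise isomorphism $X^\kappa_{r,c}\to X^\kappa_{r,c-n/r}$; it is equivariant because the $R^\kappa_j$ do not depend on $c$.

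At the two finite points I would use the root-disc isomorphisms $\Psi_j$ of Lemma~\ref{lem:family-root-conjugacy}, which extend over $s=0$. At the cusp, $M_n$ fixes $\Lambda_{\mathrm{van}}$ pointwise, so the twist decomposition \eqref{eq:typeB-cusp-decomposition}/\eqref{eq:typeC-cusp-decomposition} changes only through the constant $c$-shift. The toric identity map, possibly composed with a torus translation, then intertwines the two deck actions of Theorem~\ref{thm:master-cusp}. This is the reference cusp comparison $\Phi_0^{\mathrm{ref}}$.

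The local maps disagree on the collars by fibre translations, namely $\Pi' t_j$ composed with the collar logarithms of Lemma~\ref{lem:finite-collar}. I would apply Proposition~\ref{lem:torus-translation-globalization} and compute the total winding $\chi(\omega_0+\omega_1+\omega_2)$:
\begin{itemize}
\item each collar contributes $\ell(s)\Pi(M_nv_j-v_j)$, plus the cusp contribution;
\item for $\chi$, which kills $\delta$, the finite terms $\pm6k\delta$ contribute nothing;
\item the cusp translation is chosen so that its winding also has zero $\chi$-value.
\end{itemize}
The obstruction in $H^2\cong\mathbb Z$ therefore vanishes, and gluing produces the biholomorphism.

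\textbf{Necessity.} Let $\Phi:X^\kappa_{r,c}\simeq X^\kappa_{r,c'}$. By Theorem~\ref{thm:algebraic-dimension-one}, $\Phi$ covers the identity of the base. Lemma~\ref{lem:fibrewise-affine-form} then gives a flat linear part $L=\varepsilon I+nE_{41}$ with $\Pi_{c'}L=R\Pi_c$. The second and third columns force $R=\varepsilon I$. The first column then forces $\varepsilon=1$: otherwise $\mu\equiv0$ and $r(\beta_{\kappa,0}+c)$ would be constant, which is impossible. It also forces $c-c'=n/r$.

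To get $6\mid n$ I would use the finite-point condition $Lv_j-v_j\in N_{A_j}\Lambda$. At the order-six point, $(M_n-I)v_2=-n\delta$ must lie in $N_2\Lambda=6\gamma\mathbb Z\oplus6\delta\mathbb Z$, which gives $6\mid n$; the first point adds no further constraint. For a homology-marked isomorphism $L=I$, so $n=0$ and $c=c'$.

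\textbf{Main obstacle.} I expect the hardest step to be the gluing in the sufficiency direction. It requires an explicit cusp comparison and an exact computation of the collar windings with the clockwise conventions of \eqref{eq:clockwise-cusp-transport}. What must be checked is that the $\chi$-component of the total winding vanishes, rather than the obstruction merely being torsion. I would first treat $r=1$ and then pass to general $r$ through the isogeny $D_r$.
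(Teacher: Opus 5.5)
Your plan follows the paper's proof in structure: the column check, the root-disc maps $\Psi_j$ of Lemma~\ref{lem:family-root-conjugacy}, the toric identity at the cusp, the $\chi$-detected winding test of Proposition~\ref{lem:torus-translation-globalization}, and the centralizer plus period comparison for the converse. However, the step in the necessity argument that excludes $\varepsilon=-1$ is false.

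\textbf{The gap.} With $L=\varepsilon I+nE_{41}$ and $R=\varepsilon I_2$, the first column of $\Pi^\kappa_{r,c'}L=R\,\Pi^\kappa_{r,c}$ reads
\[
 \varepsilon s_\kappa r\mu_\kappa=\varepsilon s_\kappa r\mu_\kappa,
 \qquad
 \varepsilon r(\beta_{\kappa,0}+c')+n=\varepsilon r(\beta_{\kappa,0}+c).
\]
This holds for either sign and gives only $\varepsilon r(c-c')=n$; nothing forces $\mu_\kappa\equiv0$. This had to be so. Fibrewise inversion $[u]\mapsto[-u]$ satisfies $\Pi(-I)=(-I)\Pi$ for every period matrix, so no period identity can detect it. Your later computation of $(M_n-I)v_2$ silently assumes $\varepsilon=1$. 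As written, the case $L=-I+nE_{41}$ with $c-c'=-n/r$ is therefore not excluded, and the implication $X^\kappa_{r,c}\simeq X^\kappa_{r,c'}\Rightarrow c-c'\in(6/r)\Z$ is not established.

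\textbf{The repair.} Use the finite-point condition you already invoke, now with $\varepsilon=-1$. Since $v_2=-\gamma$,
\[
 (-I+nE_{41})v_2-v_2=2\gamma-n\delta\notin N_2\Lambda=6\Z\gamma\oplus6\Z\delta,
\]
because the coefficient of $\gamma$ is $2$. Geometrically, inversion turns $x\mapsto A_2x+v_2/6$ into $x\mapsto A_2x-v_2/6$, which is not translation-conjugate to it. Only after this does $-n\delta\in N_2\Lambda$ give $6\mid n$; this is how the paper concludes.

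\textbf{Two smaller points.}
\begin{enumerate}
\item The collar winding is not $\ell(s)(M_nv_j-v_j)$. It is the integral class $\omega_j=\mathsf P_j\lambda_j$ with $\lambda_j=(A_j-I)t_j+d_j/m_j$; for Type B this gives $\omega_1=3ku$ and $\omega_2=-k\delta$. Your conclusion survives, because $\chi$ is invariant and so $\chi(\lambda_j)=\chi(d_j)/m_j=0$. At the cusp the toric identity already gives $\omega_0=0$, with no extra translation.
\item The proposed reduction to $r=1$ through $D_r$ cannot yield the full statement. Index-one isomorphisms shift $c$ only by elements of $6\Z$, and $\Pi^\kappa_r=\Pi^\kappa_1D_r$ uses the same $c$. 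Lifting them therefore reaches at most shifts in $6\Z$ at index $r$, not all of $(6/r)\Z$. The lemmas you cite hold for every $r$, so argue directly, as the paper does.
\end{enumerate}
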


\begin{proof}
Right multiplication in \eqref{eq:period-remarking-identity} adds $n$
times the fourth period column to the first.  For the cusp extension,
let $E_{21}^{(2)}$ be the $2\times2$ elementary matrix.  Both explicit
cusp decompositions give
\begin{equation}\label{eq:cusp-integral-parameter-shift}
 C_r^\kappa(t_c,c-n/r)=C_r^\kappa(t_c,c)-nE_{21}^{(2)}.
\end{equation}
For every $\lambda\in\Z^2$ the exponential multipliers of these two
twist matrices are identical.  The periodic toric deck actions are
therefore identical, and the identity in toric coordinates extends
the re-marking across the cusp.  Its integral linear part is
$M_n=I+nE_{41}$; it acts trivially on $\Lambda_{\mathrm{van}}$ and on
$\Lambda/\Lambda_{\mathrm{van}}$, and has no component shift.
For $n=6k$, Lemma~\ref{lem:family-root-conjugacy} gives the two
root-disc maps \eqref{eq:directed-root-conjugacy}.  We compute their
collar windings in the same direction.  Set $d_j=M_nv_j-v_j$,
$\mathsf D_j=A_j$, $\mathsf P_j=A_j^{-1}$, and
$\ell(s)=\log(s)/(2\pi i)$.  Composing the source collar map, the
root-disc map, and the inverse target collar map gives
\begin{equation}\label{eq:collar-conjugacy-logarithm}
 \phi_{c'}\circ\Psi_j\circ\Theta_c(s,[u])
 =\bigl(s,[u+\Pi'(s)b_j(s)]\bigr),\qquad
 b_j(s)=-t_j-\ell(s)d_j.
\end{equation}
Here $\Theta_c$ and $\phi_{c'}$ are the maps of
Lemma~\ref{lem:finite-collar}.  As $\mathsf D_jd_j=d_j$ and
$\ell(gs)=\ell(s)-1/m_j$, the deck-equivariance defect is
\begin{equation}\label{eq:collar-winding-computation}
 b_j(gs)-\mathsf D_jb_j(s)
 =(A_j-I)t_j+\frac{d_j}{m_j}=\lambda_j.
\end{equation}
This defect is expressed in the endpoint marking.  Returning the
logarithm to the initial fibre gives the clockwise winding
\begin{equation}\label{eq:forward-collar-winding}
 \omega_j=\mathsf P_jb_j(gs)-b_j(s)=\mathsf P_j\lambda_j.
\end{equation}
Thus chosen representatives of the two kinds of data are
\[
\begin{array}{c|cc|cc}
 &\lambda_1&\lambda_2&\omega_1&\omega_2\\ \hline
 \text{Type B}&-3kw&-k\delta&3ku&-k\delta\\
 \text{Type C}&2k\delta&-k\delta&2k\delta&-k\delta.
\end{array}
\]
In the common quotient by $(A_j-I)\Lambda=(\mathsf P_j-I)\Lambda$,
$[\omega_j]=[\lambda_j]$, since
$\mathsf P_j\lambda_j-\lambda_j=(\mathsf P_j-I)\lambda_j$.
Changing $b_j$ by $\eta\in\Lambda$ changes $\lambda_j$ by
$(I-A_j)\eta$ and $\omega_j$ by $(\mathsf P_j-I)\eta$.
Take the toric identity above as both the chosen cusp map and the
reference cusp comparison in Proposition~\ref{lem:torus-translation-globalization}.
Its relative valuation is zero and $\omega_0=0$.
All local maps have the same flat integral linear part $M_n$.
Each $\omega_j$ is annihilated by $\chi$, so the oriented sum in
\eqref{eq:collar-winding-obstruction} vanishes in the global coinvariant
lattice \eqref{eq:translation-obstruction-group}.
Proposition~\ref{lem:torus-translation-globalization} now corrects the
local maps by completion-preserving translations and glues them to a
global biholomorphism with linear part $M_n$.

Conversely, Theorem~\ref{thm:algebraic-dimension-one} makes every
biholomorphism between admissible members fibre-preserving and trivial
on the base.
Lemmas~\ref{lem:fibrewise-affine-form} and
\ref{lem:monodromy-centralizer} give linear part
$M=\varepsilon I+nE_{41}$.  Comparing
\[
 \Pi^\kappa_{r,c'}M=R\Pi^\kappa_{r,c}
\]
and using the last two period columns gives $R=\varepsilon I_2$; the
first column then gives
\[
 \varepsilon r(c-c')=n.
\]
If $\varepsilon=-1$, the order-six affine-vector difference is
$2\gamma-n\delta$, which cannot lie in
$N_2\Lambda=6\mathbb Z\gamma\oplus6\mathbb Z\delta$.  Hence
$\varepsilon=1$.  The order-six difference is then $-n\delta$, so the
same norm-lattice condition gives $6\mid n$.
\end{proof}

\begin{corollary}[The connected automorphism group]\label{cor:connected-automorphisms}
For every admissible parameter,
\[
 H^0(X^\kappa_{r,c},T_{X^\kappa_{r,c}})=\C v_\delta,\qquad
 \Aut^0(X^\kappa_{r,c})\simeq\C/\Z\simeq\C^*.
\]
Here $v_\delta$ is the global vector field whose flow on the smooth
locus is translation by $a e_2$, $a\in\C$.
\end{corollary}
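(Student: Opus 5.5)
The plan is to compute $H^0(X,T_X)$ by restricting a global vector field to the smooth torus locus and working fibre by fibre. Then I identify which translation fields extend across the three local completions, and finally exponentiate.

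\emph{Step one: every global field is vertical.} Let $\xi\in H^0(X,T_X)$ and let $\phi_a$ be its flow. Since $X$ is compact, $\phi_a$ is a biholomorphism. By Theorem~\ref{thm:algebraic-dimension-one}, every automorphism preserves the torus fibration and induces the identity on $B_\kappa$. Hence $df(\xi)=0$ and $\xi$ is vertical. Over the smooth locus, $\phi_a$ lies in the identity component, so its flat integral linear part is $I$. This agrees with Lemma~\ref{lem:monodromy-centralizer}, since $\varepsilon I+nE_{41}$ must depend continuously on $a$ and equal $I$ at $a=0$. By Lemma~\ref{lem:fibrewise-affine-form} with $L=I$, the field $\xi$ is therefore a fibrewise translation field, i.e.\ a holomorphic section of the vertical translation bundle. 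At the finite points this section is equivariant on the root discs. At the cusp its flow extends across the toroidal completion, so it lies in the logarithmic extension generated by $x_1\partial_{x_1},x_2\partial_{x_2}$. I expect this step to be the main obstacle. To handle it, I would lift the flow to the periodic toric cover using Lemma~\ref{lem:toroidal-translation-valuation}. A one-parameter family $t_c^{\nu}u_a(t_c)$ with $\nu$ locally constant in $a$ forces $\nu=0$. The derivative of $u_a$ at $a=0$ is then a holomorphic logarithmic vertical field. Consequently $H^0(X,T_X)\subset H^0(B_\kappa,\mathcal V_\kappa)$.

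\emph{Step two: compute $H^0(B_\kappa,\mathcal V_\kappa)$.} This uses the split sequence~\eqref{eq:translation-vector-bundle}. It gives $H^0(\mathcal V_\kappa)=H^0(\mathcal O)\oplus H^0(\mathcal O(-1))=\C$. The trivial subbundle comes from the second coordinate direction fixed by every $R_j^\kappa$, so the constant section is $e_2$. Conversely, translation by $ae_2$ is compatible with all local models. On the root discs it commutes with the affine actions because $R_j^\kappa e_2=e_2$. At the cusp, $e_2$ corresponds to a constant logarithmic field $x_2\partial_{x_2}$, which commutes with the toric deck action of Theorem~\ref{thm:master-cusp}. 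The collar maps of Lemma~\ref{lem:finite-collar} and~\eqref{eq:global-cusp-collar} are translations by sections or exponentials, so both commute with a constant translation. Hence $v_\delta$ is a global holomorphic field, and $H^0(X,T_X)=\C v_\delta$.

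\emph{Step three: exponentiate.} Then $\Aut^0(X)$ is the image of $a\mapsto\exp(av_\delta)$, which is translation by $ae_2$. On a smooth fibre this is the identity exactly when $ae_2\in\Pi\Lambda$. Since $e_2=\Pi\delta$ is a primitive period and the other three period columns are real-linearly independent of it, this happens exactly when $a\in\Z$. The identity on a dense open set is the identity globally, so the kernel is $\Z$ and $\Aut^0(X)\simeq\C/\Z\simeq\C^*$ via $a\mapsto e^{2\pi ia}$.
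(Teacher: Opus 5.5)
Your first two steps are correct and follow the paper's proof closely. You get verticality from Theorem~\ref{thm:algebraic-dimension-one}, identify global fields with sections of $\mathcal V_\kappa$, and obtain $H^0(B_\kappa,\mathcal V_\kappa)=\C e_2$ from \eqref{eq:translation-vector-bundle}. At the cusp you differentiate a continuously chosen family of lifts $t_c^{\nu}u_a(t_c)$ from Lemma~\ref{lem:toroidal-translation-valuation}. The paper instead evaluates the field along the extended zero section. Either way one gets holomorphic coefficients in $x_1\partial_{x_1},x_2\partial_{x_2}$.

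The gap is in step three, in computing the kernel of $a\mapsto\exp(av_\delta)$.
\begin{itemize}
\item Real-linear independence of the period columns only shows $\R e_2\cap\Pi(z)\Lambda=\Z e_2$. For non-real $a$ one has $ae_2=\Pi(z)\bigl(\operatorname{Re}(a)\,\delta+\operatorname{Im}(a)\,\eta(z)\bigr)$, where $\eta(z)\in\Lambda_\R$ is the unique vector with $\Pi(z)\eta(z)=ie_2$. Whether this vector is integral depends on $z$.
\item Your single-fibre claim is in fact false at special fibres. The condition $\Pi(z)\lambda\in\C e_2$ means $s_\kappa r\mu\lambda_\gamma+\tau\lambda_u+\lambda_w=0$. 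This has solutions with $\lambda_\gamma\neq0$ whenever $\mu(z)\in\Q\tau(z)+\Q$.
\item That condition holds at the first elliptic point, where $\mu=(\tau+1)/2$, respectively $\mu_C=(\tau_C+1)/3$. By Hurwitz's theorem it also holds at nearby smooth points for nearby rational coefficients. On such a fibre the image of $\C e_2$ is a closed elliptic curve, so translation by some non-integral $a$ is trivial there.
\item This matters for the conclusion: a rank-two kernel would make $\Aut^0$ an elliptic curve rather than $\C^*$.
\end{itemize}

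The repair is to use all smooth fibres at once, as the paper does. On a simply connected chart write $ae_2=\Pi(z)\lambda(z)$ with $\lambda(z)\in\Lambda$ unique. Then $\lambda$ is locally constant, hence invariant under both monodromies. So $\lambda\in\Lambda^{A_1,A_2}=\Z\delta$ and $a\in\Z$. Alternatively, test on a fibre where $1,\tau_*,\mu_*$ are $\Q$-linearly independent; such a fibre exists by the first lemma of Appendix~\ref{app:fibre-NS}, and there the first row forces $\lambda\in\Z\delta$.

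You should also justify that the image of $a\mapsto\exp(av_\delta)$ is all of $\Aut^0(X)$. $\Aut(X)$ is a complex Lie group with Lie algebra $H^0(X,T_X)=\C v_\delta$. So this one-parameter subgroup is open and therefore equals the identity component.
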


\begin{proof}
Every automorphism preserves the base by
Theorem~\ref{thm:algebraic-dimension-one}.  A global holomorphic vector
field is therefore vertical, and on each smooth compact torus fibre
it is translation invariant.  Its coefficient is a section of the
vertical translation bundle.  At a finite point the free root-disc
cover pulls the vector field back holomorphically.  At the cusp,
evaluation on the extended zero section, which lies in the smooth
part of the central fibre, gives holomorphic coefficients in
$x_1\partial_{x_1},x_2\partial_{x_2}$.  These coefficients are
independent of the fibre coordinate on the punctured disc, hence
extend the translation field on every toric chart.  Conversely such
sections define global vector fields.  Thus
\[
 H^0(X^\kappa_{r,c},T_{X^\kappa_{r,c}})
 \simeq H^0(\mathbb P^1,\mathcal V_\kappa)=\C,
\]
where the last equality follows from
$\mathcal V_\kappa\simeq\mathcal O\oplus\mathcal O(-1)$.
The trivial subbundle is generated by $e_2=\Pi\delta$.
Its translations extend across the finite quotients and act in the
second multiplicative coordinate at the cusp, giving a holomorphic
$\C$-action on $X^\kappa_{r,c}$.

If translation by $ae_2$ is the identity, then on a simply connected
smooth base chart $ae_2=\Pi(z)\lambda(z)$ for an integral lattice
vector.  Real-linear independence of the periods makes $\lambda(z)$
unique and locally constant.  Continuation therefore fixes it under
both monodromies.  Their common invariant lattice is
$\Lambda^{A_1,A_2}=\Z\delta$: the order-six matrix first forces the
$u,w$ coordinates to vanish, and the first matrix then forces the
$\gamma$ coordinate to vanish.  Hence $a\in\Z$; conversely every
integer is a period.  The image of $\C/\Z$ has the full one-dimensional
Lie algebra of $\Aut^0(X^\kappa_{r,c})$, so it is an open subgroup and
therefore equals that connected group.  Finally $a\mapsto e^{2\pi ia}$
identifies $\C/\Z$ with $\C^*$.
\end{proof}

For the Kodaira--Spencer statements the target $B_\kappa$ and its
three special values are fixed.  The simultaneous family is locally
trivial as a map near its critical points: the finite root charts have
$t_j=s_j^{m_j}$, and the toroidal charts have $t_0=z_0\cdots z_k$;
these equations are independent of the affine parameter.  We use the
following direct infinitesimal observation.

\begin{lemma}[Forgetting an equisingular map]
\label{lem:forgetful-KS}
Let $f:X\to\Pone$ be a proper surjective holomorphic map from a connected
compact complex manifold, with connected fibres.  Suppose that
$q_i\in X$ are critical points with three distinct images
$p_i=f(q_i)$, $i=0,1,2$.  Consider first-order deformations of the map to
the fixed target that are locally trivial as maps near each $q_i$.
The tangent map that forgets the map and retains only the source
deformation has zero kernel.
\end{lemma}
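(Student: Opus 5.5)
The approach is to identify the kernel concretely as a space of vector fields on the target, and then to show that local triviality forces such a vector field to vanish at three points.

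\textbf{Step 1: the kernel in terms of $v$.} Let $(\mathcal X,F)$ be a first-order deformation of the map $f$ over $\operatorname{Spec}\C[\varepsilon]/(\varepsilon^2)$, with fixed target $\Pone$. Suppose it lies in the kernel, so the source deformation $\mathcal X$ is trivial. Choosing a trivialization $\mathcal X\simeq X\times\operatorname{Spec}\C[\varepsilon]$, the map takes the form $F=f+\varepsilon v$ with
\[
 v\in H^0(X,f^*T_{\Pone}).
\]
Changing the trivialization by an infinitesimal automorphism $\mathrm{id}+\varepsilon\xi$, with $\xi\in H^0(X,T_X)$, replaces $v$ by $v+df(\xi)$. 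So it suffices to show that $v$ itself is zero; then the deformation is literally the trivial one.

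\textbf{Step 2: $v$ comes from the base.} Since $f$ is proper and surjective with connected fibres and $X$ is a connected normal (indeed smooth) space, we have $f_*\mathcal O_X=\mathcal O_{\Pone}$. The projection formula for the locally free sheaf $T_{\Pone}$ then gives
\[
 H^0(X,f^*T_{\Pone})=H^0(\Pone,T_{\Pone}).
\]
Hence $v=w\circ f$ for a unique holomorphic vector field $w$ on $\Pone$.

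\textbf{Step 3: local triviality forces $w$ to vanish at the $p_i$.} By hypothesis, near each $q_i$ the deformed map is trivial as a map with the target held fixed. This means there is a germ of source vector field $\xi_i$ near $q_i$ such that
\[
 w\circ f=df(\xi_i)\quad\text{near }q_i.
\]
The target is fixed, so no target reparametrization is allowed. Evaluating at $q_i$, where $df(q_i)=0$ because $q_i$ is critical, gives $w(p_i)=0$.

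\textbf{Step 4: conclusion.} A vector field on $\Pone$ is a section of $\mathcal O(2)$. It therefore vanishes identically once it has three distinct zeros $p_0,p_1,p_2$. So $w=0$, hence $v=0$, and the kernel is zero.

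\textbf{Main obstacle.} The expected difficulty is the bookkeeping in Step 3. One must check that "locally trivial as maps" means precisely that the deformation is induced by a source reparametrization alone, so that $w\circ f$ lies in the image of $df$ on local germs. Passing to the critical points is what lets the three distinct critical values $p_0,p_1,p_2$ replace any analysis of the singular fibres. The application here fits this pattern: the charts $t_j=s_j^{m_j}$ and $t_0=z_0\cdots z_k$ are independent of the affine parameter, so the simultaneous family is locally trivial as a map near its critical points.
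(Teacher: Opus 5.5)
Your proposal is correct and follows the paper's proof essentially step for step. You trivialize the source and identify the map variation with a section of $f^*T_{\Pone}$. That section descends to a vector field on $\Pone$ via $f_*\mathcal O_X=\mathcal O_{\Pone}$ and the projection formula. Local triviality near $q_i$ together with $df_{q_i}=0$ forces this field to vanish at $p_0,p_1,p_2$, and it is therefore zero.
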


\begin{proof}
Work over $\C[\epsilon]/(\epsilon^2)$.  A deformation in the kernel has
trivial source; choose a trivialization reducing to the identity on $X$.
Its variation as a map is then a global section
$\sigma\in H^0(X,f^*T_{\Pone})$.  Connected fibres, properness, and
Stein factorization give $f_*\mathcal O_X=\mathcal O_{\Pone}$.  The
projection formula therefore identifies this space with
$H^0(\Pone,T_{\Pone})$, so $\sigma=f^*v$ for a vector field $v$ on
$\Pone$.

Local triviality as a map means that, near $q_i$, the chosen global
source trivialization differs from a map-trivializing one by a local
vector field $\xi_i$.  Consequently $\sigma=df(\xi_i)$ there, up to
an immaterial sign.  Since $q_i$ is critical for a map to a curve,
$df_{q_i}=0$, and hence $v(p_i)=0$.  A vector field on $\Pone$ vanishing
at three distinct points is zero, because
\[
 H^0\bigl(\Pone,T_{\Pone}(-p_0-p_1-p_2)\bigr)
 =H^0(\Pone,\mathcal O_{\Pone}(-1))=0.
\]
Thus $\sigma=0$, and the global source trivialization also trivializes
the map.  The pair deformation is zero.
\end{proof}

\begin{theorem}[Deformation and separation in moduli]
For each $r\geq1$ and each $\kappa\in\{B,C\}$, the fixed oriented smooth
six-manifold underlying $X^\kappa_{r,c}$ carries uncountably many pairwise
non-biholomorphic complex structures obtained from the affine period
parameter.  The unmarked isomorphism classes within this family form
the orbit set
\[
 \mathcal U_\kappa/(6/r)\mathbb Z.
\]
Each half-open vertical strip
\[
 \{c\in\mathcal U_\kappa:a\leq\operatorname{Re}c<a+6/r\}
\]
is a fundamental domain.  Both the Kodaira--Spencer class of the family
of pairs $(X^\kappa_{r,c},f^\kappa_{r,c})$ and the absolute
Kodaira--Spencer class are nonzero at every admissible parameter.
\end{theorem}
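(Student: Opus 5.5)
The plan is to assemble the statement from Proposition~\ref{prop:simultaneous-parameter-family}, Proposition~\ref{prop:parameter-translations}, and Lemma~\ref{lem:forgetful-KS}, with one new infinitesimal computation. First, Proposition~\ref{prop:simultaneous-parameter-family} gives a proper holomorphic submersion $\varpi_{\kappa,r}:\mathcal X_{\kappa,r}\to\mathcal U_\kappa$ over the connected half-plane. By Ehresmann's theorem all fibres are oriented-diffeomorphic to one fixed six-manifold. Proposition~\ref{prop:parameter-translations} says $X^\kappa_{r,c}\simeq X^\kappa_{r,c'}$ exactly when $c-c'\in(6/r)\Z$, so the unmarked classes form $\mathcal U_\kappa/(6/r)\Z$. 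Since $\mathcal U_\kappa$ is invariant under real translations and this group acts by real translations, each half-open strip $a\le\operatorname{Re}c<a+6/r$ meets every orbit exactly once. The quotient is uncountable; for instance, the points $c=a+iy$ with $y<C_\kappa$ are pairwise inequivalent. This gives the uncountably many pairwise non-biholomorphic complex structures.

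For the Kodaira--Spencer classes, I would treat the pair class first and then deduce the absolute class. Suppose the pair class at $c_0$ were zero. Then the map to the fixed base $B_\kappa$ would be infinitesimally trivial, so some vertical first-order deformation of the pair would realise the direction $\partial_c$. Restrict to the smooth locus over a marked disc. The variation of the fibre periods along $\partial_c$ is $\partial_c\Pi^\kappa_{r,c}$, which is $r$ in the $(2,1)$ entry and zero elsewhere. A trivialization of the pair induces fibrewise isomorphisms of the torus fibres, and by Lemma~\ref{lem:fibrewise-affine-form} these are affine with a flat integral linear part. To first order the linear part is $I$, so the marked periods could change only by left multiplication, $\dot\Pi=\dot R\,\Pi$. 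Comparing the last two columns forces $\dot R=0$, and then the $(2,1)$ entry $r\neq0$ gives a contradiction. The point is that $\partial_c\Pi$ is not of the form $\dot R\,\Pi$ because the last two columns of $\Pi$ are fixed. Over a small disc this is just the statement that the fibrewise Kodaira--Spencer class, the image of $\partial_c\Pi$ in $H^1(F,T_F)\cong\operatorname{Hom}(\overline{H^0(\Omega^1)},\C^2)$, is nonzero. So the pair class restricted to a smooth fibre is already nonzero.

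For the absolute class, I would apply Lemma~\ref{lem:forgetful-KS}. Its hypotheses hold here. The map $f^\kappa_{r,c}$ is proper with connected fibres. There are critical points over the three special values: the double locus of the reduced toroidal fibre, and any point of the multiple fibres, since there $t=s^{m}$ with $m\ge2$. As noted just before that lemma, the simultaneous family is locally trivial as a map near these points, because the local equations $t_j=s_j^{m_j}$ and $t_0=z_0\cdots z_k$ do not depend on $c$. The lemma then says that the map forgetting the map and keeping the source deformation is injective on such equisingular first-order deformations. Hence nonvanishing of the pair class gives nonvanishing of the absolute class.

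The main obstacle is making the pair computation rigorous globally rather than only on a smooth fibre. A first-order trivialization of the pair is a global object, and I must check that it restricts over the smooth locus to a fibrewise affine infinitesimal automorphism with flat linear part. The worry is that an infinitesimal vertical vector field could contribute a nonconstant linear term. I would handle this using Corollary~\ref{cor:connected-automorphisms} and the translation sheaf analysis. Vertical first-order automorphisms are sections of $\mathcal V_\kappa$, which act by translations and do not alter periods. The linear part is discrete and hence infinitesimally trivial. So the only candidate for $\dot\Pi$ is $0$, which contradicts $\partial_c\Pi\neq0$ modulo $\dot R\,\Pi$.
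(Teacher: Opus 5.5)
Your proposal is correct and follows the paper's proof essentially step for step. You use Ehresmann via Proposition~\ref{prop:simultaneous-parameter-family} together with the translation classification of Proposition~\ref{prop:parameter-translations} for the moduli and fundamental-domain statements. For the pair class you use the observation that $\partial_c\Pi^\kappa_{r,c}$ is nonzero but has vanishing last two columns, so it cannot be of the form $\dot R\,\Pi$. For the absolute class you use Lemma~\ref{lem:forgetful-KS} with the $c$-independent local equations $t_j=s_j^{m_j}$ and $t_0=z_0\cdots z_k$.

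The global worry in your last paragraph is not a real obstacle. A first-order trivialization of the pair over the fixed base restricts functorially to a trivialization of the first-order deformation of each smooth fibre. The fibrewise period computation therefore already suffices, and there is no need to invoke Corollary~\ref{cor:connected-automorphisms} or Lemma~\ref{lem:fibrewise-affine-form}, which concern actual biholomorphisms rather than infinitesimal trivializations.
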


\begin{proof}
Proposition~\ref{prop:simultaneous-parameter-family} fixes the underlying
oriented diffeomorphism type, while
Proposition~\ref{prop:parameter-translations} identifies the isomorphism
relation on the entire half-plane with translation by $(6/r)\mathbb Z$.
A half-open vertical strip of width $6/r$ is therefore a fundamental
domain and contains an uncountable pairwise distinct family.

At a fixed smooth base point the derivative of the marked period matrix is
\[
 \frac{d}{dc}\Pi^\kappa_{r,c}
 =r\begin{pmatrix}0&0&0&0\\1&0&0&0\end{pmatrix}.
\]
A tangent vector produced by changing the complex basis of $\mathbb C^2$
has the form $A\Pi^\kappa_{r,c}$.  Its last two columns are the columns of
$A$, so it cannot equal the displayed derivative.  Hence the family of
pairs has nonzero Kodaira--Spencer class.  At each finite special
value the local equation $t_j=s_j^{m_j}$ supplies critical points, and
at the cusp any double or triple point is critical.  The simultaneous
root and toroidal charts make the family locally trivial as a map near
these points.  Thus Lemma~\ref{lem:forgetful-KS} applies to its
first-order variation and shows that the absolute Kodaira--Spencer class
is nonzero at every admissible parameter.
\end{proof}

\section{Topology of the compact threefolds}\label{sec:topology}

Fix an admissible affine parameter $c$.  Throughout this section we write
$Y_r=Y_{r,c}$ and $Z_r=Z_{r,c}$; Proposition~\ref{prop:simultaneous-parameter-family}
shows that the groups computed below are independent of $c$.  Put
$X_r^B=Y_r$ and $X_r^C=Z_r$.

Van Kampen computes the fundamental group from the three boundary
fillings.  The normalization complex gives the integral cohomology of
the cusp; two successive Mayer--Vietoris attachments then compute the
integral homology of the total space.  The geometric bases and orientation
conventions are recorded in Appendix~\ref{app:integral-mv-ledger}.

\subsection{Coinvariants and fundamental group}
For either type, the matrices in Appendix~\ref{app:lift-calculations}
give
\[
 \Lambda/\bigl((A_{1,r}^\kappa-I)\Lambda+
 (A_{2,r}^\kappa-I)\Lambda\bigr)\simeq\Z,
\]
with quotient map the primitive invariant functional $\chi$.

\begin{lemma}[Invariant closure of the vanishing lattice]\label{lem:normal-closure}
For $\kappa=B,C$, the smallest subgroup containing
$\Lambda_{\mathrm{van}}=\Z\langle w,\delta\rangle$ and invariant under
$A_{1,r}^\kappa,A_{2,r}^\kappa$ is
$\ker\chi=\Z\langle u,w,\delta\rangle$.
\end{lemma}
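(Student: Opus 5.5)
The plan is to prove two inclusions: first that $\ker\chi$ is an $A^\kappa_{1,r},A^\kappa_{2,r}$-invariant subgroup containing $\Lambda_{\mathrm{van}}$, and then that every invariant subgroup containing $\Lambda_{\mathrm{van}}$ already contains $u$. Together these show that the invariant closure equals $\ker\chi=\Z\langle u,w,\delta\rangle$.

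For the first inclusion I will use the invariance of $\chi$ recorded earlier. The coinvariant computation just above identifies the quotient $\Lambda/\bigl((A_{1,r}^\kappa-I)\Lambda+(A_{2,r}^\kappa-I)\Lambda\bigr)$ with $\Z$ via $\chi$. Hence $\chi$ vanishes on each $(A_j-I)\Lambda$, that is, $\chi\circ A_j=\chi$. It follows that $\ker\chi$ is stable under $A_j$, and also under $A_j^{-1}$, since $A_j$ has finite order. Since $\chi(w)=\chi(\delta)=0$, the subgroup $\ker\chi$ contains $\Lambda_{\mathrm{van}}$. Therefore the smallest invariant subgroup $\mathcal S$ containing $\Lambda_{\mathrm{van}}$ satisfies $\mathcal S\subseteq\ker\chi$.

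For the reverse inclusion it suffices to exhibit $u\in\mathcal S$. I will use the common order-six matrix $A_{2,r}^\kappa=A_2=((T_2)^{-1})^{\mathsf T}$, which is the same for both types. The matrix $T_2$ is block diagonal: it is the identity on $\gamma,\delta$, and on $(u,w)$ it is the block
\[
 M=\begin{pmatrix}0&-1\\1&1\end{pmatrix}.
\]
Consequently $A_2$ is also block diagonal, with $(u,w)$-block
\[
 (M^{-1})^{\mathsf T}=\begin{pmatrix}1&-1\\1&0\end{pmatrix}.
\]
Reading off the second column gives $A_2w=-u$, with no $\gamma$ or $\delta$ component. Since $w\in\Lambda_{\mathrm{van}}\subseteq\mathcal S$ and $\mathcal S$ is $A_2$-invariant, we get $u\in\mathcal S$. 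Hence $\Z\langle u,w,\delta\rangle\subseteq\mathcal S$, and the two subgroups coincide.

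The only step needing care is sign and transposition bookkeeping for $A_2$ under the contragredient convention. The argument is, however, robust to these conventions. The $(u,w)$-block of any of $T_2$, $T_2^{-1}$, $A_2$, $A_2^{-1}$ is an order-six element of $SL_2(\Z)$ without fixed vectors, and the relevant entry (the $u$-coefficient of the image of $w$, or conversely) is $\pm1$ in each case, because each of these blocks has off-diagonal entries $\pm1$. So $u$ lies in the closure regardless of which of these matrices is used. The first matrix $A^\kappa_{1,r}$ is not needed for this inclusion, which explains why the statement is uniform in $\kappa$ and $r$.
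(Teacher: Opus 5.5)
Your proof is correct and follows the paper's argument. The paper also uses invariance of $\chi$ to get one inclusion, and then applies a single generator to $w$ to produce $u$. The only difference is that the paper uses the first generators ($A^B_{1,r}w=u-\delta$ and $A^C_{1,r}w=u-w-\delta$), whereas you use the common order-six matrix ($A_{2,r}w=-u$), which handles both types at once.
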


\begin{proof}
The functional $\chi$ is invariant.  Conversely,
$A^B_{1,r}w=u-\delta$ and $A^C_{1,r}w=u-w-\delta$.
Thus the invariant closure of $w,\delta$ also contains $u$.
\end{proof}

\begin{theorem}[Fundamental group of the two completed families]
\label{thm:van-kampen-reduction}
Let $\rho_j$ be the zero-section lifts of the clockwise meridians
with the left-to-right path product of
\eqref{eq:path-deck-transport-product}.  Then
\[
 \pi_1(J_r^\kappa)=\Lambda\rtimes F(\rho_1,\rho_2),\qquad
 \rho_jt_\lambda\rho_j^{-1}=t_{\mathsf D_{j,r}^\kappa\lambda},\qquad
 \rho_0=(\rho_1\rho_2)^{-1},
\]
where $\mathsf D_{j,r}^\kappa=A_{j,r}^\kappa$ is the conjugation
matrix and $\mathsf P_{j,r}^\kappa=(\mathsf D_{j,r}^\kappa)^{-1}$
is the forward return.  In particular, the conjugation matrix at the
cusp is $(\mathsf D_1\mathsf D_2)^{-1}=\mathsf D_0$, consistently
with Proposition~\ref{prop:cusp-pi1}.
Let $v_j$ be the invariant vectors of the finite fillings.  Suppose the
cusp collar identifies its meridian with $t_{\xi_0}\rho_0$, where
$\xi_0\in\Lambda$ is its winding class.  Put
$\ell_j=\chi(v_j)$ and $\ell_0=\chi(\xi_0)$.
Then
\begin{equation}\label{eq:unified-pi1}
 \pi_1(X_r^\kappa)=
 \left\langle z,x,y\ \middle|\ z\text{ central},\
 xy=z^{\ell_0},\ x^{m_1}=z^{\ell_1},\ y^{m_2}=z^{\ell_2}
 \right\rangle,
\end{equation}
where $(m_1,m_2)=(4,6)$ for Type B and $(3,6)$ for Type C.
If $\gcd(m_1,\ell_1)=\gcd(m_2,\ell_2)=1$, this group is cyclic of order
\[
 |p|,\qquad
 p=m_1m_2\ell_0-m_2\ell_1-m_1\ell_2.
\]
For the canonical choices $(\ell_0,\ell_1,\ell_2)=(0,1,-1)$,
\[
 \pi_1(Y_r)\simeq\Z/2,\qquad \pi_1(Z_r)\simeq\Z/3.
\]
\end{theorem}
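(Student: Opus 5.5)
The plan is a van Kampen computation along the gluing of Lemma~\ref{lem:analytic-gluing}, followed by an abelianization argument.

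\textbf{Step 1: the open family.} The restriction of $J_r^\kappa$ to the pair of pants $B_\kappa^\circ$ is a torus bundle over a space homotopy equivalent to a wedge of two circles. Its fibre is $K(\Lambda,1)$, and the zero section splits the projection. Hence
\[
\pi_1(J_r^\kappa)=\Lambda\rtimes F(\rho_1,\rho_2),
\]
with the conjugation action read off from Proposition~\ref{prop:finite-filling-pi1}, namely $\rho_jt_\lambda\rho_j^{-1}=t_{\mathsf D_j\lambda}$. The cusp meridian $\rho_0=(\rho_1\rho_2)^{-1}$ then conjugates by $(\mathsf D_1\mathsf D_2)^{-1}=\mathsf D_0$, matching \eqref{eq:clockwise-cusp-transport}.

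\textbf{Step 2: the three fillings.} Attach the fillings one at a time. By Proposition~\ref{prop:finite-filling-pi1}, filling a finite point $j$ adds the relation $\rho_j^{m_j}=t_{v_j}$ and does not enlarge the group: $\pi_1(N_j)$ is the quotient of $\pi_1(M_j)$, because $g_j$ is the image of $a_j$. By Proposition~\ref{prop:cusp-pi1}, filling the cusp kills $\Lambda_{\mathrm{van}}$ together with the collar image of its meridian, $t_{\xi_0}\rho_0$. The resulting presentation is
\[
\Bigl\langle \Lambda,\rho_1,\rho_2 \ \Bigm|\ \text{conjugation relations},\ \rho_j^{m_j}=t_{v_j},\ t_\lambda=1\ (\lambda\in\Lambda_{\mathrm{van}}),\ \rho_1\rho_2=t_{\xi_0}\Bigr\rangle,
\]
where the last relation is rewritten using the normalization convention for $\xi_0$.

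\textbf{Step 3: reduce to the presentation \eqref{eq:unified-pi1}.} Killing $\Lambda_{\mathrm{van}}$ also kills its normal closure. By Lemma~\ref{lem:normal-closure}, this normal closure contains $\ker\chi$, so the image of $\Lambda$ is cyclic, generated by $z=t_\gamma$. Since $\chi$ is invariant, conjugation by $\rho_j$ fixes $\gamma$ modulo $\ker\chi$, so $z$ is central. Every $t_\lambda$ becomes $z^{\chi(\lambda)}$. Setting $x=\rho_1$ and $y=\rho_2$ gives exactly \eqref{eq:unified-pi1} with $\ell_j=\chi(v_j)$ and $\ell_0=\chi(\xi_0)$. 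Two points need care here:
\begin{itemize}
\item the sign convention for $\xi_0$ against $\rho_0=(\rho_1\rho_2)^{-1}$;
\item the check that the cusp filling imposes no further relations, which holds because $\pi_1$ of the cusp neighbourhood is $\overline\Lambda$ and the boundary map is surjective.
\end{itemize}

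\textbf{Step 4: finiteness and order.} The group $G$ is a central extension of the orbifold group $\langle x,y\mid x^{m_1},y^{m_2},xy\rangle$. With coprimality $\gcd(m_j,\ell_j)=1$, that quotient collapses. Indeed, $xy=z^{\ell_0}$ is central, so $y$ commutes with $x$. Then $G$ is abelian, generated by $x$ and $z$ with $y=x^{-1}z^{\ell_0}$. Its relation matrix in $(x,z)$ is
\[
\begin{pmatrix} m_1 & -\ell_1\\ -m_2 & m_2\ell_0-\ell_2\end{pmatrix}.
\]
Its determinant is $m_1m_2\ell_0-m_1\ell_2-m_2\ell_1=p$. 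The group is finite of order $|p|$ when $p\neq0$. It is cyclic because the first row's gcd is $1$, so the first Smith invariant is $1$. For Type B the canonical values give $p=0-6+4=-2$, and for Type C they give $p=0-6+3=-3$.

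The main obstacle is verifying the canonical choices $(\ell_0,\ell_1,\ell_2)=(0,1,-1)$. The values $\ell_1=\chi(v_1)=1$ and $\ell_2=\chi(v_2)=-1$ are immediate from the filling vectors $v_{1,r},v_{2,r}$. The value $\ell_0=0$ needs the cusp collar \eqref{eq:global-cusp-collar}: exponentiation carries the zero section to the extended section $(1,1)$ of Proposition~\ref{prop:cusp-pi1}. I would follow this section around the collar meridian and check that its winding $\xi_0$ lies in $\Lambda_{\mathrm{van}}\subset\ker\chi$. The key input is that the twist $C_r^\kappa$ only contributes to the $(w,\delta)$ coordinates.
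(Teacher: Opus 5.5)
Your argument is the paper's: van Kampen with the three completely specified boundary kernels (Propositions~\ref{prop:finite-filling-pi1} and~\ref{prop:cusp-pi1}), then Lemma~\ref{lem:normal-closure} to collapse $\Lambda$ to $\langle z\rangle$ with $t_\lambda=z^{\chi(\lambda)}$, then abelianization forced by the central element $xy=z^{\ell_0}$. You eliminate $y=x^{-1}z^{\ell_0}$ and use a $2\times2$ relation matrix of determinant $p$; the paper keeps a $3\times3$ matrix with $\det R=-p$ and minors $m_1$, $m_1\ell_0-\ell_1$. The two are equivalent bookkeeping, and your cyclicity argument from $\gcd(m_1,\ell_1)=1$ is correct.

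There is one omission. You conclude ``finite of order $|p|$ when $p\neq0$''. The statement, however, claims that the coprimality hypotheses alone give a cyclic group of order $|p|$, and if $p=0$ your cokernel would be $\Z$. You must show that the hypotheses force $p\neq0$, which the paper does by congruences:
\begin{itemize}
\item In Type B, $\ell_1$ odd already gives $p=24\ell_0-6\ell_1-4\ell_2\equiv2\pmod4$; adding $3\nmid\ell_2$ gives $p\equiv\pm2\pmod{12}$.
\item In Type C, $\ell_2$ odd gives $p=18\ell_0-6\ell_1-3\ell_2\equiv3\pmod6$.
\end{itemize}

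Two smaller points. First, the opening sentence of your Step~4 misattributes the collapse to coprimality. The group is abelian for every choice of $\ell_j$, because $xy$ is central; coprimality is needed only for cyclicity and for $p\neq0$. Second, your check of $\ell_0=0$ goes beyond the paper, whose proof simply substitutes the canonical values, and it can be sharpened. The collar \eqref{eq:global-cusp-collar} sends $u=0$ exactly to $x=(1,1)$. The zero section is global over the pair of pants, since the automorphy factors are linear. Hence the zero-section meridian $\rho_0$ is carried to the extended-section meridian, so $\xi_0=0$; the remark about the twist $C_r^\kappa$ is not needed.
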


\begin{proof}
Proposition~\ref{prop:finite-filling-pi1} gives the full kernel at
each finite filling: attaching it imposes
$\rho_j^{m_j}=t_{v_j}$.
Proposition~\ref{prop:cusp-pi1} gives the full cusp kernel:
attaching it kills $\Lambda_{\mathrm{van}}$ and
$t_{\xi_0}\rho_0$, or equivalently imposes
$\rho_1\rho_2=t_{\xi_0}$.
By Lemma~\ref{lem:normal-closure}, the normal closure of the vanishing
lattice kills $\ker\chi$.  The remaining fibre group is
$\Z\langle z\rangle$, with $t_\lambda=z^{\chi(\lambda)}$;
both finite monodromies act trivially on it.  Van Kampen gives
\eqref{eq:unified-pi1}, with no further relations because the three
boundary kernels have been specified completely.

The relation $xy=z^{\ell_0}$ makes the group abelian.  Its relation
matrix is
\[
 R=\begin{pmatrix}
 -\ell_1&m_1&0\\-\ell_2&0&m_2\\-\ell_0&1&1
 \end{pmatrix},\qquad \det R=-p.
\]
Its entries have greatest common divisor one.  Its $2\times2$ minors
include $m_1$ and $m_1\ell_0-\ell_1$, which are coprime under the
stated hypothesis.  Hence the first two Smith invariants are one.
For Type B the coprimality conditions give
$p\equiv\pm2\pmod{12}$; for Type C they give
$p\equiv3\pmod6$.  In particular the determinant is nonzero.
The canonical choices give $p=-2$ in Type B and $p=-3$ in Type C.
\end{proof}

\begin{corollary}[Rank-one Seifert relation with several finite points]
If the same rank-one coinvariant reduction has finite fillings of multiplicities $m_1,\ldots,m_n$ and projected translation classes $\ell_1,\ldots,\ell_n$, then the abelianized attaching group is the cokernel of
\[
 \begin{pmatrix}
 -\ell_1&m_1&0&\cdots&0\\
 -\ell_2&0&m_2&\cdots&0\\
 \vdots&\vdots&&\ddots&\vdots\\
 -\ell_n&0&0&\cdots&m_n\\
 -\ell_0&1&1&\cdots&1
 \end{pmatrix}.
\]
If this matrix has full rank, its order is
\[
 \left|\left(\prod_{i=1}^n m_i\right)\ell_0
 -\sum_{i=1}^n\ell_i\prod_{j\neq i}m_j\right|.
\]
This is the abelianized analogue of the preceding two-filling presentation.
\end{corollary}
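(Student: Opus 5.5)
The plan is to repeat the van Kampen reduction of Theorem~\ref{thm:van-kampen-reduction}, now with $n$ finite fillings instead of two, and then read the order off a determinant.

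\emph{The presentation.} On the open part over $\mathbb P^1$ minus $n+1$ points, the fundamental group is $\Lambda\rtimes F(\rho_1,\ldots,\rho_n)$, with the cusp meridian $\rho_0=(\rho_1\cdots\rho_n)^{-1}$. The fillings contribute the following relations.
\begin{enumerate}[label=\textup{(\alph*)}]
\item By Proposition~\ref{prop:finite-filling-pi1}, each finite filling imposes $\rho_i^{m_i}=t_{v_i}$.
\item By Proposition~\ref{prop:cusp-pi1}, the cusp filling kills $\Lambda_{\mathrm{van}}$ and imposes $\rho_1\cdots\rho_n=t_{\xi_0}$.
\item The hypothesis of a rank-one coinvariant reduction plays the role of Lemma~\ref{lem:normal-closure}: the normal closure kills $\ker\chi$.
\end{enumerate}
What survives of the fibre group is $\Z\langle z\rangle$, with $t_\lambda=z^{\chi(\lambda)}$, and the monodromies act trivially on it.

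Writing $x_i$ for the image of $\rho_i$, the group is therefore generated by $z,x_1,\ldots,x_n$, with $z$ central, subject to
\[
 x_i^{m_i}=z^{\ell_i}\quad(1\leq i\leq n),\qquad x_1\cdots x_n=z^{\ell_0}.
\]
After abelianizing, I write the group additively in the coordinates $(z,x_1,\ldots,x_n)$. The relation rows are then $(-\ell_i,0,\ldots,m_i,\ldots,0)$ and $(-\ell_0,1,\ldots,1)$. These are exactly the rows of the displayed matrix, so the abelianized attaching group is its cokernel $\Z^{n+1}/\operatorname{im}$. This step needs only bookkeeping of signs, which I would align with the row convention already used for $R$ in Theorem~\ref{thm:van-kampen-reduction}.

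\emph{The order.} When the matrix has full rank, its cokernel is finite of order $|\det|$, by Smith normal form. To compute the determinant I would take a Schur complement over $\Q$ with respect to the invertible diagonal block $\diag(m_1,\ldots,m_n)$. Equivalently, subtract $\frac1{m_i}$ times row $i$ from the last row for every $i$. This clears the ones in the last row and leaves the last row as
\[
 \Bigl(-\ell_0+\sum_i\ell_i/m_i,\,0,\ldots,0\Bigr).
\]
Expanding along the first column then gives
\[
 \det=\pm\prod_i m_i\Bigl(\ell_0-\sum_i\frac{\ell_i}{m_i}\Bigr)
 =\pm\Bigl(\prod_i m_i\,\ell_0-\sum_i\ell_i\prod_{j\neq i}m_j\Bigr).
\]
Its absolute value is the stated order.

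As a sanity check, the case $n=2$ gives back $\det R=-p$ from Theorem~\ref{thm:van-kampen-reduction}.

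The only delicate step is the first one: checking that the cusp relation really is the product of all $n$ meridians with winding $\xi_0$, with no extra relations. I expect this to follow verbatim from the complete descriptions of the boundary kernels in Propositions~\ref{prop:finite-filling-pi1} and \ref{prop:cusp-pi1}.
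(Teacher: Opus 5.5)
Your proposal is correct and takes the same route the paper intends. The paper gives no separate proof and presents the corollary as the $n$-filling analogue of Theorem~\ref{thm:van-kampen-reduction}; your argument (van Kampen presentation, abelianization, and Schur-complement evaluation of the determinant, which recovers $\det R=-p$ when $n=2$) is exactly that argument. You were also right to claim only the abelianization: for $n\geq3$ the relation $x_1\cdots x_n=z^{\ell_0}$ no longer forces the group itself to be abelian.
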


\subsection{Euler characteristic}

Every smooth torus fibre and every free finite quotient of a torus has Euler characteristic zero.  Hence the Euler number is concentrated at the cusp.

\begin{proposition}
For the two geometric families,
\[
 e(Y_r)=e(W_r)=2r,
 \qquad
 e(Z_r)=e(W^C_r)=2r.
\]
Thus members with distinct indices $r$ are pairwise non-homeomorphic
within either family.  Moreover no $Y_r$ is homeomorphic to any $Z_s$,
because their fundamental groups are $\Z/2$ and $\Z/3$.
\end{proposition}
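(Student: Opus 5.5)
The plan is to compute the Euler characteristic by cutting $X_r^\kappa$ along the preimages of three small closed discs around the special values, and to read off each piece with the standard additivity tools. Write $X=X_r^\kappa$, $f=f_r^\kappa$, and $B^-=B_\kappa\setminus\bigcup_i D^-_{\kappa,i}$ (open discs removed). Then $X$ is the union of $f^{-1}(B^-)$ and the three closed tubes $f^{-1}(\overline{D^-_{\kappa,i}})$. These pieces meet along the boundary five-manifolds, which are torus bundles over circles. Mayer--Vietoris, or simply additivity of $e$ for compact manifolds glued along closed boundary components, gives
\[
 e(X)=e\bigl(f^{-1}(B^-)\bigr)+\sum_{i=0}^2 e\bigl(N_{\kappa,i,r}\bigr)-\sum_{i=0}^2 e(M_i).
\]
Every boundary $M_i$ is a fibre bundle over $S^1$, so $e(M_i)=0$. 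The piece $f^{-1}(B^-)$ is a $T^4$-bundle over a compact surface with boundary, so by multiplicativity of $e$ in fibre bundles its Euler number is $e(T^4)\,e(B^-)=0$.

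Next I treat the finite tubes. Each $N_{\kappa,j,r}$, $j=1,2$, is by Lemma~\ref{lem:log-transform} the quotient of a torus family over the root disc by a free $C_{m_j}$-action. The root family is diffeomorphic to $\Delta\times T^4$, so its Euler number is $0$. Since the action is free, $e(N_{\kappa,j,r})=0/m_j=0$. Alternatively, $N_{\kappa,j,r}$ deformation retracts onto its reduced central fibre, a bielliptic surface. That surface is a free quotient of a torus, so its Euler number is again $0$.

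The cusp tube contributes everything. The closed tube $N^\kappa_{0,r}(\rho)$ deformation retracts onto the central fibre $W_r^\kappa$. I would obtain this retraction from the contracting positive-real cocharacter action used in Lemma~\ref{lem:toric-simply-connected}, pushed down through the deck action (it commutes with the torus translations), or else appeal to the standard retraction of a proper map onto its central fibre. So $e(N^\kappa_{0,r})=e(W^\kappa_r)$. Theorem~\ref{thm:master-cusp}, together with Corollary~\ref{cor:typeB-cusp-ladder} and $\operatorname{SNF}(B^\kappa_{0,r})=\operatorname{diag}(1,r)$ so that $d=r$, gives $e(W_r^\kappa)=6r-2\cdot3r+2r=2r$. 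This is the inclusion--exclusion over the $r$ copies of $dP_6$, the $3r$ double curves $\mathbb P^1$ and the $2r$ triple points. Assembling the pieces gives $e(Y_r)=e(Z_r)=2r$.

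For the consequences: the Euler characteristic is a homotopy invariant, so $r\neq r'$ forces the manifolds to be non-homeomorphic within each family. Theorem~\ref{thm:van-kampen-reduction} gives $\pi_1(Y_r)\simeq\Z/2$ and $\pi_1(Z_s)\simeq\Z/3$, which separates the two families. The only step requiring care is the retraction of the cusp tube onto $W_r^\kappa$ compatibly with the quotient. If the equivariant homotopy is awkward, I would instead compute $e$ of the tube as $e$ of the central fibre plus $e(\text{tube}\setminus W_r^\kappa)$, which is $0$ because that complement is a $T^4$-bundle over a punctured disc. This uses additivity of compactly supported Euler characteristics on the complex analytic decomposition, and avoids the retraction.
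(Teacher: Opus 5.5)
Your argument is correct and is essentially the paper's: additivity of the Euler characteristic kills the smooth torus pieces, the boundary torus bundles, and the finite tubes (free quotients of $\overline\Delta\times T^4$), so only the cusp fibre $W_r^\kappa$ contributes, with $e(W_r^\kappa)=2r$ from the inclusion--exclusion in Theorem~\ref{thm:master-cusp}; your compactly supported fallback is exactly the paper's one-line proof. One caution: the positive-real cocharacter $t_c\mapsto at_c$ of Lemma~\ref{lem:toric-simply-connected} does not descend to the quotient. It commutes with constant torus translations, but $\widehat\Psi_\nu$ multiplies by $e^{2\pi iD(t_c)\nu}t_c^\nu$, and conjugating by the cocharacter changes this factor by $a^{-\nu}e^{2\pi i(D(t_c/a)-D(t_c))\nu}$. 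For the retraction you should therefore cite Lemma~\ref{lem:cusp-neighbourhood-collapse}, or use the $\chi_c$ route, which needs no retraction.
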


\subsection{The toroidal neighbourhood and controlled collapse}

The closed cusp neighbourhoods $N^\kappa_{0,r}(\rho)$ and their boundaries
$M^\kappa_{0,r}(\rho)$ were fixed in
\eqref{eq:closed-cusp-tube}.  We compare the level tube with its central fibre by first verifying the
Whitney--Thom hypotheses on the quotient model and then applying a
neighbourhood-retraction theorem.

\begin{lemma}[Whitney stratification and the Thom condition]
\label{lem:cusp-thom-stratification}
For $\kappa=B,C$, choose $0<\rho_0<\epsilon$ and put
\[
 \mathcal N^\kappa_{0,1}(<\rho_0)
 =(f_1^\kappa)^{-1}(|t_c|<\rho_0).
\]
Stratify this open tube by $t_c\neq0$ and the connected components of
the smooth-branch, double-branch, and triple-branch strata of
$W^\kappa_1$.  This is a Whitney \textup{(B)} stratification, and
\[
 f:\mathcal N^\kappa_{0,1}(<\rho_0)\longrightarrow\Delta_{\rho_0}
\]
is a proper Thom stratified map for the stratification
$\Delta_{\rho_0}^*\sqcup\{0\}$ of the open disc.  The same statement
holds on every finite cyclic cover
$\mathcal N^\kappa_{0,r}(<\rho_0)$.
The closed tubes $N^\kappa_{0,r}(\rho)$, $\rho<\rho_0$, will be used
as compact manifolds with boundary.
\end{lemma}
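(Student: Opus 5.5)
The plan is to reduce everything to the local model $Y_{<\epsilon}\to\Delta_\epsilon$ on the periodic toric cover. The quotient map $Y_{<\epsilon}\to\mathcal N^\kappa_{0,r}(<\epsilon)$ is a local biholomorphism commuting with $t_c$ by Theorem~\ref{thm:master-cusp}. Whitney (B) and the Thom $a_f$ condition are local, analytic, and invariant under local biholomorphisms over the base. So it suffices to check them at points of a single toric chart $U_\sigma\simeq\C^3$ with $t_c=z_0z_1z_2$, together with the lower-dimensional charts where $t_c=z_0z_1$ or $t_c=z_0$. The same reduction covers every finite cyclic cover at once, by Corollary~\ref{cor:typeB-cusp-ladder}.

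In a maximal chart the strata are:
\begin{itemize}
\item the open set $\{z_0z_1z_2\neq0\}$;
\item the three coordinate planes with the axes removed;
\item the three coordinate axes minus the origin;
\item the origin.
\end{itemize}
The stratification by the distinct branch strata of the central fibre is the restriction of these chart strata. Components of the branch strata of $W^\kappa_r$ are unions of images of such chart strata, and connected components are open and closed in each stratum, so refining to components does not affect local conditions. These are the orbit strata of the torus $(\C^*)^3$ acting linearly on $\C^3$. The frontier condition holds, and Whitney (B) follows because the stratification is a product: near a point of the stratum $\{z_1=z_2=0,\ z_0\neq0\}$, for example, the pair is locally $\C^*\times(\C^2,\text{coordinate cross})$. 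For the normal-crossing arrangement the standard check is direct. Limits of tangent planes and secant lines from a coordinate subspace stratum to a smaller one contain the smaller coordinate subspace, and the (B) secants are handled coordinatewise since all strata are linear. So I will cite the fact that a linear coordinate hyperplane arrangement stratified by intersections is Whitney, rather than recompute it.

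For the Thom condition, $f=t_c$ restricted to each stratum in the central fibre is constant (zero). The only nontrivial case is the top stratum $X=\{t_c\neq0\}$ approaching a central stratum $S$. The relevant tangent spaces are $\ker d(f|_X)=\ker d(z_0z_1z_2)$. At points of $X$, $d\log t_c=\sum dz_i/z_i$. Take a sequence $p_n\to p\in S$, say $S=\{z_1=z_2=0\}$ near $z_0\neq0$. Then $\ker df_{p_n}$ consists of vectors $v$ with $\sum v_i/z_i=0$. The vector $e_0$, which spans $TS$, satisfies $df(e_0)=z_1z_2\to0$. More precisely, $e_0$ can be corrected by $-(z_1z_2/\partial_1 f)e_1=-(z_2/z_2\cdot\ldots)$, which is a correction of size $O(|z_1|)\to0$, so $e_0$ lies in every limit of $\ker df_{p_n}$. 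In general, for each coordinate $i$ with $z_i(p)\neq0$, the vector $e_i-(z_j/z_i)e_j$ lies in $\ker df$ for any $j$ with $z_j\to0$, and it tends to $e_i$. Hence $T_pS\subset\lim\ker df_{p_n}$. This is exactly the $a_f$ condition. For the strata of the central fibre approaching smaller strata, $f$ is identically zero, so $a_f$ reduces to Whitney (a), which is already known.

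Properness of $f$ on the open tube follows from Theorem~\ref{thm:master-cusp} by restriction over $\Delta_{\rho_0}$. The stratification of the disc is $\Delta^*\sqcup\{0\}$, and $f$ maps strata submersively onto strata: $df\neq0$ on $X$, since $d\log t_c\neq0$, and central strata map to the point. The main obstacle I expect is bookkeeping rather than analysis: ensuring that the strata are global connected submanifolds of the quotient, because self-intersection of non-simple branches could glue chart strata. I would handle this by stratifying first on the cover $Y$, where each toric orbit closure is embedded and the orbit stratification is deck-invariant. The quotient stratification is then the image, and its strata are unions of the connected images of orbit types.
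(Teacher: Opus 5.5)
Your proposal is correct and follows the paper's proof essentially step for step. You reduce to the toric charts of the periodic cover, where $t_c=z_0\cdots z_k$, and cite Whitney (B) for the coordinate-subspace product stratification. You verify $a_f$ by correcting tangent vectors of the stratum using $df/f=\sum dz_j/z_j$; your vectors $e_i-(z_j/z_i)e_j$ are a coordinatewise form of the paper's single corrected vector $v_n$. You then descend through the free, stratum-preserving deck action, and take properness from Theorem~\ref{thm:master-cusp}.

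The only blemish is the garbled intermediate expression for the correction of $e_0$. It should simply read $-(z_1/z_0)e_1$, since $z_1z_2/\partial_1 f=z_1/z_0$. The general formula you give immediately afterwards is correct.
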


\begin{proof}
Work first on the ordered-branch toric cover.  Near a point where
$k+1$ branches meet there are holomorphic coordinates in which
\[
 t_c=z_0\cdots z_k,
 \qquad k=0,1,2,
\]
and the strata are the products of coordinate-hyperplane strata with a
smooth factor.  Coordinate subspace stratifications are Whitney
\textup{(B)}, and this property is preserved under products.

We verify the Thom condition directly.  Let $x_n$ lie in the open
stratum, tend to a point $x$ of a central stratum $S_I$, and suppose that
$\ker(df_{x_n})$ tends to a complex hyperplane $H$.  If
$v\in T_xS_I$, choose $i\in I$ and define $v_n$ by keeping all components
of $v$ except
\[
 (v_n)_i=-z_i(x_n)\sum_{j\ne i}\frac{v_j}{z_j(x_n)}.
\]
For $j\in I\setminus\{i\}$ one has $v_j=0$, so $v_n\to v$; moreover
$df_{x_n}(v_n)=0$ by the logarithmic identity
$df/f=\sum_jdz_j/z_j$.  Hence $T_xS_I\subset H$, which is Thom's
$a_f$ condition.

The unit-twisted toric transition maps preserve $t_c$ and permute the
ordered local branches, so the local stratifications and the Thom
condition glue on the ordered cover.  The deck action is free and acts by
stratified local biholomorphisms.  Whitney \textup{(B)} and the Thom
condition are local properties, and therefore descend to the index-one
quotient and to its finite covers.  Properness over the open disc
follows by restriction from Theorem~\ref{thm:master-cusp}.
\end{proof}

\begin{lemma}[Level-tube collapse compatible with finite covers]
\label{lem:cusp-neighbourhood-collapse}
For each $\kappa=B,C$ and $r\geq1$, after decreasing $\rho$ if necessary,
$W_r^\kappa$ is a strong deformation retract of
$N^\kappa_{0,r}(\rho)$.  These retractions commute with the
cyclic covering maps
$N^\kappa_{0,r}(\rho)\to N^\kappa_{0,1}(\rho)$.
\end{lemma}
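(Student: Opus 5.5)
The plan is to build the retraction once on the index-one tube and then lift it to the covers. By Lemma~\ref{lem:cusp-thom-stratification}, the map $f:\mathcal N^\kappa_{0,1}(<\rho_0)\to\Delta_{\rho_0}$ is a proper Thom stratified map with Whitney \textup{(B)} strata and central fibre $W_1^\kappa$. Thom's first isotopy lemma, in its controlled-vector-field form (Mather's control data), applies to this setting.

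\textbf{Step 1: a controlled radial field on $N^\kappa_{0,1}(\rho)$.} First choose control data (tubular neighbourhoods $T_S$, retractions $\pi_S$, distance functions $\rho_S$) for the central strata. Then lift the radial field $-\,r\partial_r$ on $\Delta^*_{\rho}$ to a controlled stratified vector field $\xi$ on the open stratum. By properness and the Thom $a_f$ condition, its flow exists for all positive time and carries level sets $\{|t_c|=a\}$ to $\{|t_c|=a'\}$ for $a'<a$. Next use the local conic structure theorem (Goresky--MacPherson, or Verdier's version for a Thom map): for small $\rho$ the tube is a stratified mapping cylinder of $\pi:M^\kappa_{0,1}(\rho)\to W_1^\kappa$, the limit of the flow as time tends to infinity. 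Define $H_s$ by flowing up to time $-\log(1-s)$ and set $H_1=\pi$ extended by the identity on $W_1^\kappa$. Continuity of $H$ at $s=1$ and along $W_1^\kappa$ follows from the control conditions $\pi_S\circ\phi_t=\pi_S$ near each stratum, which hold because $\xi$ is controlled. This makes $W_1^\kappa$ a strong deformation retract. If the conic-structure citation feels too heavy, an alternative is to write the retraction explicitly in the toric charts, using the logarithmic radial flow $z_i\mapsto |t_c|^{-\ell_{\sigma,i}}$-type rescalings. The exact tropical identity \eqref{eq:tropical-chart-logarithm} shows these are compatible under the unit-twisted transitions after a partition-of-unity averaging. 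I would keep this as a fallback.

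\textbf{Step 2: equivariance and descent to covers.} Rather than construct a retraction separately for each $r$, lift the index-one homotopy. The covering $p:N^\kappa_{0,r}(\rho)\to N^\kappa_{0,1}(\rho)$ of Corollary~\ref{cor:typeB-cusp-ladder} is finite, and it is \'etale on interiors. It also restricts to a covering of the central fibres $W^\kappa_r\to W^\kappa_1$, since the toric deck action is free on the central fibre by Theorem~\ref{thm:master-cusp}. Apply the homotopy lifting property to $H\circ(p\times\mathrm{id})$ with initial lift the identity. This gives a unique lift $\widetilde H$, which commutes with $p$ by construction and with the deck group $\mathbb Z/r$ by uniqueness of lifts. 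Points of $W^\kappa_r$ stay fixed: $H$ fixes their images, and the constant path is the unique lift. At $s=1$ the image lies in $p^{-1}(W^\kappa_1)=W^\kappa_r$. Hence $\widetilde H$ is a strong deformation retraction compatible with the covers.

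\textbf{Main obstacle.} The hard part is Step 1's continuity at the central fibre: the flow only converges because of control data, and one must also check that the boundary $M^\kappa_{0,1}(\rho)$ is transverse to $\xi$ for small $\rho$. This transversality is immediate since $\xi$ lifts $-r\partial_r$. Decreasing $\rho$ enters only to sit inside the conic neighbourhood of the compact fibre $W^\kappa_1$. The tube is homeomorphic to the closed mapping cylinder of $M^\kappa_{0,1}(\rho)\to W^\kappa_1$, and mapping cylinders retract strongly onto their base. I would cite the conic-structure theorem for proper Thom maps and verify only its hypotheses from Lemma~\ref{lem:cusp-thom-stratification}.
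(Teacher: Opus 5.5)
\textbf{Overall.} Your outline is correct. Step~2 is fine, but Step~1 reaches the strong deformation retraction by a genuinely different route from the paper, and the justification you sketch there is not yet complete.

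\textbf{Step 2.} This matches the paper, which also builds the retraction for $r=1$ and lifts it through $N^\kappa_{0,r}(\rho)\to N^\kappa_{0,1}(\rho)$. Lifting the composite $H\circ(p\times\mathrm{id})$ with initial lift the identity is a clean way to obtain deck-equivariance and the pointwise fixing of $W_r^\kappa$ from uniqueness of lifts.

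\textbf{How the paper does Step 1.} The paper never shows that the level tube itself is a mapping cylinder.
\begin{itemize}
\item It applies the Pflaum--Wilkin neighbourhood retraction theorem (Mather control data, Whitney/Thom hypotheses) to get a strong deformation retraction $H$ of \emph{some} open $U\supset W_1^\kappa$.
\item Properness makes the level tubes cofinal, so $N^\kappa_{0,1}(\rho)\subset U$ after shrinking $\rho$.
\item Compactness gives $H(N^\kappa_{0,1}(\rho)\times[0,1])\subset N^\kappa_{0,1}(R)$ for some $R$.
\item Post-composing with a radial compression $q_{\rho,R}:N^\kappa_{0,1}(R)\to N^\kappa_{0,1}(\rho)$ brings the homotopy back into the tube. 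This map is built from an Ehresmann connection over the annulus $\rho\le|t_c|\le R$ and is the identity on $N^\kappa_{0,1}(\rho)$.
\end{itemize}
Your route asks for more: that $N^\kappa_{0,1}(\rho)$ is itself the mapping cylinder of $M^\kappa_{0,1}(\rho)\to W_1^\kappa$, via the flow of a lift of $-r\partial_r$. This gives a level-preserving retraction and makes the compression step unnecessary. The cost is a harder black box.

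\textbf{Cautions on your Step 1.}
\begin{itemize}
\item The pointwise local conic structure theorem does not by itself put a mapping-cylinder structure on a sublevel set of $|t_c|$.
\item Your continuity claim cannot rest on control alone. Full Mather control $\xi\rho_S=0$ is impossible for a field that strictly decreases $|t_c|$. On a normal slice to a smooth-branch stratum, $t_c$ is a coordinate and the $\rho_S$-level sets are closed curves, so $|t_c|$ cannot decrease monotonically along them.
\item At a triple point $\pi_S$ is constant, so $\pi_S$-control is vacuous there.
\item What actually yields continuity at $W_1^\kappa$ is that $|t_c|$ decreases along the flow. This forces a trajectory starting near a deep stratum into the tubes of higher strata, where $\pi_{S'}$ is frozen, before it can drift far. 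A compactness argument then shows the drift radius tends to zero.
\end{itemize}

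\textbf{How to close the gap.} Either give that argument, or cite a theorem specific to level tubes. Examples are Clemens' retraction for locally normal-crossings degenerations, or the {\L}ojasiewicz-type result for sublevel sets of the real-analytic function $|t_c|^2$. With such a justification your proof is complete. Without it, the paper's compression trick is the cheaper way to pass from an arbitrary neighbourhood retraction to the level tube.
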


\begin{proof}
We first treat $r=1$.  Choose $\rho_0>\rho$ so that the open tube
\[
 X_0=(f_1^\kappa)^{-1}(|t_c|<\rho_0)
\]
is contained in the toroidal model.  By
Lemma~\ref{lem:cusp-thom-stratification}, $X_0$ is a Whitney stratified
space and $W_1^\kappa$ is a closed union of strata.  The neighbourhood
retraction theorem of \cite[Theorem~1.1]{PflaumWilkin}, with trivial
group, ambient manifold $X_0$, and closed stratified subspace
$W_1^\kappa$, applies using the control data of \cite{MatherControl}.
It gives an open neighbourhood
$U\subset X_0$ of $W_1^\kappa$ and a stratified strong deformation
retraction
\[
 H:U\times[0,1]\longrightarrow U
\]
onto $W_1^\kappa$.

The closed level tubes form a neighbourhood basis of the central fibre.
Indeed, if no $N^\kappa_{0,1}(\rho')$ were contained in $U$, a sequence
outside $U$ with $|t_c|\to0$ would have, by properness over a closed
subdisc, a limit point in $W_1^\kappa\subset U$.  Hence, after decreasing
$\rho$, we may assume
$N^\kappa_{0,1}(\rho)\subset U$.
The set
\[
 K=H\bigl(N^\kappa_{0,1}(\rho)\times[0,1]\bigr)
\]
is compact in $X_0$, so it is contained in
$N^\kappa_{0,1}(R)$ for some $\rho<R<\rho_0$.

Over the closed annulus $\rho\le |t_c|\le R$, the map $f_1^\kappa$ is a
proper submersion.  Choose an Ehresmann connection tangent to the extended zero section and
compatible with the fixed homology marking, and use horizontal transport
along radial segments at fixed argument.  This defines a continuous
radial compression
\[
 q_{\rho,R}:N^\kappa_{0,1}(R)\longrightarrow
 N^\kappa_{0,1}(\rho)
\]
which is the identity on $N^\kappa_{0,1}(\rho)$ and preserves the fibre
marking and the zero-section meridian.  The homotopy
\[
 G(x,t)=q_{\rho,R}\bigl(H(x,t)\bigr),
 \qquad x\in N^\kappa_{0,1}(\rho),
\]
is therefore a strong deformation retraction onto $W_1^\kappa$: at
$t=0$ it is the identity, at $t=1$ it takes values in $W_1^\kappa$, and
it fixes $W_1^\kappa$ pointwise.

For general $r$, lift $H$ through the finite covering of toroidal
neighbourhoods and pull back the chosen Ehresmann connection.  The lifted
homotopy fixes $W_r^\kappa$ pointwise by uniqueness of path lifting, and
the lifted radial compression gives the same formula for
$N^\kappa_{0,r}(\rho)$.  The construction is consequently compatible
with all cyclic covering maps.
\end{proof}

\subsection{The normalization complex of the central fibre}

The index-one cusp is globally irreducible and nonnormal: its normalization
is a single copy of $dP_6$ and opposite toric boundary curves are paired.
We form the normalization complex on the periodic toric cover, where
the branches are globally labelled, and then descend it through the
lattice quotient.

Let $\widetilde W_\infty$ be the central fibre of the periodic toric
cover from Section~\ref{sec:cusp}.  Its components $D_v$ are indexed by
$v\in\Z^2$.  Fix the lexicographic order on $\Z^2$, which is preserved
by every lattice translation.  Define the ordered branch spaces by
\[
 W_r^{\langle p\rangle}
 =\left(\coprod_{v_0<\cdots<v_p}
       D_{v_0}\cap\cdots\cap D_{v_p}\right)/\Gamma_r,
 \qquad \Gamma_r=B_{0,r}\Z^2,
\]
where empty intersections are omitted.  Deleting the $i$th vertex gives
face maps $d_i$, since translations preserve both the order and the face
maps.  In particular $W_r^{\langle0\rangle}$ is the normalization of
$W_r$.  Branches, rather than global component names, distinguish the
summands when several branches descend to the same normalization
component.

The deck group has no stabilizer on a nonempty finite ordered simplex.
There are $r$ vertex orbits, $3r$ edge orbits, and $2r$ triangle orbits;
hence
\[
 W_r^{\langle0\rangle}=\coprod_r dP_6,\qquad
 W_r^{\langle1\rangle}=\coprod_{3r}\Pone,\qquad
 W_r^{\langle2\rangle}=\coprod_{2r}\{\mathrm{pt}\}.
\]
The construction applies to both Type B and Type C with their respective
subgroups $\Gamma_r$.  The numbers of strata agree, but their incidence
maps retain the subgroup-dependent vertex labels.

Stalkwise the augmented complex below has the following groups:
\[
\begin{array}{c|cccc}
\text{point type}&\Z_{W_r}&\epsilon_{0*}\Z&\epsilon_{1*}\Z&\epsilon_{2*}\Z\\ \hline
\text{smooth branch}&\Z&\Z&0&0\\
\text{double point}&\Z&\Z^2&\Z&0\\
\text{triple point}&\Z&\Z^3&\Z^3&\Z.
\end{array}
\]
At a double point the maps are $1\mapsto(1,1)$ and
$(a,b)\mapsto b-a$; at a triple point they are the alternating face maps
of the ordered $2$-simplex.  Orienting a double curve in the opposite
direction changes the corresponding target generator by a sign.

\begin{proposition}[Normalization resolution]
\label{prop:branch-normalization-resolution}
Let $\epsilon_p:W_r^{\langle p\rangle}\to W_r$ be the natural maps.  With
incidence differential
\[
 \partial_{\mathrm{br}}
 =\sum_{i=0}^{p+1}(-1)^i d_i^*,
\]
the augmented complex of sheaves
\begin{equation}\label{eq:branch-resolution}
 0\longrightarrow\Z_{W_r}
 \longrightarrow\epsilon_{0*}\Z_{W_r^{\langle0\rangle}}
 \xrightarrow{\partial_{\mathrm{br}}}
 \epsilon_{1*}\Z_{W_r^{\langle1\rangle}}
 \xrightarrow{\partial_{\mathrm{br}}}
 \epsilon_{2*}\Z_{W_r^{\langle2\rangle}}
 \longrightarrow0
\end{equation}
is exact.  It is the descent of the ordinary normalization complex of the
simple-normal-crossings divisor $\widetilde W_\infty$.  In particular it
remains valid when $r=1$, where the three pairs of boundary curves belong
to the same normalization component.

Its hypercohomology computes $H^*(W_r;\Z)$.
\end{proposition}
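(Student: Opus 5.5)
Exactness is local, so I will check it on stalks; the descent statement then comes from pulling back along the covering map. The central fibre $W_r$ is the quotient of $\widetilde W_\infty$ by the free, properly discontinuous action of $\Gamma_r$ (Theorem~\ref{thm:master-cusp}). Every point of $W_r$ therefore has a neighbourhood $V$ whose preimage in $\widetilde W_\infty$ is a disjoint union of copies $\widetilde V$, each mapped homeomorphically onto $V$.

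\textbf{Step 1: comparison on a neighbourhood.} Since $\Gamma_r$ translates vertices and preserves the lexicographic order, the ordered branch spaces over $V$ are identified with the ordered strata of $\widetilde W_\infty$ over one lift $\widetilde V$. Only finitely many components meet $\widetilde V$, because the triangulation is locally finite (Lemma~\ref{lem:periodic-A2-fan}). Under this identification the face maps $d_i$ correspond to the ordinary face maps. So \eqref{eq:branch-resolution} restricted to $V$ is isomorphic to the restriction to $\widetilde V$ of the usual normalization complex of the simple-normal-crossings divisor
\[
 \widetilde W_\infty=\bigcup_v D_v,
\]
ordered lexicographically. This gives the descent assertion. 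It uses only local branch labels, not global component names, so it holds for $r=1$, where several branches lie on one normalization component.

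\textbf{Step 2: stalk exactness upstairs.} In a chart $t_c=z_0\cdots z_k$ with $k+1\le3$ branches through the point, the stalk complex is the augmented simplicial cochain complex of the full simplex on the branches meeting there:
\[
 0\to\Z\to\Z^{k+1}\to\Z^{\binom{k+1}{2}}\to\cdots .
\]
This matches the table above. The complex is exact because a simplex is contractible; equivalently, the cone construction on a fixed vertex gives a contracting homotopy. The sign conventions $1\mapsto(1,1)$ and $(a,b)\mapsto b-a$ are the alternating face maps. A reversed orientation of a double curve changes a generator by a sign, which is an isomorphism of complexes. Exactness of \eqref{eq:branch-resolution} follows.

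\textbf{Step 3: hypercohomology.} Exactness makes $\Z_{W_r}$ quasi-isomorphic to the complex $\epsilon_{\bullet*}\Z$. Each $\epsilon_p$ is finite, hence proper, so $R\epsilon_{p*}=\epsilon_{p*}$ on constant sheaves, and $H^q(W_r,\epsilon_{p*}\Z)=H^q(W_r^{\langle p\rangle};\Z)$. Hence $H^*(W_r;\Z)$ is the hypercohomology, computable by the standard spectral sequence with
\[
 E_1^{p,q}=H^q(W_r^{\langle p\rangle};\Z).
\]

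The main point needing care is Step 1. One has to check that the descent identification really matches the face maps with signs, that is, that translation preserves the vertex order so no orientation twist appears. One also has to check that freeness rules out stabilizers on simplices, so that no branch is identified with another branch through the same point. The translation-invariance of the lexicographic order and freeness of the action settle both.
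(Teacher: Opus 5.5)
Your proposal is correct and follows the paper's proof essentially step for step. You prove stalkwise exactness upstairs as the augmented cochain complex of the local branch simplex, descend through the free $\Gamma_r$-action using translation-invariance of the lexicographic order (so face maps and signs are preserved and the stalk at $x\in W_r$ is the stalk at any lift), and then take derived global sections of the resolution. Your remark that each $\epsilon_p$ is finite, so that $R\epsilon_{p*}\Z=\epsilon_{p*}\Z$ and $E_1^{p,q}=H^q(W_r^{\langle p\rangle};\Z)$, spells out a point the paper leaves implicit.
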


\begin{proof}
Everything is first constructed upstairs.  At a point where exactly
$k+1$ branches meet, the stalk of the augmented ordered-branch complex is
the augmented simplicial cochain complex of a $k$-simplex.  It is exact.
Each analytic toric chart meets only the central divisors corresponding
to its rays; in a maximal chart there are at most three local branches.
Thus the restriction of the augmented branch complex to that chart is
a finite complex, with the stalks just described.  These restrictions
agree on common face charts and define an exact complex of sheaves on
$\widetilde W_\infty$.  The action of
$\Gamma_r$ is free on the set of labelled branches and preserves all face
maps and incidence signs.  To check the quotient complex, choose a lift of
a point $x\in W_r$.  The branches through the lift form the same finite
simplex as the branches through $x$, and the stalk of the quotient complex
is its augmented simplicial cochain complex.  Thus exactness holds
stalkwise downstairs and yields \eqref{eq:branch-resolution}.  This
construction does not require global irreducible-component labels on
$W_r$; the labels live on the ordered normalization fibre products.

Applying derived global sections to this exact resolution gives
$R\Gamma(W_r,\Z)$ and the additive normalization spectral sequence.
\end{proof}

\subsection{Integral cohomology of the cusp fibre}

The ordered-branch strata are
\[
 W_r^{\langle0\rangle}=\coprod_{r}dP_6,
 \qquad
 W_r^{\langle1\rangle}=\coprod_{3r}\Pone,
 \qquad
 W_r^{\langle2\rangle}=\coprod_{2r}\{\mathrm{pt}\},
\]
and the branch nerve is the two-torus with its $A_2$ triangulation.

\begin{theorem}[Cohomology of the toroidal fibre]\label{thm:cusp-cohomology}
For either $\kappa=B,C$, the integral cohomology of $W_r=W_r^\kappa$
is torsion-free and
\[
 H^q(W_r;\Z)\cong
 \begin{cases}
 \Z,&q=0,\\
 \Z^2,&q=1,\\
 \Z^{r+3},&q=2,\\
 \Z^2,&q=3,\\
 \Z^r,&q=4,\\
 0,&\text{otherwise}.
 \end{cases}
\]
\end{theorem}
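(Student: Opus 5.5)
The plan is to compute $H^*(W_r;\Z)$ from the normalization resolution of Proposition~\ref{prop:branch-normalization-resolution}. Applying $R\Gamma$ to \eqref{eq:branch-resolution} and filtering by the column index gives the additive spectral sequence
\[
 E_1^{p,q}=H^q\bigl(W_r^{\langle p\rangle};\Z\bigr)\Longrightarrow H^{p+q}(W_r;\Z),
\]
with $d_1=\partial_{\mathrm{br}}$. Since $dP_6$, $\Pone$ and points have torsion-free cohomology concentrated in even degrees, the only nonzero entries are
\[
 E_1^{\bullet,0}:\ \Z^r\to\Z^{3r}\to\Z^{2r},\qquad
 E_1^{\bullet,2}:\ \Z^{4r}\to\Z^{3r},\qquad
 E_1^{0,4}=\Z^r,
\]
using $H^2(dP_6)=\Z^4$.

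\textbf{Step 1 (bottom row).} The row $q=0$ is the simplicial cochain complex of the branch nerve. This nerve is the $A_2$ triangulation of the torus $\R^2/\Gamma_r$ with $r$ vertices, $3r$ edges and $2r$ triangles. So $E_2^{\bullet,0}=H^\bullet(T^2;\Z)=\Z,\Z^2,\Z$, which is free.

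\textbf{Step 2 (degeneration).} All rows with $q$ odd vanish, so $d_2$ is zero. The differential $d_3$ lands in column $p\ge3$, which is empty. Hence $E_2=E_\infty$. Each graded piece will turn out to be free, so every filtration splits and the integral groups are the direct sums of the $E_2$ terms.

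\textbf{Step 3 (the $q=2$ row; the main obstacle).} This row is the restriction map
\[
 \Phi:\bigoplus_{v}H^2(D_v)\longrightarrow\bigoplus_{e}H^2(C_e)=\Z^{3r},
\]
taken with the alternating signs. Its component at $(v,e)$ sends a divisor class on $D_v$ to its intersection number with the boundary curve $C_e$. The target statement requires $\coker\Phi\cong\Z^2$, free. Granting that, the rank count gives $\ker\Phi\cong\Z^{r+2}$, and then
\[
 H^2=\Z^{r+2}\oplus E_2^{2,0}=\Z^{r+3},\qquad
 H^3=\coker\Phi\oplus E_2^{3,0}=\Z^2,\qquad
 H^4=\Z^r,
\]
with $H^1=E_2^{1,0}=\Z^2$, as claimed.

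To get $\coker\Phi$, I would dualize. On a smooth projective toric surface, $H^2(D_v)$ is unimodularly dual to $H_2(D_v)$. The relations among the boundary curve classes are exactly $\sum_\rho\langle m,u_\rho\rangle[C_\rho]=0$ for $m\in M$. Therefore an edge-weight vector $a\in\Z^{3r}$ annihilates the image of $\Phi$ if and only if, at every vertex $v$, the signed weights on the six edges at $v$ are given by $a_e=\langle m_v,\vec e\,\rangle$ for some $m_v\in(\Z^2)^\vee$. The ordered-edge sign $(a,b)\mapsto b-a$ and the orientation reversal of $\vec e$ seen from the other endpoint should force $\langle m_v,\vec e\rangle=\langle m_{v'},\vec e\rangle$ along each edge. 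Adjacent edge directions span $\Z^2$ around every triangle, so $m_v$ must be constant. Hence $\operatorname{Hom}(\coker\Phi,\Z)\cong\Z^2$, spanned by the global linear functions.

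\textbf{Step 4 (torsion-freeness).} It remains to show that $\coker\Phi$ has no torsion, i.e. that $\operatorname{im}\Phi$ is saturated. I would first try a local splitting argument. Each $dP_6$ has a basis of $H^2$ dual to four of its boundary curves, and the lattice of boundary-degree vectors of $dP_6$ is saturated in $\Z^6$, since it is the annihilator of $M$ under a unimodular pairing. Patching these saturated vertex contributions along a spanning tree of the nerve should then exhibit $\operatorname{im}\Phi$ as a direct summand. If that becomes awkward, the fallback is to compute the Smith form of the explicit $4r\times3r$ matrix for the quotient triangulation; I expect the invariants to be $1$ with multiplicity $3r-2$. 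This saturation check is where I expect the real work to lie. The Type B/C dependence enters only through $\Gamma_r$, and the local computation is identical for both.
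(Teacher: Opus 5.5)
Your setup (the normalization spectral sequence, Steps 1--2, and the deduction of all groups once $\coker\Phi\cong\Z^2$ is known) matches the paper's proof. The gap is Step 4, and it carries the whole content of the theorem. $\ker\Phi$, and hence $H^2\cong\Z^{r+3}$, come out free regardless. So ``torsion-free'' and $H^3\cong\Z^2$ are \emph{exactly} the saturation of $\operatorname{im}\Phi$, which you leave as an expectation.

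The mechanism you sketch for Step 4 cannot work, because saturation is not a vertex-local property. The periodic quotient of Theorem~\ref{thm:master-cusp} equally allows $B=2I$, i.e.\ $\Gamma=2\Z^2$ with $d=4$, and it has identical local data at every vertex. There the image is not saturated:
\begin{itemize}
\item the function $m_v=(-v_2,v_1)$, reduced mod $2$, is $2\Z^2$-periodic and satisfies all the edge conditions;
\item it gives a third mod-$2$ annihilator $a_e=\langle m_{v_0},v_0-v_1\rangle$ of $\operatorname{im}\Phi$, independent of the two constant ones since it varies along parallel edges;
\item over $\Q$ the annihilator still has rank two.
\end{itemize}
Hence $\coker\Phi$ has $2$-torsion for this lattice. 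Any correct argument must use a global property of $\Gamma_r$ that $2\Z^2$ lacks, and your spanning-tree patching of saturated vertex lattices uses none.

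The same example shows that your Step 3 reasoning is wrong as stated. The edge conditions $\langle m_v-m_{v'},v-v'\rangle=0$ do not force $m$ to be constant on a triangle. On the periodic cover their general solution is $m_v=m_0+c(-v_2,v_1)$. Over $\Z$ it is periodicity under the finite-index $\Gamma_r$ that kills $c$, so your rank-two conclusion survives, but only for that reason.

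This also shows how to repair Step 4 within your own approach.
\begin{itemize}
\item Run the annihilator computation with $\mathbb F_p$-coefficients for every prime $p$. The toric relation sequence $0\to M\to\Z^6\to H_2(dP_6;\Z)\to0$ is split, so it stays exact mod $p$.
\item Periodicity now requires $c\gamma\equiv0\pmod p$ for all $\gamma\in\Gamma_r$.
\item This forces $c\equiv0$, because $\Gamma_r$ contains the primitive vector $(1,1)$, respectively $(2,1)$. Equivalently, $\Z^2/\Gamma_r\cong\Z/r$ is cyclic.
\item Then $\dim_{\mathbb F_p}\operatorname{Hom}(\coker\Phi,\mathbb F_p)=2=\rk\coker\Phi$ for every $p$, so $\coker\Phi\cong\Z^2$.
\end{itemize}

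The paper uses the same global input in a different way. It labels components by $j=a-\ell b\bmod r$ and writes $\Phi$ as the explicit cyclic map \eqref{eq:geometric-cusp-restrictions}. It then shows, by solving cyclic difference equations over $\Z$, that $\operatorname{im}\Phi$ is exactly the kernel of the two sum functionals \eqref{eq:cusp-image-two-relations}.
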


\begin{proof}
Use the spectral sequence of the exact ordered-branch resolution
\eqref{eq:branch-resolution}:
\[
 E_1^{p,q}=H^q(W_r^{\langle p\rangle};\Z)
 \Longrightarrow H^{p+q}(W_r;\Z).
\]
The $q=0$ row is the cellular cochain complex of the dual two-torus and contributes $\Z,\Z^2,\Z$ in degrees $0,1,2$.  The $q=4$ row is $\Z^r$ in column zero.

For the degree-two row, put $\ell=1$ for Type B and $\ell=2$ for
Type C.  The following integral calculation treats both values.
The component of a vertex $(a,b)\in\Z^2$ is labelled by
$j=a-\ell b\pmod r$, since
\[
 \Z^2/B^\kappa_{0,r}\Z^2\longrightarrow\Z/r,
 \qquad [(a,b)]\longmapsto a-\ell b
\]
is an isomorphism.  In the cyclic order of rays
\[
 e_1,\ e_2,\ e_2-e_1,\ -e_1,\ -e_2,\ e_1-e_2,
\]
mark the six toric boundary curves of each $dP_6$ by
\[
 E_1,\ H-E_1-E_2,\ E_2,\ H-E_2-E_3,\ E_3,\ H-E_1-E_3.
\]
A class $a_jH-b_{1j}E_1-b_{2j}E_2-b_{3j}E_3$ has restriction degrees
$b_{1j},a_j-b_{1j}-b_{2j},b_{2j},a_j-b_{2j}-b_{3j},b_{3j},
a_j-b_{1j}-b_{3j}$ on these curves.  Pairing opposite boundary curves
on the actual adjacent components gives the restriction-difference map
$\rho_r^\kappa:\Z^{4r}\to\Z^{3r}$:
\begin{equation}\label{eq:geometric-cusp-restrictions}
 \begin{aligned}
 X_j&=b_{1j}-a_{j+1}+b_{2,j+1}+b_{3,j+1},\\
 Y_j&=a_j-b_{1j}-b_{2j}-b_{3,j-\ell},\\
 Z_j&=b_{2j}-a_{j-\ell-1}+b_{1,j-\ell-1}+b_{3,j-\ell-1}.
 \end{aligned}
\end{equation}
All indices are cyclic.  These formulas use the three directed edge
shifts $1,-\ell,-\ell-1$; when a shift is zero modulo $r$, the row is
the corresponding self-identification.

The image is contained in the kernel of the surjective map
\begin{equation}\label{eq:cusp-image-two-relations}
 \Z^{3r}\longrightarrow\Z^2,\qquad
 (X,Y,Z)\longmapsto
 \left(\sum_j(X_j+Y_j),\ \sum_j(Y_j+Z_j)\right).
\end{equation}
We prove the reverse inclusion over $\Z$.  Let $S$ be cyclic shift,
$(Sb)_j=b_{j+1}$, and put $T=S^{-1}$.  Use the second line of
\eqref{eq:geometric-cusp-restrictions} to set
$a=Y+b_1+b_2+T^\ell b_3$.  The other two equations become
\[
 \begin{aligned}
 X+SY&=(I-S)b_1+S(I-T^\ell)b_3,\\
 Z+T^{\ell+1}Y&=(I-T^{\ell+1})b_2
                  +T^{\ell+1}(I-T^\ell)b_3.
 \end{aligned}
\]
An integer cyclic vector lies in the image of $I-T$ if and only if its
coordinate sum is zero.  The second relation in
\eqref{eq:cusp-image-two-relations} therefore gives an integer vector
$h$ with $Z+T^{\ell+1}Y=(I-T)h$.  Choose
\[
 b_2=h,\qquad b_3=-T^{-\ell}h.
\]
The second equation now holds because
\[
 (I-T^{\ell+1})-T(I-T^\ell)=I-T.
\]
The first equation reduces to an integer cyclic difference equation for
$b_1$ whose right-hand side has coordinate sum zero, by the first
relation in \eqref{eq:cusp-image-two-relations}.  Solving it and then
defining $a$ gives an integral preimage of $(X,Y,Z)$.
Consequently the image of $\rho_r^\kappa$ is exactly that kernel and
is saturated.  Thus
\[
 \ker\rho_r^\kappa\cong\Z^{r+2},\qquad
 \coker\rho_r^\kappa\cong\Z^2.
\]
The spectral sequence has columns only $0,1,2$.  Its $d_2$ maps have
zero targets in odd cohomological rows, and every $d_k$ with $k\ge3$
leaves the column range.  The contributions are free abelian, so the
remaining additive extensions split.  Combining the three rows gives
the stated groups.
\end{proof}

\subsection{Integral homology}
\label{subsec:complete-integral-homology}

We determine the integral homology using the same geometric
decomposition as in the fundamental-group calculation.  Appendix~\ref{app:integral-mv-ledger} records the geometric cycles, orientation conventions, finite-quotient relation matrices, and every integral map used below.  Write
\[
 X_r^B=Y_r,\qquad X_r^C=Z_r,
\]
and denote the two reduced finite fibres by $S_1^\kappa,S_2^\kappa$.
Let $F$ be a smooth fibre and set
\[
 \mathcal E=(\gamma^*\wedge u^*,\gamma^*\wedge w^*,
 \gamma^*\wedge\delta^*,u^*\wedge w^*,
 u^*\wedge\delta^*,w^*\wedge\delta^*)
\]
for the standard basis of $H^2(F;\mathbb Z)$.

\begin{lemma}[Integral cohomology bases of the finite fibres]
\label{lem:finite-H2-bases}
For each finite quotient $p_j:F\to S_j^\kappa$, the group
$H^2(S_j^\kappa;\mathbb Z)$ is free of rank two.  There are integral
bases $a_{j,1},a_{j,2}$ for which the pullback matrices, with respect to
$\mathcal E$, are
\[
 P^B_1(r)=
 \begin{pmatrix}
 -r&2\\ r&-2\\0&4\\1&0\\0&0\\0&0
 \end{pmatrix},\qquad
 P^C_1(r)=
 \begin{pmatrix}
 -r&1\\ r&-1\\0&3\\1&0\\0&0\\0&0
 \end{pmatrix},
 \tag{\ref{lem:finite-H2-bases}.1}
\]
and, for the common order-six fibre,
\[
 P_2=
 \begin{pmatrix}
 0&0\\0&0\\0&-6\\1&0\\0&0\\0&0
 \end{pmatrix}.
 \tag{\ref{lem:finite-H2-bases}.2}
\]
With the marked orientations of Appendix~\ref{app:mv-orientations},
the intersection matrices on the two quotient surfaces are respectively
\[
 \begin{pmatrix}0&1\\1&0\end{pmatrix},\qquad
 \begin{pmatrix}0&-1\\-1&0\end{pmatrix}.
\]
\end{lemma}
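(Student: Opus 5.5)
The proof will compare $H^2$ of each quotient surface with the invariant part of $H^2$ of the torus. Write $S=S_j^\kappa=F/C_m$, where $C_m$ acts freely by the affine map $x\mapsto A_jx+v_j/m$ of Lemma~\ref{lem:log-transform}. Since $S$ is bielliptic (Proposition~\ref{prop:bielliptic-fibres}), $b_2(S)=2$. The Cartan--Leray spectral sequence $H^p(C_m;H^q(F;\Z))\Rightarrow H^{p+q}(S;\Z)$ then describes $H^2(S;\Z)$ and the pullback $p_j^*$. The plan is:
\begin{enumerate}[label=\textup{(\arabic*)}]
\item Compute the image of $p_j^*$ in $H^2(F;\Z)^{A_j}$.
\item Show that the free part of $H^2(S;\Z)$ has rank two and maps injectively.
\item Pick preimages $a_{j,1},a_{j,2}$ whose images are the displayed columns.
\item Evaluate the cup product on $S$ through $\langle p_j^*a\cup p_j^*b,[F]\rangle=m\,\langle a\cup b,[S]\rangle$.
\end{enumerate}
The last formula holds because $p_j$ has degree $m$.

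For step (1), the composite of pushforward and pullback, $p_j^*p_{j*}$, is the norm $N_{A_j}$ acting on $\bigwedge^2\Lambda^*$. This norm lands in the image of $p_j^*$. The image of $p_j^*$ in turn lies in the invariants and is sandwiched between $N_{A_j}(H^2(F))$ and the kernel of the differentials out of $E_2^{0,2}$. Rather than computing the differentials abstractly, I will determine the image using the explicit matrices. The invariant lattice $H^2(F;\Z)^{A_j}$ has rank two; for the order-six matrix it is spanned by $\gamma^*\wedge\delta^*$ and $u^*\wedge w^*$, as in the proof of Lemma~\ref{lem:invariant-class-all-parameters}. Two cycles on $S$ pull back explicitly:
\begin{itemize}
\item the fibre class of the elliptic fibration $S\to F/\text{(linear part)}$, which pulls back to a multiple of the class dual to the translation direction;
\item the class of the image of a horizontal elliptic curve.
\end{itemize}
Taking Poincaré duals and pulling back should produce the columns, for example $(0,0,-6,1,0,0)^{\mathsf T}$ and $(0,0,0,1,0,0)^{\mathsf T}$ for the order-six fibre. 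Concretely, I will take the $A_j$-invariant sub-tori, or the $C_m$-orbits of translated sub-tori, and compute the multiplicity of their pullback: $m$ for the translation curve, and orbit size for the other.

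For step (2), the free part of $H^2(S)$ maps injectively to $H^2(F;\Q)$ because $p_j^*$ is rationally injective on a finite cover. Integrality of the chosen basis (step 3) requires showing that no proper rational combination of the two candidate columns lies in the image of $p_j^*$. For this I will use the intersection form: if the Gram matrix computed in step (4) is unimodular on $\Z a_{j,1}\oplus\Z a_{j,2}$, then by Poincaré duality on $H^2(S;\Z)/\mathrm{tors}$ this span is the full free part. The Gram entries are $\frac1m\langle P^{\mathsf T}\,\cdot\,P\rangle$. With the standard pairing on $\bigwedge^2\Lambda^*$, the coefficient of $[F]$ in a product of two classes is $\gamma\delta\cdot uw$-type cross terms. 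For $P_2$ this gives $\frac16(-6)=-1$ off the diagonal and $0$ on the diagonal, and for $P_1^B$ it gives $\frac14(4)=1$ with zero diagonal, once one checks $-r\cdot 0+\dots$. These match the stated matrices, up to the orientation signs fixed in Appendix~\ref{app:mv-orientations}.

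The main obstacle is step (3). The columns must be the pullbacks of genuine integral classes, not merely invariant lattice vectors: the invariant lattice is generally larger than the image of $p_j^*$. The unimodularity argument shows maximality only once integrality is known. I will therefore establish integrality geometrically, by exhibiting explicit embedded curves in $S$ whose pullbacks are the stated columns. For $P_1^\kappa(r)$, the first column $(-r,r,0,1,0,0)$ should come from the fibre curve spanned by the $A_1$-invariant real directions $\ker(A_1-I)$, which involves $v_{1,r}=(1,-r,-r,0)$. Its Poincaré-dual computation, with the sign conventions, is where I expect the bookkeeping to be delicate.
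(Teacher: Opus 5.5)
Your strategy is the dual of the paper's. The paper bounds $L=p_j^*H^2(S_j^\kappa;\Z)$ from above using the sweep-torus test $\alpha(\delta,v_j)/m_j\in\Z$, and fixes its index by $m_j^2=|\det L|=[I:L]^2|\det I|$. You instead want lower bounds from explicit cycles, followed by maximality from a unimodular Gram matrix. That route can work, but there are two genuine gaps.

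The first gap is freeness. The lemma asserts that $H^2(S_j^\kappa;\Z)$ is free, yet your plan only treats $H^2/\mathrm{tors}$. Torsion is killed both by $p_j^*$ (since $H^2(F;\Z)$ is torsion-free) and by the intersection form, so none of your steps can detect it. Being bielliptic does not rule it out: for several bielliptic types $\operatorname{Tor}H^2\cong\operatorname{Tor}H_1$ is $\Z/2$, $(\Z/2)^2$ or $\Z/3$. You need $H_1(S_j^\kappa;\Z)\cong\Z^2$, followed by the universal coefficient theorem, as the paper does. This follows from the Smith form of the abelianized presentation $\langle\Lambda,a\mid a\lambda a^{-1}=A_j\lambda,\ a^{m_j}=v_j\rangle$.

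The second gap is step (3) at the first finite fibre, and the cycles you name do not close it.

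\emph{First column.} The subtorus $E_1$ spanned by $v_{1,r}$ and $\delta$ is $C_{m_1}$-stable. Hence its image pulls back to $\mathrm{PD}[E_1]=\omega_{\kappa,1}$ with coefficient one, not $m_1$. That is the correct first column.

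\emph{Second column.} A ``horizontal'' curve is a translate of the subtorus $E_2$ with lattice $\ker N_{A_1}=\Z\langle u+w,-s_\kappa u+\delta\rangle$. Its $C_{m_1}$-orbit is free, because $E_1\cap E_2$ is $\{0,\delta/2\}$, resp.\ $\{0,\delta/3,2\delta/3\}$, and contains no $kv_{1,r}/m_1$. So it pulls back to $4\omega_{B,2}$, resp.\ $3\omega_{C,2}$. With these two classes the Gram matrix is $2U$, resp.\ $3U$, where $U=\begin{psmallmatrix}0&1\\1&0\end{psmallmatrix}$. Your unimodularity test therefore fails, and no other cycle is offered.

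\emph{The missing cycle.} It is $p_{1*}T_{uw}$, obtained from the transfer identity you mention but never apply. One has $p_1^{!}p_{1*}[T_{uw}]=N_{A_1}(u\wedge w)$. In Type B this equals $4u\wedge w+2u\wedge\delta+2w\wedge\delta=2[E_2]$; in Type C it equals $3u\wedge w+u\wedge\delta+w\wedge\delta=[E_2]$. Their Poincar\'e duals are exactly $2\omega_{B,2}$ and $\omega_{C,2}$. At the order-six fibre the same construction gives $6\gamma^*\wedge\delta^*$, which is the second column of $P_2$ up to sign. Pairing this class with $\mathrm{PD}[E_1/C_{m_1}]$ gives a unimodular Gram matrix, and your argument then closes.

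\emph{Alternative fix.} You can keep your weaker classes and import the paper's index count instead. The values $|\det I|=4,9$ force $[I:L]=2,1$. Since $I/(\Z\omega_{\kappa,1}+m_1\Z\omega_{\kappa,2})\cong\Z/m_1$ is cyclic, this determines $L$.

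A minor point: your sample column $(0,0,-6,1,0,0)^{\mathsf T}$ is the sum of the two columns of $P_2$, not one of them.
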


\begin{proof}
The affine fundamental group of a free cyclic quotient has presentation
\[
 \Gamma_{A,v,m}=\langle \Lambda,a\mid
 [\Lambda,\Lambda]=1,\ a\lambda a^{-1}=A\lambda,\ a^m=v\rangle.
\]
For the four actions occurring here, the five-generator abelianization
matrices have Smith form $\operatorname{diag}(1,1,1,0,0)$, and hence
$H_1(S_j^\kappa;\mathbb Z)\simeq\mathbb Z^2$.  Since the quotient is a
closed oriented four-manifold with Euler characteristic zero, Poincar\'e
duality gives $b_2=2$, and the universal coefficient theorem shows that
$H^2$ is free.

Let $I=H^2(F;\Z)^T$, where $T=(A^{-1})^{\mathsf T}$, and let
$L=p_j^*H^2(S_j^\kappa;\Z)\subset I$.  Transfer shows that $L$ has full
rank in $I$ and that the pullback is injective.  Orient $F$ by its
marked lattice and the quotient surface so that the cover has positive
degree, as in Appendix~\ref{app:mv-orientations}.  Poincar\'e duality
makes the intersection lattice of $S_j^\kappa$ unimodular.  Hence the
absolute discriminant of $L$ is $m_j^2$.

The integral invariant bases in Appendix~\ref{app:finite-quotient-h2}
have intersection matrices $2U$, $3U$, and $U$ for the Type B first,
Type C first, and common order-six fibres, respectively, where
$U=\begin{psmallmatrix}0&1\\1&0\end{psmallmatrix}$.  The lattice-index
identity $|\det L|=[I:L]^2|\det I|$ gives
\[
 [I:L]=2,\qquad 1,\qquad 6,
\]
respectively.

To identify which sublattices occur, use the invariant circle $\delta$.
The square $(t,s)\mapsto[t\delta+s v_j/m_j]$ in $S_j^\kappa$ closes
to an oriented two-torus, because $A_j\delta=\delta$ and the affine
generator identifies its $s$-edges.  For $\alpha=p_j^*a$, integration
of the invariant constant two-form representing $\alpha$ gives
\begin{equation}\label{eq:finite-pullback-sweep-test}
 \frac{\alpha(\delta,v_j)}{m_j}
 =\langle a,s_{j,\delta}\rangle\in\Z.
\end{equation}
In the Type B invariant basis $(\omega_{B,1},\omega_{B,2})$ below,
this condition on $a\omega_{B,1}+b\omega_{B,2}$ is $b\in2\Z$.
The resulting candidate sublattice has index two and therefore equals
$L$.  In Type C the index is one, so $L=I$.  At the order-six fibre,
in the basis $(u^*\wedge w^*,\gamma^*\wedge\delta^*)$, the condition
is $b\in6\Z$, again giving the exact index.  These are precisely the
columns displayed in the statement.  Dividing their wedge pairings on
$F$ by $m_j$ gives the displayed quotient intersection matrices.
\end{proof}

Let
$X_r^{\kappa,\circ}=X_r^\kappa\setminus
\operatorname{int}N^\kappa_{0,r}(\rho)$.
Equivalently, it is obtained by attaching the two finite logarithmic
transforms but not the cusp.  It is
the union of two pieces retracting to $S_1^\kappa,S_2^\kappa$, with
intersection homotopy equivalent to $F$.  Fix the target homology bases by
\[
 x_{j,1}=p_{j*}T_{uw},\qquad x_{j,2}=-s_{j,\delta},
\]
and orient Mayer--Vietoris so that
$\alpha_2^\kappa=(p_{1*},-p_{2*})$.  In these bases the map
\[
 \alpha_2^\kappa:H_2(F;\mathbb Z)\longrightarrow
 H_2(S_1^\kappa;\mathbb Z)\oplus H_2(S_2^\kappa;\mathbb Z)
\]
is exactly
\begin{equation*}\label{eq:finite-attachment-matrix-B}
 A_B(r)=
 \begin{pmatrix}
 -r&r&0&1&0&0\\
 2&-2&4&0&0&0\\
 0&0&0&-1&0&0\\
 0&0&6&0&0&0
 \end{pmatrix},
 \tag{\ref*{subsec:complete-integral-homology}.1}
\end{equation*}
\begin{equation*}\label{eq:finite-attachment-matrix-C}
 A_C(r)=
 \begin{pmatrix}
 -r&r&0&1&0&0\\
 1&-1&3&0&0&0\\
 0&0&0&-1&0&0\\
 0&0&6&0&0&0
 \end{pmatrix}.
 \tag{\ref*{subsec:complete-integral-homology}.2}
\end{equation*}
The degree-one maps, in suitable bases of the two quotient surfaces, are
\[
 \begin{array}{c|c|c}
 &p_{1*}&p_{2*}\\ \hline
 B&\begin{pmatrix}4&0&0&0\\0&1&-1&2\end{pmatrix}
  &\begin{pmatrix}-6&0&0&0\\0&0&0&1\end{pmatrix}\\[3mm]
 C&\begin{pmatrix}3&0&0&0\\0&1&-1&3\end{pmatrix}
  &\begin{pmatrix}-6&0&0&0\\0&0&0&1\end{pmatrix}.
 \end{array}
 \tag{\ref{subsec:complete-integral-homology}.3}
\]
Consequently the kernel of
$\alpha_1=(p_{1*},-p_{2*})$ is the primitive line
\[
 \ker\alpha_1=\mathbb Z(u+w).
 \tag{\ref{subsec:complete-integral-homology}.4}
\]
The quotient presentations and the chosen bases of $H_1(S_j^\kappa)$ are derived in Appendix~\ref{app:finite-quotient-h1}.

\begin{lemma}[Kernel of the boundary inclusion at the toroidal fibre]
\label{lem:cusp-sweeps-integral}
Let $M^\kappa_{0,r}(\rho)=\partial N^\kappa_{0,r}(\rho)$ and let $\xi_\nu\in H_2(M^\kappa_{0,r}(\rho);\mathbb Z)$
be the torus obtained by sweeping the circle in the invariant direction
$\nu\in\Lambda_{\mathrm{van}}=\mathbb Z\langle w,\delta\rangle$ along the
clockwise cusp meridian through the extended zero section, with
circle-first product orientation.  Write $j$ for the inclusion of this
boundary into $X_r^{\kappa,\circ}$.  Then
\begin{equation*}\label{eq:integral-cusp-sweep-kernel}
 K_2:=\ker\bigl(H_2(M^\kappa_{0,r}(\rho);\mathbb Z)\to
 H_2(N^\kappa_{0,r}(\rho);\mathbb Z)\bigr)
 =\mathbb Z\xi_w\oplus\mathbb Z\xi_\delta.
 \tag{\ref*{lem:cusp-sweeps-integral}.1}
\end{equation*}
With the cuts and the Mayer--Vietoris orientation
$(i_{1*},-i_{2*})$ specified in Appendix~\ref{app:cusp-boundary-ledger},
\[
 \partial(j_*\xi_w)=-(u+w),\qquad
 \partial(j_*\xi_\delta)=0.
 \tag{\ref{lem:cusp-sweeps-integral}.2}
\]
Modulo $\operatorname{im}\alpha_2^\kappa$, the class $-j_*\xi_\delta$
is represented by
\[
 b=(0,-1,0,-1)^{\mathsf T}
 \in H_2(S_1^\kappa)\oplus H_2(S_2^\kappa).
 \tag{\ref{lem:cusp-sweeps-integral}.3}
\]
\end{lemma}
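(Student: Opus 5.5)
We treat the three assertions separately: the kernel $K_2$ from the cusp model alone, then the two boundary values and the representative $b$ from the Mayer--Vietoris decomposition of $X_r^{\kappa,\circ}$.

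\textbf{Computing $K_2$.} By Lemma~\ref{lem:cusp-neighbourhood-collapse}, $N^\kappa_{0,r}(\rho)$ retracts onto $W_r$, and Theorem~\ref{thm:cusp-cohomology} describes $H^*(W_r;\Z)$, which is torsion-free. By Proposition~\ref{prop:cusp-pi1}, the boundary $M=M^\kappa_{0,r}(\rho)$ is the mapping torus of $\mathsf P_{0,r}$. The Wang sequence gives
\[
 0\to\coker(\mathsf P-I\mid H_2F)\to H_2(M)\to\ker(\mathsf P-I\mid H_1F)\to0 ,
\]
and the right-hand group is $\Lambda_{\mathrm{van}}=\Z\langle w,\delta\rangle$. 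The sweep tori $\xi_w,\xi_\delta$ give a splitting of this surjection.

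Each sweep torus bounds in $N$. The circle in direction $\nu\in\Lambda_{\mathrm{van}}$ is a one-parameter subgroup of the dense torus $(\C^*)^2$ acting on the toric cover. Sweeping it over the extended zero-section disc (the section through $(1,1)$) produces a solid torus $S^1\times\Delta$ inside $N$ whose boundary is $\xi_\nu$; the vanishing circle collapses only in homology because $\nu$ is a cocharacter whose orbit closure contracts. More concretely, $\xi_\nu$ is the boundary of the image of $\{\text{circle}\}\times(\text{section disc})$, which is a 3-chain in $N$. Hence $\Z\xi_w\oplus\Z\xi_\delta\subset K_2$.

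For the reverse inclusion, compare the fibre part. The image of $\coker(\mathsf P-I\mid H_2F)$ in $H_2(N)\cong H_2(W_r)$ should be injective. We check this via the specialization map $H_2(F)\to H_2(W_r)$ followed by duality with $H^2(W_r)$, using the nearby-cycle computation of Appendix~\ref{app:nearby-cycles}. Ranks match: the coinvariants have rank $6-2=4$ plus torsion, to be compared with $r+3$ together with the local contributions. Since $H^*(W_r)$ is torsion-free, we also need that the torsion of the coinvariants maps injectively, that is, the $\Z/r$ piece coming from $\Lambda_{\mathrm{van}}/\im(\mathsf M-I)$ in $\wedge^2$. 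This injectivity of the fibre part is the \textbf{main obstacle}. The first attempt is the clean route through Lefschetz duality: $K_2$ equals the image of $\partial:H_3(N,M)\cong H^3(N)\cong H^3(W_r)=\Z^2$. Since $\ker(H_2M\to H_2N)=\im(H_3(N,M)\to H_2M)$ and $H_3(N,M)\cong\Z^2$, the kernel is a quotient of $\Z^2$. It contains the rank-two subgroup $\Z\xi_w\oplus\Z\xi_\delta$, which maps isomorphically onto $\Lambda_{\mathrm{van}}$ under the Wang projection and is therefore free of rank two. A quotient of $\Z^2$ containing a copy of $\Z^2$ is equal to it, so equality follows.

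\textbf{The boundary values and $b$.} The boundary map $\partial$ of the Mayer--Vietoris sequence for $X_r^{\kappa,\circ}=N_1\cup N_2$ is computed by cutting $j_*\xi_\nu$ along the two separating fibre collars. The sweep over the pair of pants decomposes into pieces over the legs around $p_1$ and $p_2$. The boundary circle class in $H_1(F)$ is read off from the transport along the cut: it is $(\mathsf P_1-I)$ or $(\mathsf P_2-I)$ applied to $\nu$, with signs fixed by Appendix~\ref{app:cusp-boundary-ledger}. For $\nu=\delta$, both $A_j$ fix $\delta$, so $\partial=0$. For $\nu=w$, the order-six matrix fixes $w$ in its rank-two block only up to rotation, and the explicit matrices give $-(u+w)$; this class lies in $\ker\alpha_1$, consistent with (\ref{subsec:complete-integral-homology}.4).

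Finally, since $\partial(j_*\xi_\delta)=0$, the class $j_*\xi_\delta$ lifts to $H_2(S_1)\oplus H_2(S_2)$. Its two halves are the sweeps of $\delta$ over the legs, which are the tori $s_{j,\delta}$ (up to the orientation $x_{j,2}=-s_{j,\delta}$ and the sign $-p_{2*}$). This gives $-j_*\xi_\delta\equiv(0,-1,0,-1)^{\mathsf T}$ modulo $\im\alpha_2^\kappa$, after absorbing fibre-torus corrections into $\im\alpha_2^\kappa$.
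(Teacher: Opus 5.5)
Your computation of $K_2$ has a gap at the final step. You have $K_2=\im\bigl(H_3(N,M)\to H_2(M)\bigr)$ with $H_3(N,M)\cong H^3(W_r)\cong\Z^2$, and $L=\Z\xi_w\oplus\Z\xi_\delta\subseteq K_2$ free of rank two. From this you can only conclude that $K_2\cong\Z^2$ and that $L$ has finite index in $K_2$. A quotient of $\Z^2$ containing a free rank-two subgroup need not equal that subgroup; think of $2\Z\oplus\Z\subset\Z^2$.

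To get $K_2=L$ you must know that $L$ is saturated in $H_2(M)$. Because $L$ splits the Wang surjection, $H_2(M)/L\cong\coker(\wedge^2\mathsf P_0-I)$, so the missing input is that these degree-two coinvariants are torsion-free. The paper supplies this with $\operatorname{SNF}(\wedge^2\mathsf P^\kappa_{0,r}-I)=\operatorname{diag}(1,1,0,0,0,0)$, read off from the unit $2\times2$ minors of $C_B(r)$ and $C_C(r)$ in Appendix~\ref{app:cusp-boundary-ledger}. Then $K_2/L$ is a finite subgroup of the free group $H_2(M)/L$, hence zero.

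You can also close the gap without matrices. $H_2(N)\cong H_2(W_r)$ is torsion-free because $H^3(W_r)$ is, so $\operatorname{Tor}H_2(M)\subseteq K_2\cong\Z^2$ and therefore vanishes. Hence $\coker(\wedge^2\mathsf P_0-I)$ is free. Your expectation of a $\Z/r$ summand in these coinvariants is mistaken. The index-$r$ quotient $\Lambda_{\mathrm{van}}/\im(\mathsf M_0-I)$ appears in degree one, in $\Lambda/\im(\mathsf P_0-I)$, not in $\wedge^2$. This torsion-freeness is exactly what your argument needs and never establishes.

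The Mayer--Vietoris part points the right way, but it is only sketched and one step is misstated. The vector $w$ is fixed by $\mathsf P_0$ but by neither $\mathsf P_j$. The cusp sweep is the reversal of $C_1(w)+C_2(\mathsf P_1w)=C_1(w)+C_2(-u)$. The connecting map reads off the cut boundary $c_{-u}-c_w$, i.e.\ $(\mathsf P_1-I)w=-(u+w)$. Applying $\mathsf P_2-I$ to $w$ itself gives $u$, not $\pm(u+w)$; on the second leg it must act on $\mathsf P_1w$.

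For $b$, what you need is the relation $j_*\xi_\delta=-s_{1,\delta}-s_{2,\delta}$, with both minus signs coming from reversing $a_1a_2$. Here $s_{j,\delta}$ is identified with the square $[\theta\delta+tv_j/m_j]$ in $S_j^\kappa$. You also need the evaluation $\langle a,s_{j,\delta}\rangle=(p_j^*a)(\delta,v_j)/m_j$, which gives $(0,-1)$ in the bases of Lemma~\ref{lem:finite-H2-bases}. The sign in $\alpha_2^\kappa=(p_{1*},-p_{2*})$ plays no role in this lift, and no correction by $\im\alpha_2^\kappa$ is needed.
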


\begin{proof}
Let
\[
 \mathcal E_2=(\gamma\wedge u,\gamma\wedge w,
 \gamma\wedge\delta,u\wedge w,u\wedge\delta,w\wedge\delta)
\]
be the basis of $H_2(F;\mathbb Z)$.  Direct exterior-square calculation
gives
\[
 \operatorname{SNF}\bigl(\wedge^2\mathsf P^\kappa_{0,r}-I\bigr)
 =\operatorname{diag}(1,1,0,0,0,0)
 \qquad(\kappa=B,C).
 \tag{\ref{lem:cusp-sweeps-integral}.4}
\]
The matrices of the forward exterior-square maps and their unit minors
are displayed in Appendix~\ref{app:cusp-boundary-ledger}.  Write
$\mathsf P_0=\mathsf P^\kappa_{0,r}$.  The Wang sequence has the exact
segment
\[
 0\longrightarrow\operatorname{coker}(\wedge^2\mathsf P_0-I)
 \longrightarrow H_2(M^\kappa_{0,r}(\rho);\mathbb Z)
 \longrightarrow\ker(\mathsf P_0-I)\longrightarrow0.
\]
Its outer groups are free, so its middle group is torsion-free.
Orient the Wang cut projection by circle-first product orientation
along the clockwise meridian.  Its restriction to the sweep tori is
\[
 \xi_w\longmapsto w,
 \qquad \xi_\delta\longmapsto\delta.
\]
Thus $L=\mathbb Z\xi_w\oplus\mathbb Z\xi_\delta$ is a primitive direct
summand of $H_2(M^\kappa_{0,r}(\rho);\mathbb Z)$.

We give explicit integral fillings of the two sweeps.  The angular
torus $(S^1)^2$ acts on the periodic toric cover by multiplication in
$x_1,x_2$.  This action preserves $t_c$, commutes with the deck action,
and descends to $N=N^\kappa_{0,r}(\rho)$.  Let
$e:\overline\Delta_\rho\to N$ be the restriction of the extended zero section from
Proposition~\ref{prop:cusp-pi1}.  For $\nu=w,\delta$, the map
\begin{equation}\label{eq:explicit-sweep-filling}
 F_\nu:S^1\times\overline\Delta_\rho\longrightarrow N,\qquad
 F_\nu(e^{2\pi i\theta},t_c)
 =\nu(e^{2\pi i\theta})\cdot e(t_c)
\end{equation}
is a continuous map of a solid torus whose boundary lies in $M=\partial N$.
Give $S^1$ its positive orientation and $\overline\Delta_\rho$ its complex
orientation.  The product-chain identity
\[
 \partial([S^1]\times[\overline\Delta_\rho])
 =-[S^1]\times[\partial\overline\Delta_\rho]
\]
makes its boundary the circle-first sweep along the clockwise meridian.
After triangulation, \eqref{eq:explicit-sweep-filling} therefore gives an
integral relative three-cycle $\mathcal B_\nu$ with
$\partial\mathcal B_\nu=\xi_\nu$.
No embeddedness of the solid-torus map is required.  Thus $L\subseteq K_2$.

The Wang projection above splits integrally on $L$, so
$H_2(M;\mathbb Z)/L$ is free.  On the other hand,
Lemma~\ref{lem:cusp-neighbourhood-collapse} and Poincar\'e--Lefschetz
duality give
\[
 H_3(N,M;\mathbb Z)\simeq H^3(N;\mathbb Z)
 \simeq H^3(W_r;\mathbb Z)\simeq\mathbb Z^2.
\]
Exactness bounds the rank of $K_2$ by two.  Since $L$ already has rank
two, $K_2/L$ is finite and embeds in the free group $H_2(M;\mathbb Z)/L$.
It is zero, proving \eqref{eq:integral-cusp-sweep-kernel}.

For the inclusion into $X_r^{\kappa,\circ}$, traverse $a_1$ first and
$a_2$ second.  The forward matrices give
$\mathsf P_1w=-u$ and $\mathsf P_2(-u)=w$.  A circle-first cylinder
$C_j(\nu)$ over $a_j$ has boundary
$c_\nu-c_{\mathsf P_j\nu}$.  Hence the sweep over $a_1a_2$ is
$C_1(w)+C_2(-u)$; reversing it represents the clockwise cusp sweep.
The first piece of that reversed cycle has boundary $c_{-u}-c_w$,
whose integral class is $-(u+w)$.  With the stated Mayer--Vietoris
convention this proves the displayed connecting image, including its
sign.  It is a primitive generator.  Appendix~\ref{app:cusp-boundary-ledger}
records the two chains, their endpoint identifications, and their
comparison with the cusp boundary loop.

Since $\delta$ is fixed by both monodromies, its circle action sweeps
the zero section over the entire pair of pants.  The boundary of this
three-chain gives
\begin{equation}\label{eq:oriented-delta-sweep-relation}
 j_*\xi_\delta=-s_{1,\delta}-s_{2,\delta}
 \quad\text{in }H_2(X_r^{\kappa,\circ};\mathbb Z).
\end{equation}
Here the finite sweeps are mapped into the respective pieces.
For $a\in H^2(S_j^\kappa;\mathbb Z)$ their evaluations are
\[
 \langle a,s_{j,\delta}\rangle
 =\frac1{m_j}(p_j^*a)(\delta,v_j).
\]
The bases in Lemma~\ref{lem:finite-H2-bases} give $(0,-1)$ for each
$s_{j,\delta}$.  Hence $j_*\xi_\delta$ is represented by
$(0,1,0,1)^{\mathsf T}=-b$, and the chosen source generator
$-\xi_\delta$ gives the displayed column $b$.
Also \eqref{eq:oriented-delta-sweep-relation} places this class in the
image of the two finite pieces, so its connecting boundary is zero.
\end{proof}

\begin{proposition}[The cokernel of the cusp attachment]
\label{prop:open-integral-torsion}
Put $(a_B,p_B)=(2,2)$ and $(a_C,p_C)=(1,3)$.
Then
\begin{equation}\label{eq:reduced-torsion-presentation}
 H_2(X_r^{\kappa,\circ};\Z)/j_*K_2
 \simeq G_{\kappa,r}:=
 \operatorname{coker}\begin{pmatrix}r&0\\-a_\kappa&p_\kappa\end{pmatrix}.
\end{equation}
In particular,
\[
 G_{B,r}\simeq\Z/r\oplus\Z/2,\qquad
 G_{C,r}\simeq\Z/(3r).
\]
\end{proposition}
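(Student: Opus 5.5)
The plan is to compute $H_2(X_r^{\kappa,\circ};\Z)$ directly from the Mayer--Vietoris sequence of the decomposition of $X_r^{\kappa,\circ}$ into the two finite pieces (retracting to $S_1^\kappa,S_2^\kappa$) with overlap $\simeq F$. Then we quotient by $j_*K_2=\Z j_*\xi_w\oplus\Z j_*\xi_\delta$, using Lemma~\ref{lem:cusp-sweeps-integral}.

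The relevant segment is
\[
 H_2(F)\xrightarrow{\alpha_2^\kappa}H_2(S_1^\kappa)\oplus H_2(S_2^\kappa)
 \longrightarrow H_2(X_r^{\kappa,\circ})\xrightarrow{\partial}
 H_1(F)\xrightarrow{\alpha_1}H_1(S_1^\kappa)\oplus H_1(S_2^\kappa).
\]
By (\ref{subsec:complete-integral-homology}.4), $\operatorname{im}\partial=\ker\alpha_1=\Z(u+w)$. This is free, so
\[
 0\to\coker\alpha_2^\kappa\to H_2(X_r^{\kappa,\circ})\to\Z(u+w)\to0
\]
splits. By (\ref{lem:cusp-sweeps-integral}.2), $j_*\xi_w$ maps to the generator $-(u+w)$, so it furnishes exactly such a splitting. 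Hence
\[
 H_2(X_r^{\kappa,\circ})/\Z j_*\xi_w\cong\coker\alpha_2^\kappa .
\]
By (\ref{lem:cusp-sweeps-integral}.3), $j_*\xi_\delta$ maps into $\coker\alpha_2^\kappa$ as the class of $-b$, where $b=(0,-1,0,-1)^{\mathsf T}$. We must, however, make sure that $j_*\xi_\delta$ lies in the image of $\coker\alpha_2^\kappa$ and not merely modulo $j_*\xi_w$. This holds by (\ref{eq:oriented-delta-sweep-relation}), since $\partial j_*\xi_\delta=0$. Therefore the quotient in question is $\coker[A_\kappa(r)\mid b]$, a presentation of $\Z^4$ by seven columns.

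The remaining step is an integral Smith normal form computation, which I would do by explicit column operations. Take $A_B(r)$ first. The columns for $u\wedge\delta$ and $w\wedge\delta$ vanish. The $u\wedge w$ column $(1,0,-1,0)$ lets us eliminate the first coordinate in favour of the third, identifying $e_1\sim e_3$. The remaining columns are $(-r,2,0,0)$, $(r,-2,0,0)$, $(0,4,0,6)$ and $b=(0,-1,0,-1)$. Using $b$, eliminate the second generator by setting $e_2\sim -e_4$, or the reverse. What is left is the span of $e_1$ and $e_4$ (with $e_3=e_1$) modulo the relations $-re_1-2e_4$ and $-4e_4+6e_4=2e_4$. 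This gives the matrix $\begin{pmatrix}r&0\\-2&2\end{pmatrix}$ up to sign and basis change, in agreement with $(a_B,p_B)=(2,2)$.

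For Type C the same steps give the relations $-re_1-e_4$ and $-3e_4+6e_4=3e_4$, i.e. the matrix with $(a_C,p_C)=(1,3)$. The care needed is in keeping the signs consistent; the final determinant and gcd are sign-insensitive, though.

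The final identifications are then immediate. For Type C the matrix $\begin{pmatrix}r&0\\-1&3\end{pmatrix}$ has an entry $1$ and determinant $3r$, so its cokernel is $\Z/(3r)$. For Type B the matrix $\begin{pmatrix}r&0\\-2&2\end{pmatrix}$ has determinant $2r$, and its first Smith invariant is the gcd of its entries, $\gcd(r,2)$. When $r$ is odd this gives $\Z/(2r)\cong\Z/r\oplus\Z/2$. When $r$ is even, a row operation adding $r/2$ times the second row to the first, or just the observation that the entries all share the factor $2$, shows that the matrix is equivalent to $\operatorname{diag}(2,r)$.

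The main obstacle is the justification that quotienting by $j_*K_2$ amounts exactly to appending the column $b$ after splitting off $j_*\xi_w$. In particular one must rule out that $j_*\xi_w$ also contributes a component in $\coker\alpha_2^\kappa$ that would change the presentation. This does not matter: any choice of lift of the generator of $\Z(u+w)$ splits the sequence, and quotienting by one such lift gives $\coker\alpha_2^\kappa$ regardless of which lift is chosen.
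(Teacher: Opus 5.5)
Your proposal is correct and follows the paper's argument. Like the paper, you use the Mayer--Vietoris sequence of the two finite pieces, split off $j_*\xi_w$ against $\ker\alpha_1=\Z(u+w)$, append the column $b$ for $j_*\xi_\delta$, and eliminate two generators with the unit columns $T_{uw}$ and $b$. This leaves the two relations $rx_1=a_\kappa x_2$ and $p_\kappa x_2=0$, up to the harmless basis sign $e_4=-e_2$.

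The only cosmetic difference is at the end for Type B. You split into odd and even $r$ via the Smith form, whereas the paper notes that $2x_2=0$ turns $rx_1=2x_2$ into $rx_1=0$, which gives $\Z/r\oplus\Z/2$ directly.
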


\begin{proof}
The Mayer--Vietoris sequence for the two finite pieces gives
\[
 0\longrightarrow\coker\alpha_2^\kappa
 \longrightarrow H_2(X_r^{\kappa,\circ};\Z)
 \longrightarrow\ker\alpha_1\longrightarrow0.
\]
By Lemma~\ref{lem:cusp-sweeps-integral}, $j_*\xi_w$ maps to a
primitive generator of $\ker\alpha_1$, while $-j_*\xi_\delta$ gives
the relation $b=(0,-1,0,-1)^{\mathsf T}$ in the left-hand group.
Thus the desired group is $\coker D_\kappa(r)$, where
$D_\kappa(r)=[A_\kappa(r)\mid b]$ is recorded with its geometric bases
in Appendix~\ref{app:mv-augmented-matrices}.

The columns $T_{uw}$ and $-\xi_\delta$ give
$x_3=x_1$ and $x_4=-x_2$.  Eliminating them uses unit coefficients.
The two remaining relations are
\[
 rx_1=a_\kappa x_2,\qquad (6-m_1)x_2=p_\kappa x_2=0,
\]
which proves \eqref{eq:reduced-torsion-presentation} over $\Z$.
For Type B, $2x_2=0$ makes the first relation $rx_1=0$.
For Type C, $x_2=rx_1$ leaves the single relation $3rx_1=0$.
These give the two asserted groups for every positive integer $r$.
\end{proof}

\begin{lemma}[The free relative contribution]
\label{lem:relative-cusp-free}
Let $N^\kappa_{0,r}(\rho)$ and $M^\kappa_{0,r}(\rho)$ be the standard
closed cusp tube and its oriented boundary.  In the excision
sequence for the pair
$(X_r^\kappa,X_r^{\kappa,\circ})$, the group
\[
 \ker\bigl(H_2(N^\kappa_{0,r}(\rho),M^\kappa_{0,r}(\rho);\mathbb Z)
 \longrightarrow H_1(X_r^{\kappa,\circ};\mathbb Z)\bigr)
\]
is free of rank $r-1$.
\end{lemma}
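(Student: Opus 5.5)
We first compute $H_2(N,M;\Z)$ with $N=N^\kappa_{0,r}(\rho)$ and $M=M^\kappa_{0,r}(\rho)$. By Poincar\'e--Lefschetz duality and Lemma~\ref{lem:cusp-neighbourhood-collapse},
\[
 H_2(N,M;\Z)\simeq H^4(N;\Z)\simeq H^4(W_r;\Z)\simeq\Z^r,
\]
using Theorem~\ref{thm:cusp-cohomology}. Geometrically, the $r$ generators are dual to the $r$ normalization components $dP_6$. Concretely, each is represented by a normal disc $\Delta_i$ to a smooth point of the $i$-th component, with boundary a meridian-type loop in $M$. The boundary map to $H_1(X_r^{\kappa,\circ};\Z)$ factors through $\partial:H_2(N,M)\to H_1(M)$. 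So the plan is to identify $\partial[\Delta_i]\in H_1(M)$ and then push into $H_1(X_r^{\kappa,\circ})$.

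To identify $\partial[\Delta_i]$, we use the toric cover. A normal disc to the component $D_v$ is the closure of a curve $t_c\mapsto$ (point with $x=t_c^{v}\cdot x_0$). By Lemma~\ref{lem:toroidal-translation-valuation}, its boundary loop is the zero-section meridian $a_0$ composed with the fibre loop whose valuation is $v$. In $H_1(M)=\Lambda_{\mathsf P_0}\oplus\Z a_0$, obtained from Proposition~\ref{prop:cusp-pi1} by abelianizing, this gives $\partial[\Delta_v]=[a_0]+\iota(v)$. Here $\iota:\Z^2\to\Lambda_{\mathrm{van}}$ is the identification of cocharacters with vanishing cycles, and $v$ runs over representatives of $\Z^2/B_{0,r}^\kappa\Z^2\simeq\Z/r$.

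Next we push into $H_1(X_r^{\kappa,\circ})$. By Theorem~\ref{thm:van-kampen-reduction} (before the cusp is attached), this group is the abelianization of the presentation with fibre classes modulo $(A_j-I)\Lambda$ together with $\rho_1,\rho_2$ and the finite relations. Lemma~\ref{lem:normal-closure} and the coinvariant computation show that $\Lambda_{\mathrm{van}}\subset\ker\chi$ maps to a quotient in which differences $\iota(v)-\iota(v')$ become combinations of $(A_j-I)\Lambda$. Rationally these differences vanish, since $\ker\chi$ is killed in the rational coinvariants. Hence the kernel contains the rank-$(r-1)$ subgroup spanned by the differences $[\Delta_v]-[\Delta_{v'}]$. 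Meanwhile the image of $\sum_v[\Delta_v]$, or of a single $[\Delta_0]$, is $[a_0]=[\rho_0]$, which has infinite order: the free $\rho$ generators survive rationally, because $X_r^{\kappa,\circ}$ still has rational $H_1$ of rank one from the missing cusp. So the kernel has rank exactly $r-1$. Being a subgroup of the free group $\Z^r$, it is free.

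The main obstacle is making the second and third steps integral and sign-correct. Concretely, we need that $[a_0]$ has infinite order in $H_1(X_r^{\kappa,\circ};\Z)$ and not merely infinite order modulo torsion coming from $\Lambda_{\mathrm{van}}$. For this we use the relation $\chi$ together with the Seifert-type relation matrix of Theorem~\ref{thm:van-kampen-reduction}, with the cusp row deleted. Once the cusp relation $xy=z^{\ell_0}$ is absent, that matrix has corank one and $x y$ generates the free part. Freeness of the kernel itself is automatic from $H_2(N,M)\simeq\Z^r$, so only the rank requires this care.
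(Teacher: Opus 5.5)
Your proof is correct and follows the paper's outline: $H_2(N,M;\Z)\cong H^4(W_r;\Z)\cong\Z^r$; the classes from $\Lambda_{\mathrm{van}}$ die in $H_1(X_r^{\kappa,\circ};\Z)$; and $[a_0]$ has infinite order there, so the image has rank one.

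The real difference is how you control the image of the boundary map. You choose explicit generators, the normal discs $\Delta_v$, and compute $\partial[\Delta_v]=[a_0]+\iota(v)$. Your rank count then needs the $[\Delta_v]$ to be independent in $H_2(N,M)$, which you assert without proof. The claim is true. Lefschetz duality identifies $H_2(N,M)$ with $\operatorname{Hom}(H_4(W_r),\Z)$ through the intersection pairing with the $r$ component classes; there is no Ext term because $W_r$ has torsion-free cohomology. Each $\Delta_v$ meets its own component once transversally and misses the others.

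The paper avoids this step entirely. By exactness of the pair sequence, the image of $\partial$ is $\ker(H_1(M)\to H_1(N))$, which Proposition~\ref{prop:cusp-pi1} already shows is generated by $a_0$ and $\Lambda_{\mathrm{van}}$. So no basis of $H_2(N,M)$ is ever needed.

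On freeness, the two arguments differ in strength. The paper uses $\Phi_\kappa(a_0)=-1$, which is your observation that $x+y$ (your $xy$) generates the free part once the cusp row is deleted. This makes $\Z[a_0]$, and hence the kernel, a direct summand. You use only that subgroups of $\Z^r$ are free, which suffices for the lemma as stated.

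What your route buys is an explicit kernel. Because $(A_1-I)\Lambda+(A_2-I)\Lambda=\ker\chi$ integrally (Smith form $\operatorname{diag}(1,1,1,0)$), $\iota(v)$ dies integrally, not only rationally. Hence every $[\Delta_v]$ maps to $[a_0]$, and the kernel is exactly the sum-zero sublattice spanned by the differences. This matches the basis change $d_i-n_id_0$ in Appendix~\ref{app:relative-free-ledger}.

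Two minor slips. With only the rational vanishing you state, you could claim a finite-index subgroup of the span of differences, not the span itself. Also, $\sum_v[\Delta_v]$ maps to $r[a_0]$, not $[a_0]$.
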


\begin{proof}
Lemma~\ref{lem:cusp-neighbourhood-collapse} gives
$N^\kappa_{0,r}(\rho)\simeq W_r^\kappa$.  Poincar\'e--Lefschetz duality and
Theorem~\ref{thm:cusp-cohomology} therefore give
\begin{equation*}\label{eq:relative-cusp-second-homology}
 H_2(N^\kappa_{0,r}(\rho),M^\kappa_{0,r}(\rho);\mathbb Z)
 \simeq H^4(N^\kappa_{0,r}(\rho);\mathbb Z)
 \simeq\mathbb Z^r.
 \tag{\ref*{lem:relative-cusp-free}.1}
\end{equation*}
Orient the extended zero-section disc oppositely to its complex
orientation, so that its relative class has boundary the clockwise
cusp meridian $a_0$.  Before the cusp is attached, abelianization gives
\[
 H_1(X_r^{B,\circ};\mathbb Z)\simeq\mathbb Z\oplus\mathbb Z/2,
 \qquad
 H_1(X_r^{C,\circ};\mathbb Z)\simeq\mathbb Z\oplus\mathbb Z/3.
\]
The primitive homomorphisms to the free quotient can be chosen as
\[
 \Phi_B(z,x,y)=12z+3x-2y,
 \qquad
 \Phi_C(z,x,y)=6z+2x-y.
\]
For the canonical collar data, $a_0=-(x+y)$ and
$\Phi_B(a_0)=\Phi_C(a_0)=-1$.  We also identify the full image of the
relative boundary map.  Proposition~\ref{prop:cusp-pi1} shows that the
kernel of $H_1(M^\kappa_{0,r})\to H_1(N^\kappa_{0,r})$ is generated
by the meridian and the image of $\Lambda_{\mathrm{van}}$.  In
$H_1(X_r^{\kappa,\circ})$, the joint monodromy images kill
$\ker\chi$, hence in particular $\Lambda_{\mathrm{van}}$.  The image
of the relative boundary map is therefore contained in $\Z[a_0]$;
the extended section disc shows that it equals $\Z[a_0]$.
Since $\Phi_\kappa(a_0)=-1$, this is a free direct summand of
$H_1(X_r^{\kappa,\circ})$.  The kernel is consequently a direct
summand of \eqref{eq:relative-cusp-second-homology}, free of rank $r-1$.
\end{proof}

\begin{theorem}[Integral homology]
\label{thm:complete-integral-homology}
For every $r\ge1$,
\[
 H_i(Y_r;\mathbb Z)\simeq
 \begin{cases}
 \mathbb Z,&i=0,6,\\
 \mathbb Z/2,&i=1,\\
 \mathbb Z^{r-1}\oplus\mathbb Z/r\oplus\mathbb Z/2,&i=2,\\
 \mathbb Z/r\oplus\mathbb Z/2,&i=3,\\
 \mathbb Z^{r-1}\oplus\mathbb Z/2,&i=4,\\
 0,&i=5,
 \end{cases}
 \tag{\ref{thm:complete-integral-homology}.1}
\]
and
\[
 H_i(Z_r;\mathbb Z)\simeq
 \begin{cases}
 \mathbb Z,&i=0,6,\\
 \mathbb Z/3,&i=1,\\
 \mathbb Z^{r-1}\oplus\mathbb Z/(3r),&i=2,\\
 \mathbb Z/(3r),&i=3,\\
 \mathbb Z^{r-1}\oplus\mathbb Z/3,&i=4,\\
 0,&i=5.
 \end{cases}
 \tag{\ref{thm:complete-integral-homology}.2}
\]
In particular, the Type B middle torsion is cyclic of order $2r$ exactly
when $r$ is odd; for even $r$ it is the noncyclic group
$\mathbb Z/r\oplus\mathbb Z/2$.
\end{theorem}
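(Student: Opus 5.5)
The plan is to compute the groups degree by degree from the fundamental group, the Mayer--Vietoris/excision data for the cusp attachment established above, and Poincar\'e duality plus the universal coefficient theorem. The total space $X_r^\kappa$ is a closed oriented six-manifold, so $H_0\simeq H_6\simeq\Z$. By Theorem~\ref{thm:van-kampen-reduction}, $H_1=\pi_1^{\mathrm{ab}}$ is $\Z/2$ for $Y_r$ and $\Z/3$ for $Z_r$. Poincar\'e duality gives $H_5\simeq H^1\simeq\operatorname{Hom}(H_1,\Z)=0$. Then $H_4\simeq H^2$, whose torsion is $\operatorname{Tor}H_1$ and whose free rank equals $b_2$. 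So everything reduces to computing $H_2$. After that, $H_3\simeq H^3$ has free rank $b_3$ and torsion $\operatorname{Tor}H_2$.

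For $H_2$ I would use the long exact sequence of the pair $(X_r^\kappa,X_r^{\kappa,\circ})$. By excision, $H_*(X_r^\kappa,X_r^{\kappa,\circ})\simeq H_*(N^\kappa_{0,r}(\rho),M^\kappa_{0,r}(\rho))$. The relevant segment is
\[
 H_3(N,M)\xrightarrow{\ \partial_3\ }H_2(X_r^{\kappa,\circ})\longrightarrow H_2(X_r^\kappa)\longrightarrow H_2(N,M)\xrightarrow{\ \partial_2\ }H_1(X_r^{\kappa,\circ}).
\]
The image of $\partial_3$ is the image under $j_*$ of the boundary map $H_3(N,M)\to H_2(M)$. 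That boundary map has image exactly $K_2$, by exactness of the pair sequence for $(N,M)$. Hence $\operatorname{im}\partial_3=j_*K_2$.

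With this identification, Proposition~\ref{prop:open-integral-torsion} gives $\coker\partial_3\simeq G_{\kappa,r}$, and Lemma~\ref{lem:relative-cusp-free} gives $\ker\partial_2\simeq\Z^{r-1}$, which is free. The short exact sequence
\[
 0\to G_{\kappa,r}\to H_2(X_r^\kappa)\to\Z^{r-1}\to0
\]
therefore splits, so $H_2(X_r^\kappa)\simeq\Z^{r-1}\oplus G_{\kappa,r}$. In particular $\operatorname{Tor}H_2$ is $\Z/r\oplus\Z/2$ for $Y_r$ and $\Z/(3r)$ for $Z_r$.

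Rational ranks then follow from the Euler characteristic. Using $e=2r$, $b_0=b_6=1$, $b_1=b_5=0$, $b_2=b_4=r-1$, we get
\[
 2-2b_3+2(r-1)=2r,
\]
so $b_3=0$. Consequently $H_3\simeq\operatorname{Tor}H_2$ and $H_4\simeq\Z^{r-1}\oplus\operatorname{Tor}H_1$, which gives the displayed lists. The closing remark is elementary: $\Z/r\oplus\Z/2$ is cyclic if and only if $\gcd(r,2)=1$.

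The main obstacle is the identification $\operatorname{im}\partial_3=j_*K_2$ together with the claim that $G_{\kappa,r}$ is the full cokernel. This needs $H_3(N,M)\to H_2(M)$ to land exactly in $K_2$, which holds by exactness once $K_2$ is the kernel of $H_2(M)\to H_2(N)$, as stated in Lemma~\ref{lem:cusp-sweeps-integral}. It also needs the Mayer--Vietoris sign conventions used in Proposition~\ref{prop:open-integral-torsion} to be those of the excision map. I would check the latter against Appendix~\ref{app:integral-mv-ledger}, but signs do not affect the isomorphism type of the cokernel.
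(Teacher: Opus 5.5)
Your proposal is correct and follows the paper's proof essentially step for step: the excision segment for $(X_r^\kappa,X_r^{\kappa,\circ})$ with $\operatorname{im}\partial_3=j_*K_2$, then Proposition~\ref{prop:open-integral-torsion} and Lemma~\ref{lem:relative-cusp-free} giving a split extension of $\Z^{r-1}$ by $G_{\kappa,r}$, and finally the Euler characteristic, Poincar\'e duality and the universal coefficient theorem for the remaining degrees. One trivial slip: the Euler identity is $2+2(r-1)-b_3=2r$, since the middle Betti number appears once rather than twice, and this still gives $b_3=0$.
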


\begin{proof}
\medskip\noindent\emph{The middle homology.}
Excision for the pair
$(X_r^\kappa,X_r^{\kappa,\circ})$ gives the exact segment
\[
 H_3(N^\kappa_{0,r}(\rho),M^\kappa_{0,r}(\rho))\longrightarrow
 H_2(X_r^{\kappa,\circ})\longrightarrow H_2(X_r^\kappa)
 \longrightarrow H_2(N^\kappa_{0,r}(\rho),M^\kappa_{0,r}(\rho))
 \longrightarrow H_1(X_r^{\kappa,\circ}).
 \tag{\ref{thm:complete-integral-homology}.3}
\]
The image of the first arrow is $j_*K_2$ by the long exact sequence of the
cusp pair.  Proposition~\ref{prop:open-integral-torsion} and
Lemma~\ref{lem:relative-cusp-free} therefore yield a short exact sequence
\[
 0\longrightarrow G_{\kappa,r}\longrightarrow
 H_2(X_r^\kappa;\mathbb Z)\longrightarrow\mathbb Z^{r-1}
 \longrightarrow0,
\]
where
\[
 G_{B,r}=\mathbb Z/r\oplus\mathbb Z/2,
 \qquad G_{C,r}=\mathbb Z/(3r).
\]
The sequence splits because its quotient is free, proving the formulas in
degree two.  Appendix~\ref{app:relative-free-ledger} records the
complete boundary map after splitting off its free image $\Z[a_0]$.

\medskip\noindent\emph{The remaining degrees.}
The fundamental-group computation gives the degree-one groups.  The
free rank of $H_2$ is $r-1$; Poincar\'e duality gives the same free rank in
degree four.  Since $e(X_r^\kappa)=2r$, the Euler characteristic identity
forces $b_3=0$.  Thus $H_3$ is finite.  The universal coefficient theorem
now gives
$H^3\simeq\operatorname{Ext}(H_2,\mathbb Z)$, and Poincar\'e duality gives
$H_3\simeq H^3$.  Hence $H_3$ has the same isomorphism type as the finite
summand of $H_2$.  Finally,
\[
 H_4\simeq\mathbb Z^{r-1}\oplus\operatorname{Ext}(H_1,\mathbb Z),
 \qquad H_5=0.
\]
Tensoring these groups with $\mathbb Q$ gives the rational cohomology
stated in Corollary~\ref{thm:rational-cohomology-total} below.
\end{proof}

\begin{corollary}[Rational cohomology]
\label{thm:rational-cohomology-total}
For every $r\ge1$,
\[
 H^q(Y_r;\mathbb Q)\simeq H^q(Z_r;\mathbb Q)\simeq
 \begin{cases}
 \mathbb Q,&q=0,6,\\
 \mathbb Q^{r-1},&q=2,4,\\
 0,&q=1,3,5.
 \end{cases}
\]
In particular, $Y_1$ and $Z_1$ are non-K\"ahler rational homology
six-spheres.
\end{corollary}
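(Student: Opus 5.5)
This corollary follows from Theorem~\ref{thm:complete-integral-homology} by the universal coefficient theorem over $\mathbb Q$, together with one extra input for the non-K\"ahler claim.

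\emph{Rational cohomology.} Since $\mathbb Q$ is a field, $H^q(X;\mathbb Q)\simeq\operatorname{Hom}(H_q(X;\mathbb Z),\mathbb Q)$ for $X=Y_r$ or $Z_r$. Torsion summands contribute nothing, so I only need the free ranks in the displayed integral homology. These are $1$ in degrees $0$ and $6$, and $r-1$ in degrees $2$ and $4$. In degrees $1$, $3$ and $5$ the groups are finite, namely $\mathbb Z/2$ or $\mathbb Z/3$, then $\mathbb Z/r\oplus\mathbb Z/2$ or $\mathbb Z/(3r)$, then $0$. Reading off these ranks gives exactly the stated table. As a consistency check, the alternating sum $2+2(r-1)=2r$ agrees with the Euler number computed earlier, and the degree-$4$ rank agrees with Poincar\'e duality on the closed oriented six-manifold.

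\emph{The index-one case.} For $r=1$ the ranks in degrees $2$ and $4$ vanish, so $H^*(Y_1;\mathbb Q)\simeq H^*(Z_1;\mathbb Q)\simeq H^*(S^6;\mathbb Q)$. This is the definition of a rational homology six-sphere.

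\emph{Non-K\"ahlerness.} This comes from Theorem~\ref{thm:algebraic-dimension-one}, whose proof via the invariant class $\widetilde q_{\kappa,r}$ does not use the homology computation. A cohomological shortcut is also available: a compact K\"ahler threefold has $[\omega]^3\neq0$, hence $b_2\geq1$, while $b_2(Y_1)=b_2(Z_1)=0$. Either argument suffices.

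There is no real obstacle here. The only point needing care is that the torsion in degree $3$ and the free parts in degrees $2$ and $4$ are read correctly from Theorem~\ref{thm:complete-integral-homology}, which has already been established.
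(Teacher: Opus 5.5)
Your proposal is correct and matches the paper's proof: the rational groups are read off from Theorem~\ref{thm:complete-integral-homology} via the universal coefficient theorem, and non-K\"ahlerness is cited from Theorem~\ref{thm:algebraic-dimension-one}. Your alternative argument, that a compact K\"ahler threefold has $b_2\geq1$, is also valid for $Y_1,Z_1$. Unlike the paper's invariant-class argument, however, it depends on the homology computation and covers only $r=1$.
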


\begin{proof}
Tensor Theorem~\ref{thm:complete-integral-homology} with $\mathbb Q$
and apply the universal coefficient theorem.  Non-K\"ahlerness was
proved in Theorem~\ref{thm:algebraic-dimension-one}.
\end{proof}

\begin{remark}[Torsion linking]\label{thm:torsion-linking}
For the complex orientation, the torsion linking pairing
\[
 \operatorname{Tor}H_2(X_r^\kappa;\Z)\times
 \operatorname{Tor}H_3(X_r^\kappa;\Z)\longrightarrow\Q/\Z
\]
is perfect by Poincar\'e duality and the universal coefficient theorem.
In the generators of \eqref{eq:reduced-torsion-presentation} and their
linking-dual degree-three generators it is the standard evaluation
pairing: $\operatorname{diag}(1/r,1/2)$ for Type B and $1/(3r)$ for
Type C, omitting the trivial order-one factor.  This specifies dual
bases, not an independently chosen geometric basis of $H_3$.
\end{remark}

\subsection{A simple-connectivity obstruction}

Let $g(x)=Ax+v/m$ act on $\Lambda_\R/\Lambda$, where $A^m=I$
and $v\in\Lambda^A$.  For each prime $p\mid m$, set
\[
 B_p=A^{m/p},\qquad N_{B_p}=I+B_p+\cdots+B_p^{p-1}.
\]
The cyclic action is free if and only if
\begin{equation}\label{eq:prime-subgroup-freeness}
 v\notin N_{B_p}\Lambda\qquad\text{for every prime }p\mid m.
\end{equation}
Indeed, a fixed point of $g^{m/p}$ gives
$(B_p-I)x+v/p=\lambda\in\Lambda$, and application of $N_{B_p}$ gives
$v=N_{B_p}\lambda$.  Conversely this equality implies
$\lambda-v/p\in\ker N_{B_p}=\im(B_p-I)$ over $\R$, so the fixed-point
equation is solvable.  Every nontrivial stabilizer in a finite cyclic
group contains a subgroup of prime order, proving the criterion.

For a rank-one coinvariant completion with two finite multiplicities, the
relation determinant is
\[
 p=m_1m_2\ell_0-m_2\ell_1-m_1\ell_2.
\]
For the three monodromy types it becomes
\[
 p_{34}=12\ell_0-4\ell_1-3\ell_2,\quad
 p_{46}=24\ell_0-6\ell_1-4\ell_2,\quad
 p_{36}=18\ell_0-6\ell_1-3\ell_2.
\]

\begin{corollary}[Simple-connectivity obstruction]

Within the rank-one coinvariant setting considered here,
every Type B completion satisfies $2\mid p_{46}$ and every Type C
completion satisfies $3\mid p_{36}$.  Hence neither type has a simply
connected completion.  For the standard Type B matrices $A_{j,r}$ of
the constructed families, freeness further implies
$p_{46}\equiv2\pmod4$.  The canonical Type A data attain $|p_{34}|=1$.
\end{corollary}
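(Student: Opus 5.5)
The divisibility claims are pure arithmetic on the displayed formulas, so I would start there. Since $p_{46}=24\ell_0-6\ell_1-4\ell_2$ and every coefficient is even, $2\mid p_{46}$ for all integers $\ell_i$. Likewise every coefficient of $p_{36}=18\ell_0-6\ell_1-3\ell_2$ is divisible by $3$, so $3\mid p_{36}$.

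To turn this into the topological obstruction, I would use the rank-one coinvariant reduction of Theorem~\ref{thm:van-kampen-reduction}. In that setting the fundamental group has the presentation \eqref{eq:unified-pi1}, and its abelianization is the cokernel of the relation matrix $R$, with $\det R=-p$. If $p=0$, the cokernel is infinite. Otherwise its order is $|p|\geq2$, since $p$ is a nonzero multiple of $2$ (Type B) or of $3$ (Type C). In either case $H_1\neq0$, so no completion of either type is simply connected. I would also check that this step does not use the coprimality hypothesis of Theorem~\ref{thm:van-kampen-reduction}: it is needed only for cyclicity of the group, not for the order of the cokernel.

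For the congruence $p_{46}\equiv2\pmod4$, reduce modulo $4$. Since $24\ell_0\equiv0$ and $4\ell_2\equiv0$, we get $p_{46}\equiv-6\ell_1\equiv2\ell_1\pmod4$. It therefore suffices to show that freeness at the order-four point forces $\ell_1=\chi(v_1)$ to be odd. I would apply the prime-order criterion \eqref{eq:prime-subgroup-freeness} with $p=2$, that is, with $B_2=(A^B_{1,r})^2$ and $N_{B_2}=I+B_2$, and show that every $A^B_{1,r}$-invariant vector with even $\chi$-value lies in $N_{B_2}\Lambda$. Concretely, I would take $\Lambda^{A^B_{1,r}}$, which has rank two because it equals the saturation of the image of the norm $N^B_1$. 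It contains $v_{1,r}=(1,-r,-r,0)^{\mathsf T}$ and $\delta$; I would confirm from the norm matrix that these two vectors generate it. I would then compute $(I+B_2)\Lambda$ directly from the matrix $T^B_{1,r}$ and check that it contains $2v_{1,r}$ and $\delta$, or suitable combinations of them. Since $\chi$ is invariant, $\chi((I+B_2)\lambda)=2\chi(\lambda)$, so $(I+B_2)\Lambda\cap\Lambda^{A}$ lies inside $\{\chi\ \text{even}\}$. The claim is that this containment is an equality. I expect this explicit index computation to be the main obstacle, because it is where the particular standard matrices enter and it must hold uniformly in $r$.

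Finally, for Type A, I would take the canonical data of \cite{AlpogeS6}, for instance $(\ell_0,\ell_1,\ell_2)=(0,-1,1)$. These satisfy $\gcd(3,\ell_1)=\gcd(4,\ell_2)=1$, so the fillings are free and the group is cyclic by the same van Kampen argument. The value is $p_{34}=12\cdot0-4(-1)-3(1)=1$, so $|p_{34}|=1$.
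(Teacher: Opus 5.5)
Your proposal is correct and follows the paper's proof essentially step for step. The divisibilities are read off from the coefficients; the cokernel-order argument for non-simple-connectivity needs no coprimality, as you note; and the congruence comes from $\Lambda^{A_{1,r}}=\Z v_0\oplus\Z\delta$ and $(I+A_{1,r}^2)\Lambda=2\Z v_0\oplus\Z\delta$ combined with the prime-$2$ freeness criterion. The only cosmetic difference is that the paper takes the canonical Type A triple to be $(\ell_0,\ell_1,\ell_2)=(0,1,-1)$, giving $p_{34}=-1$, rather than your sign-reversed choice; the absolute value is the same.
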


\begin{proof}
The displayed determinant formulas immediately give the two
divisibilities.  A simply connected completion would have trivial
abelianized attaching group, which requires determinant $\pm1$; if the
determinant vanishes, that group is infinite.

For the standard order-four Type B matrix, put
$v_0=(1,-r,-r,0)^{\mathsf T}$.  Direct calculation gives
\[
 \Lambda^{A_{1,r}}=\Z v_0\oplus\Z\delta,\qquad
 (I+A_{1,r}^2)\Lambda=2\Z v_0\oplus\Z\delta.
\]
Write a filling vector as $v=\ell_1v_0+b\delta$.  Criterion
\eqref{eq:prime-subgroup-freeness} for the unique prime $p=2$ forces
$\ell_1$ to be odd.  Therefore
$p_{46}\equiv-2\ell_1\equiv2\pmod4$.
For Type A, $(\ell_0,\ell_1,\ell_2)=(0,1,-1)$ gives $p_{34}=-1$.
\end{proof}

\section{Extensions and additional invariants}\label{sec:extensions}

We first prove that the invariant tensors of the Type B/C monodromies
are unchanged under finite-index restriction.  This leads to
compact examples over arbitrary curves and additional families obtained
by translation gluing.  We then extend the local periodic quotient
construction to arbitrary dimension and compute the additive
Bockstein spectral sequences of the original threefolds.

\subsection{Rigidity under finite-index restriction}

Let $\Lambda=\Z\gamma\oplus\Z u\oplus\Z w\oplus\Z\delta$ be the
homology lattice and put
\[
 \mathcal G_{\kappa,r}=\langle A^\kappa_{1,r},A_{2,r}\rangle\subset GL(\Lambda).
\]
This deck-matrix group is also generated by the forward transports
$\mathsf P_j=A_j^{-1}$.  Thus the finite-index condition below applies
to the geometric local system as well.

\begin{theorem}[Finite-index invariant tensors]\label{thm:finite-index-invariants}
Let $r\geq1$ and $\kappa=B,C$.  For every finite-index subgroup
$H\subset\mathcal G_{\kappa,r}$,
\begin{align}
 (\textstyle\bigwedge^2\Lambda^*)^H
   &=\Z\widetilde q_{\kappa,r},&
 (\textstyle\bigwedge^2\Lambda^*_{\R})^H
   &=\R\widetilde q_{\kappa,r},\label{eq:finite-index-H2}\\
 \Lambda^H&=\Z\delta,&(\Lambda^*)^H&=\Z\gamma^*,\label{eq:finite-index-H1}\\
 \operatorname{Cent}_{GL_4(\Z)}(H)
   &=\{\varepsilon I+nE_{41}:\varepsilon=\pm1,\ n\in\Z\}.
   &&\label{eq:finite-index-centralizer}
\end{align}
Thus finite-index restriction preserves the invariant alternating
line and the integral centralizer.
\end{theorem}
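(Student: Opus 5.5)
The strategy is to replace the finite-index subgroup $H$ by a power of a single element together with a finite-order pair, and then reduce to the full-group computations already done. The key is that $\mathcal G_{\kappa,r}$ contains the cusp monodromy $\mathsf M^\kappa_{0,r}=(A_{1,r}A_{2,r})^{-1}$, which is unipotent of infinite order, together with the conjugates of the finite-order generators. If $H$ has index $n$, then for every $g\in\mathcal G_{\kappa,r}$ some power $g^k$ with $1\le k\le n$ lies in $H$ (the cosets $g^iH$ cannot all be distinct). In particular $H$ contains $N_0=\mathsf M_{0,r}^{k}$ for some $k\ge1$, and also the conjugates $hN_0h^{-1}$ by elements $h\in\mathcal G_{\kappa,r}$. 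Note that the finite-order elements $A_j$ themselves need not lie in $H$.

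The core observation is the following. Invariants of a unipotent $\mathsf M$ over a torsion-free module (or over $\R$) are unchanged by passing to a power, since $\ker(\mathsf M^k-I)=\ker(\mathsf M-I)$ for unipotent $\mathsf M$. Indeed, $\mathsf M^k-I=(\mathsf M-I)(I+\mathsf M+\dots+\mathsf M^{k-1})$, and the second factor is unipotent-plus-$(k-1)I$, hence invertible over $\Q$. The same holds for centralizers: a matrix commuting with $\mathsf M^k$ commutes with $\log\mathsf M^k=k\log\mathsf M$, and hence with $\mathsf M$. So every invariant or commutant of $H$ is also an invariant or commutant of the subgroup $\mathcal K$ generated by all conjugates $h\mathsf M_{0,r}h^{-1}$, $h\in\mathcal G_{\kappa,r}$. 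This $\mathcal K$ is the normal closure of the cusp monodromy. The quotient $\mathcal G_{\kappa,r}/\mathcal K$ is generated by the images of $A_1,A_2$ with $A_1A_2\equiv1$. It is therefore cyclic of order dividing $\gcd(m_1,m_2)$, namely $2$ in Type B and $3$ in Type C. The same unipotent argument applies to $\wedge^2\Lambda^*$ and $\Lambda^*$, since exterior and dual powers of unipotents are unipotent. Thus $X^H\subseteq X^{\mathcal K}$ for each module $X$ in question, and it suffices to show $X^{\mathcal K}=X^{\mathcal G}$. The latter groups are already known: Lemma~\ref{lem:invariant-class-all-parameters} gives the $\wedge^2$ invariants, the computation in Corollary~\ref{cor:connected-automorphisms} gives $\Lambda^{\mathcal G}=\Z\delta$ (and dually $\Z\gamma^*$), and Lemma~\ref{lem:monodromy-centralizer} gives the centralizer. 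The reverse inclusions are trivial.

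To prove $X^{\mathcal K}=X^{\mathcal G}$, I will use two explicit unipotents in $\mathcal K$: $\mathsf M_0$ and $A_1\mathsf M_0A_1^{-1}$ (or the conjugate by $A_2$). The kernel of $\mathsf M_0-I$ on $\Lambda$ is $\Lambda_{\mathrm{van}}=\Z\langle w,\delta\rangle$. By Lemma~\ref{lem:normal-closure}, its $A_1$-translate $A_1\Lambda_{\mathrm{van}}$ contains $A_1w$, which involves $u$. Intersecting $\Lambda_{\mathrm{van}}$ with $A_1\Lambda_{\mathrm{van}}$ should leave only $\Z\delta$. Dually, $\ker(\mathsf M_0^{\mathsf T}-I)$ on $\Lambda^*$ is spanned by $\gamma^*,u^*$, and the conjugate cuts this down to $\Z\gamma^*$. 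For the centralizer, a matrix commuting with $N_0=\log\mathsf M_0$ and with $A_1N_0A_1^{-1}$ commutes with the Lie algebra they generate. I expect a short rational computation to show that this forces the form $aI+eE_{41}$, and integrality then finishes as in Lemma~\ref{lem:monodromy-centralizer}. For $\wedge^2\Lambda^*$, I will compute the rank-two kernel of $\wedge^2\mathsf P_0-I$ recorded in the proof of Lemma~\ref{lem:cusp-sweeps-integral}. Dually, this kernel on cohomology is of rank four, by the Smith form $\operatorname{diag}(1,1,0,0,0,0)$. I will then intersect it with its $A_1$-conjugate and check that the intersection is the line $\Z\widetilde q_{\kappa,r}$. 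In each case it is cleanest to verify equality over $\Q$ and then saturate, since the integral invariant groups of $\mathcal G$ are already saturated.

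The main obstacle is this last intersection for $\wedge^2$. The two kernels are rank-four subspaces of a rank-six space and a priori could meet in dimension two. If the single conjugate does not suffice, I would add the conjugate by $A_2$ (the order-six element) and intersect all three kernels. Alternatively, I would argue that $X^{\mathcal K}$ is a $\mathcal G/\mathcal K$-module with $\mathcal G/\mathcal K$ of order at most $3$. Under that approach I would compute the action of $A_2$ on the candidate $\mathcal K$-invariants. Any $\mathcal K$-invariant not fixed by $A_2$ would have to be an eigenvector with a nontrivial square or cube root of unity as eigenvalue, and I would exclude this using the explicit $(1,1)$-signature data: the only real invariant directions compatible with both unipotent kernels are spanned by $\widetilde q$.
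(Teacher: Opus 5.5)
Your reduction is the paper's. A power of every $\mathcal G_{\kappa,r}$-conjugate of the cusp unipotent lies in $H$, and invariants and commutants of a unipotent coincide with those of its logarithm. One then intersects over several conjugates; the paper uses $N_i=A_2^iNA_2^{-i}$, $i=0,1,2$, with $N=A_1A_2-I$. Routing this through the normal closure $\mathcal K$ is harmless.

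The gap is the number of conjugates. The pair $\mathsf M_0$, $A_1\mathsf M_0A_1^{-1}$ does give $\Lambda^H=\Z\delta$ and $(\Lambda^*)^H=\Z\gamma^*$, but it cannot give the other two assertions. In Type B, $A_1(\mathsf M_0-I)A_1^{-1}$ sends $\gamma\mapsto-r\delta$ and $w\mapsto-u$, and kills $u$ and $\delta$. Both unipotents therefore preserve the rational splitting $\Q\langle\gamma-2ru,\delta\rangle\oplus\Q\langle u,w+\delta\rangle$. This has two consequences:
\begin{itemize}
\item Their common invariants in $\bigwedge^2\Lambda^*_\Q$ form a plane. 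Besides $\widetilde q_{B,r}$, it contains $\gamma^*\wedge(w^*-\delta^*)$, the area form of the first summand. You anticipated this case.
\item Their commutant is three-dimensional. It contains the idempotent $L_0$ with $L_0\gamma=2ru$, $L_0u=u$, $L_0w=w+\delta$, $L_0\delta=0$. So the ``short rational computation'' you expect to force $aI+eE_{41}$ from these two elements fails.
\end{itemize}
Type C behaves the same way, with invariant splitting $\Q\langle 2\gamma-3ru,\delta\rangle\oplus\Q\langle u,2w+\delta\rangle$.

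The repair is the one you mention only for $\bigwedge^2$: add the conjugate $A_2\mathsf M_0A_2^{-1}$, which is the paper's $N_1$ up to sign and does not preserve the splitting. With three conjugates, the $\bigwedge^2$ equations cut the plane down to $\R\widetilde q_{\kappa,r}$. The commutant drops to $\Q I+\Q E_{41}$, and your integrality step then finishes. You should drop the alternative route through the $\mathcal G/\mathcal K$-action and the $(1,1)$-signature. It relies on the claim that $\widetilde q$ alone is compatible with both unipotent kernels, which is false by the plane above. Signature also has no bearing on invariance. Finally, a small correction: $\bigwedge^2\mathsf P_0-I$ has rank two, so its kernel has rank four, not two, on both homology and cohomology.
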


\begin{proof}
Set $s=s_\kappa\in\{2,3\}$ and $h=s-1$.  Direct multiplication of the
homology matrices in Appendix~\ref{app:monodromy-data} gives
\[
 N=A^\kappa_{1,r}A_{2,r}-I
 =\begin{pmatrix}
 0&0&0&0\\0&0&0&0\\-sr&-h&0&0\\-r&-1&0&0
 \end{pmatrix},\qquad N^2=0.
\]
Put $N_i=A_2^iNA_2^{-i}$ for $i=0,1,2$.  These conjugates are
\begin{equation}\label{eq:three-cusp-conjugates}
 N_0=N,\quad
 N_1=\begin{pmatrix}0&0&0&0\\sr&0&h&0\\0&0&0&0\\-r&0&-1&0\end{pmatrix},
 \quad
 N_2=\begin{pmatrix}0&0&0&0\\sr&-h&h&0\\sr&-h&h&0\\-r&1&-1&0\end{pmatrix}.
\end{equation}
For each $i$, some positive power of $I+N_i$ belongs to $H$, because
$H$ has finite index.  No normality of $H$ is needed.

Represent an alternating form by its skew matrix $Q$, with independent
entries $(a,b,c,d,e,f)$ in positions $(12,13,14,23,24,34)$.
If $Q$ is $H$-invariant, then for some $m_i>0$ and every integer $j$,
\[
 (I+jm_iN_i)^{\mathsf T}Q(I+jm_iN_i)=Q.
\]
Comparing coefficients of this polynomial in $j$ gives
$N_i^{\mathsf T}Q+QN_i=0$.  Five entries of these three equations are
\begin{equation}\label{eq:five-invariant-equations}
 \begin{split}
 -hb-c+srd+re&=0,\qquad rf=0,\qquad re=0,\\
 ha-c+srd+rf&=0,\\
 -ha-hb+c-srd+re&=0.
 \end{split}
\end{equation}
Since $r,h\neq0$, the equations force $e=f=0$, $a=-b$, then
$c=srd$ and $a=b=0$.  The remaining form is
$d(sr\,\gamma^*\wedge\delta^*+u^*\wedge w^*)$.
This form is invariant under both original generators by
Lemma~\ref{lem:invariant-class-all-parameters}.  Over $\Z$ its
coefficient $d$ is an integer, proving both statements in
\eqref{eq:finite-index-H2}.

The kernels of $N_0,N_1,N_2$ over $\Q$ are, respectively,
\[
 \langle w,\delta\rangle,\qquad
 \langle u,\delta\rangle,\qquad
 \langle u+w,\delta\rangle.
\]
Their intersection is $\Q\delta$ and the sum of their images is
$\Q\langle u,w,\delta\rangle$.  Intersecting with the integral
lattice or its dual proves \eqref{eq:finite-index-H1}.

Finally, a matrix commuting with $H$ commutes with all $N_i$ and
preserves their kernels.  Preservation of the three planes above forces
it to have the form
\[
 L=\begin{pmatrix}
 a_0&0&0&0\\p&a&0&0\\q&0&a&0\\e&f&g&d
 \end{pmatrix}.
\]
The $(4,2)$ entry for $[L,N_0]$, the $(4,3)$ entry for $[L,N_1]$, and
the $(4,2)$ entry for $[L,N_2]$ give
\[
 a-d=hg=-hf=-h(f+g).
\]
Hence $a=d$ and $f=g=0$.  The $(4,1)$ entries for $N_0,N_1$ give
$p=q=-r(a_0-a)$.  The $(3,1)$ entry for $N_0$ then gives
$r(s-h)(a_0-a)=0$, so $a_0=a$ and $p=q=0$.
Thus $L=aI+eE_{41}$.  Conversely these matrices commute with the
original generators.  Integrality and $\det L=a^4=\pm1$ give exactly
\eqref{eq:finite-index-centralizer}.
\end{proof}

\subsection{Compact examples over arbitrary curves}

\begin{lemma}[An invariant-cone criterion]\label{lem:invariant-cone-criterion}
Let $f:X\to C$ be a proper surjective holomorphic map from a connected
compact complex manifold to a compact Riemann surface, with connected
fibres.  Assume that over $C^\circ=C\setminus S$, for a finite set $S$,
it is a holomorphic submersion with complex $n$-torus fibres.
At each smooth fibre suppose that the real monodromy-invariant space
in $H^2$ contains no nonzero class with a translation-invariant
semipositive $(1,1)$ representative.  Then every divisor is vertical,
$f^*\C(C)$ is the whole meromorphic function field of $X$, and $f$ is
the algebraic reduction.  In particular $a(X)=1$, and $X$ lies outside
Fujiki's class $\mathcal C$.
\end{lemma}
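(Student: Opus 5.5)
The plan is to run the proof of Theorem~\ref{thm:algebraic-dimension-one} and Corollary~\ref{cor:non-Fujiki} again, replacing the explicit signature computation by the stated cone hypothesis. The first step is flatness. For a line bundle $L$ on $X$, the restriction $c_1(L|_{F_t})$ to the smooth fibres is a flat section of $R^2f_*\Z$ over $C^\circ$, because fibre inclusions along a path are homotopic inside $X$. Its real image therefore lies in the monodromy-invariant subspace of $H^2(F_t;\R)$.

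Next, suppose an irreducible divisor $D$ dominates $C$. Then $D|_{F_t}$ is a nonzero effective divisor on the $n$-torus $F_t$ for general $t$. The averaging argument of Lemma~\ref{lem:effective-torus-class} extends verbatim to dimension $n$. One averages the integration current $[D|_{F_t}]$ over translations and gets a translation-invariant semipositive $(1,1)$ form in the same class. It is nonzero because its pairing with $\omega^{n-2}$ for an invariant K\"ahler form $\omega$ equals $\int_{D|_{F_t}}\omega^{n-2}\cdot\omega>0$; for $n=1$ one pairs degrees instead. This produces a nonzero invariant class with a semipositive invariant representative, contradicting the hypothesis. Hence all divisors are vertical.

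The meromorphic function argument is the same as before. A nonconstant $g$ has vertical divisor, so $g|_{F_t}$ is holomorphic by normality, hence constant. Remmert's proper mapping theorem applied to the graph closure, followed by normalization of the image curve, gives $g=f^*h$. Thus $f^*\C(C)$ is the whole function field, $a(X)=1$, and $f$ is the algebraic reduction.

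For exclusion from class $\mathcal C$, invoke \cite[Theorems~3.2 and~3.4]{DemaillyPaun} to get a K\"ahler current $T$ with analytic singularities. Choose a smooth fibre outside its singular locus, restrict $T$, and average over translations. The result is a positive definite invariant form whose class is monodromy invariant, and it is a nonzero semipositive class, again a contradiction.

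The step that needs most care is the restriction of $T$. I would argue that $T\ge\epsilon\omega_X$ restricts to a closed current on $F_t$ dominating $\epsilon\omega_X|_{F_t}$, since the local potentials are smooth near a generic fibre. Its class is the restriction of $[T]$, which is flat and hence invariant. Checking that the averaged form is strictly positive, and not merely nonzero, is unnecessary: a nonzero semipositive invariant representative already violates the hypothesis.
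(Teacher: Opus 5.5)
Your proposal is correct and follows the paper's proof essentially step for step: flatness of restricted global classes, translation-averaging of the restricted divisor current to violate the cone hypothesis, the graph argument for meromorphic functions, and restriction plus averaging of a Demailly--P\u{a}un K\"ahler current for class $\mathcal C$. Two small corrections. First, the positive pairing is with $\omega^{n-1}$; your integral $\int_{D|_{F_t}}\omega^{n-1}$ is the right quantity. Second, you should require the smooth fibre only to be \emph{not contained in} the singular locus of $T$, as the paper does, because that locus may meet every fibre. The restriction of the analytically singular quasi-plurisubharmonic potentials is still well defined on such a fibre and still dominates a positive multiple of $\omega_X|_{F_t}$.
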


\begin{proof}
Restrictions of global classes are monodromy-invariant.  On an
$n$-torus the translation average of the integration current of a
nonzero effective divisor is a nonzero invariant semipositive form:
its pairing with an invariant K\"ahler form $\omega^{n-1}$ is
strictly positive.  Thus a horizontal divisor contradicts the
hypothesis on a general smooth fibre.  A meromorphic function then
has vertical zero and pole divisors, is constant on a general fibre,
and descends to $C$ by the graph argument in
Theorem~\ref{thm:algebraic-dimension-one}.  A compact Riemann surface
has meromorphic function field of transcendence degree one, proving
the assertions on algebraic reduction.

If $X$ were in class $\mathcal C$, it would carry a K\"ahler current
with analytic singularities; see \cite[Theorems~3.2 and~3.4]{DemaillyPaun}.  Restrict to a smooth fibre
not contained in the singular locus and average the restricted current
under translations.  The result is a positive definite
invariant form representing the restriction of a global real class,
contradicting the stated semipositivity condition.  This is the same
restriction argument as in Corollary~\ref{cor:non-Fujiki} and is
independent of the dimension of the fibre.
\end{proof}

\begin{theorem}[Finite base change away from the special values]
\label{thm:arbitrary-base-curve}
Let $b:C\to\Pone$ be a finite holomorphic map of degree $d$ from a
connected compact Riemann surface, unramified over $\{0,1,\infty\}$.
For either type, $r\geq1$, and $c\in\mathcal U_\kappa$, put
\begin{equation}\label{eq:base-changed-threefold}
 X^\kappa_{r,c;b}=X^\kappa_{r,c}\times_{\Pone}C,
 \qquad f_b:X^\kappa_{r,c;b}\longrightarrow C.
\end{equation}
This is a smooth compact connected complex threefold.  There are $d$
multiple fibres of each prescribed multiplicity and $d$ toroidal
fibres, each with $r$ normalization components isomorphic to $dP_6$.
Moreover
\begin{equation}\label{eq:base-change-geometric-conclusions}
 e(X^\kappa_{r,c;b})=2rd,\qquad
 a(X^\kappa_{r,c;b})=1,\qquad
 X^\kappa_{r,c;b}\notin\mathcal C.
\end{equation}
All divisors are vertical, $f_b$ is the algebraic reduction, and
$\Aut^0(X^\kappa_{r,c;b})\simeq\C^*$.

For fixed $b,\kappa,r$, varying $c$ gives a proper holomorphic
submersion over $\mathcal U_\kappa$ on a fixed oriented smooth
six-manifold.  Over the identity of $C$ the exact equivalence is
\begin{equation}\label{eq:base-change-parameter-period}
 X^\kappa_{r,c;b}\simeq_C X^\kappa_{r,c';b}
 \quad\Longleftrightarrow\quad c-c'\in(6/r)\Z.
\end{equation}
There are also uncountably many unmarked biholomorphism classes in
this family.
\end{theorem}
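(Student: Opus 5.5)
Since $b$ is unramified over $\{0,1,\infty\}$, the fibre product $X^\kappa_{r,c;b}$ is locally, near each preimage of a special value, a copy of the corresponding local model of $X^\kappa_{r,c}$. Away from these preimages it is the pullback of a smooth torus fibration along an arbitrary holomorphic map of curves. I would first prove smoothness, compactness and connectedness this way. Near a critical point of $f^\kappa_{r,c}$ the map $b$ is a local biholomorphism on the relevant base disc, so the fibre product is locally biholomorphic to $X^\kappa_{r,c}$. Elsewhere $f^\kappa_{r,c}$ is a submersion, and the base change of a submersion is smooth. Properness is inherited, and compactness follows. Connectedness uses the connected fibres together with connectedness of $C$. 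Each of $0,1,\infty$ has exactly $d$ preimages, which gives the stated counts of multiple and toroidal fibres, with the local descriptions of Proposition~\ref{prop:bielliptic-fibres} and Theorem~\ref{thm:master-cusp}. For the Euler number, all other fibres have $e=0$, so it is concentrated at the $d$ toroidal fibres. Each contributes $2r$, giving $2rd$.

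For the algebraic statements I would apply Lemma~\ref{lem:invariant-cone-criterion}. The monodromy group of the pulled-back local system is $\rho(b_*\pi_1(C^\circ))$. Since $b$ restricted to $C^\circ=b^{-1}(\Pone\setminus\{0,1,\infty\})$ is a finite covering, this is a finite-index subgroup $H$ of the orbifold monodromy group $\mathcal G_{\kappa,r}$; concretely, its image in $\pi_1$ of the thrice-punctured sphere has index at most $d$. By Theorem~\ref{thm:finite-index-invariants}, $(\bigwedge^2\Lambda^*_\R)^H=\R\widetilde q_{\kappa,r}$. By Lemma~\ref{lem:invariant-class-all-parameters}, nonzero multiples of $\widetilde q_{\kappa,r}$ have indefinite Appell--Humbert form, so no nonzero invariant class is semipositive. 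The criterion then gives vertical divisors, $a=1$, the algebraic reduction, and exclusion from $\mathcal C$.

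For $\Aut^0$ I would repeat the proof of Corollary~\ref{cor:connected-automorphisms}. Vertical vector fields correspond to sections of $b^*\mathcal V_\kappa\simeq\mathcal O_C\oplus b^*\mathcal O(-1)$. Here $b^*\mathcal O(-1)$ has negative degree, so $H^0=\C$, spanned by $v_\delta$. The period lattice of the $\C$-action is $\Lambda^H=\Z\delta$ by \eqref{eq:finite-index-H1}, hence $\Aut^0\simeq\C/\Z\simeq\C^*$. A subtle point is that the argument must first show that every automorphism preserves the fibration. The algebraic reduction gives this, up to an automorphism of $C$, and that is all one needs for the identity component.

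The simultaneous family over $\mathcal U_\kappa$ is the base change of Proposition~\ref{prop:simultaneous-parameter-family} along $b\times\mathrm{id}$, so Ehresmann's theorem gives a fixed oriented six-manifold.

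For \eqref{eq:base-change-parameter-period} I would argue each direction separately.

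\emph{($\Leftarrow$)} Pull back the biholomorphism of Proposition~\ref{prop:parameter-translations}, which lies over the identity of $\Pone$.

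\emph{($\Rightarrow$)} An isomorphism over $C$ has a flat integral linear part commuting with $H$. By \eqref{eq:finite-index-centralizer} it is $\varepsilon I+nE_{41}$. Comparing periods, as in Proposition~\ref{prop:parameter-translations}, gives $R=\varepsilon I$ and $\varepsilon r(c-c')=n$. The local norm condition at any preimage of $1$ is the same order-six condition as before, since the local model is unchanged. It forces $\varepsilon=1$ and $6\mid n$.

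\textbf{Main obstacle.} The uncountability of \emph{unmarked} classes is where I expect the difficulty. An unmarked biholomorphism induces an automorphism of $C$ preserving $b^{-1}\{0,1,\infty\}$ with its types, but it need not commute with $b$. I would handle this with a countability argument. For $g(C)\ge2$, $\Aut(C)$ is finite, and for $g\le1$ the subgroup preserving the nonempty finite set of $3d$ special points is finite. For each such automorphism $\alpha$, together with each of the countably many integral linear parts, the period comparison pins down $c'$ in a countable set relative to $c$. Hence each unmarked class meets $\mathcal U_\kappa$ in a countable set. Since $\mathcal U_\kappa$ is uncountable, there are uncountably many unmarked classes.
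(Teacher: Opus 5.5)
Your proposal is correct and follows the paper's proof essentially step for step: repeated local models over the unramified special values, finite-index monodromy fed into Theorem~\ref{thm:finite-index-invariants} and Lemma~\ref{lem:invariant-cone-criterion}, the pulled-back translation bundle for $\Aut^0$, the centralizer--period--order-six-norm argument for \eqref{eq:base-change-parameter-period}, and the same countability argument for unmarked classes. One slip to fix: $b|_{C^\circ}$ need not be an unramified covering, since $b$ may branch over points of $\Pone\setminus\{0,1,\infty\}$ (as in the hyperelliptic example). First delete those branch values to get a genuine degree-$d$ covering, then use that refilling the punctures is surjective on $\pi_1$, which keeps the index at most $d$---exactly as the paper argues.
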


\begin{proof}
Over the three special values, $b$ is a local biholomorphism, so the
original local completions are simply repeated.  At a branch point
lying over the smooth locus, coordinates for the original submersion
and the base map have the form $f(z_1,z_2,t)=t$ and $t=v^e$.
Their fibre product has coordinates $(z_1,z_2,v)$ and is smooth.
This also proves smoothness jointly with the parameter $c$.
Properness follows by base change; compactness and connectedness
follow from compactness of $C$ and connectedness of the fibres.
The same local argument applied to
Proposition~\ref{prop:simultaneous-parameter-family} gives a proper
holomorphic submersion in $c$, and Ehresmann's theorem applies.
The finite map to the original threefold is branched along the smooth
fibres above the branch values of $b$.

Let $R$ be the finite set of branch values in
$\Pone\setminus\{0,1,\infty\}$.  Away from $R$ the map of bases is a
covering of degree $d$, so its fundamental-group image has index $d$.
Filling back the points of $R$ gives a surjection on base fundamental
groups; the image of the covering subgroup therefore still has finite
index, at most $d$.  Filling their preimages in $C$ gives precisely
the monodromy image of the pulled-back torus family.  Its image in
$\mathcal G_{\kappa,r}$ consequently has finite index.
Theorem~\ref{thm:finite-index-invariants} and
Lemma~\ref{lem:invariant-class-all-parameters} show that its invariant
real line still has signature $(1,1)$ on every smooth fibre.  Neither
sign of a nonzero multiple is semipositive.  Apply
Lemma~\ref{lem:invariant-cone-criterion} to prove all assertions on
divisors and algebraic reduction, including exclusion from class
$\mathcal C$.

All smooth torus fibres and all reduced bielliptic fibres have Euler
number zero.  The Euler characteristic of a multiple fibre is that
of its reduction.  Stratifying the base and using additivity of
compactly supported Euler characteristic, only the $d$ cusp fibres
contribute; each contributes $2r$.  This proves the first formula
in \eqref{eq:base-change-geometric-conclusions}.

A biholomorphism over $C$ has flat integral linear part commuting
with the finite-index monodromy image.  Equation
\eqref{eq:finite-index-centralizer} gives $L=\varepsilon I+nE_{41}$.
At a marked smooth fibre the period identity forces
$R=\varepsilon I_2$ and $\varepsilon r(c-c')=n$ exactly as in
Proposition~\ref{prop:parameter-translations}.  There is an unchanged
order-six root model above each point of $b^{-1}(1)$.  Its norm
lattice is $6\Z\gamma\oplus6\Z\delta$, forcing $\varepsilon=1$ and
$6\mid n$.  Conversely the original isomorphisms for $6\mid n$ pull
back to $C$.  This proves \eqref{eq:base-change-parameter-period}.

For unmarked isomorphisms, the algebraic reduction forces a base
automorphism preserving the three distinguished sets of special
values.  This automorphism group is finite: it preserves the finite
set $b^{-1}\{0,1,\infty\}$ of at least three points, and the
pointwise stabilizer is finite for every genus.  For a fixed such
automorphism and a fixed integral homology map, at most one target
parameter $c'$ can occur for a fixed $c$.  Indeed, at a smooth base
point the two possible target period matrices would have the same
kernel after the same integral change of marking.  They would differ
by a complex-linear left factor; their last two columns are both
$I_2$, making that factor the identity and forcing the two parameters
to coincide.  There are only countably many integral maps, so every
unmarked isomorphism class meets the parameter half-plane in a
countable set.  The half-plane is uncountable.

Finally the vertical translation bundle is
$b^*\mathcal V_\kappa\simeq\mathcal O_C\oplus b^*\mathcal O_{\Pone}(-1)$.
This follows locally at the unchanged root and cusp charts and by
ordinary pullback at all smooth charts, including branch points.
The second summand has negative degree $-d$, hence no sections.
An element of $\Aut^0$ fixes the base and is a translation on the
smooth fibres; as in Corollary~\ref{cor:connected-automorphisms}, its
vector fields are exactly the sections of this bundle.  Their
one-dimensional flow has kernel $\Z\delta$ by
\eqref{eq:finite-index-H1}.  Its connected automorphism group is
therefore $\C/\Z\simeq\C^*$.
\end{proof}

\begin{corollary}[Every compact curve occurs]\label{cor:every-base-curve}
For every connected compact Riemann surface $C$, both types admit
compact threefolds with algebraic reduction onto $C$ as in
Theorem~\ref{thm:arbitrary-base-curve}.  For a fixed choice of $b$ and
$r$, one oriented smooth six-manifold carries uncountably many of
these complex structures.
\end{corollary}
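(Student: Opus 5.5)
The proof reduces to producing, for each compact curve $C$, a finite map $b:C\to\Pone$ that is unramified over $\{0,1,\infty\}$; once such a map exists, Theorem~\ref{thm:arbitrary-base-curve} gives everything else.

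\textbf{Step 1: construct $b$.} Start from any nonconstant meromorphic function $g:C\to\Pone$. Such a function exists by the Riemann existence theorem, or because every compact Riemann surface is projective. The map $g$ has finitely many branch values $R\subset\Pone$. Choose three distinct points $a_0,a_1,a_2\in\Pone\setminus R$; this is possible because $R$ is finite. Let $\phi\in\operatorname{PGL}_2(\C)$ be the Möbius transformation sending $a_0,a_1,a_2$ to $0,1,\infty$. Put $b=\phi\circ g$. Since $\phi$ is an automorphism, the branch values of $b$ are exactly $\phi(R)$, and this set is disjoint from $\{0,1,\infty\}$. So $b$ is unramified over the three special values, and its degree is $d=\deg g\ge1$.

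\textbf{Step 2: apply the base-change theorem.} Fix $\kappa\in\{B,C\}$ and $r\ge1$, and form $X^\kappa_{r,c;b}$ for $c\in\mathcal U_\kappa$. Theorem~\ref{thm:arbitrary-base-curve} then says that:
\begin{itemize}
\item $X^\kappa_{r,c;b}$ is a smooth compact connected threefold;
\item $f_b$ is its algebraic reduction, so $a=1$;
\item it lies outside class $\mathcal C$.
\end{itemize}
The same theorem shows that varying $c$ gives a proper holomorphic submersion over $\mathcal U_\kappa$. By Ehresmann's theorem, all members with fixed $b$ and $r$ therefore live on one oriented smooth six-manifold. It also shows that each unmarked biholomorphism class meets $\mathcal U_\kappa$ in a countable set. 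Since $\mathcal U_\kappa$ is uncountable, this gives uncountably many pairwise non-biholomorphic complex structures on that six-manifold.

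\textbf{Main obstacle.} The only genuinely new input is the existence of a nonconstant meromorphic function on $C$; it is classical. Everything else is already contained in Theorem~\ref{thm:arbitrary-base-curve}. Two small points need checking. First, the Möbius normalisation must move the three special values off the branch locus; this works because $\operatorname{PGL}_2(\C)$ acts triply transitively and $R$ is finite. Second, when $C=\Pone$ one may simply take $b=\mathrm{id}$, which recovers the original families.
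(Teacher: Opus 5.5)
Your proposal is correct and follows the paper's proof: you compose a nonconstant meromorphic function on $C$ with a M\"obius transformation that moves its finite branch-value set off $\{0,1,\infty\}$. You then apply Theorem~\ref{thm:arbitrary-base-curve} directly, both for the algebraic-reduction conclusions and for the uncountably many complex structures on one oriented six-manifold.
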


\begin{proof}
Choose a nonconstant meromorphic function on $C$, giving a finite
map to $\Pone$.  Its branch-value set is finite.  Postcomposing with
a generic M\"obius transformation moves that set away from
$0,1,\infty$, so the preceding theorem applies.
\end{proof}

\paragraph{An explicit higher-genus family.}
For $g\geq1$, take the smooth projective completion of
\begin{equation}\label{eq:hyperelliptic-base-example}
 C_g:\quad y^2=\prod_{j=2}^{2g+3}(t-j),\qquad b_g(t,y)=t.
\end{equation}
There are $2g+2$ simple branch points, none at $0$ or $1$;
the even degree gives two unramified points above infinity.
Riemann--Hurwitz gives genus $g$ and $\deg b_g=2$.
Thus \eqref{eq:base-changed-threefold} gives examples over $C_g$
with Euler number $4r$ and two copies of each special-fibre type.
The translation-gluing space below has dimension $2g+1$ for this
example.

\subsection{Translation twists and the additional holomorphic obstruction}

In this subsection the objects are \emph{homology-marked} fibrations,
not strictly marked ones: no zero section is specified.  On the smooth
locus they are torsors under the fixed family of tori.  We also fix
identifications of the component data at every toroidal fibre;
comparison translations must have zero valuation class there.  Thus
the allowed local changes of trivialization are sections of the sheaf
$\mathcal T^0$ of completion-preserving translations.
With all differences interpreted in this fixed translation sheaf,
two overlap cocycles $h_{ab},h'_{ab}$ are equivalent when
\begin{equation}\label{eq:translation-torsor-equivalence}
 h'_{ab}=h_{ab}+\sigma_b-\sigma_a,
 \qquad \sigma_a\in\mathcal T^0(U_a).
\end{equation}
This is the change of local origins by the translations $\sigma_a$.
It does not require these local origins to glue to a specified section.

The vanishing $H^1(\Pone,\mathcal V_\kappa)=0$ is essential to the
sufficiency of the integer winding test used earlier.  On another curve, the obstruction also has a holomorphic term.

\begin{proposition}[Translation gluing on a curve]\label{prop:general-translation-gluing}
Let $C$ be a compact Riemann surface.  Suppose the fixed local
completions have an exact exponential sequence of abelian sheaves
\[
 0\longrightarrow\mathcal L\longrightarrow\mathcal V
 \xrightarrow{\exp}\mathcal T^0\longrightarrow0,
\]
where $\mathcal V$ is a holomorphic vector bundle and $\mathcal T^0$
is the identity-component sheaf of completion-preserving
translations.  There is an exact sequence
\begin{equation}\label{eq:general-translation-obstruction}
 0\longrightarrow
 \frac{H^1(C,\mathcal V)}{\operatorname{im}H^1(C,\mathcal L)}
 \longrightarrow H^1(C,\mathcal T^0)
 \xrightarrow{\partial}H^2(C,\mathcal L)
 \longrightarrow0.
\end{equation}
For local conjugacies with common linear part and matched component
data, the mismatch glues precisely when its class in
$H^1(C,\mathcal T^0)$ vanishes.  A zero integral obstruction
$\partial[\tau]$ is necessary but is sufficient only when the remaining
class in the left term of \eqref{eq:general-translation-obstruction}
is zero.
\end{proposition}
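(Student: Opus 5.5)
The plan is to take the long exact cohomology sequence of the exponential sequence $0\to\mathcal L\to\mathcal V\to\mathcal T^0\to0$ on $C$:
\[
 H^1(C,\mathcal L)\to H^1(C,\mathcal V)\to H^1(C,\mathcal T^0)
 \xrightarrow{\partial}H^2(C,\mathcal L)\to H^2(C,\mathcal V).
\]
Exactness at $H^1(C,\mathcal V)$ identifies the kernel of $\partial$ with the quotient $H^1(C,\mathcal V)/\operatorname{im}H^1(C,\mathcal L)$. This gives the left injection and exactness in the middle of \eqref{eq:general-translation-obstruction}.

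For surjectivity of $\partial$ I would show $H^2(C,\mathcal V)=0$. Here $\mathcal V$ is a coherent (locally free) sheaf on a one-dimensional complex manifold, so Dolbeault/Cartan theory gives vanishing of $H^q$ for $q\ge2$. The abelian sheaf cohomology of $\mathcal V$ agrees with its coherent cohomology, so this applies directly. No assumption on $\mathcal L$ beyond being a sheaf of abelian groups is needed. The one point to check is that $H^2(C,\mathcal L)$ is ordinary sheaf cohomology: $\mathcal L$ is a constructible sheaf such as $j_*\mathbb L$, and $C$ has covering dimension two, so this group can be nonzero. That is exactly why $\partial$ carries information.

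For the gluing statement I would reuse the first part of the proof of Proposition~\ref{lem:torus-translation-globalization} essentially verbatim. Equal linear parts make the overlap mismatches $\tau_{ab}$ translations. The overlap identities make them a \v{C}ech $1$-cocycle in $\mathcal T^0$, using that all overlaps lie in the smooth locus and that the component data are matched. By \eqref{eq:translation-torsor-equivalence}, corrections by $\sigma_a\in\mathcal T^0(U_a)$ change $\tau$ by a coboundary. The local maps therefore glue after such corrections if and only if $[\tau]=0$ in $H^1(C,\mathcal T^0)$. The corrected local maps extend across the completions by the definition of $\mathcal T^0$, and Lemma~\ref{lem:analytic-gluing} then gives the global biholomorphism.

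It remains to prove the last sentence. If $[\tau]=0$ then $\partial[\tau]=0$, so the integral condition is necessary. Conversely, $\partial[\tau]=0$ places $[\tau]$ in the image of the left term, and $[\tau]=0$ exactly when that preimage class vanishes there, by injectivity. The main subtlety I expect is a bookkeeping one: the \v{C}ech and derived-functor classes must agree on a fine enough cover, for example a Leray refinement of the piecewise cover. This is also where the comparison with the full sheaf $\mathcal T$ used on $\mathbb P^1$ is deliberately not asserted. The statement only concerns $\mathcal T^0$, so no torsion-freeness argument is required.
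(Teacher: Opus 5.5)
Your proposal is correct and follows the paper's proof: the long exact sequence of the exponential sequence, cut off by $H^2(C,\mathcal V)=0$ (coherent vanishing on a curve), gives \eqref{eq:general-translation-obstruction}, and the gluing criterion comes from identifying corrections by $\mathcal T^0$-sections with coboundaries of the mismatch cocycle. The \v{C}ech bookkeeping you flag is harmless: \v{C}ech $H^1$ of any open cover already injects into sheaf $H^1$, so no Leray refinement is needed.
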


\begin{proof}
The long exact sequence contains
\[
 H^1(C,\mathcal L)\longrightarrow H^1(C,\mathcal V)
 \longrightarrow H^1(C,\mathcal T^0)
 \xrightarrow{\partial}H^2(C,\mathcal L)
 \longrightarrow H^2(C,\mathcal V)=0.
\]
The last equality is coherent-cohomology vanishing above the dimension
of a curve.  Taking the cokernel of the first arrow gives
\eqref{eq:general-translation-obstruction}.  Translation mismatches
are a \v{C}ech one-cocycle; correcting the local maps is exactly expressing
it as a coboundary.  When its image under $\partial$ vanishes, its
lifts to $H^1(C,\mathcal V)$ differ by the displayed integral image.
This proves the gluing criterion.
\end{proof}

\begin{theorem}[Locally trivial twists after base change]
\label{thm:base-change-translation-twists}
Fix $b,C,\kappa,r,c$ as in
Theorem~\ref{thm:arbitrary-base-curve}, and let $g$ be the genus of $C$.
For the pulled-back completions put
$\mathcal L_b=j_{C*}\mathbb L_b$ and let $\mathcal V_b$ be their
translation bundle.  Then
\begin{equation}\label{eq:translation-twist-dimension}
 \mathcal V_b\simeq\mathcal O_C\oplus b^*\mathcal O_{\Pone}(-1),
 \qquad h^1(C,\mathcal V_b)=2g+d-1.
\end{equation}
The homology-marked translation twists with the fixed component
identifications above and zero integral obstruction are classified by
\begin{equation}\label{eq:zero-obstruction-twists}
 H^1(C,\mathcal V_b)/\operatorname{im}H^1(C,\mathcal L_b).
\end{equation}
They are realized by holomorphic gluings with unchanged local models.
A choice of vector-space coordinates on $H^1(C,\mathcal V_b)$ gives
a proper holomorphic deformation of compact smooth threefolds,
locally trivial over the base charts.  Every member has algebraic
dimension one, Euler number $2rd$, and lies outside $\mathcal C$.
If $2g+d-1>0$, there are uncountably many inequivalent twists under
homology-marked translation isomorphisms preserving those component
identifications.

Here $2g+d-1$ is the dimension of the additive space
$H^1(C,\mathcal V_b)$.  The reference fibration supplies the zero class in
\eqref{eq:zero-obstruction-twists}; the quotient is taken using
\eqref{eq:translation-torsor-equivalence} and need not be Hausdorff.
\end{theorem}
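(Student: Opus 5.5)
The proof splits into four steps: the splitting of $\mathcal V_b$, the classification via Proposition~\ref{prop:general-translation-gluing}, the construction of the holomorphic family of gluings, and the transfer of the invariants of Theorem~\ref{thm:arbitrary-base-curve}.

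\textbf{Step 1: the translation bundle.} The base-changed completions are pulled back from the original ones. At points over $\{0,1,\infty\}$ the map $b$ is a local biholomorphism, and at the remaining points the bundle is pulled back directly. So the translation bundle is the pullback $\mathcal V_b=b^*\mathcal V_\kappa$, exactly as noted at the end of the proof of Theorem~\ref{thm:arbitrary-base-curve}. The splitting $\mathcal V_\kappa\simeq\mathcal O\oplus\mathcal O(-1)$ from \eqref{eq:translation-vector-bundle} then gives the first half of \eqref{eq:translation-twist-dimension}. For the dimension, $h^1(\mathcal O_C)=g$. For the second summand, $b^*\mathcal O(-1)$ has degree $-d<0$, so it has no sections, and Riemann--Roch gives $h^1=d+g-1$. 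The total is $2g+d-1$.

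The exponential sequence $0\to\mathcal L_b\to\mathcal V_b\to\mathcal T^0\to0$ is checked stalkwise. The local models are unchanged at the special points, and the stalk computations in the proposition on translations across the local completions apply verbatim. At branch points of $b$ the torus family is smooth, so the ordinary exponential sequence holds there.

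\textbf{Step 2: classification.} Apply Proposition~\ref{prop:general-translation-gluing}. Homology-marked torsor structures with unchanged local models and fixed component identifications are classified by $H^1(C,\mathcal T^0)$, modulo the equivalence \eqref{eq:translation-torsor-equivalence}. The classes with $\partial=0$ are exactly the image of $H^1(C,\mathcal V_b)$, which by exactness of \eqref{eq:general-translation-obstruction} is the quotient \eqref{eq:zero-obstruction-twists}.

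To realize a class concretely, choose a \v{C}ech cocycle $\{v_{ab}\}$ of $\mathcal V_b$ on a cover by discs. Each disc either meets no special point or is one of the local completion discs. Reglue the pieces $X|_{U_a}$ along $\exp(v_{ab})$. These translations extend across the local completions by the definition of $\mathcal T^0$. They also commute with the local completion data, since translations preserve the periodic toric structure and the root actions. Lemma~\ref{lem:analytic-gluing}, used with several discs rather than three annuli, gives a Hausdorff compact complex manifold proper over $C$.

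\textbf{Step 3: the holomorphic family.} Choose cocycle representatives $v^{(1)},\dots,v^{(N)}$ spanning $H^1(C,\mathcal V_b)$, where $N=2g+d-1$. Glue with $\exp\bigl(\sum_i\lambda_i v^{(i)}_{ab}\bigr)$ over the parameter space $\C^N$. This is the relative version of the same gluing with $S=\C^N$. Lemma~\ref{lem:analytic-gluing} gives properness, and local triviality over base charts holds by construction, so the total family is a proper holomorphic submersion onto $\C^N$.

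\textbf{Step 4: invariants and uncountability.} The invariants of each member are inherited from Theorem~\ref{thm:arbitrary-base-curve}. The local system and the local models are unchanged, so the monodromy has finite index in $\mathcal G_{\kappa,r}$. Theorem~\ref{thm:finite-index-invariants} and Lemma~\ref{lem:invariant-cone-criterion} then give algebraic dimension one and exclusion from $\mathcal C$. The Euler number $2rd$ comes from additivity, with the cusp fibres unchanged.

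For uncountability I expect the main obstacle: one must show that the image of $H^1(C,\mathcal L_b)$ in the $\C$-vector space $H^1(C,\mathcal V_b)$ is countable. It is countable because $\mathcal L_b$ is a constructible sheaf of finitely generated abelian groups on a compact curve, so $H^1(C,\mathcal L_b)$ is a finitely generated group, hence countable. The quotient of a positive-dimensional complex vector space by a countable subgroup is uncountable, though possibly non-Hausdorff, as the statement allows. The isomorphisms in question preserve the homology marking and the component identifications, so they are exactly $\mathcal T^0$-coboundaries, and the classification in Step 2 identifies isomorphism classes with points of this quotient.
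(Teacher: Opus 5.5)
Your proposal is correct and takes essentially the same route as the paper. Both arguments identify $\mathcal V_b=b^*\mathcal V_\kappa$ through the stalkwise exponential sequence and compute $h^1$ by Riemann--Roch. Both then apply Proposition~\ref{prop:general-translation-gluing} and realize classes by exponentiating additive \v{C}ech cocycles over $\C^{2g+d-1}$. Finally, both inherit the invariants from Theorem~\ref{thm:arbitrary-base-curve}, and obtain uncountability from the countable image of the finitely generated group $H^1(C,\mathcal L_b)$.

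One small repair is needed in Steps 2 and 3. Lemma~\ref{lem:analytic-gluing} is stated and proved only for disjoint annuli, where there are no triple overlaps. For a general disc cover you should instead argue as the paper does:
\begin{itemize}
\item additivity of the cocycle together with commutativity of translations gives the multiplicative cocycle condition on triple overlaps;
\item the glued family is locally over $C$ a product of an original local piece with $\C^{N}$, which gives Hausdorffness and smoothness;
\item properness holds because it is local on $C\times\C^N$ and $C$ is compact.
\end{itemize}
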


\begin{proof}
At the special points the base change is unramified, so the original
exponential sequences pull back without alteration.  At all other
points, including branch points, the fibration is smooth and the
ordinary torus exponential sequence gives the same conclusion.
Thus the translation sequence is exact with $\mathcal V_b=b^*\mathcal
V_\kappa$, proving the splitting in
\eqref{eq:translation-twist-dimension}.  Riemann--Roch \cite{Forster} and the
negative degree of $b^*\mathcal O(-1)$ give
\[
 h^1(C,\mathcal O_C)=g,\qquad
 h^1(C,b^*\mathcal O(-1))=g+d-1.
\]
Apply Proposition~\ref{prop:general-translation-gluing}.

For realization choose a finite Stein cover of $C$ and holomorphic
additive \v{C}ech cocycles $v^{(1)}_{ab},\ldots,v^{(q)}_{ab}$ representing
a basis of $H^1(C,\mathcal V_b)$, where $q=2g+d-1$.
Modify the original transitions on overlaps by
\[
 \exp\left(\sum_{\ell=1}^q z_\ell v^{(\ell)}_{ab}\right),
 \qquad z\in\C^q.
\]
The additive cocycle condition and commutativity of fibre translations
give the exact multiplicative cocycle condition.  The exponential
lands in completion-preserving translations by the stalkwise sequence,
so the modified charts descend to a holomorphic family.  Locally over
$C$, this family is the product of an original smooth local total
space with $\C^q$.  Its total space is therefore smooth, and its map
to $C\times\C^q$ is proper, since properness is local on that target.
Projection to $\C^q$ is a holomorphic submersion by this local product
description.  Since $C$ is compact, the projection is also proper.
Ehresmann's theorem gives a single oriented diffeomorphism type.

These transitions are translations, so the homology monodromy, fibre
complex structures, and all local special fibres remain unchanged.
The invariant-cone argument and the Euler calculation in
Theorem~\ref{thm:arbitrary-base-curve} apply to every twist.  Finally
$\mathcal L_b$ is a constructible integral sheaf on a finitely
triangulated compact curve; its stalks are finitely generated, and
its cohomology is finitely generated.  Its image in the complex vector
space $H^1(C,\mathcal V_b)$ is therefore countable.  If that vector
space has positive dimension its quotient by this image is
uncountable.  Exactness identifies this quotient with the zero-obstruction
translation-twist classes.
\end{proof}

\subsection{Periodic toroidal quotients in arbitrary dimension}

Let $n\geq1$ and let $\mathcal P$ be a locally finite triangulation of
$\R^n$ with vertex set $\Z^n$, invariant under all translations in
$\Z^n$.  Assume every maximal simplex is unimodular.  In particular
there are finitely many simplex orbits.  Coning the simplices at
height one in $\Z^n\oplus\Z$ gives a smooth fan and a separated
analytic toric $(n+1)$-fold $Y_{\mathcal P}$ with height character $t$.
For the polarized fan construction, starting with a positive-definite
symmetric integral bilinear form, see \cite{Mumford} and
\cite[Section~3.1]{EngelFortmanSchreieder}.  Here the holomorphic twist
need not be symmetric and the tori need not carry a chosen polarization;
the proof below establishes the analytic quotient and its properness
directly from the periodic fan and the logarithmic coordinate bounds.

\begin{theorem}[Higher-dimensional periodic quotient]
\label{thm:higher-dimensional-periodic}
Let $S$ be a complex manifold, let $D(t,s)$ be a holomorphic
$n\times n$ matrix on $\Delta_{\epsilon_0}\times S$, and let
$\Gamma=B\Z^n\subset\Z^n$ have finite index $d=|\det B|$.
For every relatively compact open $S'\Subset S$, and sufficiently
small $\epsilon>0$, the action
\begin{equation}\label{eq:n-dimensional-deck-action}
 (x,t,s)\longmapsto
 \bigl(e^{2\pi iD(t,s)\nu}t^\nu x,t,s\bigr),\qquad\nu\in\Gamma,
\end{equation}
is free and properly discontinuous on
$(Y_{\mathcal P})_{<\epsilon}\times S'$.
Its quotient is smooth and Hausdorff, proper over
$\Delta_\epsilon\times S'$, and a holomorphic submersion over $S'$.
The local equation of its map to the disc is
$t=z_0\cdots z_k$ with $0\leq k\leq n$.  The relative canonical
bundle for the smooth projection to $S'$ is trivial.

The smooth fibres are compact complex $n$-tori.  The normalization of
the central fibre has exactly $d$ components, each the smooth complete
toric $n$-fold defined by the star of a vertex of $\mathcal P$.
The quotient to the model with $\Gamma=\Z^n$ is finite \'etale of
degree $d$ and group $\Z^n/\Gamma$.  If $f_n(\mathcal P/\Gamma)$
is the number of maximal simplex orbits, then
\begin{equation}\label{eq:n-dimensional-cusp-euler}
 e(X_0)=f_n(\mathcal P/\Gamma).
\end{equation}
In the period order $[I_n\mid(\tau I_n+D)B]$, where
$t=e^{2\pi i\tau}$, the counterclockwise forward monodromy is
\begin{equation}\label{eq:n-dimensional-monodromy}
 P=\begin{pmatrix}I_n&B\\0&I_n\end{pmatrix},\qquad
 (P-I)^2=0,\quad\rk(P-I)=n,\quad
 [\operatorname{sat}(P-I)\Lambda:(P-I)\Lambda]=d.
\end{equation}
The component interpretation of $d$ is specific to this vertex-lattice
model, as in Remark~\ref{rem:model-dependent-components}.
\end{theorem}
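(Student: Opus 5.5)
The plan is to repeat the two-dimensional argument of Theorem~\ref{thm:master-cusp} in dimension $n$, checking at each step that only local finiteness, translation invariance and unimodularity of $\mathcal P$ were used. First I will build the fan. Cone each simplex of $\mathcal P$ at height one. Local finiteness and translation invariance give a fan, as in Lemma~\ref{lem:periodic-A2-fan}. For a maximal simplex with vertices $v_0,\dots,v_n$, subtracting $(v_0,1)$ from the other rays gives $(v_i-v_0,0)$, which form a basis of $\Z^n$ by unimodularity. Together with $(v_0,1)$ they form a basis of $\Z^{n+1}$. Hence $Y_{\mathcal P}$ is smooth. The height character pulls back to $z_0\cdots z_k$ in the chart of a $k$-simplex, since the dual basis pairs to $1$ with every ray. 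The divisor attached to the ray through $(v,1)$ is the toric $n$-fold of the star of $v$. This star is a complete fan because $\mathcal P$ triangulates $\R^n$.

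Second, I will prove properness and freeness with the tropical argument of Lemma~\ref{lem:tropical-properness}. Set $y=-\log|x|/\log|t|\in\R^n$ and $L=I-2\pi\operatorname{Im}D/\log|t|$. Then $L\to I$ uniformly on $\overline{S'}$, and the action shifts $y$ by $-L\nu$. The chart identity $\log|z_i|=\log|t|\,\ell_{\sigma,i}(-y)$, with $\sum_i\ell_{\sigma,i}=1$, carries over verbatim with $n+1$ coordinates. It bounds $y$ on compact chart pieces, so only finitely many $\nu$ move one compact set onto another. On the central fibre, orbit translation $O_\sigma\mapsto O_{\sigma+\nu}$ settles both finiteness and freeness. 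Off the central fibre, freeness follows from invertibility of $\log|t|I-2\pi\operatorname{Im}D$. For the fundamental sets I will use Proposition~\ref{prop:uniform-fundamental-blocks}: choose $\lambda$ with $L^{-1}y-\lambda\in[0,1]^n$, cover the bounded region by finitely many simplices, and take closed unit polydiscs in their charts. Over $t=0$ I add one compact star component per vertex orbit, finitely many translates for $\Gamma$. Together these give properness, Hausdorffness, smoothness, the submersion onto $S'$, and the finite \'etale cover $\Z^n/\Gamma$.

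Third, the remaining invariants. Triviality of $K$ relative to $S'$: the form $\frac{dx_1}{x_1}\wedge\cdots\wedge\frac{dx_n}{x_n}$, restricted relatively, is invariant under torus translations and under $t^\nu$. Since $d\log t=\sum_i d\log z_i$, it extends as a nowhere-vanishing generator of the relative dualizing sheaf in every chart. The exponential map $u\mapsto e^{2\pi iu}$ identifies the punctured quotient with $\C^n/(\Z^n+(\tau I+D)B\Z^n)$, so the smooth fibres are tori. Continuing $\tau\mapsto\tau+1$ counterclockwise adds $B\nu$ to the second block, which gives the displayed $P$. Then $(P-I)^2=0$, the rank is $n$, and the image $B\Z^n$ has index $d$ in its saturation $\Z^n$.

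Fourth, the central fibre. Components correspond to $\Z^n/\Gamma$ and number $d$, because the action is free on simplices. For the Euler number I stratify $X_0$ by open torus orbits. An orbit of a $k$-simplex with $k<n$ is $(\C^*)^{n-k}$ and has Euler characteristic $0$, while each maximal simplex orbit contributes one point. This yields $e(X_0)=f_n(\mathcal P/\Gamma)$ without inclusion--exclusion. I expect the main obstacle to be the uniform chartwise bound in the properness step, namely that compact sets in $Y_{\le\epsilon}$ meet finitely many charts uniformly as $t\to0$. I would first try to copy the inequality $-\log R/|\log|t||\le\ell_{\sigma,i}\le1+n\log R/|\log|t||$ and check that it survives with $n+1$ factors.
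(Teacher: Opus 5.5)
Your proposal is correct and follows the paper's proof essentially step for step. It uses the same unimodular cone construction and the same tropical coordinate $y$ with shift $y\mapsto y-L\nu$. The chartwise bound $-\log R/|\log|t||\le\ell_i(-y)\le 1+n\log R/|\log|t||$ does survive with $n+1$ factors, exactly as you planned, so the obstacle you anticipated does not arise. The fundamental set from $L^{-1}y-\lambda\in[0,1]^n$, the period lattice $\Z^n+(\tau I_n+D)B\Z^n$, and the orbit-stratum Euler count are also the same as in the paper.

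The only difference is cosmetic and concerns the canonical bundle. The paper uses the $(n+1)$-form $\frac{dx_1}{x_1}\wedge\cdots\wedge\frac{dx_n}{x_n}\wedge dt=\pm dz_0\wedge\cdots\wedge dz_n$ directly. You instead pass through the relative dualizing sheaf of the map to the disc, which is equivalent because $K_\Delta$ is trivial.
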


\begin{proof}
A cone over a maximal unimodular simplex has $n+1$ primitive generators
forming a lattice basis.  Its affine chart is $\C^{n+1}$ and the
height character is the product of its coordinates.  Intersections
are charts for common faces, so gluing gives a separated smooth
analytic space.  Local finiteness of the triangulation gives a finite star for every
nonzero cone.  The fan of a
vertex divisor is its star modulo the height-one ray; it is complete
because the triangulation fills a neighbourhood of the vertex in
$\R^n$.  Thus each such divisor is compact.  Translation of simplices
and the holomorphic torus multiplier extend
\eqref{eq:n-dimensional-deck-action} on all charts.

Here is the uniform properness estimate.  For $t\neq0$ put
\[
 y=-\frac{\log|x|}{\log|t|},\qquad
 L=I_n-\frac{2\pi\operatorname{Im}D(t,s)}{\log|t|}.
\]
On a compact parameter set, $D$ is bounded and $L$ tends uniformly to
$I_n$ as $t\to0$.  Choose $\epsilon<1$ so that
$2\pi\sup\|\operatorname{Im}D\|/|\log\epsilon|<1/2$; then
$L,L^{-1}$ are uniformly bounded.  The action changes $y$ to $y-L\nu$.
For a maximal simplex with vertices $v_0,\ldots,v_n$, let $m_i$ be
the basis dual to $(v_i,1)$ and put
$\ell_i(q)=\langle m_i,(q,1)\rangle$.  Its exact chart identities are
\begin{equation}\label{eq:n-dimensional-tropical-chart}
 \log|z_i|=\log|t|\,\ell_i(-y),\qquad
 \sum_{i=0}^n\ell_i=1,\qquad t=\prod_{i=0}^n z_i.
\end{equation}
On a compact chart portion with $|z_i|\leq R$ and $R\geq1$ these imply
\[
 -\frac{\log R}{|\log|t||}\leq\ell_i(-y)
 \leq1+\frac{n\log R}{|\log|t||}.
\]
Hence $y$ is bounded on compact subsets even when $t\to0$.
For two compact sets, $y-L\nu$ can stay in the second bounded set
only for finitely many lattice vectors $\nu$.  On the central fibre,
a point in a toric orbit for a simplex $\sigma$ is sent to the orbit
for $\sigma+\nu$.  Finitely many chart cones meet each compact set,
so again only finitely many translations occur.  This proves proper
discontinuity uniformly on compact parameter sets.  Central freeness
follows because no nonzero translation preserves a bounded simplex;
off the central fibre, a fixed point would give $L\nu=0$ and hence
$\nu=0$.

To prove properness of the quotient, first use the full lattice.
Translate a point so that $L^{-1}y$ lies in $[0,1]^n$.  Its new
$-y$ lies in a fixed bounded subset of $\R^n$, covered by finitely
many simplices.  In a simplex containing it, all the $\ell_i(-y)$
are nonnegative and at most one.  Formula
\eqref{eq:n-dimensional-tropical-chart} gives $|z_i|\leq1$.
The corresponding finite union of closed polydiscs over a closed
subdisc is compact and meets every orbit off the central fibre.
Add one complete vertex divisor for the central orbits.  For
$\Gamma$, add finitely many translates and vertex representatives.
This yields compact sets meeting every orbit over compact subsets
of the base.  Their images prove quotient properness.  Freeness and
proper discontinuity give a smooth Hausdorff quotient and the local
product equations.  The projection to $S'$ is locally the projection
of a product of a smooth toric chart with $S'$.

Writing $t=e^{2\pi i\tau}$ and $x=e^{2\pi iz}$ identifies a punctured
fibre with
\[
 \C^n/\bigl(\Z^n+(\tau I_n+D(t,s))B\Z^n\bigr).
\]
The imaginary part of $\tau I_n+D$ is invertible for small $|t|$;
these periods are a real lattice of rank $2n$.  Thus the fibre is a
compact complex torus.  In the stated period order, counterclockwise
continuation $\tau\mapsto\tau+1$ gives
\eqref{eq:n-dimensional-monodromy}; clockwise continuation gives its
inverse, with $-B$ in the upper-right block.  The image of either
monodromy minus the identity is $B\Z^n$ in the first period block,
with saturation $\Z^n$.

The relative $(n+1)$-form
\begin{equation}\label{eq:local-canonical-volume}
 \Omega=\frac{dx_1}{x_1}\wedge\cdots\wedge
          \frac{dx_n}{x_n}\wedge dt
\end{equation}
is nowhere zero on every toric chart.  Indeed, the height-one
unimodular basis gives
$\Omega=\pm dz_0\wedge\cdots\wedge dz_n$ there.
Under a deck transformation each $d\log x_i$ changes by a multiple
of $dt$ modulo forms from $S'$.  Thus \eqref{eq:local-canonical-volume}
is invariant as a form relative to $S'$ and descends, proving the
canonical-bundle assertion.

Vertex orbits are indexed by $\Z^n/\Gamma$, and no vertex has a
nontrivial stabilizer.  This proves the normalization assertion,
allowing self-identifications along boundary strata.  In the central
fibre a simplex of dimension $k$ contributes the torus orbit
$(\C^*)^{n-k}$.  There are finitely many orbit strata after quotient;
their compactly supported Euler numbers vanish unless $k=n$.
Every maximal-simplex orbit contributes one, proving
\eqref{eq:n-dimensional-cusp-euler}.  Finally, inclusion of the two
free deck groups gives the finite \'etale quotient of degree $d$.
\end{proof}

\begin{corollary}[The standard grid family]\label{cor:kuhn-euler}
For the standard triangulation whose simplices are
\[
 \operatorname{conv}\left(v,\ v+e_{\pi(1)},\
 v+e_{\pi(1)}+e_{\pi(2)},\ \ldots,\ v+\sum_{i=1}^n e_i\right),
 \quad v\in\Z^n,\quad\pi\in\mathfrak S_n,
\]
the central Euler number is $n!d$.  For $n=2$ this triangulation is
integrally equivalent, by reflection of one coordinate, to the
periodic $A_2$ triangulation used earlier.  Thus the formula recovers
$2d$, while $n=1$ gives the $d$-component nodal genus-one model.
\end{corollary}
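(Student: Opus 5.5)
The statement has three parts: a simplex count, an application of \eqref{eq:n-dimensional-cusp-euler}, and an integral equivalence in dimension two. Before using Theorem~\ref{thm:higher-dimensional-periodic} I will check that the Kuhn (Freudenthal) triangulation satisfies its hypotheses. I will describe the triangulation as the one cut out by the hyperplanes $x_i=k$ and $x_i-x_j=k$ for $k\in\Z$. Inside each unit cube $v+[0,1]^n$, the simplex indexed by $\pi$ is the chamber $1\geq y_{\pi(1)}\geq\cdots\geq y_{\pi(n)}\geq0$ in the local coordinates $y=x-v$. These chambers tile the cube, and faces are compatible across adjacent cubes because the cutting hyperplanes are globally defined. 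The triangulation is therefore locally finite, $\Z^n$-invariant, and has vertex set $\Z^n$. Each simplex is unimodular, since its edge vectors $e_{\pi(1)},\dots,e_{\pi(n)}$ from $v$ form a basis of $\Z^n$.

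Next I will count maximal simplex orbits. Every maximal simplex lies in exactly one unit cube, namely the one whose lowest vertex is its first vertex $v$. Hence the maximal simplices are in bijection with pairs $(v,\pi)\in\Z^n\times\mathfrak S_n$. Translation by $\Gamma$ acts freely on the $v$-coordinate and fixes $\pi$, so
\[
 f_n(\mathcal P/\Gamma)=n!\,[\Z^n:\Gamma]=n!\,d.
\]
Substituting this into \eqref{eq:n-dimensional-cusp-euler} gives $e(X_0)=n!d$.

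For $n=2$ the grid triangles are $\operatorname{conv}(v,v+e_1,v+e_1+e_2)$ and $\operatorname{conv}(v,v+e_2,v+e_1+e_2)$, so the diagonals run in direction $e_1+e_2$. In the $A_2$ triangulation of Section~\ref{sec:cusp} the diagonal of each square is the edge from $v+e_1$ to $v+e_2$, which has direction $e_1-e_2$. The reflection $(a,b)\mapsto(a,-b)$ lies in $GL_2(\Z)$, preserves $\Z^2$ and its translations, and sends lines of the family $x_1-x_2=k$ to lines of the family $x_1+x_2=k$. It therefore carries one triangulation onto the other. Because the map is integral and linear, it induces an isomorphism of the fans obtained by coning at height one, so the two toric models agree. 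In both cases the Euler number is $2d$, in agreement with Theorem~\ref{thm:master-cusp}.

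For $n=1$ the simplices are the segments $[k,k+1]$, and the quotient by $d\Z$ has $d$ of them. By Theorem~\ref{thm:higher-dimensional-periodic} the central fibre then has $d$ components, each the toric curve $\Pone$ given by the star of a vertex. Consecutive components meet at nodes arranged in a cycle, which is the $d$-component nodal genus-one fibre (an $I_d$ fibre). Its Euler number is $d=1!\,d$, as the general formula predicts.

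The step that needs the most care is the verification that the Kuhn simplices form a genuine triangulation, with faces glued consistently across cubes. I will settle it by the hyperplane-arrangement description above; the remaining steps are counting.
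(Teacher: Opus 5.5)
Your proof is correct and follows essentially the same route as the paper: unimodularity from the simplex edges, $n!$ Kuhn simplices in each unit cube, $d$ cube orbits modulo $\Gamma$, and then \eqref{eq:n-dimensional-cusp-euler}; your hyperplane-arrangement check of face compatibility and your explicit reflection and $n=1$ checks spell out what the paper states in one line. One small slip: the vectors from $v$ to the other vertices are the partial sums $e_{\pi(1)}+\cdots+e_{\pi(k)}$, not the $e_{\pi(k)}$ themselves, but the two sets differ by a unitriangular integral matrix, so unimodularity is unaffected.
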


\begin{proof}
The successive edge differences are a permutation of the standard
basis, proving unimodularity.  Every unit cube has $n!$ maximal
simplices, with compatible triangulations on its faces.  There are
$d$ cube orbits modulo $\Gamma$, so
$f_n(\mathcal P/\Gamma)=n!d$.  The remaining statements follow from
the displayed simplices and Theorem~\ref{thm:higher-dimensional-periodic}.
\end{proof}

\subsection{Mod-prime cohomology and Bockstein spectral sequences}
\label{subsec:bockstein-data}

We deduce the mod-prime cohomology and Bockstein spectral sequences
from the integral homology, using the coefficient-sequence convention
of \cite[Section~3.E]{Hatcher}.

\begin{proposition}[The additive Bockstein spectral sequences]
\label{prop:all-bocksteins}
For the original Type B/C threefolds over $\Pone$, put
\[
 p_B=2,\quad p_C=3,\qquad
 G_{B,r}=\Z/r\oplus\Z/2,\quad G_{C,r}=\Z/(3r).
\]
For a prime $\ell$, define $a_\ell=1$ when $\ell=p_\kappa$ and zero
otherwise.  Let $b_{\ell,j}$ be the multiplicity of $\ell^j$ among
the elementary divisors of the $\ell$-primary part of $G_{\kappa,r}$:
\begin{align}
 b_{\ell,j}^{B}&=\mathbf1_{v_\ell(r)=j}
                  +\mathbf1_{\ell=2,\,j=1},&
 b_{\ell,j}^{C}&=\mathbf1_{v_\ell(3r)=j},\qquad j\geq1.
 \label{eq:bockstein-multiplicities}
\end{align}
Set $b_\ell=\sum_{j\geq1}b_{\ell,j}$.  In degrees $0,\ldots,6$,
\begin{equation}\label{eq:mod-prime-betti}
 \dim_{\mathbb F_\ell}H^q(X^\kappa_{r,c};\mathbb F_\ell)
 =\bigl(1,\ a_\ell,\ r-1+a_\ell+b_\ell,\ 2b_\ell,
     \ r-1+a_\ell+b_\ell,\ a_\ell,\ 1\bigr)_q.
\end{equation}
Let $(E_j,d_j)$ be the singly graded Bockstein spectral sequence from
$0\to\Z\xrightarrow{\ell}\Z\to\mathbb F_\ell\to0$, with
$E_1=H^*(X;\mathbb F_\ell)$ and $d_j$ of degree one.  Its differential
ranks in source degrees $0,\ldots,5$ are
\begin{equation}\label{eq:bockstein-page-ranks}
 \rk(d_j:E_j^q\to E_j^{q+1})
 =\bigl(0,\ a_\ell\mathbf1_{j=1},\ b_{\ell,j},\ b_{\ell,j},
        \ a_\ell\mathbf1_{j=1},\ 0\bigr)_q.
\end{equation}
These formulas determine the entire additive spectral sequence up to
isomorphism; its stable dimensions are
$(1,0,r-1,0,r-1,0,1)$.  They do not specify a preferred geometric
basis or the multiplicative structure.
\end{proposition}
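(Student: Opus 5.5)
The computation uses only the integral homology of Theorem~\ref{thm:complete-integral-homology} and standard homological algebra; no further geometry is needed. Write $X=X^\kappa_{r,c}$. In degrees $0,\dots,6$ we have
\[
H_*(X;\Z)=\bigl(\Z,\ \Z/p_\kappa,\ \Z^{r-1}\oplus G_{\kappa,r},\ G_{\kappa,r},\ \Z^{r-1}\oplus\Z/p_\kappa,\ 0,\ \Z\bigr).
\]
Here $p_\kappa$ is prime, so the $\ell$-primary part of $H_1$ is $(\Z/\ell)^{a_\ell}$. The first step is to read off the $\ell$-primary decomposition of $G_{\kappa,r}$. For Type B, $\Z/r\oplus\Z/2$ has $\ell$-primary part $\Z/\ell^{v_\ell(r)}$, together with one extra $\Z/2$ when $\ell=2$. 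For Type C, $\Z/(3r)$ has $\ell$-primary part $\Z/\ell^{v_\ell(3r)}$. This gives exactly the multiplicities \eqref{eq:bockstein-multiplicities}. In particular $b_\ell$ is the number of cyclic summands of the $\ell$-primary part, which equals $\dim_{\mathbb F_\ell}(G\otimes\mathbb F_\ell)$ and $\dim_{\mathbb F_\ell}\operatorname{Tor}(G,\mathbb F_\ell)$.

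Next, for the mod-$\ell$ Betti numbers, apply the universal coefficient theorem $H^q(X;\mathbb F_\ell)\cong\operatorname{Hom}(H_q,\mathbb F_\ell)\oplus\operatorname{Ext}(H_{q-1},\mathbb F_\ell)$ degree by degree. A free summand contributes $1$ to its own degree. A cyclic $\ell$-primary summand in $H_q$ contributes $1$ in degree $q$ and $1$ in degree $q+1$. Summing gives \eqref{eq:mod-prime-betti}:
\begin{itemize}
\item the $H_1$ torsion contributes $a_\ell$ in degrees $1$ and $2$;
\item $G$ in $H_2$ contributes $b_\ell$ in degrees $2$ and $3$;
\item $G$ in $H_3$ contributes $b_\ell$ in degrees $3$ and $4$;
\item $\Z/p_\kappa$ in $H_4$ contributes $a_\ell$ in degrees $4$ and $5$.
\end{itemize}
As a consistency check, the result is Poincaré symmetric.

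For the Bockstein spectral sequence I will use the standard structure theorem (Hatcher, Section~3.E, or McCleary). Decompose $H^*(X;\Z)$, a finitely generated graded group, into cyclic summands. Then $E_\infty\cong(H^*(X;\Z)/\mathrm{tors})\otimes\mathbb F_\ell$. Each summand $\Z/\ell^j$ of $H^{q+1}(X;\Z)$ produces a pair of classes in degrees $q$ and $q+1$ that survive to $E_j$ and are connected by $d_j$ of rank one. By the universal coefficient theorem and Poincaré duality, $H^q(X;\Z)$ has torsion $\operatorname{Ext}(H_{q-1})$. So the integral cohomology torsion is
\begin{itemize}
\item $\Z/p_\kappa$ in $H^2$, from $H_1$;
\item $G$ in $H^3$, from $H_2$;
\item $G$ in $H^4$, from $H_3$;
\item $\Z/p_\kappa$ in $H^5$, from $H_4$.
\end{itemize}
Hence $d_1$ has rank $a_\ell$ from degree $1$ to $2$ and from degree $4$ to $5$. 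Each $d_j$ has rank $b_{\ell,j}$ from degree $2$ to $3$ and from degree $3$ to $4$. This is \eqref{eq:bockstein-page-ranks}.

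The stable dimensions then follow from the free ranks $(1,0,r-1,0,r-1,0,1)$. Equivalently, they follow by subtracting twice the total differential ranks from \eqref{eq:mod-prime-betti}, which also serves as a check. Up to isomorphism, an additive spectral sequence of $\mathbb F_\ell$-vector spaces is determined by the dimensions of its pages and the ranks of its differentials, and these are now all known.

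The main obstacle is getting the degree bookkeeping right for the placement of torsion, namely that $\operatorname{Tor}H^{q+1}$ is linked to $E^q\to E^{q+1}$, together with the Ext shift. I will verify this carefully on a model case such as a Moore space before assembling the result.
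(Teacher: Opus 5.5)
Your proposal is correct and follows essentially the same route as the paper: integral (co)homology from Theorem~\ref{thm:complete-integral-homology}, universal coefficients for the mod-$\ell$ dimensions, and the elementary-summand description of the Bockstein spectral sequence. The differences are minor. Where you cite Hatcher~3.E, the paper proves that description inline by splitting a finite free cochain complex into one-term summands and summands $\Z\xrightarrow{m}\Z$. The paper also reads off the mod-$\ell$ dimensions from the cohomological coefficient sequence rather than from the Hom/Ext form. Finally, Poincar\'e duality is not needed for $\operatorname{Tor}H^q\cong\operatorname{Tor}H_{q-1}$; the universal coefficient theorem alone gives it.
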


\begin{proof}
The integral universal coefficient theorem and
Theorem~\ref{thm:complete-integral-homology} give
\begin{equation}\label{eq:integral-cohomology-for-bockstein}
 H^q(X;\Z)\simeq
 \begin{cases}
 \Z,&q=0,6,\\
 0,&q=1,\\
 \Z^{r-1}\oplus\Z/p_\kappa,&q=2,\\
 G_{\kappa,r},&q=3,\\
 \Z^{r-1}\oplus G_{\kappa,r},&q=4,\\
 \Z/p_\kappa,&q=5.
 \end{cases}
\end{equation}
The coefficient sequence
\[
 0\to H^q(X;\Z)\otimes\mathbb F_\ell
 \to H^q(X;\mathbb F_\ell)
 \to H^{q+1}(X;\Z)[\ell]\to0
\]
then gives \eqref{eq:mod-prime-betti}.

For completeness, the elementary-complex argument also fixes all
higher pages.  A finite free integral cochain complex decomposes,
by choosing Smith bases in its groups of cocycles and their
boundaries, into free one-term complexes and two-term complexes
$\Z\xrightarrow{m}\Z$, up to chain isomorphism after splitting off
contractible unit summands.  To see compatibility with adjacent
maps, the quotient of each cochain group by its cocycles is its
next boundary group, hence free; choose a splitting, and then Smith
diagonalize the inclusion of boundaries into cocycles.  These choices
give the stated two-term decomposition without changing the next
map.

A two-term summand in degrees $(q,q+1)$ with $v_\ell(m)=j>0$
contributes one copy of $\mathbb F_\ell$ in each of those degrees.
Its Bockstein differentials are zero before page $j$, and on page
$j$ the differential is multiplication by the nonzero residue of
$m/\ell^j$.  Both copies disappear on the next page.  If $\ell$
does not divide $m$, the summand is absent from $E_1$; a free
one-term summand survives forever.  Apply this description to the
torsion summands of \eqref{eq:integral-cohomology-for-bockstein}.
It gives \eqref{eq:bockstein-page-ranks}, and also proves the assertion
about the complete additive spectral sequence.
\end{proof}

\begin{remark}[First versus higher Bocksteins]
For $Y_2$ at $\ell=2$, the middle group is $(\Z/2)^2$, so the two
middle differential ranks on page one are both two.  For $Y_4$,
they are one on page one and one on page two.  Thus divisibility by
$\ell$ alone gives the mod-$\ell$ dimensions but does not give the
first Bockstein rank.  At $r=1$ and $\ell=p_\kappa$, the dimensions
are $(1,1,2,2,2,1,1)$, and the page-one ranks in degrees one through
four are all one; the stable page has only degrees zero and six.
\end{remark}

\paragraph{Further questions.}
The constructions leave open the integral cup-product rings, torsion
linking pairings in independently specified geometric bases, and the
full Kuranishi spaces.  The higher-dimensional quotient theorem is
local over a disc; constructing global higher-dimensional families
requires additional period and gluing data.

\appendix
\section{Monodromy matrices and lattice calculations}\label{app:monodromy-data}

\subsection{The three integral lift calculations}\label{app:lift-calculations}

Write $d_1=(x,y)^{\mathsf T}$ and $d_2=(a,b)^{\mathsf T}$.  The group law
\eqref{eq:jacobi-law} gives the central coordinates listed in the table in
Section~\ref{sec:lifts}.  After imposing the two finite-order relations, the matrices $P-I$,
where $P=T_1T_2$, are as follows; the central entry is denoted by $*$.
\[
\begin{array}{c|c}
A&
\begin{pmatrix}
0&s(a-y)&s(b+x-y)&*\\
0&0&1&-a+b+x\\
0&0&0&-a+y\\
0&0&0&0
\end{pmatrix}\\[5ex]
B&
\begin{pmatrix}
0&s(a-y)&s(a+b+x-y)&*\\
0&0&1&b+x\\
0&0&0&-a+y\\
0&0&0&0
\end{pmatrix}\\[5ex]
C&
\begin{pmatrix}
0&s(a-y)&s(a+b+x-2y)&*\\
0&0&2&-a+b+x\\
0&0&0&-a+y\\
0&0&0&0
\end{pmatrix}.
\end{array}
\]
Thus $(P-I)^2=0$ exactly when $a=y$.  On that locus, in the quotient and
saturated-image bases used in the proof, the induced matrices are
\[
 \overline N_A=\begin{pmatrix}sp_A&q_A\\1&p_A\end{pmatrix},
 \qquad
 \overline N_B=\begin{pmatrix}sp_B&q_B\\1&p_B\end{pmatrix},
 \qquad
 \overline N_C=\begin{pmatrix}sp_C&q_C\\2&p_C\end{pmatrix},
\]
where
\begin{align*}
 p_A&=b+x-y,
 &q_A&=\frac{s}{6}(-3b^2-6by+2x^2-8xy+5y^2),\\
 p_B&=b+x,
 &q_B&=\frac{s}{2}(-2b^2-4by+x^2-2xy-y^2),\\
 p_C&=b+x-y,
 &q_C&=\frac{s}{3}(-3b^2-6by+x^2-4xy+y^2).
\end{align*}
Their determinants are
\[
 \det\overline N_A=\frac{s}{6}(3b+2x-y)^2,
 \quad
 \det\overline N_B=\frac{s}{2}(2b+x+y)^2,
 \quad
 \det\overline N_C=\frac{s}{3}(3b+x+y)^2.
\]

The integral Heisenberg move lattices and their primitive quotient
coordinates are computed in Lemma~\ref{lem:square-zero-cohomology}.

For later use, the standard cusp-index-one Type B matrices are
\begin{equation}\label{eq:typeB-standard-matrices}
 T_1=
 \begin{pmatrix}
 1&-2&0&1\\
 0&0&1&1\\
 0&-1&0&0\\
 0&0&0&1
 \end{pmatrix},
 \qquad
 T_2=
 \begin{pmatrix}
 1&0&0&0\\
 0&0&-1&0\\
 0&1&1&0\\
 0&0&0&1
 \end{pmatrix}.
\end{equation}
The corresponding Type C representative is obtained by replacing the
first row of $T_1$ by $(1,-3,0,1)$ and the middle $2\times2$ block by the
order-three Type C block displayed in Theorem~\ref{thm:core-classification}.

The dual deck matrices are
\[
 A_{1,r}=(T_{1,r}^{-1})^{\mathsf T}
 =\begin{pmatrix}
 1&0&0&0\\
 0&0&1&0\\
 -2r&-1&0&0\\
 -r&0&-1&1
 \end{pmatrix},
\]
\[
 A_{2,r}=(T_{2,r}^{-1})^{\mathsf T}
 =\begin{pmatrix}
 1&0&0&0\\
 0&1&-1&0\\
 0&1&0&0\\
 0&0&0&1
 \end{pmatrix}.
\]
They fix the vectors used in Section~\ref{sec:finite}.  The combined image lattice
\[
 (A_{1,r}-I)\Lambda+(A_{2,r}-I)\Lambda
\]
is saturated of rank three; its Smith normal form is $\diag(1,1,1,0)$.

For the Type C family,
\[
 A^C_{1,r}=((T^C_{1,r})^{-1})^{\mathsf T}
 =\begin{pmatrix}
 1&0&0&0\\
 0&0&1&0\\
 -3r&-1&-1&0\\
 -r&0&-1&1
 \end{pmatrix},
 \qquad
 A^C_{2,r}=A_{2,r}.
\]
They fix the vectors in \eqref{eq:typeC-filling-vectors}; their joint
coinvariant image is again saturated of rank three with Smith normal form
$\diag(1,1,1,0)$.  The cusp deck matrix is
\eqref{eq:typeC-cusp-monodromy}, and its inverse is the clockwise
forward transport \eqref{eq:clockwise-cusp-transport}.

\section{Geometric Mayer--Vietoris data}
\label{app:integral-mv-ledger}

This appendix fixes the geometric generators and signs used in the integral
Mayer--Vietoris calculation of Section~\ref{subsec:complete-integral-homology}.
The matrices represent inclusion and boundary maps in these bases.

\subsection{Orientations and basic cycles}
\label{app:mv-orientations}

Write
\[
 F=\Lambda_{\mathbb R}/\Lambda,
 \qquad
 \Lambda=\mathbb Z\langle\gamma,u,w,\delta\rangle,
\]
and use the marked orientation of the smooth fibre defined by
\[
 \operatorname{vol}_F
 =\gamma^*\wedge u^*\wedge w^*\wedge\delta^*.
\]
For $\Pi=[Z\mid I]$, the real period matrix in the standard complex
coordinate order $(\Re z_1,\Im z_1,\Re z_2,\Im z_2)$ has determinant
$-\det(\Im Z)$.  This is negative on the chosen admissible half-plane.
Thus the marked orientation is opposite to the complex orientation.
Orient each finite quotient so that its covering map preserves the
marked orientation; the intersection matrices below use this convention.
Using the complex orientations instead negates those matrices.  The
coordinate cycles and their evaluation pairings are fixed independently
of this choice.  In Remark~\ref{thm:torsion-linking}, the total
space carries its complex orientation and the degree-three generators
are defined by the resulting Poincar\'e duality and the universal coefficient theorem.
For two ordered basis vectors $\alpha,\beta\in\Lambda$, let
$T_{\alpha\beta}\subset F$ be the oriented coordinate two-torus with
first circle $\alpha$ and second circle $\beta$.  The ordered homology basis
is
\[
 \mathcal E_2=(T_{\gamma u},T_{\gamma w},T_{\gamma\delta},
 T_{uw},T_{u\delta},T_{w\delta}).
\]
At a finite point, $a_j$ is one clockwise coarse meridian; its root
lift is $s(q)=s_0e^{-2\pi iq/m_j}$, $0\leq q\leq1$.  The endpoint
identification uses $\mathsf D_j^{-1}$, so its forward homology return
is $\mathsf P_j=A_j^{-1}$, as computed in
\eqref{eq:clockwise-forward-period}.  Based path products are traversed
from left to right: $a_1a_2$ traverses $a_1$ then $a_2$, has endpoint
deck transformation $g_1g_2$, and has forward return
$\mathsf P_2\mathsf P_1$.  The clockwise cusp loop is
$a_0=(a_1a_2)^{-1}$, with forward return $\mathsf P_0$ and deck matrix
$\mathsf D_0=\mathsf M_0$ from \eqref{eq:clockwise-cusp-transport}.

Let $p_j:F\to S_j^\kappa$ be the free cyclic quotient of the central
torus.  For an invariant direction $\nu$, the finite sweep $s_{j,\nu}$
has circle-first orientation $(\nu,a_j)$; the cusp sweep $\xi_\nu$
has orientation $(\nu,a_0)$.  The closed tube has the
outward-normal-first boundary orientation.  Conjugation in fundamental
group presentations uses $\mathsf D_j$, not $\mathsf P_j$, by the
mapping-torus square in Proposition~\ref{prop:finite-filling-pi1}.

For every two-piece Mayer--Vietoris decomposition we use the convention
$(i_{1*},-i_{2*})$.  Thus the maps from the reference fibre to the two
finite pieces are
\[
 \alpha_q^\kappa=(p_{1*},-p_{2*}).
\]
The chain computations below give
$j_*\xi_\delta=-s_{1,\delta}-s_{2,\delta}$.  Accordingly we choose
$-\xi_\delta$ as the last source generator of the augmented presentation;
in the finite-fibre bases its column is
\[
 b=(0,-1,0,-1)^{\mathsf T}.
\]
Using $\xi_\delta$ instead multiplies this entire column by $-1$.
Equivalently, it right-multiplies the augmented matrix by the unimodular
matrix $\operatorname{diag}(1,1,1,1,1,1,-1)$, so its cokernel is unchanged.

\subsection{The finite quotient groups in degree one}
\label{app:finite-quotient-h1}

Let $a$ also denote the deck generator in the affine fundamental-group
presentation
\[
 \Gamma_{A,v,m}=\langle\Lambda,a\mid
 a\lambda a^{-1}=A\lambda,\ a^m=v\rangle.
\]
In the ordered generators $(\gamma,u,w,\delta,a)$, relation matrices for the
three finite local models are
\[
 R_{B,1}(r)=
 \begin{pmatrix}
 0&0&2r&r&0\\
 0&1&1&0&0\\
 0&-1&1&1&0\\
 0&0&0&0&0\\
 -1&r&r&0&4
 \end{pmatrix},
 \qquad
 R_{C,1}(r)=
 \begin{pmatrix}
 0&0&3r&r&0\\
 0&1&1&0&0\\
 0&-1&2&1&0\\
 0&0&0&0&0\\
 -1&r&r&0&3
 \end{pmatrix},
\]
\[
 R_2=
 \begin{pmatrix}
 0&0&0&0&0\\
 0&0&-1&0&0\\
 0&1&1&0&0\\
 0&0&0&0&0\\
 1&0&0&0&6
 \end{pmatrix}.
\]
The first four rows are the columns of $I-A$ and the last row is the
relation $ma-v=0$.

For the Type B first fibre the relations reduce to
\[
 u+w=0,
 \qquad 2w+\delta=0,
 \qquad \gamma=4a_1.
\]
For the Type C first fibre they reduce to
\[
 u+w=0,
 \qquad 3w+\delta=0,
 \qquad \gamma=3a_1.
\]
At the common order-six fibre they reduce to
\[
 u=w=0,
 \qquad \gamma=-6a_2.
\]
Consequently all three $H_1$ groups are free of rank two.  Choose
\[
 (y_{1,1},y_{1,2})=([a_1],[u]),
 \qquad
 (y_{2,1},y_{2,2})=([a_2],[\delta]).
\]
The maps induced by the quotient covers are then
\[
 p_{1*}^B=
 \begin{pmatrix}4&0&0&0\\0&1&-1&2\end{pmatrix},
 \qquad
 p_{1*}^C=
 \begin{pmatrix}3&0&0&0\\0&1&-1&3\end{pmatrix},
\]
\[
 p_{2*}=
 \begin{pmatrix}-6&0&0&0\\0&0&0&1\end{pmatrix}.
\]
Solving $(p_{1*},-p_{2*})z=0$ gives
\[
 \ker\alpha_1^\kappa=\mathbb Z(u+w)
\]
for both families.  The generator is primitive.

\subsection{Integral pullback lattices in degree two}
\label{app:finite-quotient-h2}

Let $T=(A^{-1})^{\mathsf T}$ be the contragredient deck matrix and
$I=H^2(F;\Z)^T$.  This is also the fixed group of the forward
cohomology transport $T^{-1}$.  Solving $(\bigwedge^2T-I)\alpha=0$ in the ordered
basis $\mathcal E$ gives the following integral bases.  For the first
finite fibre, with $s_B=2$ and $s_C=3$, the coefficient equations are
\[
 x_{u\delta}=x_{w\delta}=0,\qquad
 x_{\gamma w}=-x_{\gamma u},\qquad
 x_{\gamma\delta}=s_\kappa(x_{\gamma u}+r x_{uw}).
\]
Thus the free integer coordinates are $a=x_{uw}$ and
$b=x_{\gamma u}+r x_{uw}$, and
\[
 \omega_{\kappa,1}=-r\gamma^*\wedge u^*
             +r\gamma^*\wedge w^*+u^*\wedge w^*,
\]
\[
 \omega_{\kappa,2}=\gamma^*\wedge u^*
             -\gamma^*\wedge w^*+s_\kappa\gamma^*\wedge\delta^*
\]
form an integral basis of $I$.  At the common order-six fibre, the
coefficients $x_{\gamma u},x_{\gamma w},x_{u\delta},x_{w\delta}$ vanish,
and an integral basis is
\[
 \omega_{2,1}=u^*\wedge w^*,\qquad
 \omega_{2,2}=\gamma^*\wedge\delta^*.
\]

The squares of these basis elements vanish, and their mutual products
in the marked orientation are
\[
 \omega_{B,1}\smile\omega_{B,2}=2\operatorname{vol}_F,\qquad
 \omega_{C,1}\smile\omega_{C,2}=3\operatorname{vol}_F,\qquad
 \omega_{2,1}\smile\omega_{2,2}=\operatorname{vol}_F.
\]
The invariant-lattice discriminants therefore have absolute values
$4,9,1$.  The cover degrees $4,3,6$ and unimodularity downstairs give
indices $2,1,6$, respectively, for the pullback lattices.

For $\alpha=a\omega_{\kappa,1}+b\omega_{\kappa,2}$, the geometric
sweep integral \eqref{eq:finite-pullback-sweep-test} is
\[
 \frac{\alpha(\delta,v_1)}{m_1}
 =\begin{cases}-b/2,&\kappa=B,\\-b,&\kappa=C.\end{cases}
\]
For $\alpha=a\omega_{2,1}+b\omega_{2,2}$ it is $b/6$, since
$v_2=-\gamma$.  Accordingly the pullback lattices are
\[
 \Z\omega_{B,1}\oplus2\Z\omega_{B,2},\qquad
 \Z\omega_{C,1}\oplus\Z\omega_{C,2},\qquad
 \Z\omega_{2,1}\oplus6\Z\omega_{2,2}.
\]
The integrality conditions give containment, and the calculated indices
give equality.  Choosing $-6\omega_{2,2}$ as the second generator in
the last lattice gives exactly $P_1^B(r),P_1^C(r),P_2$ from
Lemma~\ref{lem:finite-H2-bases}.  After division by the covering degrees,
the intersection matrices are
\[
 \begin{pmatrix}0&1\\1&0\end{pmatrix}
 \quad\text{at the first finite fibre},\qquad
 \begin{pmatrix}0&-1\\-1&0\end{pmatrix}
 \quad\text{at the order-six fibre}.
\]

Let $a_{j,1},a_{j,2}$ denote the quotient cohomology bases with these
pullbacks.  Define
\[
 x_{j,1}=p_{j*}T_{uw},
 \qquad
 x_{j,2}=-s_{j,\delta}.
\]
The sweep pairing is
\begin{equation}\label{eq:finite-sweep-pairing-appendix}
 \langle a,s_{j,\nu}\rangle
 =\frac1{m_j}(p_j^*a)(\nu,v_j).
\end{equation}
Indeed, on the root cover the $m_j$ lifted strips form the oriented
parallelogram spanned by $\nu$ and $v_j$.  Formula
\eqref{eq:finite-sweep-pairing-appendix} gives
\[
 \langle a_{j,k},x_{j,\ell}\rangle=\delta_{k\ell}.
\]
Thus the matrices of $p_{j*}$ in degree two are obtained by evaluating the
columns of the pullback matrices on the coordinate tori.  With
$\alpha_2=(p_{1*},-p_{2*})$, this gives exactly
$A_B(r)$ and $A_C(r)$ in
\eqref{eq:finite-attachment-matrix-B}--
\eqref{eq:finite-attachment-matrix-C}.

\subsection{The toroidal boundary and the two sweep classes}
\label{app:cusp-boundary-ledger}

The boundary $M^\kappa_{0,r}(\rho)$ is the mapping torus with clockwise
forward return $\mathsf P^\kappa_{0,r}$.  In the ordered basis
\[
 (\gamma\wedge u,\gamma\wedge w,\gamma\wedge\delta,
 u\wedge w,u\wedge\delta,w\wedge\delta)
\]
the matrices $C_\kappa(r)=\bigwedge^2\mathsf P^\kappa_{0,r}-I$ are
\[
 C_B(r)=
 \begin{pmatrix}
0&0&0&0&0&0\\-1&0&0&0&0&0\\-1&0&0&0&0&0\\
2r&0&0&0&0&0\\r&0&0&0&0&0\\r&r&-2r&1&-1&0
\end{pmatrix},
\]
\[
 C_C(r)=
 \begin{pmatrix}
0&0&0&0&0&0\\-2&0&0&0&0&0\\-1&0&0&0&0&0\\
3r&0&0&0&0&0\\r&0&0&0&0&0\\r&r&-3r&1&-2&0
\end{pmatrix}.
\]
Both have rank two.  For $C_B(r)$ the minor in rows $2,6$ and columns
$1,4$ is $-1$; for $C_C(r)$ the minor in rows $3,6$ and columns $1,4$
is $-1$.  Hence their images are saturated and
\[
 \operatorname{SNF}C_B(r)=\operatorname{SNF}C_C(r)
 =\operatorname{diag}(1,1,0,0,0,0).
\]
These are the forward-transport matrices, not the exterior squares of
the deck matrices.  They have the same image lattices as the latter:
\[
 \bigwedge^2\mathsf P_0-I
 =(\bigwedge^2\mathsf D_0-I)
   \bigl(- (\bigwedge^2\mathsf D_0)^{-1}\bigr).
\]
The right factor is unimodular.  In the Wang sequence oriented by the
clockwise, circle-first cut, $\xi_w\mapsto w$ and
$\xi_\delta\mapsto\delta$.
These minors give the saturation used in
Lemma~\ref{lem:cusp-sweeps-integral}; the integral fillings are the
maps \eqref{eq:explicit-sweep-filling}.  We now specify the chains for
the other boundary inclusion.

Choose a based pair of pants $P$ with boundary loops
$a_0=(a_1a_2)^{-1}$, $a_1$, and $a_2$.  Cut along arcs from a reference
base point to the two finite boundaries.  Thickening the two resulting
lobes and attaching the finite fillings gives pieces $V_1,V_2$ of
$X_r^{\kappa,\circ}$; each retracts onto $S_j^\kappa$, and their
intersection retracts onto the reference fibre $F$.
On a cut lobe use the flat real trivialization of the torus bundle.
Transport $c_\nu(\theta)=[\theta\nu]$ along the based clockwise path
$a_j$, including its incoming and outgoing basing arcs.  Along the
root arc the flat coordinate remains $\nu$; at its endpoint $(gs_0,\nu)$
the quotient identifies it with $(s_0,A_j^{-1}\nu)$.
Both ends are therefore in the same reference fibre, and the terminal
circle is $c_{\mathsf P_j\nu}$, not $c_{A_j\nu}$.
The product boundary of the circle-first cylinder is
\begin{equation}\label{eq:finite-cylinder-boundary}
 \partial C_j(\nu)=c_\nu-c_{\mathsf P_j\nu},\qquad
 \mathsf P_j=A_j^{-1}.
\end{equation}
In particular, $\mathsf P_2w=u+w$, whereas $\mathsf P_1w=-u$ and
$\mathsf P_2(-u)=w$ for both types.

Since $a_1a_2$ traverses $a_1$ before $a_2$, its $w$-sweep has
successive fibre circles $c_w,c_{-u},c_w$.  Its two cylinders are
$C_1(w)$ and $C_2(-u)$, whose ends cancel as integral chains.
Reversing this traversal gives the cusp sweep.  Its two pieces are
\begin{equation}\label{eq:cusp-cut-cylinders}
 \begin{aligned}
 z_1&=-C_1(w),&z_2&=-C_2(-u),\\
 \partial z_1&=c_{-u}-c_w,&\partial z_2&=c_w-c_{-u}.
 \end{aligned}
\end{equation}
Here $z_j$ lies in $V_j$.  A based homotopy of the cusp boundary to
$(a_1a_2)^{-1}$, applied to its flat circle family through the zero
section, supplies a prism three-chain between the two representatives.
Subdivision realizes these cylinders and prisms as integral singular
chains.  With $\alpha=(i_{1*},-i_{2*})$ and
$\partial_{\rm MV}[z_1+z_2]=[\partial z_1]$, we obtain
\begin{equation}\label{eq:forward-MV-sweep-boundary}
 \partial_{\rm MV}(j_*\xi_w)=-(u+w).
\end{equation}
The vector has coefficients $-1,-1$ in the $(u,w)$ positions and is
primitive.  No rescaling of the Wang generator $\xi_w\mapsto w$ has
been made.

For $\delta$, the invariant circle acts on the open torus family.
Sweeping its zero section over $P$ gives a map
$S^1\times P\to X_r^{\kappa,\circ}$ and a three-chain whose boundary
is the negative sum of the three circle-first boundary sweeps.
Under the finite collar map, the zero section on
$s(t)=s_0e^{-2\pi it/m_j}$ has coordinate
\[
 -\ell(s_0)\Pi(s(t))v_j+\frac{t}{m_j}\Pi(s(t))v_j.
\]
The first term is a holomorphic equivariant translation on the root
disc and can be removed by a translation isotopy.  Retraction to the
central affine quotient therefore identifies its $\delta$-sweep with
\begin{equation}\label{eq:finite-delta-sweep-square}
 (\theta,t)\longmapsto
 \left[\theta\delta+\frac{t}{m_j}v_j\right]\in S_j^\kappa,
 \qquad(\theta,t)\in[0,1]^2.
\end{equation}
The $\theta$-edges agree by the period $\delta$, and the $t$-edges
agree by the affine generator, since $A_j\delta=\delta$.
This is one coarse meridian, not $m_j$ meridians.  Its pairing with
$a\in H^2(S_j^\kappa;\mathbb Z)$ is
$(p_j^*a)(\delta,v_j)/m_j$, explaining the factor in
\eqref{eq:finite-sweep-pairing-appendix}.
The three boundary sweeps hence give
$j_*\xi_\delta=-s_{1,\delta}-s_{2,\delta}$ with the same signs as
\eqref{eq:oriented-delta-sweep-relation}.
In the stated bases the complete inclusion data are
\[
 \partial(j_*\xi_w)=-(u+w),\qquad \partial(j_*\xi_\delta)=0,\qquad
 j_*\xi_\delta\equiv(0,1,0,1)^{\mathsf T}=-b\pmod{\im\alpha_2}.
\]

\subsection{The augmented Mayer--Vietoris matrices}
\label{app:mv-augmented-matrices}
The source basis is
\[
 (T_{\gamma u},T_{\gamma w},T_{\gamma\delta},T_{uw},
 T_{u\delta},T_{w\delta},-\xi_\delta),
\]
and the target basis is $(x_{1,1},x_{1,2},x_{2,1},x_{2,2})$.
The first six columns are the inclusion map $\alpha_2^\kappa$;
the last is the representative $b$ of $-j_*\xi_\delta$ obtained from
\eqref{eq:finite-delta-sweep-square}.
For $(a_\kappa,m_1)=(2,4)$ or $(1,3)$ the two matrices are
\begin{equation}\label{eq:augmented-MV-matrix}
 D_\kappa(r)=
 \begin{pmatrix}
 -r&r&0&1&0&0&0\\
 a_\kappa&-a_\kappa&m_1&0&0&0&-1\\
 0&0&0&-1&0&0&0\\
 0&0&6&0&0&0&-1
 \end{pmatrix}.
\end{equation}
Eliminating $x_3,x_4$ as in
Proposition~\ref{prop:open-integral-torsion} gives
\eqref{eq:reduced-torsion-presentation}.  Its Smith invariants, with
the two eliminated unit factors restored, are
\[
 \operatorname{SNF}D_B(r)=
 \begin{cases}
 \operatorname{diag}(1,1,1,2r),&r\text{ odd},\\
 \operatorname{diag}(1,1,2,r),&r\text{ even},
 \end{cases}
 \qquad
 \operatorname{SNF}D_C(r)=\operatorname{diag}(1,1,1,3r).
\]
In particular, the Type B group at $r=2$ is $(\Z/2)^2$, whereas the
Type C group at that index is $\Z/6$.

\subsection{The free relative contribution}
\label{app:relative-free-ledger}

The controlled collapse gives
\[
 H_2(N^\kappa_{0,r},M^\kappa_{0,r};\mathbb Z)
 \cong H^4(W_r;\mathbb Z)\cong\mathbb Z^r.
\]
Let $d_0$ be the relative class of the extended zero-section disc with
the negative of its complex orientation, and complete it to a basis
$d_0,d_1,\ldots,d_{r-1}$.  Its boundary is the clockwise cusp meridian
$a_0$, consistently with Lemma~\ref{lem:relative-cusp-free}.  The homomorphisms
\[
 \Phi_B(z,x,y)=12z+3x-2y,
 \qquad
 \Phi_C(z,x,y)=6z+2x-y
\]
induce primitive maps from $H_1(X_r^{\kappa,\circ})$ to its free quotient,
and for the canonical collar data
\[
 \Phi_B(a_0)=\Phi_C(a_0)=-1.
\]
By Lemma~\ref{lem:relative-cusp-free}, the full boundary image is
exactly $\Z[a_0]$.  The equation $\Phi_\kappa(a_0)=-1$ shows both
that $d_0$ is primitive and that
\[
 H_1(X_r^{\kappa,\circ})
 =\Z[a_0]\oplus\operatorname{Tor}H_1(X_r^{\kappa,\circ}).
\]
For $i>0$, write $\partial d_i=n_i[a_0]$ and replace $d_i$ by
$d_i-n_id_0$.  In these domain and target bases the complete map is
\[
 \Z^r\longrightarrow\Z\oplus\Z/p_\kappa,\qquad
 (n_0,\ldots,n_{r-1})\longmapsto(n_0,\overline0),
 \qquad p_B=2,\quad p_C=3.
\]
Its projection to the free quotient has row $(1,0,\ldots,0)$, its
torsion coordinate is zero, and its kernel is $\Z^{r-1}$.  The exact sequence for the
second Mayer--Vietoris attachment becomes
\[
 0\longrightarrow G_{\kappa,r}
 \longrightarrow H_2(X_r^\kappa;\mathbb Z)
 \longrightarrow\mathbb Z^{r-1}\longrightarrow0,
\]
where
\[
 G_{B,r}=\mathbb Z/r\oplus\mathbb Z/2,
 \qquad
 G_{C,r}=\mathbb Z/(3r).
\]
The quotient is free, so the sequence splits.  Poincar\'e duality and the
universal coefficient theorem then give the remaining homology groups in
Theorem~\ref{thm:complete-integral-homology}.

\section{N\'eron--Severi groups of fixed fibres}\label{app:fibre-NS}
Fix $\kappa\in\{B,C\}$ and $r\geq1$.  The parameter domain and marking
are those of Sections~\ref{sec:period} and \ref{sec:affine-moduli}.

\begin{lemma}[Independence of the first two period coordinates]

For either realized monodromy type, the holomorphic functions
\[
 1,\qquad \tau_\kappa,\qquad \mu_\kappa
\]
on the orbifold universal cover are linearly independent over $\C$.
Consequently there is a smooth base point $t_*$ for which
$1,\tau_\kappa(t_*),\mu_\kappa(t_*)$ are linearly independent over $\Q$.
\end{lemma}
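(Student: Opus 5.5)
Suppose a relation $\alpha+\beta\tau_\kappa+\gamma\mu_\kappa=0$ holds identically on the orbifold universal cover, with constants $\alpha,\beta,\gamma\in\C$ not all zero. The plan is to apply the deck transformations and compare transformation laws until all three constants are forced to vanish. After that, the $\Q$-independence at some base point should follow by a countability argument.

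\textbf{Step 1.} First I rule out $\gamma=0$. The relation would then read $\alpha+\beta\tau_\kappa=0$. But $\tau_\kappa$ is nonconstant, since $j(\tau_\kappa)$ is a nonconstant function of $t$. Hence $\alpha=\beta=0$, a contradiction. So we may normalize $\gamma=1$ and write $\mu_\kappa=a+b\tau_\kappa$ for constants $a,b$.

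\textbf{Step 2.} Next I apply the second elliptic generator. Using \eqref{eq:mu-laws} or \eqref{eq:typeC-mu-laws}, the left side transforms as $\mu\circ g_2=\mu/(\tau+1)$. The right side becomes $a+b\,\tau\circ g_2=a-b/(\tau+1)$, using the laws \eqref{eq:tau-laws} and \eqref{eq:typeC-tau-laws}. Equating and multiplying through by $\tau+1$ gives
\[
 a+b\tau=a(\tau+1)-b,
\]
identically in $\tau$. Comparing coefficients yields $b=a$ and $a=-b$, so $a=b=0$. This forces $\mu_\kappa\equiv0$.

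\textbf{Step 3.} The conclusion $\mu_\kappa\equiv0$ is incompatible with the first law $\mu\circ g_1=1-\mu/\tau$, since the left side would be $0$ and the right side $1$. It is also incompatible with the cusp law $\mu\circ(g_1g_2)=\mu+1$. Either contradiction completes the proof of linear independence over $\C$.

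\textbf{Step 4.} For the pointwise statement, suppose that for every point $z$ the values $1,\tau_\kappa(z),\mu_\kappa(z)$ are $\Q$-dependent. Then $z$ lies in the zero set of $q_0+q_1\tau_\kappa+q_2\mu_\kappa$ for some nonzero $(q_0,q_1,q_2)\in\Q^3$. There are only countably many such triples. By Step 3, each such function is a nonzero holomorphic function, so its zero set is discrete in the connected upper half-plane $\Hh$. A countable union of discrete sets cannot cover $\Hh$, since for instance $\Hh$ is a Baire space. Hence some point $z_*$ lies outside all of these zero sets.

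Finally, we need the point $t_*=\pi_\kappa(z_*)$ to be smooth. The finite points, the cusp, and their preimages form a countable set, so we can additionally avoid them. The only step requiring care is that Step 2 uses the exact transformation laws \eqref{eq:tau-laws} and \eqref{eq:typeC-tau-laws}. These coincide for the second generator in both types, so a single computation covers both $\kappa=B$ and $\kappa=C$.
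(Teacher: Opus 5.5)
Your proof is correct and follows the paper's argument. You apply the common order-six law $\tau_\kappa\mapsto-1/(\tau_\kappa+1)$, $\mu_\kappa\mapsto\mu_\kappa/(\tau_\kappa+1)$, compare coefficients to kill the relation, use $\mu_\kappa\not\equiv0$ (which you justify explicitly via the $g_1$ law, where the paper leaves it implicit), and finish with the countable-union-of-discrete-zero-sets argument. One small slip: in Step 2 the constant terms of $a+b\tau=a\tau+(a-b)$ give $b=0$, not $a=-b$, but together with $b=a$ this still yields $a=b=0$.
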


\begin{proof}
Suppose $A\mu_\kappa+B\tau_\kappa+C=0$ identically.  The second elliptic
generator has the common transformation law
\[
 \tau_\kappa\longmapsto-\frac1{\tau_\kappa+1},
 \qquad
 \mu_\kappa\longmapsto\frac{\mu_\kappa}{\tau_\kappa+1}.
\]
Applying the relation after this transformation and multiplying by
$\tau_\kappa+1$ gives
\[
 A\mu_\kappa+C\tau_\kappa+(C-B)=0.
\]
Subtracting the original relation yields
$(C-B)\tau_\kappa-B=0$.  Since $\tau_\kappa$ is nonconstant, one obtains
$B=C=0$, and then $A=0$ because $\mu_\kappa$ is not identically zero.

For every nonzero $(a,b,c)\in\Z^3$, the holomorphic function
$a\mu_\kappa+b\tau_\kappa+c$ is therefore nonzero and has a discrete zero
set.  The complement of the countable union of those zero sets contains a
smooth point $t_*$ with the stated rational independence.
\end{proof}

Fix such a point $t_*$ and abbreviate
\[
 \tau_*=\tau_\kappa(t_*),\qquad
 \mu_*=\mu_\kappa(t_*),\qquad
 \beta_*=\beta_{\kappa,0}(t_*).
\]
We use $\lambda_{\kappa,r}=s_\kappa r$ and
$\widetilde q_{\kappa,r}$ from \eqref{eq:distinguished-NS-class}.

\begin{proposition}[Very general N\'eron--Severi group]
\label{prop:very-general-NS}
There is a countable subset
$\Sigma_{\kappa,r}\subset\mathcal U_\kappa$ such that, for every
$c\notin\Sigma_{\kappa,r}$, the fixed smooth torus fibre over $t_*$
satisfies
\[
 \NS(F^\kappa_{r,c,t_*})=\Z\widetilde q_{\kappa,r}.
\]
The Hermitian form associated with $\widetilde q_{\kappa,r}$ has
signature $(1,1)$.  Consequently $F^\kappa_{r,c,t_*}$ contains no
nonzero effective divisor and has algebraic dimension zero.
\end{proposition}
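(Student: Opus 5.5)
We identify $\NS(F)$ with the integral alternating forms $Q\in\bigwedge^2\Lambda^*$ of type $(1,1)$ for the period matrix $\Pi=[Z\mid I_2]$, with
\[
 Z=\begin{pmatrix}s_\kappa r\mu_*&\tau_*\\ r(\beta_*+c)&\mu_*\end{pmatrix}.
\]
As in Lemma~\ref{lem:invariant-class-all-parameters}, the type-$(1,1)$ condition is $K^{\mathsf T}QK=0$ with $K=\begin{psmallmatrix}I_2\\-Z\end{psmallmatrix}$. For a $4\times4$ skew matrix this reduces to the single scalar equation
\[
 q_{12}-(\text{linear in the entries of }Z\text{ via the mixed block})+q_{34}\det Z=0
\]
in the six Plücker-type coordinates $(a,b,c_0,d,e,f)$ of $Q$. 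This is the Hodge-locus equation announced in the introduction. Expanding $\det Z=s_\kappa r\mu_*^2-r\tau_*(\beta_*+c)$, the equation becomes
\[
 \alpha_Q+\beta_Q\,c=0,
\]
where $\alpha_Q,\beta_Q$ are $\Z$-linear combinations of $1,\tau_*,\mu_*,\beta_*,\mu_*^2,\tau_*\beta_*$ and the coefficient of $c$ is $\pm r\tau_*\,q_{34}$ (the $w^*\wedge\delta^*$-type coefficient, up to the exact index convention).

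The first step is to split into two cases. If $\beta_Q\neq0$, the equation determines $c$ uniquely, so each such $Q$ contributes at most one bad parameter. Since there are countably many integral $Q$, these form a countable set, which we put into $\Sigma_{\kappa,r}$. If $\beta_Q=0$, that is $q_{34}=0$, then the equation is independent of $c$. We substitute $q_{34}=0$ and group the remaining terms by the monomials $1,\tau_*,\mu_*$ together with any remaining $c$-independent products. By the independence lemma, $1,\tau_*,\mu_*$ are $\Q$-linearly independent at $t_*$, which forces linear relations among the remaining coordinates. The intended outcome is that this leaves exactly the line through $\widetilde q_{\kappa,r}$, which we already know is of type $(1,1)$ for every $c$.

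The main obstacle is the $\beta_Q=0$ case. After setting $q_{34}=0$, the equation may still contain terms with $\beta_*$ (the coefficient of $q_{23}$ or $q_{24}$ multiplied by $r(\beta_*+c)$), and these carry $c$-dependence that is linear and not a product. So the correct bookkeeping is: write the full equation as $P_Q+c\,L_Q=0$, where $L_Q$ is an integral linear form in the entries of $Q$ with coefficients among $r,\,r\tau_*$. For $Q$ with $L_Q\neq0$, $c$ is uniquely determined, and again we get countably many exceptions. For $L_Q=0$, the conditions are integral, so the relevant entries vanish because $1$ and $\tau_*$ are independent over $\Q$. The remaining equation $P_Q=0$ involves only $1,\tau_*,\mu_*$ and the entries of $Q$, plus possibly $\beta_*$ through $\widetilde q$. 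The plan is to check by direct expansion that $P_Q=0$ reduces to $a=b=0$ and $c_0=s_\kappa r\,d$, in the same way as the computation \eqref{eq:five-invariant-equations}. If a term with $\beta_*$ survives, we would enlarge the hypothesis on $t_*$ to also require rational independence of $\beta_*$, using the same countable-zero-set argument. We still need to verify that $\beta_{\kappa,0}$ is not an affine combination of $1,\tau,\mu$; this follows from its cocycle law \eqref{eq:beta-laws}, whose inhomogeneous term involves $\mu^2$.

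Finally, the signature $(1,1)$ statement is Lemma~\ref{lem:invariant-class-all-parameters}. With $\NS=\Z\widetilde q_{\kappa,r}$, Lemma~\ref{lem:effective-torus-class} rules out any nonzero effective divisor, since the only candidate classes are zero or indefinite. A two-torus with no divisors has no nonconstant meromorphic functions, because the polar divisor would be effective and nonzero. Hence the algebraic dimension is zero.
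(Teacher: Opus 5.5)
Your proposal is correct and follows essentially the paper's argument. The single Riemann relation $K^{\mathsf T}EK=0$ is affine in $c$, so each integral class whose relation is not identically zero contributes at most one exceptional parameter. The $\Q$-independence of $1,\tau_*,\mu_*$ then shows that the identically-zero classes are exactly $\Z\widetilde q_{\kappa,r}$, and the signature, divisor and algebraic-dimension claims follow from Lemmas~\ref{lem:invariant-class-all-parameters} and \ref{lem:effective-torus-class} as you say.

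Your worry about surviving $\beta_*$-terms is vacuous, so no extra genericity of $\beta_*$ is needed. Since $\beta_*$ and $c$ enter only through the entry $r(\beta_*+c)$ of $Z$, the $c$-coefficient is $r(e_{24}-e_{34}\tau_*)$, not $\pm r\tau_*e_{34}$ as you first wrote. Once that coefficient vanishes, the remaining relation is $e_{12}-e_{13}\tau_*+(s_\kappa r\,e_{23}-e_{14})\mu_*=0$.
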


\begin{proof}
Write the marked period matrix at $t_*$ as
\[
 \Pi_c=[Z_c\mid I_2],\qquad
 Z_c=\begin{pmatrix}
 \lambda_{\kappa,r}\mu_*&\tau_*\\
 r(\beta_*+c)&\mu_*
 \end{pmatrix},
\]
and put
\[
 K_c=\begin{pmatrix}I_2\\-Z_c\end{pmatrix}.
\]
For an integral cohomological alternating form
$E=(e_{ij})\in\bigwedge^2\Lambda^*$, the Appell--Humbert type-$(1,1)$
condition is
\[
 K_c^{\mathsf T}EK_c=0.
\]
When $E$ is nondegenerate this is equivalently
$\Pi_cE^{-1}\Pi_c^{\mathsf T}=0$; these are the covariant and
contravariant forms of the same Riemann relation.

Writing the six upper-triangular entries of $E$ as
$e_{12},e_{13},e_{14},e_{23},e_{24},e_{34}$, the unique scalar equation is
\begin{align}\label{eq:hodge-locus-affine}
 \Psi_E(c)={}&e_{12}-e_{13}\tau_*-e_{14}\mu_*
 +\lambda_{\kappa,r}e_{23}\mu_*\\
 &+r e_{24}(\beta_*+c)
 -r e_{34}\tau_*(\beta_*+c)
 +\lambda_{\kappa,r}e_{34}\mu_*^2=0.\notag
\end{align}
If this equation is identically zero as a function of $c$, its linear
coefficient gives $e_{24}-\tau_*e_{34}=0$.  The rational independence of
$1$ and $\tau_*$ forces $e_{24}=e_{34}=0$.  The remaining equation and
the independence of $1,\tau_*,\mu_*$ give
\[
 e_{12}=e_{13}=0,\qquad e_{14}=\lambda_{\kappa,r}e_{23}.
\]
Thus the integral classes whose entire parameter line is a Hodge locus
are exactly the multiples of \eqref{eq:distinguished-NS-class}.  Every
other integral class contributes at most one parameter value through
\eqref{eq:hodge-locus-affine}; removing the resulting countable set gives
the asserted N\'eron--Severi group.

Lemma~\ref{lem:invariant-class-all-parameters} gives signature $(1,1)$
for $\widetilde q_{\kappa,r}$ at every admissible parameter.
Since every integral $(1,1)$ class is now a multiple of this generator,
Lemma~\ref{lem:effective-torus-class} excludes every nonzero effective
divisor.  A nonconstant meromorphic function on a compact torus would
have a nonzero polar divisor, so the algebraic dimension is zero.
\end{proof}

\section{Nearby cycles and local invariant cycles}\label{app:nearby-cycles}
Fix $\kappa\in\{B,C\}$ and $r\geq1$, and suppress $\kappa$ from the notation.
This appendix describes specialization at the toroidal fibre using
the nearby-cycle formalism of \cite{SGA7,Dimca} and the ordered branch
spaces of Proposition~\ref{prop:branch-normalization-resolution}.
Write $T_i$ for the contragredient deck action on fibre cohomology,
with exterior powers understood in degree $q$.  The clockwise forward
cohomology transport is $T_i^{-1}$; its invariant subspace is the same.
Accordingly the invariant-cycle targets below are independent of this
choice of inverse, whereas all geometric sweeps use the clockwise
forward convention of \eqref{eq:clockwise-forward-period}.

\subsection{The nearby-cycle sheaves}

\begin{proposition}[Nearby-cycle sheaves on the ordered branch complex]
\label{prop:branch-nearby-descent}
Let $\psi_r=R\psi_f\Z$ be the nearby-cycle complex of the toroidal
neighbourhood $N^\kappa_{0,r}(\rho)$.  Its cohomology sheaves descend from
the ordered-branch cover and satisfy
\[
 \mathcal H^0\psi_r=\Z_{W_r},
 \qquad
 \mathcal H^2\psi_r=\epsilon_{2*}\Z_{W_r^{\langle2\rangle}}.
\]
The sheaf $\mathcal H^1\psi_r=\mathcal R^1_r$ fits into
\begin{equation}\label{eq:global-V-resolution}
 0\longrightarrow\mathcal R^1_r\longrightarrow
 \epsilon_{1*}\Z_{W_r^{\langle1\rangle}}
 \xrightarrow{\delta_{\mathrm{tr}}}
 \epsilon_{2*}\Z_{W_r^{\langle2\rangle}}
 \longrightarrow0.
\end{equation}
At either oriented triple point the stalk map is
\[
 (n_1,n_2,n_3)\longmapsto n_1-n_2+n_3.
\]
For $r=1$, the induced map on global sections is
\[
 \Z^3\longrightarrow\Z^2,
 \qquad(a,b,c)\longmapsto(s,s),
 \quad s=a-b+c,
\]
and consequently
\begin{equation}\label{eq:V-cohomology}
 H^a(W_1,\mathcal R^1_1)\cong
 \begin{cases}
 \Z^2,&a=0,\\
 \Z,&a=1,\\
 \Z^3,&a=2,\\
 0,&a\ge3.
 \end{cases}
\end{equation}
\end{proposition}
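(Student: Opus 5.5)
The plan is to compute everything on the periodic toric cover $Y$ and then descend through the free $\Gamma_r$-action, as in Proposition~\ref{prop:branch-normalization-resolution}.

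\emph{Local stalks.} Near a point where $k+1$ branches meet, the toroidal model is $t_c=z_0\cdots z_k$ times a smooth factor. The Milnor fibre there is $\{z_0\cdots z_k=\eta\}\simeq(\C^*)^k$, so the stalks of $\mathcal H^q\psi_r$ are $\bigwedge^q\Z^k$.
\begin{itemize}
\item At a smooth branch the only nonzero stalk is $\mathcal H^0=\Z$.
\item At a double point $\mathcal H^1=\Z$, generated by the vanishing circle $|z_0|=|z_1|$.
\item At a triple point $\mathcal H^1=\Z^2$ and $\mathcal H^2=\Z$.
\end{itemize}

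\emph{Identifying the sheaves.} Since $\psi$ is functorial for local biholomorphisms, and the deck group acts freely and preserves the ordered branch labels, these identifications glue upstairs and descend.
\begin{itemize}
\item $\mathcal H^0\psi=\Z_{W_r}$, because the Milnor fibres are connected.
\item $\mathcal H^2\psi$ is supported on the triple points with rank one, giving $\epsilon_{2*}\Z$. The orientation of the generator comes from the ordered $2$-simplex.
\item For $\mathcal H^1$, I write $(\C^*)^{k}=\{(z_0,\dots,z_k):\prod z_i=\eta\}$ and identify $H_1$ with $\Z^{k+1}/\Z(1,\dots,1)$. Dually, $H^1$ is the kernel of the sum map, i.e.\ the $1$-cochains on the vertex set modulo nothing.
\end{itemize}
It is more convenient to take the edge description. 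Each double curve $D_{v_i}\cap D_{v_j}$ carries the class $d\log z_j-d\log z_i$ restricted to the Milnor fibre; this is the ordered-edge generator of $\epsilon_{1*}\Z$. Over a triple point the three edge classes $e_{01},e_{02},e_{12}$ satisfy exactly one relation, $e_{01}-e_{02}+e_{12}=0$. This gives an injection $\mathcal R^1\hookrightarrow\epsilon_{1*}\Z$ whose cokernel is the alternating-sum map $(n_1,n_2,n_3)\mapsto n_1-n_2+n_3$ onto $\epsilon_{2*}\Z$. That is \eqref{eq:global-V-resolution} stalkwise.

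\emph{Signs.} The main obstacle is making the sign of $\delta_{\mathrm{tr}}$ consistent at both triangle orbits $\Delta^\pm$, so that both are ``oriented'' the same way. I would fix the signs using the lexicographic order on vertices, which is translation invariant. I would then check that $\Delta^+$ and $\Delta^-$ both give $+,-,+$ in the ordered-face convention, with the edges listed as $(v_0v_1,v_0v_2,v_1v_2)$.

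\emph{Global sections for $r=1$.} There are three edge orbits, with directions $e_1$, $e_2$, $e_2-e_1$, and two triangle orbits. Each triangle contains one edge from each orbit. With the ordering fixed above, each triangle's alternating sum is the same expression $s=a-b+c$, which gives the map $(a,b,c)\mapsto(s,s)$.

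\emph{Cohomology for $r=1$.} I take hypercohomology of \eqref{eq:global-V-resolution}.
\begin{itemize}
\item $\epsilon_{2*}\Z$ is a skyscraper, so its only cohomology is $H^0=\Z^2$.
\item $\epsilon_{1*}\Z$ is pushed forward from three copies of $\Pone$, so its cohomology is $\Z^3$ in degrees $0$ and $2$.
\end{itemize}
In the long exact sequence, $H^0(\mathcal R^1)=\ker(\Z^3\to\Z^2)=\Z^2$. The cokernel of the same map is $\Z^2/\Z(1,1)\cong\Z$, which injects into $H^1(\mathcal R^1)$. Since $H^1(\epsilon_{1*}\Z)=0$, this gives $H^1(\mathcal R^1)=\Z$. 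Next, $H^2(\mathcal R^1)=H^2(\epsilon_{1*}\Z)=\Z^3$, and all higher groups vanish.
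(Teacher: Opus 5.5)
Your overall route is the paper's: Milnor fibres on the periodic toric cover, free descent of the ordered-branch data, and the long exact sequence for $r=1$. Your lexicographic sign check and the $r=1$ cohomology computation are both correct. The genuine gap is in the integral identification of $\mathcal H^1$, which is the real content of the statement over $\Z$.

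\textbf{Homology and cohomology are interchanged.} The Milnor fibre $M_I=\{\prod_{i\in I}z_i=\eta\}$ is a fibre of $\prod\colon(\C^*)^I\to\C^*$. Hence $H_1(M_I)=\ker(\Z^I\xrightarrow{\Sigma}\Z)$, whereas $H^1(M_I)=\Z^I/\Z(1,\ldots,1)$, spanned by the classes $[d\log z_i]$ subject to $\sum_i[d\log z_i]=0$. You state the reverse.

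\textbf{Consequence for your edge generators.} The class $d\log z_j-d\log z_i$ is not an edge generator.
\begin{itemize}
\item On $z_iz_j=\eta$ it equals $2[d\log z_j]$, twice a generator.
\item At a triple point, $d\log z_1-d\log z_0$ and $d\log z_2-d\log z_0$ (the third class is their difference) pair with the vanishing-cycle basis $e_1-e_0,\ e_2-e_0$ through the matrix $\begin{pmatrix}2&1\\1&2\end{pmatrix}$. So the three classes span only an index-$3$ sublattice of $H^1(M_I)$.
\end{itemize}
Taken literally, your description therefore produces a finite-index subsheaf of $\mathcal H^1\psi_r$: index $2$ along double curves and index $3$ at triple points. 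This is invisible rationally but false over $\Z$. The paper's proof stresses exactly that the stalk sequence involves no finite-index change of lattice.

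\textbf{The map into $\epsilon_{1*}\Z$ is never constructed.} Saying ``three classes satisfying one relation'' presents $\mathcal R^1_x$ as a \emph{quotient} of $\Z^3$. That does not by itself give an injection into $\epsilon_{1*}\Z$. You also never identify your map with the generization map of the sheaf, and that identification is what makes the resolution a sequence of sheaves.

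\textbf{The repair} is the paper's argument, using generization directly.
\begin{itemize}
\item For $J\subset I$, the inclusion $M_J\subset M_I$ induces coordinate restriction $\Z^I/\Z\mathbf 1\to\Z^J/\Z\mathbf 1$.
\item For an ordered pair $v_i<v_j$, identify $\Z^{\{i,j\}}/\Z(1,1)\cong\Z$ by $[a_i,a_j]\mapsto a_j-a_i$, so the generator is $[d\log z_j]$. These identifications are translation invariant and descend.
\item At a triple point, generization is $[a_0,a_1,a_2]\mapsto(a_1-a_0,\,a_2-a_0,\,a_2-a_1)$. This map is injective, and its image is exactly $\{n_1-n_2+n_3=0\}$ because $[0,n_1,n_2]$ is a preimage.
\end{itemize}
Equivalently, your one-relation argument is correct in \emph{homology}. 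The vanishing cycles $\gamma_{ij}=e_j-e_i$ generate $H_1(M_I)$ integrally, subject only to $\gamma_{01}-\gamma_{02}+\gamma_{12}=0$. Dualizing this presentation embeds $H^1(M_I)$ in $\Z^3$ by evaluation on the $\gamma_{ij}$, which is the generization map, with cokernel given by the alternating sum. With that correction, the rest of your proposal goes through as written.
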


\begin{proof}
On the periodic cover the local equation at a point with ordered
branch set $I$ is $\prod_{i\in I}z_i=t_c$.  Its Milnor fibre retracts
onto the angular torus whose homology and cohomology lattices are
\[
 H_1(M_I;\Z)=\ker\!\left(\Z^I\xrightarrow{\sum}\Z\right),
 \qquad
 Q_I:=H^1(M_I;\Z)=\Z^I/\Z(1,\ldots,1).
\]
The second identity follows by dualizing the first exact sequence.
Consequently
\[
 (R^q\psi\Z)_x\cong\bigwedge^q Q_I.
\]
For $J\subset I$, generization to the stratum with branch set $J$ is
induced by the coordinate restriction $Q_I\to Q_J$ and its exterior
powers.  This map is well defined because a diagonal vector restricts
to a diagonal vector.

For three ordered branches $0,1,2$, the map to the three double-branch
stalks is
\[
 [a_0,a_1,a_2]\longmapsto
 (a_1-a_0,\ a_2-a_0,\ a_2-a_1).
\]
It identifies $Q_I$ integrally with the kernel of
$(n_1,n_2,n_3)\mapsto n_1-n_2+n_3$: given a vector in that kernel,
take $[a_0,a_1,a_2]=[0,n_1,n_2]$.  Thus the stalk sequence is exact,
with no finite-index change of lattice.  Also $\bigwedge^2Q_I\cong\Z$
at a triple point and is zero on the other strata.

These coordinate restrictions are equivariant under the ordered deck
translations, so they give \eqref{eq:global-V-resolution} on $W_r$.
For $r=1$ each of the three double curves is a copy of $\Pone$ joining
the same two triple points.  Choose the incidence generators so that
the alternating boundary at both triple points is $(1,-1,1)$; changing
one triple-point generator by a sign makes these choices compatible.
The map on global sections is then $(a,b,c)\mapsto(s,s)$ with
$s=a-b+c$.  Its kernel is $\Z^2$ and its cokernel is $\Z$.
The long exact sequence, together with $H^2(\Pone;\Z)=\Z$, gives
\eqref{eq:V-cohomology}.
\end{proof}

\subsection{Specialization at the toroidal fibre}

\begin{lemma}[Supported specialization and the angular torus]
\label{lem:rational-supported-specialization}
Let $K\simeq(S^1)^2$ be a compact angular torus orbit in the smooth
stratum of the index-one fibre $W_1$, and let $K_t\subset F_t$ be its
parallel transport to a nearby fibre.  There is a commutative diagram
\[
\begin{CD}
 H^1(K;\Q) @>{\operatorname{Th}_0}>>
 H^3(W_1,W_1\setminus K;\Q) @>>> H^3(W_1;\Q)\\
 @V{\operatorname{PT}}V{\simeq}V
 @V{\operatorname{sp}_K}V{\simeq}V
 @VV{\operatorname{sp}}V\\
 H^1(K_t;\Q) @>{\operatorname{Th}_t}>>
 H^3(F_t,F_t\setminus K_t;\Q) @>>> H^3(F_t;\Q).
\end{CD}
\]
The upper Gysin map and the lower Gysin map are injective of rank two; the
lower image is primitive in integral cohomology.  Consequently
specialization $H^3(W_1;\Q)\to H^3(F_t;\Q)$ is injective.
\end{lemma}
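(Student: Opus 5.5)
Take a compact angular torus orbit $K\simeq(S^1)^2$ lying in the smooth stratum of $W_1$, for instance the orbit of the extended zero section point under the angular torus action used in Lemma~\ref{lem:cusp-sweeps-integral}. I will build the diagram from local Thom isomorphisms and a tubular neighbourhood that is compatible with the whole toroidal tube. Near a smooth-stratum point the map is $t_c=z_0$, so a small $(S^1)^2$-invariant tube $U\supset K$ in the total space is a product $U\simeq K\times D^2\times\Delta$, with $D^2$ in the normal direction of $K$ inside the fibre. This gives compatible tubes $U\cap W_1$ and $U\cap F_t$ of $K$ and $K_t$. Excision identifies $H^3(W_1,W_1\setminus K)$ with $H^3(U_0,U_0\setminus K)$, and the Thom isomorphism for the real rank-two normal bundle (oriented by the complex structure) gives $\operatorname{Th}_0:H^1(K)\xrightarrow{\sim}H^3(W_1,W_1\setminus K)$; the same holds for $\operatorname{Th}_t$. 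The left vertical arrow is parallel transport in the product, and $\operatorname{sp}_K$ is defined as the composite of the two Thom isomorphisms. The right square commutes once $\operatorname{sp}$ is realised as $H^3(W_1)\cong H^3(N_{0,1}(\rho))\to H^3(F_t)$ through the retraction of Lemma~\ref{lem:cusp-neighbourhood-collapse}, using that restrictions of relative classes supported in $U$ commute with this composite; $U$ can be chosen inside the tube and preserved by the radial compression.

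Next, the lower row. Lift $K_t$ to the universal cover $\Lambda_\R$ of $F_t$. Here $K_t$ is a translate of the subtorus spanned by $\Lambda_{\mathrm{van}}=\Z\langle w,\delta\rangle$, because the angular torus orbit is the $(\C^*)^2$-torus acting through the cocharacters $w,\delta$ (Proposition~\ref{prop:cusp-pi1}). The Gysin map of a primitive coordinate subtorus $T_{w\delta}\subset F$ sends $H^1(T_{w\delta};\Z)=\Z\langle w^*,\delta^*\rangle$ to classes dual to $T_{\gamma w}$ and $T_{\gamma\delta}$, up to sign, i.e.\ to the coordinate classes $\pm u^*\wedge\gamma^*\wedge w^*$-type elements of $\bigwedge^3\Lambda^*$. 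This can be checked by Poincar\'e duality on the torus: pushforward of $[T_{w\delta}]\cap w^*$ is the circle class $\delta$, whose Poincar\'e dual is a primitive basis element of $\bigwedge^3\Lambda^*$. Hence the lower Gysin map is injective with primitive rank-two image.

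Finally, the diagram gives injectivity of the upper Gysin map. The composite through the lower row is injective, the left and middle vertical maps are isomorphisms, and commutativity then forces the upper composite to be injective of rank two. To conclude that $\operatorname{sp}$ itself is injective on $H^3(W_1;\Q)$, I need the upper Gysin image to be all of $H^3(W_1;\Q)$, which is $\Q^2$ by Theorem~\ref{thm:cusp-cohomology}. Rank two plus injectivity gives surjectivity onto this two-dimensional space. Then $\operatorname{sp}$ restricted to $H^3(W_1;\Q)=\operatorname{im}(\text{upper Gysin})$ agrees with the injective lower composite, so it is injective.

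The main obstacle I expect is the commutativity of the right square. I plan to handle it by choosing the deformation retraction to be the identity on the product tube $U$, or by using the local triviality of $f$ along the smooth stratum, which is a Thom–Whitney stratified product by Lemma~\ref{lem:cusp-thom-stratification}. The specialization map is then induced on $U$ by the product projection and preserves supports.
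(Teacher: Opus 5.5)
Your proof is correct. At the one step with real content, the rank of the upper Gysin map, you argue differently from the paper. Write $G_0,G_t$ for the upper and lower Gysin maps. You get injectivity of $G_0$ from the diagram itself: $\operatorname{sp}\circ G_0=G_t\circ\operatorname{PT}$ is injective because $G_t$ is, so $G_0$ is injective. Then $\dim H^3(W_1;\Q)=2$ (Theorem~\ref{thm:cusp-cohomology}) makes $G_0$ an isomorphism, and injectivity of $\operatorname{sp}$ follows at once.

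The paper instead proves that $G_0$ is an isomorphism intrinsically and over $\Z$, without using the nearby fibre. Let $W_1^{\mathrm{sm}}=W_1\setminus\operatorname{Sing}(W_1)\simeq(\C^*)^2\simeq K\times\R^2$, and let $\Sigma$ be the compact singular locus, of real dimension at most two. Then $H^3(\Sigma;\Z)=0$, so $H^3_c(W_1^{\mathrm{sm}};\Z)\cong\Z^2\to H^3(W_1;\Z)\cong\Z^2$ is surjective, hence bijective. The Thom map identifies $H^1(K;\Z)$ with $H^3_c(W_1^{\mathrm{sm}};\Z)$, and the diagram is used only in the final implication.

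Your route is shorter and avoids the open--closed sequence, but it makes the statement about $W_1$ depend on commutativity of the right square. The paper's route shows directly that $H^3(W_1;\Z)$ is spanned by the Thom classes of $K$. You can also recover the integral statement: $H^3(W_1;\Z)$ is torsion-free, so $\operatorname{sp}$ is injective integrally. Primitivity of $\operatorname{im}G_t$ then gives $\operatorname{sp}(H^3(W_1;\Z))\subseteq\operatorname{sp}(\operatorname{im}G_0)$, hence $H^3(W_1;\Z)=\operatorname{im}G_0$.

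Two small corrections. First, a deformation retraction onto $W_1$ cannot be the identity on your tube $U$, so drop that suggestion. The right square needs no special retraction. You only need that $H^3(N_{0,1}(\rho))\to H^3(W_1)$ is an isomorphism, and that both relative groups receive isomorphic restrictions from $H^3(N,N\setminus C)$, where $C\cong K\times\overline\Delta$ is the closed product tube (by excision and the Thom isomorphism in $U$). Naturality of forgetting supports then gives commutativity. Second, the Gysin images of $w^*,\delta^*$ are the Poincar\'e duals of the circles $\delta,w$, namely $\pm\gamma^*\wedge u^*\wedge w^*$ and $\pm\gamma^*\wedge u^*\wedge\delta^*$. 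They are not classes dual to the two-tori $T_{\gamma w},T_{\gamma\delta}$, although your subsequent Poincar\'e-duality check is right.
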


\begin{proof}
Choose a relatively compact tubular neighbourhood $U_0$ of $K$ in
the smooth stratum of $W_1$, invariant under the compact angular torus.
Average a horizontal lift under that torus.  After shrinking the base
disc, its transport gives an equivariant product tube around $K$ in
the submersion locus,
with slices $U_0$ and $U_t$ and transported compact torus $K_t$.
Fix a simply connected sector in the punctured disc for this transport
and for the global specialization map.

On a holomorphic submersion neighbourhood the nearby-cycle complex is
the constant sheaf in degree zero, and the canonical morphism
$\mathbb Z_{W_1}\to R\psi_f\mathbb Z$ restricts to the ordinary
identification supplied by the product tube.  Apply cohomology with
supports in $K$ to this morphism.  Excision in the tube and its slice
identifies the resulting local map with
\[
 H^3(U_0,U_0\setminus K;\mathbb Z)
 \longrightarrow H^3(U_t,U_t\setminus K_t;\mathbb Z).
\]
Forgetting supports is natural for that same sheaf morphism.  Under
the proper nearby-fibre identification
$\mathbb H^3(W_1,R\psi_f\mathbb Z)\simeq H^3(F_t;\mathbb Z)$,
its global map is precisely specialization.  Thus the local map is
compatible with the right-hand vertical arrow in the displayed diagram,
not merely with an abstract isomorphism of the two local groups.
Transport preserves the oriented normal bundles of $K$ and $K_t$,
so it also identifies their Thom classes.  Excision followed by
forgetting supports gives the asserted diagram, already over $\mathbb Z$;
tensoring with $\mathbb Q$ gives its stated form.

For the lower row, the lattice of $K_t$ is the primitive direct summand
$\Z\langle w,\delta\rangle\subset\Lambda$.  A complementary subtorus
exists, and the K\"unneth decomposition identifies the Gysin map with a
primitive rank-two summand of $H^3(F_t;\Z)$.

For the upper row put $U=W_1\setminus\operatorname{Sing}(W_1)$ and
$\Sigma=W_1\setminus U$.  The normalization identifies
$U\simeq(\C^*)^2$, whereas $\Sigma$ is a compact union of double
curves and triple points, of real dimension at most two.  Thus
$H^3(\Sigma;\Z)=0$.  The open--closed cohomology sequence, using the
compactness of $W_1$, gives
\[
 H^2(\Sigma;\Z)\longrightarrow H_c^3(U;\Z)
 \longrightarrow H^3(W_1;\Z)\longrightarrow0.
\]
Here $H_c^3(U;\Z)\simeq\Z^2$ because
$U\simeq K\times\R^2$, and $H^3(W_1;\Z)\simeq\Z^2$ by
Theorem~\ref{thm:cusp-cohomology}.  The displayed surjection between
free groups of the same finite rank is an isomorphism.  The Thom map
for $K\subset K\times\R^2$, followed by extension of supports,
identifies $H^1(K;\Z)$ with $H_c^3(U;\Z)$.  Its composition with the
open--closed map is exactly the upper Gysin map.  This proves that the upper Gysin map is an isomorphism.  Tensoring with $\Q$ and using the commutative diagram
and the injective lower Gysin map proves the specialization assertion.
\end{proof}

\begin{proposition}[Specialization to invariant cycles]
\label{prop:rational-index-one-specialization}
For the index-one Type B or Type C toroidal cusp, specialization induces
an isomorphism
\[
 H^q(W_1;\Q)\xrightarrow{\ \sim\ }
 H^q(F_t;\Q)^{T_0}
 \qquad(0\le q\le4).
\]
For index $r$, specialization is surjective onto the
monodromy-invariant subspace of $H^q(F_t;\Q)$; its kernel is
$\Q^{r-1}$ in degrees $2$ and $4$ and zero in degrees $0,1,3$.
\end{proposition}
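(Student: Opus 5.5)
The plan is to compare dimensions on both sides degree by degree and then use the specialization map $\operatorname{sp}:H^q(W_r;\Q)\cong H^q(N^\kappa_{0,r}(\rho);\Q)\to H^q(F_t;\Q)$, which lands in the $T_0$-invariants because it factors through restriction to the boundary mapping torus. The collapse Lemma~\ref{lem:cusp-neighbourhood-collapse} gives the left isomorphism.

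\textbf{Computing the invariants.} I would first compute $H^q(F_t;\Q)^{T_0}=\bigwedge^q(\Lambda_\Q^*)^{T_0}$ directly from $\mathsf P_0=\begin{psmallmatrix}I&0\\-B_{0,r}&I\end{psmallmatrix}$. The fixed space in degree one is $\Lambda_{\mathrm{van}}$-dual of rank two. In degree two, the rank-two statement $\operatorname{SNF}(\wedge^2\mathsf P_0-I)=\operatorname{diag}(1,1,0,\dots)$ of Lemma~\ref{lem:cusp-sweeps-integral}, transposed, gives invariant rank four. In degree three, duality with degree one gives rank two, and degrees zero and four give rank one. Theorem~\ref{thm:cusp-cohomology} gives ranks $1,2,r+3,2,r$. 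So the target ranks are $1,2,4,2,1$, and the predicted kernels are $0,0,r-1,0,r-1$. It therefore suffices to prove surjectivity onto invariants in every degree; the kernel ranks then follow by subtraction, and for $r=1$ surjectivity gives an isomorphism.

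\textbf{Surjectivity by degree.} In degree zero surjectivity is trivial. In degree three, Lemma~\ref{lem:rational-supported-specialization} gives injectivity for $r=1$, and equal ranks give an isomorphism. For general $r$, I would pull back along the finite étale cover $N_{0,r}\to N_{0,1}$ of Corollary~\ref{cor:typeB-cusp-ladder}. This cover is compatible with specialization and restricts on nearby fibres to an isogeny, which is an isomorphism on rational cohomology, so the image is unchanged.

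In degree one, I would use the ring structure instead. For $q=1$, the invariant classes are pulled back from $\overline\Lambda=\pi_1(N)$ via Proposition~\ref{prop:cusp-pi1}. Since $H^1(N;\Q)=\operatorname{Hom}(\overline\Lambda,\Q)$ restricts isomorphically to $\operatorname{Ann}(\Lambda_{\mathrm{van}})=(\Lambda^*)^{T_0}$, degree one is surjective.

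For degree two, the invariants are $\bigwedge^2$ of the degree-one invariants plus cup products with degree-one classes and the remaining invariants. Cup products of degree-one classes give $\gamma^*\wedge u^*$, and products of $H^1$ classes with $H^3$ classes give the degree-four generator, since $\gamma^*\wedge u^*\wedge w^*\wedge\delta^*$ is a product of the $H^1$ and $H^3$ invariants. Specialization is a ring map (nearby-cycle cup product), so degree four follows. In degree two, the invariant space is spanned by $\gamma^*\wedge u^*$ together with classes $\alpha$ whose Poincaré pairing with $H^2$ is controlled. I would argue instead via the local invariant cycle theorem from the Clemens–Schmid exact sequence for the semistable degeneration: the reduced normal-crossings central fibre of Theorem~\ref{thm:master-cusp} is semistable, and in the Kähler-like local situation over a disc one has exactness of $H^q(W)\to H^q(F_t)\xrightarrow{T-I}$ over $\Q$. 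If that is not available in this non-Kähler setting, I would exhibit explicit preimages using the nearby-cycle spectral sequence $E_2^{a,b}=H^a(W_r,\mathcal H^b\psi_r)$ of Proposition~\ref{prop:branch-nearby-descent}.

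\textbf{Main obstacle.} The hardest step is degree two. I expect to compare the edge maps $H^2(W)\to E_2^{0,2}$, $E_2^{1,1}$, and $E_2^{2,0}$ with the weight-type filtration of $\Lambda$ given by $\ker N\subset\Lambda$. The aim is to show that the images $\gamma^*\wedge u^*$, the mixed classes, and the two degree-two classes $\gamma^*\wedge\delta^*$ and $u^*\wedge w^*$-type classes are all hit, using the explicit $r=1$ computation~\eqref{eq:V-cohomology}.
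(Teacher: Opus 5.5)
There is a genuine gap: your degree-by-degree surjectivity argument does not cover degrees four and two.

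\textbf{Degree four.} The claimed factorization is false. You have $H^1(F_t;\Q)^{T_0}=\Q\gamma^*\oplus\Q u^*$. The classes $\gamma^*\wedge u^*\wedge w^*$ and $\gamma^*\wedge u^*\wedge\delta^*$ are $T_0$-invariant, because the pullbacks of $w^*,\delta^*$ change only by elements of $\langle\gamma^*,u^*\rangle$. Since the invariant rank in degree three is two, this gives $H^3(F_t;\Q)^{T_0}=\gamma^*\wedge u^*\wedge\langle w^*,\delta^*\rangle$. Every cup product of a degree-one invariant with a degree-three invariant therefore vanishes, and the volume class is never reached. It is a product of two degree-two invariants, since $\widetilde q_{\kappa,r}\wedge\widetilde q_{\kappa,r}=2s_\kappa r\,\gamma^*\wedge u^*\wedge w^*\wedge\delta^*$, but that route presupposes degree two.

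\textbf{Degree two.} Cup products of degree-one classes produce only $\gamma^*\wedge u^*$, which is one of the four invariant dimensions. For the other three you appeal to Clemens--Schmid, whose local invariant cycle theorem needs a K\"ahler total space that you have not established for this toroidal neighbourhood. Your fallback through the nearby-cycle spectral sequence is announced but not carried out.

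\textbf{The missing idea.} You need not prove surjectivity degree by degree. For $r=1$ it suffices to prove injectivity, and this is forced by the spectral sequence $E_2^{a,b}=H^a(W_1,\mathcal H^b\psi_1)\Rightarrow H^{a+b}(F_t;\Q)$ together with the one geometric input you already cite.
\begin{itemize}
\item By Proposition~\ref{prop:branch-nearby-descent} and Theorem~\ref{thm:cusp-cohomology}, the rows $b=0,1,2$ have ranks $(1,2,4,2,1)$, $(2,1,3)$ and $(2)$.
\item Comparing with the Betti numbers $(1,4,6,4,1)$ of $F_t$ in total degrees one and four kills $d_2^{0,1}$ and $d_2^{2,1}$.
\item Total degree two gives $\operatorname{rk}d_2^{1,1}+\operatorname{rk}d_2^{0,2}+\operatorname{rk}d_3^{0,2}=1$.
\item The rank-two degree-three specialization of Lemma~\ref{lem:rational-supported-specialization} forces $d_2^{1,1}=d_3^{0,2}=0$.
\end{itemize}
Hence no differential hits the bottom row, and $H^q(W_1;\Q)\to H^q(F_t;\Q)$ is injective in every degree. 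Its image lies in the invariants, whose ranks $(1,2,4,2,1)$ you computed correctly, so it is an isomorphism onto them.

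The isogeny/transfer comparison you already use in degree three then gives surjectivity onto invariants in every degree for general $r$. The kernels follow by your subtraction. Your degree-one argument through $\pi_1(N)\cong\overline\Lambda$ is correct and is a valid substitute in that degree.
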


\begin{proof}
For $r=1$, Proposition~\ref{prop:branch-nearby-descent} and
Theorem~\ref{thm:cusp-cohomology} give
\[
 E_2^{a,b}=H^a(W_1,\mathcal H^b\psi_1)
 \Longrightarrow H^{a+b}(F_t;\Q)
\]
with rank table
\[
\begin{array}{c|ccccc}
 b=2&2&0&0&0&0\\
 b=1&2&1&3&0&0\\
 b=0&1&2&4&2&1\\ \hline
 &a=0&a=1&a=2&a=3&a=4.
\end{array}
\]
Write
\[
 \alpha=\operatorname{rk}d_2^{0,1},\quad
 \beta=\operatorname{rk}d_2^{1,1},\quad
 \gamma=\operatorname{rk}d_2^{2,1},\quad
 \delta=\operatorname{rk}d_2^{0,2},\quad
 \varepsilon=\operatorname{rk}d_3^{0,2}.
\]
These are all possible nonzero differentials.  Since the nearby fibre is
a four-torus with Betti numbers $(1,4,6,4,1)$, comparison in total degrees
one and four gives $\alpha=\gamma=0$, while total degree two gives
\[
 \beta+\delta+\varepsilon=1.
\]
The bottom edge in total degree three is the specialization map
$H^3(W_1;\Q)\to H^3(F_t;\Q)$.  Lemma~\ref{lem:rational-supported-specialization}
shows that it has rank two.  Hence no differential can hit
$E_2^{3,0}$: $\beta=\varepsilon=0$, and therefore $\delta=1$.
All bottom-row classes survive.  Thus specialization is injective in every
degree.

A class extending across the disc is monodromy invariant.  Direct
exterior-power calculation for the cusp monodromy with square-zero logarithm gives
invariant ranks $(1,2,4,2,1)$, exactly the ranks of $H^q(W_1;\Q)$.
Therefore the specialization image is the full invariant subspace.

For general $r$, the cyclic cover of cusp neighbourhoods restricts to a
degree-$r$ cover $W_r\to W_1$ and a degree-$r$ isogeny of nearby tori.
Transfer makes pullback injective on $H^*(W_1;\Q)$ and an isomorphism on
the rational cohomology of nearby tori.  Comparing ranks with
Theorem~\ref{thm:cusp-cohomology} gives the stated kernels; in degree four
the kernel is the sum-zero subspace of the $r$ component classes.
\end{proof}

\begin{lemma}[Rational cohomology of a finite fibre]
\label{lem:rational-finite-transfer}
At either multiple fibre, pullback along the free cyclic quotient of the
covering torus identifies
\[
 H^q(S_j;\mathbb Q)\simeq H^q(F;\mathbb Q)^{T_j}.
\]
Hence finite multiple fibres produce no point-supported kernel or
cokernel in the rational cohomology sheaves $R^qf_*\mathbb Q$.
\end{lemma}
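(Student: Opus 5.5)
The plan is to use the standard transfer argument for a free finite quotient, and then translate it into a statement about the stalks of $R^qf_*\mathbb Q$ near $p_j$. First, by Lemma~\ref{lem:log-transform} and Proposition~\ref{prop:bielliptic-fibres}, the reduced fibre is $S_j=F_0/C_{m_j}$, where $F_0$ is the central torus of the root-disc family and the cyclic group acts freely by $x\mapsto A_jx+v_j/m_j$ in flat coordinates. For a free action of a finite group $G$, the map $p^*:H^q(S_j;\mathbb Q)\to H^q(F_0;\mathbb Q)^G$ is an isomorphism. Transfer gives $p_!p^*=m_j$ and $p^*p_!=\sum_{g\in G}g^*$, so $p^*$ is injective and its image is the $G$-invariants, since $m_j$ is invertible in $\mathbb Q$. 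The generator acts on $H^*(F_0;\mathbb Q)=\bigwedge^*\Lambda^*_{\mathbb Q}$ through its linear part. Translations are homotopic to the identity, so the affine vector $v_j/m_j$ plays no role. Hence the $G$-invariants are the invariants of the contragredient matrix $T_j$, or equivalently of $T_j^{-1}$, which has the same fixed subspace.

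Next I identify $F_0$ with a nearby smooth fibre $F=F_t$ compatibly with the markings. The root family $\mathcal T\to\Delta_s$ is a proper submersion over the whole root disc, so Ehresmann trivializes it over the disc. This gives an isomorphism $H^*(F_0)\cong H^*(F_{s})$ that respects the lattice marking $\Lambda$. Under this isomorphism the deck generator's linear part is the same $A_j$ used for monodromy, by the period identity \eqref{eq:root-holomorphic-equivariance} and the forward-transport convention \eqref{eq:clockwise-forward-period}. This gives $H^q(S_j;\mathbb Q)\simeq H^q(F;\mathbb Q)^{T_j}$, as stated.

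For the sheaf statement, take a small disc $D\ni p_j$. Let $N_j=f^{-1}(D)$, which is the quotient $\mathcal T/C_{m_j}$. It retracts onto $S_j$, since $\mathcal T$ retracts equivariantly onto $F_0$ and the action is free. Proper base change then gives $(R^qf_*\mathbb Q)_{p_j}=H^q(N_j;\mathbb Q)=H^q(S_j;\mathbb Q)$. The map to a nearby stalk is the restriction $H^q(N_j)\to H^q(F_t)$, and its image lies in the local invariants. The main point to check is that this restriction map agrees with $p^*$ followed by the Ehresmann identification. I would verify this by factoring the restriction through the root cover. The composite $F_t\hookrightarrow\mathcal T\to N_j$ is the fibre inclusion, and $\mathcal T\simeq F_0$. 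So the restriction equals $H^q(N_j)\cong H^q(\mathcal T)^{G}\to H^q(\mathcal T)\cong H^q(F_t)$, where the first isomorphism is again transfer. It follows that the stalk maps isomorphically onto $H^q(F_t;\mathbb Q)^{T_j}=(j_*j^*R^qf_*\mathbb Q)_{p_j}$. Hence the adjunction map $R^qf_*\mathbb Q\to j_*j^*R^qf_*\mathbb Q$ is an isomorphism at $p_j$, so there is no point-supported kernel or cokernel there.
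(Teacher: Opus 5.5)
Your proposal is correct and follows the paper's argument. Transfer for the free cyclic quotient ($p_!p^*=m_j$, $p^*p_!=\sum_g g^*$) identifies the image of pullback over $\mathbb Q$ with the invariants, and proper base change gives the stalk statement. You also check that the restriction $H^q(N_j)\to H^q(F_t)$ factors through the root cover, so that the deck action matches the local monodromy $T_j^{\pm1}$; the paper leaves this step implicit.
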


\begin{proof}
For a free quotient $q:T\to T/G$, transfer satisfies
$q_*q^*=|G|\,\mathrm{id}$ and
$q^*q_*=\sum_{g\in G}g^*$.  Over $\mathbb Q$, the image of $q^*$ is
therefore precisely the invariant subspace.  Proper base change gives the
stalk statement.
\end{proof}

\begin{proposition}[Constructible rational cohomology sheaves]

Let $j:B^\circ\hookrightarrow B$ be the complement of the three special
points and put $\mathcal L_r^q=R^q(f|_{B^\circ})_*\mathbb Q$.  There are
natural exact sequences
\[
 0\longrightarrow\mathcal K_r^q\longrightarrow R^qf_*\mathbb Q
 \longrightarrow j_*\mathcal L_r^q\longrightarrow0,
\]
where
\[
 \mathcal K_r^q\simeq
 \begin{cases}
 \mathbb Q_{p_0}^{r-1},&q=2,4,\\
 0,&\text{otherwise}.
 \end{cases}
\]
Moreover
\[
 H^p(B,j_*\mathcal L_r^q)\simeq
 \begin{cases}
 \mathbb Q,&p=0,2,\\
 0,&p=1
 \end{cases}
 \qquad(0\le q\le4).
\]
\end{proposition}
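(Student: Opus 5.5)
The plan is to build the exact sequences stalk by stalk from the adjunction map $R^qf_*\mathbb Q\to j_*j^*R^qf_*\mathbb Q=j_*\mathcal L^q_r$, and then to compute the cohomology of $j_*\mathcal L^q_r$ on $\mathbb P^1$ by an Euler characteristic count together with its two end terms.

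\emph{Stalks of the adjunction map.} Proper base change identifies the stalk of $R^qf_*\mathbb Q$ at a point $p$ with the cohomology of the fibre there. Over $B^\circ$ the adjunction map is the identity. At a finite point $p_j$, the stalk of $j_*\mathcal L^q_r$ is the invariant subspace $H^q(F;\mathbb Q)^{T_j}$. The map is the specialization $H^q(S_j;\mathbb Q)\to H^q(F;\mathbb Q)$. By Lemma~\ref{lem:rational-finite-transfer} this map is an isomorphism onto the invariants, so the kernel and cokernel vanish at $p_j$.

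At the cusp, the stalk of $R^qf_*\mathbb Q$ is $H^q(N_{0,r};\mathbb Q)$. By Lemma~\ref{lem:cusp-neighbourhood-collapse} this equals $H^q(W_r;\mathbb Q)$. The adjunction map is specialization to $H^q(F_t;\mathbb Q)^{T_0}$. Proposition~\ref{prop:rational-index-one-specialization} gives surjectivity of this map. Its kernel is $\mathbb Q^{r-1}$ for $q=2,4$ and zero for $q=0,1,3$. For $q\ge5$ both sides vanish, since $W_r$ has real dimension four. This yields the short exact sequence, and it identifies $\mathcal K^q_r$ as a skyscraper at $p_0$ of the stated rank. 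Naturality holds because everything comes from the adjunction morphism.

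\emph{Cohomology of $j_*\mathcal L^q_r$.} For the degree-zero group, $H^0(B,j_*\mathcal L^q_r)=H^0(B^\circ,\mathcal L^q_r)$, which is the space of global monodromy invariants of $\bigwedge^qH^1(F;\mathbb Q)$. Theorem~\ref{thm:finite-index-invariants} applies with $H=\mathcal G_{\kappa,r}$, after dualizing and using Poincar\'e duality on $F$. It gives invariants of rank $1,1,1,1,1$ in degrees $q=0,\dots,4$: degree one uses $(\Lambda^*)^H=\mathbb Z\gamma^*$, degree two uses $\widetilde q$, and degrees three and four follow by duality.

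For the top-degree group, $H^2(B,j_*\mathcal L)$ is the coinvariant group, by the same argument as the collar-obstruction lemma: the quotient $j_*/j_!$ is a skyscraper and $H^2_c$ is computed cellularly on the pair of pants. Over $\mathbb Q$ the coinvariants are dual to the invariants of the dual local system. That dual system is again $\bigwedge^{4-q}$ up to a twist by duality, so it also has rank one.

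To get $H^1$, I use the Euler characteristic $\chi(B,j_*\mathcal L)=\chi(B^\circ)\,\rk\mathcal L+\sum_p\dim(\text{stalk at }p)=-\binom4q+\sum_{p}\dim\mathcal L^{T_p}$. Then I check that this equals $2$. For $q=1$ the count is $-4+2+2+2=2$, using rank-two invariants at the two finite points and at the cusp. For $q=2$ it is $-6+2+2+4$, where the order-six invariants have rank two, read off from Lemma~\ref{lem:finite-H2-bases}'s $H^2(S_j)\cong\mathbb Q^2$, and the cusp invariants have rank four. Degrees $q=0,3,4$ are handled the same way, using $H^q(S_j;\mathbb Q)$ with Betti numbers $1,2,2,2,1$ and the cusp ranks $(1,2,4,2,1)$. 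Since $\chi=2$ and $h^0=h^2=1$, it follows that $h^1=0$.

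\emph{Main obstacle.} The delicate step is the top-degree identification, because Verdier duality for $j_*$ of a local system on a curve requires care over whether the orbifold or cusp stalks agree with the duals. Rather than argue by duality, I would compute $H^2$ directly as coinvariants. Rationally the coinvariants have the same dimension as the invariants of the transpose action, which Theorem~\ref{thm:finite-index-invariants} controls as well.
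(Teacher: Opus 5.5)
Your proposal is correct and follows the paper's proof: you get the stalks from Proposition~\ref{prop:rational-index-one-specialization} at the cusp and Lemma~\ref{lem:rational-finite-transfer} at the finite points, then use $h^0=h^2=1$ (simultaneous invariants and coinvariants) together with the cellular Euler count $-\operatorname{rk}\mathcal L+\sum_i\dim\mathcal L^{T_i}=2$ to force $H^1=0$. Your degree-by-degree check of the local invariant ranks, and your use of Theorem~\ref{thm:finite-index-invariants} with duality to justify the rank-one global invariants and coinvariants, simply spell out what the paper asserts in one line.
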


\begin{proof}
The stalk statement is Proposition~\ref{prop:rational-index-one-specialization}
at the cusp and Lemma~\ref{lem:rational-finite-transfer} at the two finite
points.  For the second assertion, the cellular Euler formula for a local
system $\mathcal L$ on the complement of three points is
\[
 \chi(B,j_*\mathcal L)
 =-\operatorname{rk}\mathcal L+
   \sum_{i=0}^2\dim\mathcal L^{T_i}.
\]
Global sections are simultaneous invariants and top cohomology is the
coinvariant space.  For the five exterior powers of the present rank-four
local systems, both spaces are one-dimensional and the local-invariant
ranks are $(1,2,4,2,1)$ at the cusp and the corresponding finite-order
ranks; the formula gives Euler characteristic two and hence vanishing of
$H^1$.
\end{proof}

\end{document}